\documentclass{amsart}
\usepackage{xy,cleveref,enumitem,verbatim,amssymb}
\xyoption{all}
\title{Computation of the classifying ring of formal modules}
\author{A. Salch}
%\date{}
\DeclareMathOperator{\End}{{\rm End}}
\DeclareMathOperator{\Rees}{{\rm Rees}}
\DeclareMathOperator{\Symm}{{\rm Sym}}
\DeclareMathOperator{\Spec}{{\rm Spec}}
\DeclareMathOperator{\Ext}{{\rm Ext}}
\DeclareMathOperator{\im}{{\rm im\:}}
\DeclareMathOperator{\coker}{{\rm coker}}
\DeclareMathOperator{\id}{{\rm id}}
\DeclareMathOperator{\Sym}{{\rm Sym}}
\DeclareMathOperator{\Aut}{{\rm Aut}}

\newcommand{\customcomment}[1]{}

\theoremstyle{plain}
\newtheorem{prop}{Proposition}[subsection]
\newtheorem{theorem}[prop]{Theorem}
\newtheorem{corollary}[prop]{Corollary}
\newtheorem{setup for thm}[prop]{Setup for theorem}

\newtheorem*{unn motivating questions}{Motivating Questions}
\newtheorem{lemma}[prop]{Lemma}

\newtheorem{motivating questions}[prop]{Motivating Questions}
\newtheorem{definition}[prop]{Definition}
\newtheorem{definition-proposition}[prop]{Definition-Proposition}
\newtheorem{definition-theorem}[prop]{Definition-Theorem}

\newtheorem{running notations}[prop]{Running notations}

\theoremstyle{definition}
\newtheorem{remark}[prop]{Remark}
\newtheorem{intuitive idea}[prop]{Intuitive idea}

\newtheorem{example}[prop]{Example}

\newtheorem{convention}[prop]{Conventions}
\newtheorem{conventions}[prop]{Conventions}

\newtheorem*{sketch of proof}{Sketch of proof}
%\numberwithin{equation}{prop}

\begin{document}
\begin{abstract}
In this paper, we develop general machinery for computing the classifying ring $L^A$ of one-dimensional formal $A$-modules, for various commutative rings $A$. 
We then apply the machinery to obtain calculations of $L^A$ for various number rings and cyclic group rings $A$. This includes the first full calculations of the ring $L^A$ in cases in which it fails to be a polynomial algebra. We also derive consequences for the solvability of some lifting and extension problems. 
\end{abstract}
\maketitle
\tableofcontents

\section{Introduction and review of some known facts}

\subsection{Introduction}

This paper %\footnote{This paper is intended to be the second in a series about formal modules, but makes no assumption that the reader has read the first paper, \cite{cmah1}. The connections between this material and homotopy theory do not become significant until the later papers in the series, e.g. \cite{formalmodules4} and \cite{cmah7}.} 
is about the computation of the classifying rings $L^A$ %and classifying Hopf algebroids $(L^A,L^AB)$ 
of formal $A$-modules. We offer some relevant definitions: when $A$ is a commutative ring, a {\em formal $A$-module}
is a formal group law $F$ over a commutative $A$-algebra $R$, which is additionally equipped with a ring map $\rho: A \rightarrow \End(F)$ such that $\rho(a)(X) \equiv aX$ modulo $X^2$.
Higher-dimensional formal modules exist and arise naturally from abelian varieties of dimension $>1$, but in this paper we restrict our attention to the moduli of formal modules of dimension $1$. From now on, all formal groups and formal modules are implicitly assumed to be one-dimensional (see Conventions \ref{nu convention} for this and other conventions). %A recap of some classical facts about formal modules and their moduli theory, as well as some of the areas in which they have found fruitful applications (algebraic geometry, number theory, algebraic topology), can be found in the preprint \cite{cmah1}. Due to the close relation between the subject of this paper and the paper \cite{cmah1}, the expository material in each one overlaps with the other. 

Formal modules arise in number theory and arithmetic geometry, for example, in Lubin and Tate's famous $p$-adic version \cite{MR0172878} of the Kronecker-Weber theorem; %\footnote{Lubin and Tate's paper \cite{MR0172878} referred to a formal module as a formal group {\em with complex multiplication}, but one must be a bit careful to avoid confusion with this turn of phrase: even among the formal modules which come from abelian varieties, not all come from abelian varieties with complex multiplication.% The standard usage is to say that an abelian variety has complex multiplication if its endomorphism ring has a {\em sufficiently large} commutative subring. For example, if the formal group of an abelian variety of large dimension has the structure of a formal $A$-module for a relatively small extension $A$ of $\mathbb{Z}$, this does not necessarily mean that the abelian variety has CM!}
in Drinfeld's elliptic modules \cite{MR0384707}; in Drinfeld's $p$-adic symmetric domains \cite{MR0422290}, which are deformation spaces of certain formal modules; and as the formal parts of Barsotti-Tate modules, as in \cite{MR1393439}. Formal $A$-modules also arise in algebraic topology, by using the natural map from the moduli stack of formal $A$-modules to the moduli stack of formal groups to detect certain classes in the cohomology of the latter, particularly in order to resolve certain differentials in spectral sequences used to compute the Adams-Novikov $E_2$-term and stable homotopy groups of spheres; for example, see \cite{MR745362}, \cite{pearlmanthesis}, \cite{formalmodules4}, and \cite{cmah7}.

A straightforward algebraic argument (see \cite{MR0384707}) shows that there exists a classifying ring $L^A$ for formal $A$-modules, i.e., 
a commutative $A$-algebra $L^A$ such that $\hom_{A-alg}(L^A, R)$ is in natural bijection with the set of formal $A$-modules over $R$. It remains an open problem to explicitly compute $L^A$ for a general commutative ring $A$. The ring $L^A$ has been computed for only a few classes of rings $A$, described in the next paragraph. In each case, {\em the ring $L^A$ has turned out to be a polynomial $A$-algebra}.
One reason this matters is that certain qualitative features of formal $A$-modules follow as a consequence of the calculation of $L^A$.
As an example, in \cite{MR0384707} Drinfeld obtained the following results, under the assumption that $A$ is the ring of integers in a nonarchimedean local field:
\begin{description}
\item[Extension] Every formal $A$-module $n$-bud extends to a formal $A$-module.
(A {\em formal module $n$-bud} is what one gets by reducing the entire definition of a formal module modulo $(X,Y)^{n+1}$, so that $F(X,Y)$ is a power series in $R[[X,Y]]/(X,Y)^{n+1}$, and it is only required to satisfy the axioms for a formal module modulo $(X,Y)^{n+1}$.)
\item[Lifting] 
If $R$ is a commutative $A$-algebra and $I$ is an ideal of $R$, then every formal $A$-module over $R/I$ is the modulo-$I$ reduction
of a formal $A$-module over $R$.
\end{description}
These two qualitative features of formal $A$-modules follow immediately from $L^A$ being a polynomial $A$-algebra, and that is {\em how these properties are proven}: one calculates $L^A$, and the extension and lifting properties follow as a consequence.

Consequently, it is of some value to have calculations of the ring $L^A$ for various rings $A$, particularly Dedekind domains, since formal $A$-modules of interest in number theory often have $A$ the ring of integers in a (local or global) number field.
There are three cases of rings $A$ for which $L^A$ is already calculated:
\begin{enumerate}
\item In \cite{MR0393050}, M. Lazard proved that 
\begin{equation}\label{lazard iso} L^{A} \cong A[x_1, x_2, \dots],\end{equation}
a polynomial algebra on countably infinitely many generators, when $A = \mathbb{Z}$. As a consequence, the ring $L^{\mathbb{Z}}$ is often called the Lazard ring. 
\item In \cite{MR0384707}, V. Drinfeld proved that there is also an isomorphism of the form \eqref{lazard iso} when $A$ is the ring of integers in a local nonarchimedean field (e.g. the ring of integers in a finite extension of $\mathbb{Q}_p$).
\item 
Finally, M. Hazewinkel proved in \cite{MR506881} that there is an isomorphism of the form \eqref{lazard iso} when $A$ is a discrete valuation ring or a global number ring of class number one.
\end{enumerate}

One cannot expect an isomorphism of the form \eqref{lazard iso} for an arbitrary (global) number ring $A$. As far as the author knows, this was first observed by Hazewinkel, who showed that, in the case when $A$ is the ring of integers in the field $\mathbb{Q}(\sqrt[4]{-18})$, the sub-$A$-module of $L^A$ consisting of elements of degree $2$ is not a free $A$-module, and consequently $L^A$ cannot be a polynomial $A$-algebra. This observation appears in 21.3.3A of \cite{MR506881}, but no attempt is made there to compute the ring $L^A$.

In the present paper we compute the ring $L^A$ for certain classes of commutative ring $A$. 
%In many cases, $L^A$ fails to be polynomial, and consequently the ``lifting'' property described above {\em fails.} (The extension property still holds in all the cases handled in this paper, but see Remark \ref{nonsmooth remark} for a comment on situations in which the extension property fails.) 
These are the first known full computations of $L^A$ in which $L^A$ fails to be polynomial. Specifically, the computations we make are as follows:
\begin{itemize}[leftmargin=*]
\item In Theorem \ref{main thm in hereditary case},
we prove the following: let $A$ be a torsion-free commutative ring, and let $S$ be a set of prime numbers such that the ring $A[S^{-1}]$ is hereditary.
(If, for example, $A$ is already hereditary,
then we can let $S$ be the empty set.)

Then the commutative graded ring $L^A$
is, after inverting $S$, isomorphic to a tensor product %\footnote{Despite being a tensor product of infinitely many commutative algebras, it is nevertheless well-behaved, since it is graded in such a way that, in each degree, all but finitely many of the tensor factors in the tensor product are trivial. See Conventions \ref{nu convention} for remarks about the role of gradings in this paper.} 
of (graded) Rees algebras\footnote{The {\em Rees algebra} of an ideal $I$ in a commutative ring $A$, written $\Rees_A(I)$, is the commutative $A$-algebra $\coprod_{n\geq 0} I^n\{ t^n\} \subseteq A[t]$. The notation $\Rees^n_A(I)$ denotes $\Rees_A(I)$ as a graded $A$-algebra, graded so that $I^m$ is in degree $mn$. 

Similarly, here and throughout this paper, $\Symm_A^n(I)$ does {\em not} denote the $n$th symmetric power of $I$. Rather, it denotes the symmetric $A$-algebra of $A$, equipped with the grading in which the $m$th symmetric power of $I$ is concentrated in degree $mn$. See Definition \ref{def of susp symm and susp rees}.}, and also isomorphic to a tensor product of graded symmetric algebras:
\begin{align*} L^A[S^{-1}] &\cong \left( \Rees^2_{A}(I^A_2) \otimes_{A} \Rees_{A}^4(I^A_3) \otimes_{A} \Rees_{A}^6(I^A_4) \right. \\ &\ \ \ \ \ \ \ \ \ \left.\otimes_{A}\Rees_{A}^8(I^A_5) \otimes_{A}\Rees_{A}^{10}(I^A_6) \otimes_{A} \dots\right) [S^{-1}] \\
&\cong \left( \Symm^2_{A}(I^A_2) \otimes_{A} \Symm_{A}^4(I^A_3) \otimes_{A} \Symm_{A}^6(I^A_4) \right. \\ &\ \ \ \ \ \ \ \ \ \left.\otimes_{A}\Symm_{A}^8(I^A_5) \otimes_{A}\Symm_{A}^{10}(I^A_6) \otimes_{A} \dots\right) [S^{-1}] 
\end{align*}
where $I^A_n$ is the ideal in $A$ generated by $\nu(n)$ and by all elements of the form $a^n-a$, and where $\nu(n)$ is defined as follows:
\begin{itemize}
\item $\nu(n)$ is defined only for integers $n>1$,
\item $\nu(n)=1$ if $n$ is not a prime power,
\item and, if $n$ is a power of a prime number $p$, then $\nu(n)$ is defined to be $p$.
\end{itemize}

In particular, the ring $L^A[S^{-1}]$ is a polynomial $A$-algebra if and only if each of the ideals $I^A_n$ is nonprincipal in $A[S^{-1}]$.
\item In Theorem \ref{number ring iso 0}, we prove the following:
let $A$ be the ring of integers in a finite extension $K/\mathbb{Q}$,
let $1, \alpha_1, \dots , \alpha_j$ be a $\mathbb{Z}$-linear basis for $A$,
and let $J^A_n$ be the ideal $(\nu(n), \alpha_1^n-\alpha_1, \alpha_2^n - \alpha_2, \dots ,\alpha_j^n - \alpha_j)$ of $A$.
Let $P$ denote the set of integers $>1$ which are prime powers, and let $R$ denote the set of integers $>1$ which are not prime powers.
Then we have isomorphisms of commutative graded $A$-algebras:
\begin{align*}
 L^A
  &\cong \left(\bigotimes^{n\in P} \Rees_A^{2n-2}(J^A_n)\right)
 \otimes_A A[x_{n-1}: n\in R] \\
  &\cong \left(\bigotimes^{n\in P} \Symm_A^{2n-2}(J^A_n)\right)
 \otimes_A A[x_{n-1}: n\in R] ,
\end{align*}
with $x_{n-1}$ in degree $2(n-1)$, and with all tensor products taken over the ring $A$.
To be clear, the notation $A[x_{n-1}: n\in R]$ denotes the polynomial $A$-algebra on the set of indeterminates $\{ x_{n-1}: n\in R\}$. %The notation $\bigotimes_A^{n\in P} \Rees_A^{2n-2}(J^A_n)$ denotes the tensor product, over $A$, of the graded Rees $A$-algebras $\Rees_A^{2n-2}(J^A_n)$ for each $n\in P$, and similarly for the notation $\bigotimes_A^{n\in P} \Symm_A^{2n-2}(J^A_n)$.
\item For quadratic number rings, we prove Theorem \ref{quadratic case}:
let $K$ be a quadratic extension of the rational numbers,
and let $A = \mathbb{Z}[\alpha]$ be the ring of integers of $K$.
Let $\Delta$ denote the discriminant of $K/\mathbb{Q}$.
%For each prime number $p$ which divides $\Delta$, let $\mathfrak{m}_p$ be the unique maximal ideal of $A$ over $p$.
Let $R$ be the set of prime numbers $p$ which divide $\Delta$
and which have the property that $J^A_{p^m} = (p, \alpha^{p^m}-\alpha)$ is nonprincipal for some positive integer $m$.
Let $S$ be the set of integers $>1$ which are
not powers of primes contained in $R$.
Then we have an isomorphism of commutative graded $A$-algebras:
\begin{align*} L^A &\cong A[ x_{n-1}: n\in S] \otimes_A \\
 &\ \ \ \ \ \ \ \ \ \bigotimes^{p\in R} \left( \Rees_A^{2p-2}(J^A_{p}) \otimes_A \Rees_A^{2p^2-2}(J^A_{p^2}) \otimes_A \Rees_A^{2p^3-2}(J^A_{p^3}) \otimes_A \dots \right) \end{align*}
with each polynomial generator $x_{n-1}$ in degree $2(n-1)$, and with all the tensor products taken over $A$.

Consequently, 
we have an isomorphism of commutative graded $A[R^{-1}]$-algebras
%\begin{align*}
$ L^A[R^{-1}]
  %&
\cong A[R^{-1}][x_1, x_2, \dots ],$
%\end{align*}
with each $x_i$ in degree $2i$. 
\item 
As an example computation, in Theorem \ref{fourth root of -18} 
we fully work out the ring $L^A$ in the case where $A$ is the ring of integers in the number field $\mathbb{Q}(\sqrt[4]{-18})$. This was Hazewinkel's original example of a number ring $A$ in which $L^A$ could not possibly be a polynomial ring (but Hazewinkel's computation stopped at grading degree $2$). The full result is:
let $S$ denote the set of all integers $>1$
which are not powers of $2$ or of $3$.
Then we have an isomorphism of commutative graded $A$-algebras
\begin{align*}
 L^A 
   & \cong A[ x_{n-1}: n\in S] \otimes_A 
    A[x_1,y_1]/(2x_1-(\alpha^2-\alpha)y_1) \\ 
   & \ \ \ \ \ \otimes_A 
    \bigotimes^{m\geq 2} \left( A[x_{2^m-1},y_{2^m-1}]/(2x_{2^m-1}-\alpha y_{2^m-1})\right) \\
   & \ \ \ \ \ \otimes_A
    \bigotimes^{m\geq 1} \left( A[x_{3^m-1},y_{3^m-1}]/(3x_{3^m-1}-\alpha y_{3^m-1})\right),
\end{align*}
where $\alpha = \sqrt[4]{-18}\in A$, 
where the polynomial generators $x_{i}$ and $y_i$ are each 
in degree $2i$, and where all the tensor products are taken over $A$.

Consequently, the ring $L^A$ is not isomorphic to a polynomial algebra, but we have an isomorphism of commutative graded $A[\frac{1}{6}]$-algebras
%\begin{align*}
 $L^A\left[\frac{1}{6}\right]
  \cong A\left[\frac{1}{6}\right][x_1, x_2, \dots ],$
%\end{align*}
with each $x_i$ in degree $2i$.
\item 
Let $C_m$ be the cyclic group of order $m$, 
let $P$ be the set of integers $>1$ which are
prime powers relatively prime to $m$,
and let $S$ be the set of integers $>1$ not contained in $P$. 
Write $R$ for the group $\mathbb{Z}\left[\frac{1}{m}\right]$-algebra $\mathbb{Z}\left[\frac{1}{m}\right][C_m]$ of $C_m$.
Theorem \ref{group ring computation} establishes an isomorphism of graded rings
\begin{align*} L^{\mathbb{Z}[C_m]}\left[\frac{1}{m}\right] &\cong 
 \bigotimes^{n\in P} \left( R[x_{n-1},y_{n-1}]/(\nu(n)x_{n-1} - (1 - \sigma)y_{n-1})\right) \\    & \ \ \ \ \ \otimes_{R}
 R[x_{n-1}: n\in S],\end{align*}
where $\sigma$ denotes a generator of $C_m$, where  
the polynomial generators $x_{n-1}$ and $y_{n-1}$ are each 
in degree $2(n-1)$, and where the tensor products are all taken over $R$.
\item In each of the above cases, $L^A$ is a tensor product of Rees rings, so even when $L^A$ fails to be a polynomial algebra, {\em $L^A$ is still a subalgebra of a polynomial algebra.} Furthermore, in each of the above cases, $L^A$ is also a symmetric algebra on a projective module. As a consequence, we get Corollary \ref{structure cor in hereditary case}, a generalization of Drinfeld's lifting and extension theorems: for all of the hereditary rings $A$ described above, every formal $A$-module $n$-bud over a commutative $A$-algebra extends to a formal $A$-module. Furthermore, if $R$ is a commutative $A$-algebra and $I$ is an ideal of $R$, then every formal $A$-module over $R/I$ is the modulo-$I$ reduction
of a formal $A$-module over $R$. 
\end{itemize}
%The description of $L^{\mathbb{Z}[C_n]}\left[\frac{1}{n}\right]$ in Theorem \ref{group ring computation} ought to be compared to the classifying ring $L^{C_n}$ of ``$C_n$-equivariant formal groups'' as in \cite{MR1856028} and \cite{MR1770613}, whose relationship to formal $\mathbb{Z}[C_n]$-modules is presently not known. % This ``should'' have some bearing on Greenlees' conjecture; see Remark \ref{group rings remark} for more about this.

It is well-known that the classifying ring $L^AB$ of strict isomorphisms\footnote{Here is a quick review of the relevant ideas; for a fuller exposition, see \cite{MR506881}. Given formal $A$-modules $F,G$ over some $A$-algebra $R$, an isomorphism $f(X)\in R[[X]]$ of formal $A$-modules is said to be {\em strict} if $f(X) \equiv X \mod X^2$. There is an evident functor from commutative $A$-algebras to sets, given by sending each commutative $A$-algebra $R$ to the set of triples $(F,G,f)$, where $F,G$ are formal $A$-modules over $R$, and where $f: F \rightarrow G$ is a strict isomorphism. This functor is co-representable, and the traditional notation for its co-representing $A$-algebra is $L^AB$. 

While it is generally a subtle matter to work out the structure of the classifying ring $L^A$---this is the subject of the present paper!---it is much more straightforward, given $L^A$, to describe the structure of $L^AB$, since it is known that $L^AB$ is isomorphic to a polynomial algebra $L^A[b_1, b_2, \dots]$ over $L^A$, for {\em any} $A$.} of formal $A$-modules is always a polynomial algebra over $L^A$, and the Hopf algebroid $(L^A,L^AB)$ is isomorphic to $(L^A,L^A[b_1, b_2, \dots ])$. The stack associated to the groupoid scheme $(\Spec L^A, \Spec L^AB)$ is the moduli stack of formal $A$-modules, so the reader who is so inclined can regard the computations in this paper as computations of presentations for this moduli stack.

Producing these computations of $L^A$ for various rings $A$ requires some preliminary work. 
In \cref{u-homology section} we define a certain homology theory on rings, ``$U$-homology.'' Proposition \ref{u-homology detects injectivity of fundamental functional} shows that, in homological degrees $0$ and $1$, $U$-homology is the obstruction to $L^A$ being a polynomial algebra. These and other general properties of $U$-homology are worked out in \cref{u-homology general properties subsection}.

In \cref{calculation of U-homology} we use a theorem of Pirashvili, and a comparison to Hochschild homology with twisted coefficients, to show that $U$-homology is, in fact, {\em acyclic}: it vanishes in all positive degrees! 

In \cref{global consequences subsection}, we derive the various consequences of these results on $U$-homology. Most importantly, Corollary \ref{gen thm cor} establishes that, for a torsion-free ring $A$ such that the ideal $I_n^A = (\nu(n),a^n-a\ \forall a\in A)\subseteq A$ is a projective $A$-module for all integers $n>1$, $L^A$ is a tensor product of suspended Rees algebras.
We then prove Theorem \ref{main thm in hereditary case}, which similarly identifies $L^A[S^{-1}]$ for any torsion-free commutative ring $A$ and any multiplicatively-closed subset $S\subseteq A$ such that $A[S^{-1}]$ is hereditary. All of the above calculations of $L^A$ are enabled by Theorem \ref{main thm in hereditary case}.
%By Corollary \ref{structure cor in hereditary case}, every formal $A$-module over a commutative $A[S^{-1}]$-algebra extends to a formal $A$-module, and if $I$ is an ideal in a commutative $A[S^{-1}]$-algebra $R$,  every formal $A$-module over $R/I$ is the reduction modulo $I$ of some formal $A$-module over $R$.

%The results in this paper are related to those in the preprint \cite{cmah1}. In that preprint, we compute the classifying ring $L^A$ of formal $A$-modules, modulo torsion, for all characteristic zero Dedekind domains $A$. In the present paper, we get stronger results, computing $L^A$ without reducing modulo torsion, but in a different level of generality, requiring $A$ to be finitely generated as an abelian group so that the rigidity theorem, Theorem \ref{rigidity of u-homology}, applies. Consequently, the results of the present paper do not render those of \cite{cmah1} obsolete, and the results of \cite{cmah1} do not render those of the present paper obsolete.

The author originally posted an earlier version of this paper as a preprint with the title ``Computation of the classifying ring of formal groups with complex multiplication.'' The change from that title to the current one was because ``formal modules'' is a shorter and less old-fashioned way to say ``formal groups with complex multiplication.'' To avoid any possible confusion, we point out that a different preprint of the author's, ``Structure and cohomology of moduli of formal modules'' \cite{cmah1}, in an earlier version had the similar-sounding title ``The structure of the classifying ring of formal groups with complex multiplication,'' and some material that appeared in early versions of \cite{cmah1} has been moved into this paper (and removed from \cite{cmah1}). Despite the similarity in titles, the present paper does very different things from \cite{cmah1}: the present paper is concerned with the calculation of the classifying ring $L^A$ of formal $A$-modules for general commutative rings $A$, particularly hereditary rings $A$, while \cite{cmah1} proves some qualitative results about $L^A$ (e.g. behavior of $L^A$ with respect to completion of $A$) but is mostly concerned with explicit calculations in the cohomology of the moduli stack of formal $A$-modules, i.e., $\Ext$-groups in the category of comodules over the Hopf algebroid $(L^A,L^AB)$. 

The author is grateful to Doug Ravenel for teaching him a great deal about formal modules and homotopy theory when the author was a graduate student. The author also found the Sage and Magma computer algebra systems, \cite{sagemath66} and \cite{MR1484478}, quite helpful while preparing the number-theoretic results in \cref{number rings subsection}. Finally, the author is most grateful to the editor who handled this paper, Eric Friedlander, and to the anonymous referee for their tremendous persistence and patience with an author who took a bafflingly long time to make revisions.

\begin{conventions}\label{nu convention} Here are a few conventions which are in force throughout this paper.
\begin{itemize}
\item Given a ring $A$, we write $A\{ x\}$ for the free $A$-module on a generator $x$.
\item All graded rings considered in this paper are $\mathbb{N}$-graded, and concentrated in even degrees. Consequently, there is no question about whether the Koszul sign convention is in effect: all our commutative graded rings are genuinely commutative, i.e., commutative as ungraded rings.
\item For every commutative ring $A$, we equip the classifying ring $L^A$ of formal $A$-modules with the grading in which the homogeneous elements of degree\linebreak $2d-2$ are those elements which parameterize the total degree $d$ terms in a formal $A$-module, regarded as a power series $F(X,Y)\in A[[X,Y]]$. This grading is chosen for compatibility with the appearance of formal groups in algebraic topology. Specifically, with our chosen grading, Quillen's isomorphism \cite{MR0253350} of $L^{\mathbb{Z}}$ with the coefficient ring $MU_*$ of complex bordism is an isomorphism of graded rings.
\item Occasionally (e.g. in Theorem \ref{quadratic case} and in Theorem \ref{fourth root of -18}) we refer to a tensor product of {\em infinitely many} commutative algebras. In every case, these are tensor products of commutative {\em graded} algebras, trivial in negative degrees, and in which only finitely many tensor factors have nonzero homogeneous elements in any given degree. The upshot is that there are no special subtleties or surprises in these infinitary tensor products: these infinitary tensor products behave just as ordinary finite tensor products behave.
\item
If $n$ is an integer, $n>1$, then we let $\nu(n)$ stand for the integer $1$ if $n$ is not a prime power, and we let $\nu(n)=p$ if $n$ is a power of a prime number $p$. The function $\nu$ is defined only on the set of integers greater than $1$; in particular, $\nu(1)$ is not defined. (This definition, and the way we use it in the context of formal modules, is not new; for example, see \cite{MR0384707} and \cite{MR506881}). 
\end{itemize}
\end{conventions}

\subsection{Review of some known facts}

Proposition \ref{drinfeld presentation} appears as Proposition 1.1 in \cite{MR0384707}.
\begin{prop}\label{drinfeld presentation} {\bf (Drinfeld.)} Let $A$ be a commutative ring, and let $n$ be an integer. Let $\nu(n)$ be defined as in Convention \ref{nu convention}. 
Let $D^A$ denote the homogeneous ideal in $L^A$ generated by all products of elements $xy$ with $x,y\in L^A$ each homogeneous of positive degree.
Let $\overline{L}^A$ denote the quotient ring $L^A/D^A$.  
The ring $\overline{L}^A$ is graded, so we may consider its degree $n$ summand $\overline{L}^A_n$ for various integers $n$.
If $n\geq 2$, then $\overline{L}^A_{2n-2}$ is isomorphic
to the $A$-module generated by symbols $d$
and $\{ c_a: a\in A\}$, that is, one generator $c_a$ for each element
$a$ of $A$ along with one additional generator $d$,
modulo the relations:
\begin{align}
\label{hazewinkel relation 20} d(a^n-a) &= \nu(n)c_a \mbox{\ \ for\ all\ } a\in A \\
\label{hazewinkel relation 21} c_{a+b}-c_a-c_b &= d\frac{(a+b)^n - a^n - b^n}{\nu(n)} \mbox{\ \ for\ all\ } a,b\in A \\
\label{hazewinkel relation 22} ac_b + b^nc_a &= c_{ab} \mbox{\ \ for\ all\ } a,b\in A .\end{align}
\end{prop} 
We will refer to the presentation described in Proposition \ref{drinfeld presentation} as {\em Drinfeld's presentation for $\overline{L}^A_{2n-2}$.} 

%In Proposition \ref{drinfeld presentation}, and throughout this paper, we follow the {\em topological} convention for the grading on $L^A$, which is twice the {\em algebraic} grading convention. The topological convention comes from the isomorphism of the Lazard ring $L^{\mathbb{Z}}$ with the homotopy groups $\pi_*(MU)$ of the complex cobordism spectrum (see \cite{MR860042} for a textbook account); under this isomorphism, the generator of $L^{\mathbb{Z}}$ which classifies an extension of a formal group $n$-bud to a formal group $(n+1)$-bud is in degree $2n$, rather than degree $n$. 
Our gradings are twice those found in \cite{MR0384707}, for the reason explained in Conventions \ref{nu convention}.

One fairly easy application of Proposition \ref{drinfeld presentation} is Proposition \ref{lazard ring for algebras over rationals}, which is proven in \cite{MR0384707}.
\begin{prop}\label{lazard ring for algebras over rationals}
Let $A$ be a commutative $\mathbb{Q}$-algebra. %Suppose that $A$ is additively torsion-free, i.e., if $m\in \mathbb{Z}$ and $a\in A$ and $ma = 0$, then either $m=0$ or $a=0$. 
Then the classifying ring $L^A$ of formal 
$A$-modules is isomorphic, as a graded $A$-algebra, to
$A[x_1, x_2, \dots ]$. For each $n\geq 1$, the polynomial generator $x_n$ is in degree $2n$, and it corresponds to the generator $d$ of 
$A \cong \overline{L}^A_{2n}$ in the Drinfeld presentation
for $\overline{L}^A_{2n}$.
\end{prop}

%There are two facts we will use which were proven in \cite{cmah1}, Theorem \ref{lazard ring localization iso} and Definition-Proposition \ref{def of fundamental functional}
Theorem \ref{lazard ring localization iso} appears in Hazewinkel's excellent book \cite{MR506881} (see the proof of Theorem 21.3.5 there). A more fully explained proof also appears in the preprint \cite{cmah1}.
\begin{theorem}\label{lazard ring localization iso}
Let $A$ be a commutative ring and let $S$ be a multiplicatively closed subset of $A$. 
Then the homomorphism of graded rings $L^A[S^{-1}] \rightarrow L^{A[S^{-1}]}$ is an isomorphism. 
%Even better, the homomorphism of graded Hopf algebroids
%\begin{equation}\label{map of hopf algebroids 22} ( L^A[S^{-1}] , L^AB[S^{-1}]) \rightarrow (L^{A[S^{-1}]}, L^{A[S^{-1}]}B)\end{equation}
%is an isomorphism of Hopf algebroids.
\end{theorem}

Definition-Proposition \ref{def of fundamental functional} first appeared, with proof, in a version of the preprint \cite{cmah1}. For the sake of the self-containedness of the present paper, we have elected to remove the proof of Definition-Proposition \ref{def of fundamental functional} from \cite{cmah1}, and to include it in the present paper instead. 
\begin{definition-proposition}\label{def of fundamental functional}
Let $n$ be a positive integer, let $\nu(n)$ be as defined in Conventions \ref{nu convention}, and let $A$ be a commutative ring which is $\nu(n)$-torsion-free. Recall from Proposition \ref{drinfeld presentation} 
that $\overline{L}^A_{2n-2}$ is %described by Drinfeld's presentation: it is 
generated, as an $A$-module, by elements $d$ and $\{ c_a\}_{a\in A}$, subject to the relations \eqref{hazewinkel relation 20}, \eqref{hazewinkel relation 21}, and \eqref{hazewinkel relation 22}.

Let $M^A_{2n-2}$ denote the $A$-module generated by elements $d$ and $\{ c_a\}_{a\in A}$, subject only to the relations \eqref{hazewinkel relation 20}.
Let $q_{2n-2}: M^A_{2n-2} \rightarrow \overline{L}^A_{2n-2}$ denote the $A$-module quotient map which enforces the additional relations \eqref{hazewinkel relation 21} and \eqref{hazewinkel relation 22}.

By the {\em degree $2n-2$ fundamental functional of $A$}, we mean the $A$-module homomorphism
\begin{align*} \sigma_{2n-2}: \overline{L}^A_{2n-2} &\rightarrow A\end{align*}
given by 
\begin{align*} 
 \sigma_{2n-2}(d) &= \nu(n),\\
 \sigma_{2n-2}(c_a) &= a^n-a.
\end{align*}
%where $\nu(n) = p$ if $n$ is a power of a prime number $p$, and $\nu(n) = 1$ if $n$ is not a prime power.

If $n>1$, then the kernel of the composite map $\sigma_{2n-2}\circ q_{2n-2}: M^A_{2n-2}\rightarrow A$
is exactly the set of $\nu(n)$-torsion elements of $M^A_{2n-2}$. Furthermore, the kernels of $\sigma_{2n-2}$ and of $q_{2n-2}$ are 
annihilated by multiplication by $\nu(n)$. 
If $n$ is not a prime power, then $\sigma_{2n-2}$ and $q_{2n-2}$ are isomorphisms of $A$-modules.
\end{definition-proposition}
\begin{proof}
By the definition of $M^A_{2n-2}$, the top row in the commutative diagram of $A$-modules
\begin{equation*}\label{diagram 348} \xymatrix{ 
 \coprod_{a\in A} A\{ r_a\} \ar[r]^(.4){\delta_0} 
  & A\{ d\} \oplus \coprod_{a\in A} A\{ c_a\} \ar[r]^(.7){\delta_{-1}} \ar[rd]_{\tilde{\sigma}_{2n-2}}
  & M^A_{2n-2} \ar[r]\ar[d]^{\sigma_{2n-2}\circ q_{2n-2}}
  & 0 \\
 & & A & }\end{equation*}
is exact, where $\tilde{\sigma}_{2n-2}$ is the map given by
$\tilde{\sigma}_{2n-2}(d) = \nu(n)$ and $\tilde{\sigma}_{2n-2}(c_a) = a^n-a$, where
$\delta_0$ is the map given by 
$\delta_0(r_a) = (a^n-a)d-\nu(n)c_a$, and where $\delta_{-1}$ sends $d$ to $d$ and $c_a$ to $c_a$.
Suppose that $x = \beta d + \sum_{a\in A} \alpha_a c_a$ is in the kernel of $\tilde{\sigma}_{2n-2}$, where
$\beta\in A$ and $\alpha_a\in A$ for all $a\in A$.
Then:
\begin{align*}
 0 
  &= \tilde{\sigma}_{2n-2}(x) \\
  &= \beta\nu(n) + \sum_{a\in A} \alpha_a (a^n - a),
\end{align*}
so $\beta\nu(n) = -\sum_{a\in A} \alpha_a (a^n - a)$.
%Let $\gamma_a = -\alpha_a$ for all $a\in A$.
Continuing, we have
\begin{align*}
 \delta_0\left(\sum_{a\in A} -\alpha_a r_a\right) 
  &= \left( \sum_{a\in A} -\alpha_a (a^n - a)\right) d + \sum_{a\in A} \alpha_a\nu(n) c_a \\
  &= \beta \nu(n) d + \sum_{a\in A} \alpha_a\nu(n) c_a \\
  &= \nu(n) x,\end{align*}
so $\nu(n)x\in \im \delta_0$.
Hence, every element in the kernel of $\sigma_{2n-2}\circ q_{2n-2}$ is killed by $\nu(n)$.

\begin{comment}(FINE, BUT NOW A SIMPLER PROOF)
Conversely, if $x = \beta d + \sum_{a\in A} \alpha_a c_a$
has the property that $\nu(n) x \in \im \delta_0$,
then there exists some $\gamma_a\in A$ for each $a\in A$
such that
\begin{align*}
 \delta_0\left( \sum_{a\in A} \gamma_a r_a\right) 
  &= \left( \sum_{a\in A} \gamma_a (a-a^n)\right) d - \sum_{a\in A} \gamma_a\nu(n) c_a \\
  &= \nu(n) \beta d + \sum_{a\in A} \nu(n)\alpha_a c_a,\end{align*}
i.e., $\gamma_a\nu(n) = -\alpha_a\nu(n)$ for each $a\in A$.
Consequently, $\gamma_a = -\alpha_a$ for each $a\in A$, since $A$ is assumed to be $\nu(n)$-torsion-free. 
We then have
\begin{align*} 
 \beta \nu(n) 
  &= \sum_{a\in A} \gamma_a (a-a^n)
  = \sum_{a\in A} -\alpha_a (a-a^n),
\end{align*}
and consequently
\begin{align*} 
 \tilde{\sigma}_{2n-2}(x) &= \beta\nu(n) + \sum_{a\in A} \alpha_a (a - a^n)  = 0.\end{align*}
\end{comment}
Conversely, since the codomain of $\sigma_{2n-2}\circ q_{2n-2}$ is $A$, and since $A$ is $\nu(n)$-torsion-free, every element in $M^A_{2n-2}$ killed by $\nu(n)$ is also in the kernel of $\sigma_{2n-2}\circ q_{2n-2}$.

By its construction, $q_{2n-2}$ is a surjection, and so we have a short exact sequence
\[ 0 \rightarrow \ker q_{2n-2} \rightarrow \ker \sigma_{2n-2}\circ  q_{2n-2} \rightarrow \ker \sigma_{2n-2} \rightarrow 0.\]
We have just shown that every element in $\ker \sigma_{2n-2}\circ q_{2n-2}$ is killed by multiplication by $\nu(n)$. Since $\ker\sigma_{2n-2}$ and $\ker q_{2n-2}$ are subquotients of $\ker \sigma_{2n-2}\circ q_{2n-2}$, they too are killed by multiplication
by $\nu(n)$, as claimed. 

Now suppose that $n$ is not a prime power. We have just shown that the kernel of $\sigma_{2n-2}$ is killed by multiplication by $\nu(n)$, so 
$\sigma_{2n-2}$ is injective. Furthermore, $\nu(n) = 1$ implies that
$\sigma_{2n-2}(d) = 1\in A$, so $\sigma_{2n-2}$ is surjective. Hence, $\sigma_{2n-2}$ is an isomorphism.
One also checks easily that, since $\nu(n) = 1$, the relations \eqref{hazewinkel relation 21} and \eqref{hazewinkel relation 22}
can be derived from the relation \eqref{hazewinkel relation 20}, so $q_{2n-2}$ is also an isomorphism.
\end{proof}

Below, in Theorem \ref{fund func is inj}, we will show that the fundamental functional is injective whenever $A$ is $\nu(n)$-torsion-free. 
It is the injectivity of the fundamental functional which makes the computation of $L^A$ possible using the methods of this paper. Barring future developments, the entire apparatus of $U$-homology, which is defined and developed starting in the next section, is only a tool for showing that the fundamental functional is injective.

\section{$U$-homology}\label{u-homology section}

\subsection{$U$-homology as the obstruction to $L^A$ being a polynomial algebra}\label{u-homology general properties subsection}

In this subsection we introduce ``$U$-homology,'' an invariant of commutative rings.
In Proposition \ref{u-homology detects injectivity of fundamental functional} 
and in Proposition \ref{fundamental functional main properties ii} 
we demonstrate the main properties of $U$-homology:
\begin{itemize}
\item in dimension $1$, it is the obstruction to injectivity of the fundamental functional,
\item in dimension $0$, it is the obstruction to surjectivity of the fundamental functional,
\item and the vanishing of $U^A_0(n)$ and $U^A_1(n)$ for all $n$ is equivalent to $L^A$ being isomorphic to a polynomial algebra by a certain fundamental comparison map.
\end{itemize}

\begin{definition}\label{def of Fn(A)}
When $A$ is a commutative ring and $n>1$ an integer, let $F_n(A)$ denote the $A/\nu(n)$-module with one generator 
$c_a$ for each element $a\in A$, and subject to the relation $c_0 = 0$ and the relation
$c_{a+b} = c_a + c_b$ for each $a,b\in A$.
\end{definition}
To be clear, in Definition \ref{def of Fn(A)} and throughout this paper, $A/\nu(n)$ denotes the quotient of $A$ by the ideal generated by the integer $\nu(n)$.

%For example, $F_{p^i}(\mathbb{Z}) \cong \mathbb{Z}/p\mathbb{Z}$ for every prime power $p^i$. On the other hand, $F_{n}(A)$ is not always isomorphic to a quotient ring of $A$: for example, $F_{p^i}(\mathbb{Z}[x]/(x^2))$ is a $\mathbb{Z}/p\mathbb{Z}[x]/(x^2)$-module, but the element $c_x$ in $F_{p^i}(\mathbb{Z}[x]/(x^2))$ is not identified with the element $xc_1$, and so $F_{p^i}(\mathbb{Z}[x]/(x^2))$ is not isomorphic to $\mathbb{Z}/p\mathbb{Z}[x]/(x^2)$. 
%The $A/\nu(n)$-module $F_n(A)$ is isomorphic to the left $A/\nu(n)$-module $A/\nu(n)\otimes_{\mathbb{Z}} A$.

\begin{definition-proposition}\label{def of u-homology}
Suppose $A$ is a commutative ring and $n>1$ an integer which is a power of a prime number (which is necessarily $\nu(n)$).
Given an element $a\in A$, we will write $\overline{a}$ for the reduction of $a$ modulo $\nu(n)$.
Let
$\mathcal{U}^A(n)_{\bullet}$ denote the 
simplicial $A/\nu(n)$-module given as follows:
\begin{itemize}
\item $\mathcal{U}^A(n)_{0} = A/\nu(n)$.
\item $\mathcal{U}^A(n)_{m} = F_n(A)^{\otimes_{A/\nu(n)}\, m}$,
i.e., the $m$-fold tensor product, over $A/\nu(n)$, of $F_n(A)$ with itself. This definition holds for $m\geq 0$, and is consistent with the case $m=0$ given above.
\item The face map $d_0: \mathcal{U}^A(n)_1 = F_n(A) \rightarrow A/\nu(n) = \mathcal{U}^A(n)_0$ is given by letting $d_0(c_a) = \overline{a}$.
\item The face map $d_1: \mathcal{U}^A(n)_1 = F_n(A) \rightarrow A/\nu(n) = \mathcal{U}^A(n)_0$ is given by letting $d_1(c_a) = \overline{a}^n$.
\item If $m\geq 1$, the face map \begin{align*} d_0: \mathcal{U}^A(n)_{m+1} = F_n(A)^{\otimes_{A/\nu(n)}\, m+1} \rightarrow  F_n(A)^{\otimes_{A/\nu(n)}\, m} = \mathcal{U}^A(n)_{m}\end{align*} is given by $d_0(c_{a_1} \otimes \dots \otimes c_{a_{m+1}}) = \overline{a}_1(c_{a_2}\otimes \dots \otimes c_{a_{m+1}})$.
\item If $m\geq 1$ and $1\leq i\leq m$, the face map \begin{align*} d_{i}: \mathcal{U}^A(n)_{m+1} = F_n(A)^{\otimes_{A/\nu(n)}\, m+1} \rightarrow F_n(A)^{\otimes_{A/\nu(n)}\, m} = \mathcal{U}^A(n)_{m}\end{align*} is given by \begin{align*} %\label{eq 2093} 
d_{i}(c_{a_1} \otimes \dots \otimes c_{a_{m+1}}) &= c_{a_1} \otimes \dots \otimes c_{a_{i-1}}\otimes c_{a_ia_{i+1}}\otimes c_{a_{i+2}} \otimes \dots \otimes c_{a_{m+1}}.\end{align*}
\item If $m\geq 1$, the face map \begin{align*}%\label{eq 2094} 
d_{m+1}: \mathcal{U}^A(n)_{m+1} = F_n(A)^{\otimes_{A/\nu(n)}\, m+1} &\rightarrow F_n(A)^{\otimes_{A/\nu(n)}\, m} = \mathcal{U}^A(n)_{m}\end{align*} is given\footnote{The formulas for $d_{m+1}$ and for $d_1$ each involve an $n$th power of $\overline{a}$. Taking an $n$th power is, in general, not a group homomorphism. However, the simplicial module we are defining is a simplicial $A/\nu(n)$-module. When $n$ is not a prime power, $A/\nu(n)$ is the zero ring. Consequently,  it is only in the case that $n$ is a prime power that one needs to be sure that the face and degeneracy maps are indeed well-defined module homomorphisms. If $n$ is instead a power of some prime $p$, then $A/\nu(n)$ is a commutative ring of characteristic $p$, so the $n$th power map indeed commutes with addition.} by $d_{m+1}(c_{a_1}\otimes \dots \otimes c_{a_{m+1}}) = \overline{a}_{m+1}^n(c_{a_1} \otimes \dots \otimes c_{a_m})$. 
\item The degeneracy map $s_0: A/\nu(n) = \mathcal{U}^A(n)_0 \rightarrow \mathcal{U}^A(n)_1 = F_n(A)$ is given by $s_0(b) = bc_1$.
\item If $m\geq 1$ and $0\leq i\leq m$, the degeneracy map \begin{align*} s_{i}: \mathcal{U}^A(n)_{m} = F_n(A)^{\otimes_{A/\nu(n)}\, m} &\rightarrow  F_n(A)^{\otimes_{A/\nu(n)}\, m+1} = \mathcal{U}^A(n)_{m+1}\end{align*} is given by 
\begin{align}\label{eq 2096} s_{i}(c_{a_1}\otimes \dots \otimes c_{a_m}) = c_{a_1}\otimes \dots \otimes c_{a_i} \otimes c_1\otimes c_{a_{i+1}} \otimes \dots \otimes c_{a_m}.\end{align}
In the case $i=0$, the right-hand side of formula \eqref{eq 2096} is to be understood as $c_1\otimes c_{a_1} \otimes \dots \otimes c_{a_m}$.
\end{itemize}

Let $U^A(n)_{\bullet}$ denote the Moore/alternating sum chain complex of $\mathcal{U}^A(n)_{\bullet}$, i.e., the chain complex whose $i$-chain group is $\mathcal{U}^A(n)_i$ and whose boundary map $\mathcal{U}^A(n)_{i}\rightarrow \mathcal{U}^A(n)_{i-1}$ is $\sum_{j=0}^i (-1)^jd_j$.
For each $i\geq 0$, let $U^A_i(n)$ denote the $i$th homology group
$H_i(U^A(n)_{\bullet})$ of the chain complex of $A/\nu(n)$-modules $U^A(n)_{\bullet}$. We will call these
homology groups {\em $U$-homology.}

%When $A$ is additionally  equipped with a grading, then the $U$-homology groups inherit a grading from $A$. (This happens e.g. in Proposition \ref{u-homology may ss}.) We will write $U_{i,j}^A(n)$ for the $U$-homology group of $A$ in homological degree $i$ and internal grading degree (inherited from $A$) $j$.

If $n>1$ is an integer which is {\em not} a prime power, then we define $\mathcal{U}^A(n)_{\bullet}$, $U^A(n)_{\bullet}$, and the $U$-homology groups $U^A_i(n)$ to all be zero.
\end{definition-proposition}
\begin{proof}
One ought to show that $\mathcal{U}^A(n)_{\bullet}$ is actually a simplicial $A/\nu(n)$-module, i.e., that the simplicial identities are satisfied. 
%(It is easy to check that the face and degeneracy maps given in the definition of $\mathcal{U}^A(n)_{\bullet}$ are indeed well-defined, although it is worth noting that the well-definedness of these maps does rely on the fact that $n$ is a power of the same prime $\nu(n)$ that we have quotiented out in the definition of $F_n(A)$.) 
This is routine and left as an exercise for the interested reader, who has at least two reasonable approaches: 
\begin{itemize}
\item explicitly write out the simplicial identities and verify that they hold, or 
\item observe that $\mathcal{U}^A(n)_{\bullet}$ is levelwise isomorphic to the Hochschild bar construction of $A/\nu(n)$ with coefficients in a certain bimodule, whose definition is given below in Definition \ref{def of Phi bimodule}, and the levelwise isomorphisms commute with the face and degeneracy maps. Since the face and degeneracy maps in the Hochschild bar construction satisfy the simplicial identities, so must the face and degeneracy maps in $\mathcal{U}^A(n)_{\bullet}$. This argument appears more explicitly below, in the proof of Theorem \ref{u-homology is hh}.
\end{itemize}
\end{proof}
For example, in low degrees, the chain complex $U^A(n)_{\bullet}$ is
 \begin{equation}\label{u-homology cplx} \dots \rightarrow F_n(A)\otimes_{A/\nu(n)} F_n(A)
  \stackrel{\delta_1}{\longrightarrow} 
 F_n(A) \stackrel{\delta_0}{\longrightarrow} A/\nu(n) \rightarrow 0\end{equation}
with $\delta_0$ and $\delta_1$ defined by:
\begin{align*}
 \delta_0(c_{a}) &= \overline{a}-\overline{a}^n,\\
 \delta_1(c_a\otimes c_b) &= \overline{a}c_b - c_{ab} + \overline{b}^nc_a.
\end{align*}
In the rest of this paper, we do not need any of the $U$-homology groups except $U_0^A(n)$ and $U_1^A(n)$, so in fact the complex \eqref{u-homology cplx} suffices for our needs, and the rest of the simplicial module $\mathcal{U}^A(n)_{\bullet}$ is not really needed for anything that follows in this paper. %Although simplicial methods are not necessary, there are two reasons for defining $U$-homology as the Moore chain complex of a simplicial $A/\nu(n)$-module in Definition-Proposition \ref{def of u-homology}, rather than just defining the chain complex directly. First, the Moore chain complex of a simplicial module is a chain complex, and checking the simplicial identities is (at least to a certain mindset) more conceptually satisfying than checking the condition $\delta\circ\delta = 0$ to verify that a sequence of modules is indeed a chain complex. Second, simplicial constructions generalize to nonabelian settings (e.g. spectra) much more readily than definitions that directly involve an alternating sum. 
%We do not know whether the $U$-homology groups in dimensions $>1$ have any bearing on formal $A$-modules.

\begin{definition-proposition}\label{Pn def-prop}
Let $n>1$ be an integer.
Let $P_{2n-2}^A$ denote the cokernel of the $A$-module homomorphism
$A \rightarrow \overline{L}^A_{2n-2}$
sending $1$ to $d$ (see Proposition \ref{drinfeld presentation} for the
element $d$). Clearly $P_{2n-2}^A$ is functorial in the choice of 
commutative ring $A$.

For any commutative ring $A$, the natural map of $A$-modules
\[ P_{2n-2}^A \rightarrow P_{2n-2}^{A/\nu(n)}\]
is an isomorphism.
\end{definition-proposition}
\begin{proof}
After reducing modulo $d$, the Drinfeld relations \eqref{hazewinkel relation 20}, \eqref{hazewinkel relation 21}, and \eqref{hazewinkel relation 22} become
\begin{align}
\nonumber 0 &= \nu(n) c_a,\\
\nonumber c_{a+b} &= c_a + c_b, \\
\label{haz rel 22a} c_{ab} &= ac_b + b^nc_a.\end{align}
In particular, filling in $1$ for $a$ and $b$ in \eqref{haz rel 22a} gives us that $c_1 = 0$. Hence, $c_{\nu(n)} = \nu(n)c_1 = 0$, and
so $c_{\nu(n)a} = \nu(n)c_a + a^nc_{\nu(n)} = 0$.
Hence, $P_{2n-2}^A$ is isomorphic, as an $A$-module, to the $A/\nu(n)$-module
with one generator $c_a$ for each $a\in A/\nu(n)$, subject to the
modulo $d$ Drinfeld relations above; this is exactly the modulo $d$ 
Drinfeld presentation
for $\overline{L}_{2n-2}^{A/\nu(n)}$, i.e., a presentation for $P_{2n-2}^{A/\nu(n)}$.
\end{proof}

\begin{prop}\label{u-homology detects injectivity of fundamental functional}
Let $A$ be a commutative ring and let $n>1$ be a power of a prime.
Suppose that $A$ is $\nu(n)$-torsion-free. %, i.e., if $a\in A$ and $\nu(n)a= 0$ then $a = 0$.
Then $U^A_1(n)\cong 0$ if and only if the fundamental functional
$\sigma_{2n-2}: \overline{L}^A_{2n-2} \rightarrow A$
is injective.
Furthermore, $U^A_0(n)$ is isomorphic to the cokernel of
$\sigma_{2n-2}$.
\end{prop}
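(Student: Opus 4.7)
The plan is to identify $U^A_0(n)$ and $U^A_1(n)$ as the cokernel and kernel of a canonical quotient $\bar\sigma_n$ of the fundamental functional, and then to compare $\sigma_n$ with $\bar\sigma_n$ via the snake lemma, where the $\nu(n)$-torsion-freeness hypothesis will make the error terms vanish. One may assume $n$ is a prime power: if not, then $\nu(n) = 1$, $A/\nu(n) = 0$, and $\sigma_n$ is already an isomorphism by Definition-Proposition~\ref{def of fundamental functional}, so the statement is vacuous.

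By Definition-Proposition~\ref{Pn def-prop}, $P^A_{n-1}$ is presented as an $A/\nu(n)$-module by generators $c_a$ (for $a\in A$) subject to the additivity relations $c_{a+b} = c_a + c_b$, which cut out $F_n(A)$, together with the multiplicativity relations $c_{ab} - ac_b - b^n c_a = 0$, whose left-hand sides are precisely the negatives of $\delta_1(c_a \otimes c_b)$. Thus $P^A_{n-1}$ is canonically identified with the cokernel of $\delta_1 : F_n(A) \otimes_A F_n(A) \to F_n(A)$. Since $\sigma_n(d) = \nu(n)$, composing $\sigma_n$ with the reduction $A \twoheadrightarrow A/\nu(n)$ annihilates $d$ and descends to an $A$-module map $\bar\sigma_n : P^A_{n-1} \to A/\nu(n)$ sending $c_a \mapsto a - a^n \pmod{\nu(n)}$; reading this back in terms of the chain complex, $\bar\sigma_n$ is precisely the map on the cokernel of $\delta_1$ induced by $\delta_0$, and therefore $\ker(\bar\sigma_n) = U^A_1(n)$ while the cokernel of $\bar\sigma_n$ is $U^A_0(n)$.

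Now consider the commutative diagram with exact rows whose top row is $0 \to A \xrightarrow{\phi} L^A_{n-1}/D^A_{n-1} \to P^A_{n-1} \to 0$, where $\phi$ sends $1$ to $d$, and whose bottom row is $0 \to \nu(n)A \hookrightarrow A \twoheadrightarrow A/\nu(n) \to 0$, with vertical maps multiplication-by-$\nu(n)$, $\sigma_n$, and $\bar\sigma_n$. The composite $\sigma_n \circ \phi$ equals multiplication by $\nu(n)$ on $A$, which is injective under the $\nu(n)$-torsion-freeness hypothesis; this simultaneously forces $\phi$ to be injective (so the top row really is exact on the left) and makes the left vertical an isomorphism $A \to \nu(n)A$. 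Applying the snake lemma then yields the exact sequence
\[ 0 \to \ker(\sigma_n) \to U^A_1(n) \to 0 \to \mathrm{coker}(\sigma_n) \to U^A_0(n) \to 0, \]
so $\ker(\sigma_n) \cong U^A_1(n)$ and $\mathrm{coker}(\sigma_n) \cong U^A_0(n)$, which yields both parts of the proposition. The only nontrivial ingredient beyond diagram chasing is the identification $P^A_{n-1} \cong \mathrm{coker}(\delta_1)$ supplied by Definition-Proposition~\ref{Pn def-prop}; everything else is formal once $\bar\sigma_n$ has been constructed.
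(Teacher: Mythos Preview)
Your proof is correct and follows essentially the same approach as the paper: both identify $\coker\delta_1$ with $P^A_{n-1}$, set up a commutative ladder comparing $\sigma_n$ with the induced map $\bar\sigma_n=\tilde\delta_0$, and read off the conclusions by elementary homological algebra. The only organizational difference is that the paper invokes the four lemma for the injectivity equivalence and then gives a separate diagram for the cokernel, whereas your single application of the snake lemma handles both claims at once (and in fact yields the slightly sharper statement $\ker\sigma_n\cong U^A_1(n)$).
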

\begin{proof}
We will write $\delta_1,\delta_0$ for the differentials in the chain complex $U^A(n)_{\bullet}$, defined in Definition-Proposition \ref{def of u-homology} and more explicitly in \eqref{u-homology cplx}. 
Clearly (see the proof of Definition-Proposition \ref{Pn def-prop})
the cokernel of $\delta_1$ is $P^A_{2n-2}$, since $\coker \delta_1$ and $P^A_{2n-2}$ are $A/\nu(n)$-modules with the same set
of generators as one another, and the same set of relations as one another. We have a commutative diagram of $A$-modules with exact rows
\begin{equation}\label{comm diag 09593}\xymatrix{
  & A \ar[d]^{\id} \ar[r]^(.4){d} & \overline{L}^A_{2n-2}\ar[d]^{\sigma_{2n-2}}\ar[r] & P^A_{2n-2}\ar[r] \ar[d]^{\tilde{\delta}_0} & 0 \ar[d] \\
 0 \ar[r] & A \ar[r]^{\nu(n)} & A \ar[r] & A/\nu(n) \ar[r] & 0 ,}\end{equation}
where $\tilde{\delta}_0$ is the $A/\nu(n)$-module map sending each $c_a$ to $a^n-a$.
%Since $U^A_1(n) \cong \ker \delta_0/\im \delta_1$, 
We furthermore have the commutative diagram with exact rows and columns
\[\xymatrix{
 & 0\ar[d] & 0\ar[d] & \\
0 \ar[r] & \im\delta_1 \ar[r]^{\id}\ar[d] &  \im\delta_1 \ar[r]\ar[d] & 0 \ar[d]\\
0 \ar[r] & \ker \delta_0 \ar[r] \ar[d] & F_n(A) \ar[r]^{-\delta_0}\ar[d] & A/\nu(n) \ar[d]^{\id} \\
0 \ar[r] & U_1^A(n) \ar[r] \ar[d] & P^A_{2n-2} \ar[r]^{\tilde{\delta}_0}\ar[d] & A/\nu(n) \\
 & 0 & 0, & 
}\]
so vanishing of $U^A_1(n)$ is equivalent to injectivity of $\tilde{\delta}_0$.
The ``four lemma'' from homological algebra, applied to diagram \eqref{comm diag 09593}, then tells us that 
injectivity of $\tilde{\delta}_0$ is equivalent to injectivity of $\sigma_{2n-2}$.

For the claim about $U^A_0(n)$: 
we have the commutative diagram of $A$-modules with exact rows
\begin{equation}\label{comm diag f049094}\xymatrix{
 A\{d\} \oplus \coprod_{a\in A} A\{ c_a\}\ar[d]^{\pi^{\prime}} \ar[r]^(.65){-\tilde{\sigma}_{2n-2}} & A \ar[d]^{\pi} \ar[r] & \coker \sigma_{2n-2} \ar[r]\ar[d] & 0 \ar[d] \\
 %\coprod_{a\in A} A\{ c_a\}\otimes_{A/\nu(n)} 
 F_n(A) \ar[r]^(.6){\delta_0} & A/\nu(n) \ar[r] & U_0^A(n) \ar[r] & 0 
}\end{equation}
where:
\begin{itemize}
\item $\pi^{\prime}(d) = 0$, 
\item $\pi^{\prime}(\alpha c_a) = \overline{\alpha}c_a$ for all $\alpha\in A$ (using the notation of Definition-Proposition \ref{def of u-homology}), 
\item and $\pi$ is the modulo $\nu(n)$ reduction map. 
\end{itemize}
The map $\tilde{\sigma}_{2n-2}$ is the composite of the Drinfeld presentation $A\{d\} \oplus \coprod_{a\in A} A\{ c_a\} \twoheadrightarrow \overline{L}_{2n-2}^A$ with the fundamental functional $\sigma_{2n-2}: \overline{L}_{2n-2}^A\rightarrow A$. 
Hence, the bottom row in diagram \eqref{comm diag f049094} is the reduction, modulo $\nu(n)$ (and also modulo $d\in A\{d\}\oplus \coprod_{a\in A} A\{c_a\}$), of the top row. In particular,
 $U^A_0(n)$ is the reduction modulo $\nu(n)$
of $\coker \sigma_{2n-2}$. But $\nu(n)$ is already zero in $\coker \sigma_{2n-2}$
since $\sigma_{2n-2}(d) = \nu(n)$, and consequently $\coker \sigma_{2n-2} \cong U_0^A(n)$.
\end{proof}
%We do not know if there is any practical purpose for $U^A_j(n)$ for $j>1$. In the present paper we only ever have need of $U^A_j(n)$ for $j=0,1$. The construction of $U^A(n)_{\bullet}$ resembles the construction of the cyclic bar complex, whose homology computes the Hochschild homology of a ring, but since $A/\nu(n)$ is typically not a field, $U^A(n)_{\bullet}$ is typically not a special case of the usual cyclic bar complex that computes Hochschild homology. Instead the homology groups of $U^A(n)_{\bullet}$ are a kind of Shukla homology with twisted coefficients; see \cite{MR0132769}. In Theorem \ref{u-homology of fields is group homology} we see that when $A$ is a {\em field}, the $U$-homology of $A$ is indeed isomorphic to certain Hochschild homology groups.

This is an opportune time to introduce the symmetric algebras and the Rees algebras. Both are classical constructions. 
We will need graded versions as well, which are slightly less classical:
\begin{definition}\label{def of susp symm and susp rees}
Let $A$ be a commutative ring, $I$ an ideal of $A$.
\begin{itemize}
\item By the {\em Rees algebra of $I$}, written $\Rees_A(I)$, we mean the commutative $A$-algebra $\coprod_{n\geq 0} I^n\{ t^n\} \subseteq A[t]$.
\item Let $j$ be an integer. By the {\em $j$-suspended Rees algebra of $I$}, written $\Rees^j_A(I)$, we mean the commutative graded $A$-algebra whose underlying commutative $A$-algebra is $\Rees_A(I)$, and which is equipped with the grading in which the summand $I^n\{ t^n\}$ is in grading degree $jn$.
\end{itemize}
Now, more generally, let $A$ be a commutative ring and let $M$ be an $A$-module.
\begin{itemize}
\item By the {\em symmetric algebra of $M$}, written $\Symm_A(M)$, we mean the commutative $A$-algebra $\coprod_{n\geq 0} (M^{\otimes_A n})_{\Sigma_n}$,
where $(M^{\otimes_A n})_{\Sigma_n}$ is the module of coinvariants under the action of the symmetric group $\Sigma_n$ on $M^{\otimes_A n}$ given by 
permuting the tensor factors.
\item Let $j$ be an integer. By the {\em $j$-suspended symmetric algebra of $M$}, written $\Symm^j_A(M)$, we mean the commutative graded $A$-algebra whose underlying commutative $A$-algebra is $\Symm_A(M)$, and which is equipped with the grading in which the summand $(M^{\otimes_A n})_{\Sigma_n}$ is in degree $jn$.
\end{itemize}
\end{definition}
In this paper, $\Rees^j_A(I)$ only occurs in cases where $j$ is even, so $\Rees^j_A(I)$ is both graded-commutative and commutative. That is, as explained in Convention \ref{nu convention}, we do not have to deal with the Koszul sign convention.

\begin{definition}\label{def of fundamental comparison 1}
Let $A$ be a commutative ring.
We will say that $A$ {\em satisfies the fundamental comparison condition} if the $A$-module $\overline{L}^A_{2n-2}$ is projective for all integers $n>1$.
\end{definition}
If $A$ satisfies the fundamental comparison condition, then the projection $A$-module map $L^A_{2n-2}\rightarrow \overline{L}^A_{2n-2}$ splits for all
integers $n>1$. Choose such a splitting $A$-module map $i_{2n-2}: \overline{L}^A_{2n-2}\rightarrow L^A_{2n-2}$ for each integer $n>1$, and let
$i: \coprod_{n>1} \overline{L}^A_{2n-2}\rightarrow L^A$
be the $A$-module coproduct of the maps $i_{2n-2}$. Then the adjunction between $\Symm_A$ and the forgetful functor from commutative $A$-algebras
to $A$-modules yields
a choice of commutative graded $A$-algebra homomorphism $i^{\sharp}: \Symm_A\left( \coprod_{n>1} \overline{L}^A_{2n-2}\right) \rightarrow L^A$.
\begin{definition}\label{def of fundamental comparison 2} Let $A$ be a torsion-free commutative ring satisfying the fundamental comparison condition. 
By the {\em fundamental comparison triangle for $A$} we mean the diagram of commutative graded $A$-algebra homomorphisms
\begin{equation}\label{fct} \xymatrix{ &  \Symm_A\left( \coprod_{n > 1} \overline{L}^A_{2n-2} \right) \ar[rd]^{s}\ar[ld]_{i^{\sharp}}  & & \\
L^A & & \Symm_A\left( \coprod_{n > 1} A\right) \ar[r]^{\cong} & A[x_1, x_2, \dots ] }\end{equation}
where $s$ is $\Symm_A\left(\coprod_{n > 1}\sigma_{2n-2}\right)$, the symmetric algebra functor applied to the coproduct (in the category of graded $A$-modules) of the fundamental functionals $\sigma_{2n-2}$ for all $n$. The $A$-algebra $A[x_1, x_2, \dots ]$ is graded, with $x_i$ in degree $2i$.
\end{definition}
Note that we need $A$ to satisfy the fundamental comparison condition in order to define the fundamental comparison triangle.
\begin{definition}\label{def of fundamental comparison 3} Let $A$ be a torsion-free commutative ring satisfying the fundamental comparison condition. Choose a splitting $A$-module map for $L^A_{2n-2} \rightarrow \overline{L}^A_{2n-2}$ for each integer $n>1$. 
We will say that {\em $L^A$ is polynomial by the fundamental comparison (with respect to the given family of splitting maps)} if each $A$-algebra homomorphism
in the fundamental comparison triangle is an isomorphism.
\end{definition}
\begin{remark}\label{non-naturality of fct}
The fundamental comparison triangle is, at least {\em a priori}, not natural in the choice of $A$, since it involves making choices of the splitting 
maps $\{ i_{2n-2}\}$, and when one has a homomorphism from one split short exact sequence to another, there is an (often nontrivial) obstruction 
to the existence of a {\em compatible} splitting of the two short exact sequences. See \cite{compatiblesplittings}, for example.
\end{remark}

\begin{remark}\label{symm and monos and epis} Although $\Symm_A$ preserves epimorphisms, it typically does not preserve monomorphisms; see section 6.2 of chapter III of \cite{MR1727844}. \end{remark}

\begin{prop}\label{fundamental functional main properties ii}
Suppose that $A$ is a torsion-free commutative ring satisfying the fundamental comparison condition. 
Then the following are equivalent:
\begin{enumerate}
\item The groups $U^A_1(n)$ and $U^A_0(n)$ are both trivial for all integers $n>1$.
\item $L^A$ is polynomial by the fundamental comparison, with respect to every family of splitting maps.
\item $L^A$ is polynomial by the fundamental comparison, with respect to some family of splitting maps.
\end{enumerate}
\end{prop}
\begin{proof}
From Theorem \ref{lazard ring localization iso} we know that the natural commutative graded $\mathbb{Q}\otimes_{\mathbb{Z}}A$-algebra homomorphism
$\mathbb{Q}\otimes_{\mathbb{Z}} L^A \rightarrow L^{\mathbb{Q}\otimes_{\mathbb{Z}} A}$
is an isomorphism. 
It is automatic that $\mathbb{Q} \otimes_{\mathbb{Z}}A$ satisfies the fundamental comparison condition, since $\mathbb{Q} \otimes_{\mathbb{Z}}A$ is torsion-free and consequently Proposition \ref{lazard ring for algebras over rationals} ensures that the $\mathbb{Q}\otimes_{\mathbb{Z}} A$-module $\overline{L}^{\mathbb{Q}\otimes_{\mathbb{Z}} A}_{2n-2}$ is free, hence projective. 
After choosing a family of splitting maps, we can fit the fundamental comparison triangle for $A$ together with the fundamental comparison triangle for $\mathbb{Q}\otimes_{\mathbb{Z}} A$
to get the diagram
\begin{equation}\label{diag 09902} \xymatrix{
 &  \Symm_A\left( \coprod_{n> 1} \overline{L}^A_{2n-2} \right) \ar[rd]^{s}\ar[ld]_{c} \ar[dd]^{\ell^{\prime}} &  \\
L^A\ar[dd]^{\ell^{\prime\prime}} & & \Symm_A\left( \coprod_{n> 1} A\right)\ar[dd]^{\ell} \\
 &  \Symm_{\mathbb{Q}\otimes_{\mathbb{Z}}A}\left( \coprod_{n> 1} \overline{L}^{\mathbb{Q}\otimes_{\mathbb{Z}}A}_{2n-2} \right) \ar[rd]^{\overline{s}}\ar[ld]_{\overline{c}}  &  \\
L^{\mathbb{Q}\otimes_{\mathbb{Z}}A} & & \Symm_{\mathbb{Q}\otimes_{\mathbb{Z}}A}\left( \coprod_{n> 1} \mathbb{Q}\otimes_{\mathbb{Z}}A\right)
}\end{equation}
In light of Remark \ref{non-naturality of fct}, we ought to explain why we have a map of fundamental comparison triangles in this situation.
The reason is that, when we choose a splitting $i_{2n-2}$ of the projection $\pi_{2n-2}: L^A_{2n-2} \rightarrow \overline{L}^A_{2n-2}$, we can tensor that splitting map
over $A$ with $\mathbb{Q}\otimes_{\mathbb{Z}} A$ to get a splitting map $\mathbb{Q}\otimes_{\mathbb{Z}} L^A_{2n-2} \rightarrow \mathbb{Q}\otimes_{\mathbb{Z}} \overline{L}^A_{2n-2}$, and hence, using Theorem \ref{lazard ring localization iso}, a commutative diagram of $A$-modules
\[ \xymatrix{
 \overline{L}^A_{2n-2}\ar@/^2ex/[rrrr]^{\id}\ar[d]\ar[rr]_{i_{2n-2}} && L^A_{2n-2}\ar[d]\ar[rr]_{\pi_{2n-2}} && \overline{L}^A_{2n-2} \ar[d] \\
 \overline{L}^{\mathbb{Q}\otimes_{\mathbb{Z}}A}_{2n-2}\ar@/_2ex/[rrrr]_{\id}\ar[rr]^{i_{2n-2}} && L^{\mathbb{Q}\otimes_{\mathbb{Z}}A}_{2n-2}\ar[rr]^{\pi_{2n-2}} && \overline{L}^{\mathbb{Q}\otimes_{\mathbb{Z}}A}_{2n-2}, } \]
which is all we need in order to get the commutativity of the left-hand parallelogram in diagram \eqref{diag 09902}. 
\begin{description}
\item[1 implies 2]
Suppose that $U_0^A(n) \cong 0 \cong U_1^A(n)$ for all integers $n>1$.
Then $\sigma_{2n-2}$ is an isomorphism for all $n>1$ by Proposition \ref{u-homology detects injectivity of fundamental functional},
hence the homomorphism $s$ in diagram \eqref{fct} is an isomorphism. 
Since $A$ is torsion-free, the localization map $A \rightarrow \mathbb{Q}\otimes_{\mathbb{Z}} A$ is injective, hence
the map marked $\ell$ in diagram \eqref{diag 09902} is injective\footnote{This {\em isn't} due to $\Symm$ preserving injections, since
in general, $\Symm$ does not preserve injections---see Remark \ref{symm and monos and epis}. Instead this is simply due to the observation
that $\Symm_A(\coprod_{n> 1} A)$ is a polynomial algebra over $A$, and $\Symm_{\mathbb{Q}\otimes_{\mathbb{Z}}A}\left(\mathbb{Q}\otimes_{\mathbb{Z}} \coprod_{n> 1} A\right)$ is a polynomial algebra over $\mathbb{Q}\otimes_{\mathbb{Z}} A$ on the same set of polynomial generators.}. 
Consequently,  $\ell\circ s= \overline{s} \circ \ell^{\prime}$ is injective, so $\ell^{\prime}$ is injective as well.
The map marked $\overline{c}$ in diagram \eqref{diag 09902} is an isomorphism by Proposition \ref{lazard ring for algebras over rationals},
so $\overline{c}\circ\ell^{\prime} = \ell^{\prime\prime}\circ c$ is injective, hence $c$ is injective.
The map $c$ is also surjective since every element in $L^A$ is a sum of products of indecomposables.
Hence, $c$ is an isomorphism. Hence, $c$ and $s$ are both isomorphisms, so $L^A$ is polynomial by the fundamental comparison.
\item[2 implies 3] Trivial.
\item[3 implies 1] If $L^A$ is polynomial by the fundamental comparison, then the map $s$ in diagram \eqref{diag 09902} is an isomorphism.
Since the map of $A$-modules $\coprod_{n > 1} \sigma_{2n-2}: \coprod_{n > 1} \overline{L}^A_{2n-2} \rightarrow \coprod_{n > 1} A$
is the summand consisting of rank $1$ tensors in the map of $A$-modules
$s: \Symm_A\left(\coprod_{n > 1} \overline{L}^A_{2n-2}\right) \rightarrow \Symm_A\left(\coprod_{n > 1} A\right)$,
we then have that $\coprod_{n>1}\sigma_{2n-2}$ is an isomorphism of $A$-modules, and hence that each $\sigma_{2n-2}$ is an isomorphism of
$A$-modules.
Now Proposition \ref{u-homology detects injectivity of fundamental functional} implies that $U_0^A(n)\cong 0 \cong U_1^A(n)$ 
for all integers $n>1$.
\end{description}
\end{proof}

%The typical situation, e.g. for number rings and relatively small group rings, is that the $U$-homology groups vanish in degree $1$ but are nonzero in degree $0$,and consequently $L^A$ is not polynomial, but is a subalgebra of a polynomial algebra. See Theorem \ref{main thm in hereditary case} and the results of \cref{computations section}.

\begin{definition}\label{def of In ideal}
Let $A$ be a commutative ring, and let $n$ be a positive integer. We write $I^A_n$ for the ideal in $A$ generated by $\nu(n)$ and by all elements of the form $a^n-a\in A$. 
\end{definition}
When $n$ is not a prime power, it is easy to see that the ideal $I^A_n$ is the trivial ideal $(1)$. For a prime power $n=p^m$, the ideal $I^A_{p^m}$ has a certain universal property: it is the largest among all ideals in $A$ which are contained in the kernel of every ring homomorphism $A\rightarrow \mathbb{F}_{p^m}$. For that reason, the ideal $I^A_{p^m}$ is called {\em the universal $\mathbb{F}_{p^m}$-point-detecting ideal of $A$} in the preprint \cite{cmah1}. See \cite{cmah1} for further discussion of the relationship between $I^A_{p^m}$, counting $\mathbb{F}_{p^m}$-points, and the zeta-function of $\Spec A$.

\begin{remark}\label{description of ideals I_n}
It is occasionally useful (e.g. in Corollary \ref{number ring iso}, below) to have a smaller presentation of the 
ideal $I^A_{n}$. Suppose that $A$ is generated as a commutative ring by a single element $t\in A$, 
i.e., the ring homomorphism $\mathbb{Z}[t] \rightarrow A$, sending $t$ to $t$, is surjective. 
%(This does not imply that $A$ is Dedekind; for example, the case $A = \mathbb{Z}[t]$ is not Dedekind.)
Then $I^A_{n} = (\nu(n),t^{n}-t)$.
The proof is elementary, and left to the interested reader.
\begin{comment}
: if $a$ is an element of $A$, write $a$ as a polynomial $a = \sum_{j\geq 0} a_j t^j$,
where each $a_j$ is an integer.
Then 
\begin{align*} 
 a^{\nu(n)} - a 
  &= \left(\sum_{j\geq 0} a_j t^j\right)^{\nu(n)} - \sum_{j\geq 0} a_j t^j \\
  &\equiv \sum_{j\geq 0} a_j^{\nu(n)} t^{j\nu(n)} - \sum_{j\geq 0} a_j t^j \mod \nu(n) \\
  &\equiv \sum_{j\geq 0} a_j \left( t^{j\nu(n)} - t^j\right) \mod \nu(n) \\
  &\equiv 0 \mod (\nu(n),t^{\nu(n)}-t),\end{align*}
since $\nu(n)$ is always either $1$ or a prime number. 
\end{comment}
\end{remark}

\begin{theorem}\label{general thm}
Suppose that $A$ is a torsion-free commutative ring, 
and suppose that the fundamental functional $\sigma_{2n-2}$ is injective for all $n>1$.
Suppose that, for each $n>1$, the ideal $I^A_{n}$ is projective when regarded as an $A$-module.
Then the following statements are all true:
\begin{itemize}
\item $A$ satisfies the fundamental comparison condition.
\item Of the fundamental comparison maps 
\begin{align*} s: \Symm_A\left( \coprod_{n > 1} \overline{L}^A_{2n-2}\right)\rightarrow \Symm_A\left( \coprod_{n > 1} A\right)\end{align*}
and \begin{align*} i^{\sharp}: \Symm_A\left( \coprod_{n > 1} \overline{L}^A_{2n-2}\right) &\rightarrow L^A,\end{align*}
the map $s$ is injective and $i^{\sharp}$ is an isomorphism. Consequently,  $L^A$ is isomorphic to a sub-$A$-algebra of a polynomial $A$-algebra.
\item $L^A$ is isomorphic, as a graded $A$-algebra, to the tensor product of the suspended symmetric algebras of the ideals $I^A_2, I^A_3, \dots$ of $A$:
\begin{align}
\label{LA iso 23030} L^A 
  &\cong \bigotimes_{n>1} \Symm_A^{2n-2}\left( I^A_{n}\right) \\
\nonumber  &\cong \Symm_A^2(I^A_2) \otimes_A \Symm_A^4(I^A_3) \otimes_A \Symm_A^6(I^A_4) \otimes_A \dots 
.\end{align}
\item $L^A$ is isomorphic, as a graded $A$-algebra, to the tensor product of the suspended Rees algebras of the ideals $I^A_2, I^A_3, \dots$ of $A$:
\begin{align} 
\label{LA iso 23031} L^A 
  &\cong \bigotimes_{n>1} \Rees_A^{2n-2}\left( I^A_{n}\right) \\
\nonumber  &\cong \Rees_A^2(I^A_2) \otimes_A \Rees_A^4(I^A_3) \otimes_A \Rees_A^6(I^A_4) \otimes_A \dots .\end{align}
\end{itemize}
\end{theorem}
\begin{proof}\leavevmode
\begin{itemize}[leftmargin=*]
\item 
Since $\sigma_{2n-2}$ is assumed injective for all $n$, the $A$-module $\overline{L}^A_{2n-2}$ is %a sub-$A$-module of a free $A$-module for all $n$,
isomorphic to $\im \sigma_{2n-2} = I^A_{n}$, which is projective for all $n>1$, by assumption.
Hence, $A$ satisfies the fundamental comparison condition.
\item
We claim that the inclusion $I_{n}^A\subseteq A$, regarded as an $A$-module
map, induces a monomorphism 
\begin{align}\label{mono 3204934}\Symm_A(I_{n}^A) &\rightarrow \Symm_A(A)\end{align}
after applying $\Symm_A$. The injectivity of \eqref{mono 3204934} uses the assumption that $I_{n}^A$ is projective, and is not generally true if we relax the projectivity assumption on $I^A_{n}$; see Remark \ref{symm and monos and epis}. 

The argument for injectivity of \eqref{mono 3204934} is general, classical, and quite simple: since $I^A_{n}$ is projective, tensoring the $A$-module monomorphism $I^A_{n} \rightarrow A$ with $I^A_{n}$ yields that the multiplication map $I^A_{n}\otimes_A I^A_{n}\rightarrow I_{n}^A$ is monic. A similar argument together with a straightforward induction yields that the multiplication map $(I^A_{n})^{\otimes_A j}\rightarrow A^{\otimes_A j}\cong A$ is monic for all positive integers $j$. Hence, we have a commutative square of $A$-modules
\begin{equation*}\label{comm sq fk3409a}\xymatrix{
 (I^A_{n})^{\otimes_A j}\ar[d] \ar[r]& A^{\otimes_A j} \ar[d] \\
 \left((I^A_{n})^{\otimes_A j}\right)_{\Sigma_j} \ar[r] & \left(A^{\otimes_A j}\right)_{\Sigma_j}
}\end{equation*}
whose top horizontal map is injective. The right-hand vertical map is an isomorphism, hence the composite from upper-left to lower-right is injective. This tells us that the left-hand vertical map must be injective as well. It is also surjective, since it is projection to the module of coinvariants. Hence, the left-hand vertical map is an isomorphism, hence the bottom horizontal map is injective. The direct sum of the bottom horizontal map, over all nonnegative integers $j$, is precisely the map $\Symm_A(I^A_{n})\rightarrow \Symm_A(A)$. Hence, \eqref{mono 3204934} is injective, as claimed.

The map $s$ is, up to isomorphism, a tensor product of copies of the map \eqref{mono 3204934}, taken across all integers $n>1$. This map is injective by an induction completely analogous to that described in the previous paragraph, again using the projectivity of $I^A_{n}$, and the resulting projectivity of $\Symm_A(I^A_{n})$, as an $A$-module. Hence, $s$ is injective, as claimed.

Now we need to know why the map $i^{\sharp}$ is an isomorphism. 
We have the commutative diagram
\begin{equation}\label{diag 0950945959} \xymatrix{
 & \Symm_A\left( \coprod_{n>1} \overline{L}^A_{2n-2}\right) \ar[ld]_{i^{\sharp}} \ar[rd]^s \ar[d]^{\ell_2}   & \\
 L^A \ar[d]_{\ell_1} & \mathbb{Q}\otimes_{\mathbb{Z}} \Symm_A\left( \coprod_{n>1} \overline{L}^A_{2n-2}\right) \ar[ld]^{\mathbb{Q}\otimes_{\mathbb{Z}} i^{\sharp}} \ar[rd]^{\mathbb{Q}\otimes_{\mathbb{Z}} s}\ar[d]_g & \Symm_A\left( \coprod_{n>1} A\right) \ar[d]^{\ell_3} \\
 \mathbb{Q}\otimes_{\mathbb{Z}} L^A\ar[d]_f & \Symm_{\mathbb{Q}\otimes_{\mathbb{Z}} A}\left( \coprod_{n>1} \overline{L}^{\mathbb{Q}\otimes_{\mathbb{Z}} A}_{2n-2}\right)\ar[ld]^h & \mathbb{Q}\otimes_{\mathbb{Z}} \Symm_A\left( \coprod_{n>1} A\right) \\
 L^{\mathbb{Q}\otimes_{\mathbb{Z}} A} &&
}\end{equation}
in which the vertical maps $\ell_1,\ell_2,\ell_3$
are the canonical localization maps. The map marked $f$ is an isomorphism by Theorem \ref{lazard ring localization iso}, while the map marked $h$ is an isomorphism by Proposition \ref{lazard ring for algebras over rationals}. Since $\mathbb{Z}\rightarrow\mathbb{Q}$ is flat, the map marked $g$ is an isomorphism by general base change properties of the functor $\Symm$; see for example Proposition 5.2(iii) of \cite{MR2418168}. 
Hence the map $\mathbb{Q}\otimes_{\mathbb{Z}} i^{\sharp}$ %and $\mathbb{Q}\otimes_{\mathbb{Z}} s$ are isomorphisms 
is an isomorphism as well. 
%and $\ell_3$ is injective since $\Symm$ commutes with localizations and since $A \rightarrow \mathbb{Q}\otimes_{\mathbb{Z}} A$ is injective (since $A$ is additively torsion-free), and we have already shown that $s$ is injective. Hence, $\ell_3\circ s = \left( \mathbb{Q}\otimes_{\mathbb{Z}} s\right)\circ \ell_2$ is injective, hence $\ell_2$ is injective,
The map $\ell_2$ is injective, since we already showed that \[ \Sym_A\left( \coprod_{n>1} \overline{L}^A_{2n-2}\right) \cong \Sym_A\left( \coprod_{n>1} I^A_{n}\right) \rightarrow \Sym_A\left( \coprod_{n>1} A\right)\] is injective, $\Sym_A\left( \coprod_{n>1} A\right)$ is a free $A$-module, and $A$ is torsion-free.
Hence, $\left( \mathbb{Q}\otimes_{\mathbb{Z}} i^{\sharp}\right) \circ\ell_2 = \ell_1\circ i^{\sharp}$ is injective, and consequently $i^{\sharp}$ is injective.

The map $i^{\sharp}$ is also surjective since every element of $L^A$ is a product
of (lifts of) indecomposables. Hence, $i^{\sharp}$ is an isomorphism, as claimed.
\item The fact that $i^{\sharp}$ is an isomorphism gives us the claimed isomorphism
of $L^A$ with a tensor product of suspended symmetric algebras.
We already showed that, for each $j$, the $j$th symmetric power of $I^A_{n}$ coincides with the $j$th power $(I^A_{n})^j$ of the ideal $I^A_{n}$, i.e., the $j$th summand in the Rees algebra $\Rees_A(I^A_{n})$. Consequently,  the symmetric algebras of the ideals $I^A_n$ coincide with their Rees algebras,
giving us the claimed isomorphism of $L^A$ with a tensor product of suspended
Rees algebras.

The degree $2n-2$ of the suspensions on the right-hand side of \eqref{LA iso 23030} and of \eqref{LA iso 23031} arises because $I_n^A\subseteq A$ is the image of the fundamental functional $\sigma_{2n-2}: \overline{L}^A_{2n-2} \rightarrow A$, and $\overline{L}^A_{2n-2}$ is the $A$-module of degree $2n-2$ indecomposables in the graded $A$-algebra $L^A$.
\end{itemize}
\end{proof}
%The suspended symmetric and suspended Rees algebras were defined in Definition \ref{def of susp symm and susp rees}. The fundamental comparison maps $s$ and $i^{\sharp}$ were defined in Definition \ref{def of fundamental comparison}.

\begin{corollary} {\bf (Lifting and extensions.)}\label{lifting and extensions}
Suppose that $A$ is a torsion-free commutative ring, and suppose that the fundamental functional $\sigma_{2n-2}$ is injective, and has image a projective module, for all $n>1$.
Then every formal $A$-module $n$-bud extends to a formal $A$-module.
Furthermore, if $R$ is a commutative $A$-algebra and $I$ is an ideal in $R$, then every formal $A$-module over $R/I$ is the reduction modulo $I$ of a formal $A$-module over $R$.
\end{corollary}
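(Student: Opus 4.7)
The strategy is to apply Theorem~\ref{general thm} to give a concrete description of $L^A$, then deduce both assertions from that description. Under the stated hypotheses, Theorem~\ref{general thm} furnishes an isomorphism of graded $A$-algebras
\[ L^A \;\cong\; \Symm_A(M), \qquad M \;=\; \coprod_{n > 1} I^A_n, \]
with $I^A_n$ placed in grading degree $2(n-1)$. Because each $I^A_n$ is projective by hypothesis, $M$ is a projective graded $A$-module.

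For the lifting property, the universal property of $\Symm_A$ identifies $A$-algebra homomorphisms $L^A \to R$ with $A$-module homomorphisms $M \to R$. A formal $A$-module over $R/I$ then corresponds to an $A$-module map $M \to R/I$; projectivity of $M$ lets us lift this along the surjection $R \twoheadrightarrow R/I$ to a map $M \to R$, and the adjunction translates this back into a formal $A$-module over $R$ whose reduction modulo $I$ is the given one.

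For the extension property, let $L^{A,(n)}$ denote the classifying ring of formal $A$-module $n$-buds. Drinfeld's inductive construction builds $L^A$ by starting with $A = L^{A,(1)}$ and adjoining at each stage $k \geq 2$ the module $L^A_{k-1}/D^A_{k-1} \cong I^A_k$ of generators in grading degree $2(k-1)$, with $L^{A,(n)}$ appearing as the intermediate stage $k=n$. Applying the argument of Theorem~\ref{general thm} at each such stage shows that no additional relations mix these new generators with the old ones, so
\[ L^{A,(n)} \;\cong\; \Symm_A(M_{\leq n}), \qquad M_{\leq n} = \coprod_{1 < k \leq n} I^A_k, \]
and the natural map $\iota\colon L^{A,(n)} \to L^A$ is identified with the inclusion $\Symm_A(M_{\leq n}) \hookrightarrow \Symm_A(M)$. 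The grading gives $M = M_{\leq n} \oplus M_{> n}$ as $A$-modules, so the projection $M \twoheadrightarrow M_{\leq n}$ induces an $A$-algebra retraction $\rho\colon L^A \to L^{A,(n)}$ with $\rho\iota = \id$. Hence for any $n$-bud $f\colon L^{A,(n)} \to R$, the composite $f \circ \rho\colon L^A \to R$ is a formal $A$-module extending $f$.

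The most delicate step is the identification $L^{A,(n)} \cong \Symm_A(M_{\leq n})$ compatibly with $\iota$: Theorem~\ref{general thm} is phrased only for the full classifying ring $L^A$, so one needs to observe that its proof, being inductive in the degree of generators, already yields the analogous truncated statement for each $L^{A,(n)}$. Once this is granted, both conclusions reduce to the formal fact that $L^A$ is a symmetric algebra on a projective graded $A$-module, with $L^{A,(n)}$ appearing as the symmetric subalgebra on the low-degree summands.
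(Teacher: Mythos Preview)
Your proposal is correct and follows essentially the same route as the paper's own proof: both invoke Theorem~\ref{general thm} to identify $L^A$ with $\Symm_A\bigl(\coprod_{m>1} I^A_m\bigr)$, identify $L^A_{\leq n}$ with the truncated symmetric algebra $\Symm_A\bigl(\coprod_{1<m\leq n} I^A_m\bigr)$, and then read off both the extension and lifting properties from the universal property of $\Symm_A$ together with projectivity of the generating module. The paper phrases the extension step by saying the inclusion $\kappa$ of summands induces $\Symm_A(\kappa)$ and every map extends over it, while you phrase it via the retraction coming from the complementary projection; these are equivalent. You are also right to flag that the identification of $L^A_{\leq n}$ is not literally part of the statement of Theorem~\ref{general thm}---the paper simply asserts the commutative square ``from Theorem~\ref{general thm}'', implicitly using that the same argument goes through degree by degree.
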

\begin{proof}
Let $L^A_{\leq n}$ be the classifying ring of formal $A$-module $n$-buds,
and let $\iota: L^A_{\leq n} \rightarrow L^A$ be the map classifying
the formal $A$-module $n$-bud of the universal formal $A$-module.
Then from Theorem \ref{general thm} we have a commutative square
\[\xymatrix{ 
 L^A_{\leq n} \ar[rr]^{\iota} \ar[d]^{\cong} && L^A \ar[d]^{\cong} \\
 \Symm_A \left( \coprod_{1<m\leq n}  I^A_m\right) \ar[rr]^{\Symm_A(\kappa)} &&
  \Symm_A \left( \coprod_{1<m} I^A_m\right) }\]
where $\kappa$ is the inclusion of the summand
$\kappa: \coprod_{1<m\leq n}  I^A_m\hookrightarrow \coprod_{1<m} I^A_m$.
By the universal property of $\Symm_A$,
every morphism of commutative $A$-algebras $L^A_{\leq n} \rightarrow R$
extends over $\iota$ to a morphism of commutative $A$-algebras 
$L^A\rightarrow R$, hence every formal $A$-module $n$-bud extends
to a formal $A$-module.

Furthermore, by the universal property of $\Symm_A$ and the lifting property of projective modules, every morphism of commutative $A$-algebras $L^A\rightarrow R/I$ lifts to a morphism $L^A\rightarrow R$. 
Hence, every formal $A$-module over $R/I$ is the reduction modulo $I$ of a formal $A$-module over $R$.
\end{proof}

The fundamental comparison works especially well for hereditary rings $A$:
\begin{corollary}\label{fundamental functional main properties iv}
Suppose that $A$ is a torsion-free hereditary commutative ring,
and suppose that the fundamental functional $\sigma_{2n-2}$ is injective for all $n>1$.
Then the conclusions of Theorem \ref{general thm} and of Corollary \ref{lifting and extensions} all hold.
\begin{comment} (TRUE, BUT TOO WORDY)
For each integer $n>1$, let $I^A_n$ denote the image of $\sigma_n$, i.e., $I^A_n$ is the ideal in $A$ generated by $\nu(n)$ and by all elements of the form
$a-a^n\in A$.
Then the following statements are all true:
\begin{itemize}
\item $A$ satisfies the fundamental comparison condition, 
\item of the fundamental comparison maps $s: \Symm_A\left( \coprod_{n > 1} \overline{L}^A_{n-1}\right)\rightarrow \Symm_A\left( \coprod_{n > 1} A\right)$
and $i: \Symm_A\left( \coprod_{n > 1} \overline{L}^A_{n-1}\right) \rightarrow L^A$,
the map $s$ is injective and $i$ is an isomorphism
(and consequently, $L^A$ is isomorphic to a sub-$A$-algebra of a polynomial $A$-algebra), 
\item $L^A$ is isomorphic, as a graded $A$-algebra, to the tensor product of the suspended symmetric algebras of the ideals $I^A_2, I^A_3, \dots$ of $A$:
\begin{align*}
 L^A 
  &\cong \bigotimes_{n>1} \Symm_A^{2(n-1)}\left( I^A_n\right) \\
  &\cong \Symm_A^2(I^A_2) \otimes_A \Symm_A^4(I^A_3) \otimes_A \Symm_A^6(I^A_4) \otimes_A \dots 
,\end{align*}
and
\item $L^A$ is isomorphic, as a graded $A$-algebra, to the tensor product of the suspended Rees algebras of the ideals $I^A_2, I^A_3, \dots$ of $A$:
\begin{align*} 
 L^A 
  &\cong \bigotimes_{n>1} \Rees_A^{2(n-1)}\left( I^A_n\right) \\
  &\cong \Rees_A^2(I^A_2) \otimes_A \Rees_A^4(I^A_3) \otimes_A \Rees_A^6(I^A_4) \otimes_A \dots .\end{align*}
\end{itemize}
\end{comment}
\end{corollary}

\subsection{Calculation of $U$-homology}\label{calculation of U-homology}

\begin{definition}\label{def of Phi bimodule}
Let $p$ be a prime number, and let $B$ be a commutative algebra over the field $\mathbb{F}_p$ with $p$ elements.  If $n$ is a power of $p$, let $\Phi^n(B)$ denote the $B$-bimodule whose underlying right $B$-module is $B$ itself, and whose left $B$ action is twisted by the $n$th power map. That is, if $x\in \Phi^n(B)$ and $b\in B$, then $xb\in \Phi^n(B)$ is the product of $x$ and $b$ in the ring $B$, and $bx\in \Phi^n(B)$ is the product of $b^n$ and $x$ in the ring $B$.
\end{definition}

As mentioned in the proof of Definition-Proposition \ref{def of u-homology}, the chain complex $U^A(n)_{\bullet}$ in Definition-Proposition \ref{def of u-homology} can be constructed as a Hochschild chain complex. We have the following identification of $U$-homology with Hochschild homology with appropriate coefficients:
\begin{theorem}\label{u-homology is hh}
Let $A$ be a commutative ring, and let $n>1$ be an integer. 
\begin{itemize}
\item If $n$ is not a prime power, then the $U$-homology group $U^A_i(n)$ vanishes for all $i$. 
\item If $n$ is a power of a prime number $p$, then write $A/p$ for the ring $A$ reduced modulo the ideal generated by $p$. Then, for all $i\geq 0$, the $U$-homology group $U^A_i(n)$ is isomorphic to the Hochschild homology group $HH_i(A/p; \Phi^n(A/p))$. 
\end{itemize}
\end{theorem}
\begin{proof}
In the case that $n$ is not a prime power, $A/\nu(n)$ is the zero ring, so the chain complex of $A/\nu(n)$-modules $U^A(n)_{\bullet}$ is zero. The nontrivial case is when we instead suppose that $n$ is a power of $p = \nu(n)$. By Definition \ref{def of Fn(A)}, the $A/p$-module $F_n(A)$ is obtained by forgetting the $A$-module structure on $A$, retaining only the abelian group structure, and then taking the free $A/p$-module on this abelian group. In other words, $F_n(A)$ is $A/p\otimes_{\mathbb{Z}}A$. Of course this $A/p$-module is, in turn, isomorphic to $A/p \otimes_{\mathbb{F}_p} A/p$. Consequently,  the $A/p$-module of $m$-simplices $\mathcal{U}^A(n)_{m}$ in $\mathcal{U}^A(n)_{\bullet}$, defined in Definition-Proposition \ref{def of u-homology}, is
\begin{align*}
 \mathcal{U}^A(n)_{m} 
  &= F_n(A)^{\otimes_{A/p}\, m} \\
  &\cong \left( A/p \otimes_{\mathbb{Z}} A\right)^{\otimes_{A/p}\, m} \\
  &\cong \left( A/p \otimes_{\mathbb{F}_p} A/p\right)^{\otimes_{A/p}\, m} \\
  &\cong A/p \otimes_{\mathbb{F}_p} \left( A/p^{\otimes_{\mathbb{F}_p}\, m}\right) 
\end{align*}
via the isomorphism
\begin{align*}
 F_n(A)^{\otimes_{A/p}\, m} 
  &\stackrel{\psi_m}{\longrightarrow} A/p \otimes_{\mathbb{F}_p} \left( A/p^{\otimes_{\mathbb{F}_p}\, m}\right) \\
 a_1c_{b_1} \otimes a_2 c_{b_2} \otimes \dots \otimes a_mc_{b_m}
  &\mapsto a_1a_2 \dots a_m \otimes \overline{b}_1\otimes \dots \otimes \overline{b}_m.
\end{align*}

We claim that the isomorphisms $\psi_0,\psi_1,\dots$ fit together with the face and degeneracy maps to yield an isomorphism of simplicial $A/p$-modules
\begin{equation}\label{simplicial map 23}\xymatrix{
 F_n(A)^{\otimes_{A/p}\,  0} \ar[d]^{\psi_0}_{\cong}
 \ar[r] & 
  F_n(A)^{\otimes_{A/p}\,  1} \ar[d]^{\psi_1}_{\cong} \ar@<1ex>[l]\ar@<-1ex>[l]\ar@<1ex>[r]\ar@<-1ex>[r] &
  F_n(A)^{\otimes_{A/p}\,  2} \ar[d]^{\psi_2}_{\cong} \ar@<2ex>[l]\ar@<-2ex>[l]\ar[l]
                                 \ar@<2ex>[r]\ar@<-2ex>[r]\ar[r] &
   \dots \ar@<3ex>[l]\ar@<-3ex>[l]\ar@<1ex>[l]\ar@<-1ex>[l] \\
 A/p \otimes_{\mathbb{F}_p} \left( A/p^{\otimes_{\mathbb{F}_p} 0}\right) 
 \ar[r] & 
  A/p \otimes_{\mathbb{F}_p} \left( A/p^{\otimes_{\mathbb{F}_p} 1}\right) \ar@<1ex>[l]\ar@<-1ex>[l]\ar@<1ex>[r]\ar@<-1ex>[r] &
  A/p \otimes_{\mathbb{F}_p} \left( A/p^{\otimes_{\mathbb{F}_p} 2}\right)  \ar@<2ex>[l]\ar@<-2ex>[l]\ar[l]
                                 \ar@<2ex>[r]\ar@<-2ex>[r]\ar[r] &
   \dots \ar@<3ex>[l]\ar@<-3ex>[l]\ar@<1ex>[l]\ar@<-1ex>[l] .  }\end{equation}
The top row of \eqref{simplicial map 23} is the simplicial $A/p$-module $\mathcal{U}^A(n)_{\bullet}$, while the bottom row is the Hochschild bar construction of $A/p$, as a $\mathbb{F}_p$-algebra, with coefficients in the bimodule $\Phi^n(A/p)$. Verifying that the maps $\psi_0,\psi_1,\dots$ indeed define a morphism of simplicial $A/p$-modules is a matter of verifying that they commute with the face and degeneracy maps, which is routine. 

Since each $\psi_i$ is an isomorphism, $\mathcal{U}^A(n)_{\bullet}$ is isomorphic to the Hochschild bar construction of $A/p$ with coefficients in $\Phi^n(A/p)$. Taking Moore complexes and then homology, we get the desired isomorphism $U^A_i(n) \cong HH_i(A/p; \Phi^n(A/p))$ for each $i$.
\end{proof}

The following 2007 theorem of Pirashvili, the main result of \cite{MR2336249}, is now extremely useful:
\begin{theorem} \label{pirashvili thm} {\bf (Pirashvili.)} Let $n$ be a positive power of a prime number $p$, let $B$ be a commutative $\mathbb{F}_p$-algebra, and let $\Phi^n(B)$ be the $B$-bimodule defined in Definition \ref{def of Phi bimodule}. Then $HH_i(B; \Phi^n(B))$ vanishes for all $i>0$.
\end{theorem}

Putting Theorems \ref{u-homology is hh} and \ref{pirashvili thm} together, we have:
\begin{corollary}\label{u-homology and pirashvili cor}
All $U$-homology groups vanish in all positive homological degrees. That is, $U_i^A(n)$ vanishes whenever $A$ is a commutative ring, and $i,n$ are integers with $i>0$ and $n>1$.

Furthermore, the group $U^A_0(n)$ if trivial if $n$ is not a prime power. If $n$ is a power of a prime number $p$, then $U^A_0(n) \cong HH_0(A/p; \Phi^n(A/p)) \cong A/I^A_n$.
\end{corollary}

\subsection{Consequences for the structure of $L^A$}\label{global consequences subsection}

Now we are ready to combine the calculation of $U$-homology in Corollary \ref{u-homology and pirashvili cor} with the relationship between $U$-homology and $L^A$, proven in Proposition \ref{u-homology detects injectivity of fundamental functional}, yielding the following theorem:
\begin{theorem}\label{fund func is inj}
Let $A$ be a commutative ring and let $n>1$ be an integer.
Suppose that $A$ is $\nu(n)$-torsion-free.
Then the fundamental functional $\sigma_{2n-2}: \overline{L}^A_{2n-2} \rightarrow A$
is injective. The image of $\sigma_{2n-2}$ is the ideal $I_{n}^A$ generated by $\nu(n)$ and by all elements of the form
$a^n-a\in A$.
\end{theorem}

Applying Theorem \ref{general thm} now yields:
\begin{corollary}\label{gen thm cor}
Let $A$ be a torsion-free commutative ring. Suppose that, for each integer $n>1$, the ideal $I^A_{n} = (\nu(n),a^n-a\ \forall a\in A)$ of $A$ is projective as an $A$-module. Then $L^A$ is isomorphic to the tensor product of suspended symmetric algebras, and also isomorphic to the tensor product of suspended Rees algebras:
\begin{equation*}
  L^A 
  \cong \bigotimes_{n>1} \Symm_A^{2n-2}\left( I^A_{n}\right) 
  \cong \bigotimes_{n>1} \Rees_A^{2n-2}\left( I^A_{n}\right) .
\end{equation*}
\end{corollary}
In particular, by assuming that the ring $A$ is hereditary, we can force the projectivity hypothesis on the ideals $I_{n}^A$ to be satisfied:
\begin{theorem}\label{main thm in hereditary case}
Let $A$ be a torsion-free commutative ring. %whose underlying abelian group is finitely generated and free.
Let $S$ be a set of prime numbers such that the ring
$A[S^{-1}]$ is hereditary.
(If, for example, $A$ is already hereditary,
then we can let $S$ be the empty set.)

Then the commutative graded ring $L^A$
is, after inverting $S$, isomorphic to a symmetric algebra, and also to a tensor product of (``suspended,'' i.e., graded) Rees algebras:
\begin{align*}
 L^A[S^{-1}] &\cong \left( \Rees^2_{A}(I^A_2) \otimes_{A} \Rees_{A}^4(I^A_3) \otimes_{A} \Rees_{A}^6(I^A_4) \otimes_{A}\Rees_{A}^8(I^A_5) \otimes_{A} \dots\right) [S^{-1}] \\
 &\cong \left(\Symm_{A}\left( \coprod_{n>1} I^{A}_{n}\right)\right)[S^{-1}],
\end{align*}
where $I^A_{n}$, for each positive integer $n$, 
is the ideal of $A$ defined in Definition \ref{def of In ideal} and again in Corollary \ref{gen thm cor}.
\end{theorem}
\begin{proof}
Theorem \ref{lazard ring localization iso} ensures that $L^A[S^{-1}]$ coincides with $L^{A[S^{-1}]}$. Since $A[S^{-1}]$ is hereditary, every ideal in $A[S^{-1}]$ is projective as an $A[S^{-1}]$-module. Hence, the hypotheses of Corollary \ref{gen thm cor} are satisfied, yielding the isomorphism $L^{A[S^{-1}]}\cong \bigotimes_{n>1} \Rees_{A[S^{-1}]}^{2n-2}\left( I^{A[S^{-1}]}_{n}\right)$. Finally, the Rees algebra functor commutes with localization; this is straightforward to prove from the definition of the Rees algebra, but readers who would rather just see this claim appear in a trustworthy source can find it in the documentation \cite[page 4]{eisenbudrees} for David Eisenbud's Rees package for Macaulay2, for example. This yields the isomorphism
\begin{align*}
 \bigotimes_{n>1} \Rees_{A[S^{-1}]}^{2n-2}\left( I^{A[S^{-1}]}_{n}\right)
 &\cong \left( \bigotimes_{n>1} \Rees_{A}^{2n-2}\left( I^{A}_{n}\right)\right)[S^{-1}],
\end{align*}
where the tensor product on the left is taken over $A[S^{-1}]$, and the tensor product on the right is taken 
over $A$.

An entirely analogous argument works with $\Sym$ in place of $\Rees$. The required fact that $\Sym$ commutes with localization can be proven directly, or it can be taken as a consequence of \cite[Proposition 5.2(iii)]{MR2418168}, for instance.
\end{proof}

\begin{corollary}\label{structure cor in hereditary case}
Let $A$ be a commutative ring and $S$ a set of prime numbers such that
$A$ and $S$ satisfy the assumptions 
of Theorem \ref{main thm in hereditary case}. Then 
the following statements are all true:
\begin{itemize}
\item $L^A[S^{-1}]$ is a commutative graded sub-$A$-algebra of the polynomial algebra $A[S^{-1}][x_1, x_2, \dots ]$, with $x_i$ in degree $2i$.
\item $L^A[S^{-1}]$ is not Noetherian, but for every integer $n$, the sub-$A$-algebra of $L^A[S^{-1}]$ generated by all elements of 
degree $\leq n$ is Noetherian.
\item If $A[S^{-1}]$ is an integral domain, then the underlying $A[S^{-1}]$-module of $L^A[S^{-1}]$ is torsion-free.
\item 
If $U_0^A(n)[S^{-1}]$ is trivial for all $n>1$, then 
$L^A[S^{-1}]$ is polynomial by the fundamental comparison condition, so $L^A[S^{-1}] \cong A[S^{-1}][x_1, x_2, \dots ]$.
\item {\bf All formal module buds extend:} Every formal $A$-module $n$-bud over a commutative $A[S^{-1}]$-algebra extends to a formal $A$-module.
\item {\bf All formal modules lift:}
If $R$ is a commutative $A[S^{-1}]$-algebra and $I$ is an ideal of $R$, then every formal $A$-module over $R/I$ is the modulo-$I$ reduction
of a formal $A$-module over $R$. 
\end{itemize}
\end{corollary}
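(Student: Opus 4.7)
The plan is to deduce all six bullet points from the structural isomorphism supplied by Theorem~\ref{main thm in hereditary case}, namely $L^A[S^{-1}]\cong\bigotimes_A \Rees_A^{2(n-1)}(I^A_n)[S^{-1}]$, together with standard properties of Rees algebras, hereditary rings, and results already proven in the excerpt.

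For the first and third bullets, I would exploit the fact that $\Rees_A(I^A_n)\hookrightarrow A[t_n]$ by construction. After inverting $S$, each $I^A_n[S^{-1}]$ is an ideal of the hereditary ring $A[S^{-1}]$, hence projective and therefore flat, so a straightforward iterated flatness argument shows that the tensor product of the inclusions $\Rees_{A[S^{-1}]}(I^A_n[S^{-1}])\hookrightarrow A[S^{-1}][t_n]$ remains injective, embedding $L^A[S^{-1}]$ into the polynomial algebra $A[S^{-1}][x_1,x_2,\dots]$. If additionally $A[S^{-1}]$ is a domain, then this polynomial algebra is a domain, and $L^A[S^{-1}]$ as a subring is torsion-free, giving the third bullet immediately.

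For the second bullet, the sub-algebra generated in grading degree $\leq n$ involves only the finitely many Rees factors $\Rees_A^{2(m-1)}(I^A_m)[S^{-1}]$ with $2(m-1)\leq n$; each such factor is a finitely generated $A[S^{-1}]$-algebra since $I^A_m$ is a finitely generated ideal (as $A$ is finitely generated as a $\mathbb{Z}$-module, hence Noetherian), and the Rees algebra is generated in a single degree by a generating set for the ideal. Hilbert's basis theorem then yields Noetherianity of the truncation. The whole algebra fails to be Noetherian because the non-prime-power indices $n$ contribute polynomial generators $x_{n-1}$ in an infinite strictly increasing sequence of grading degrees, producing an infinite algebraically independent family.

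For the fourth bullet, assuming $U_0^A(n)[S^{-1}]\cong 0$ for all $n$, the localization property of $U$-homology yields $U_0^{A[S^{-1}]}(n)\cong 0$, and together with the vanishing of $U_1^{A[S^{-1}]}(n)$ already established during the proof of Theorem~\ref{main thm in hereditary case}, Proposition~\ref{fundamental functional main properties ii} applied to $A[S^{-1}]$ gives that $L^{A[S^{-1}]}$ is polynomial by the fundamental comparison; Theorem~\ref{lazard ring localization iso} then identifies $L^{A[S^{-1}]}$ with $L^A[S^{-1}]$. For the last two bullets I would apply Corollary~\ref{lifting and extensions} directly to $A[S^{-1}]$, whose hypotheses are satisfied since $A[S^{-1}]$ is additively torsion-free ($A$ is $\mathbb{Z}$-free of finite rank) and each $\sigma_n$ for $A[S^{-1}]$ is injective with projective image, projectivity following from the hereditary hypothesis. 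The main obstacle throughout is bookkeeping: ensuring the preceding results transfer cleanly after inverting $S$ and justifying the flatness step for the first bullet, but nothing here requires genuinely new ideas.
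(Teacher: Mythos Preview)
Your proposal is correct and follows essentially the same approach as the paper, which proves this corollary with a one-line citation: ``Theorem~\ref{main thm in hereditary case} together with Corollary~\ref{lifting and extensions}.'' You have filled in the details the paper leaves implicit, in particular for the Noetherianity bullet and the fourth bullet on $U_0$-vanishing (where the paper's citation does not directly cover these and one must reach back to Proposition~\ref{fundamental functional main properties ii} and the structure of the Rees factors, as you do); your flatness argument for the first bullet is a reasonable alternative to invoking the injectivity of $s$ already established in Corollary~\ref{fundamental functional main properties iv}.
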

\begin{proof}
Theorem \ref{main thm in hereditary case} together with Corollary \ref{lifting and extensions}. To give a bit more detail about the fourth bullet point: if $U^A_0(n)[S^{-1}]$ vanishes for all $n>1$, then by Corollary \ref{u-homology and pirashvili cor}, the $U$-homology groups $U^A_i(n)[S^{-1}]$ vanish for all $i\geq 0$. Proposition \ref{fundamental functional main properties ii} then ensures that $L^{A[S^{-1}]}$ is polynomial by the fundamental comparison. 
\end{proof}
%The claim about lifting requires only an additional observation, as follows: since $A[S^{-1}]$ is hereditary, the image of $\sigma_{2n-2}[S^{-1}]$ in $A[S^{-1}]$ is a projective $A[S^{-1}]$-module, and consequently $L^A[S^{-1}]$ is a symmetric $A[S^{-1}]$-algebra on a projective $A[S^{-1}]$-module. \end{proof}
%\begin{proof}
%These results follow from Theorem \ref{main thm in hereditary case}. 
%For the statements about extension of $n$-buds and lifting of formal modules, see the proof of Corollary \ref{lifting corollary}, and the comments immediately preceding that corollary.
%\end{proof}

\begin{corollary}
Let $K/\mathbb{Q}$ be a finite field extension with ring of integers $A$. Then every formal $A$-module $n$-bud extends to a formal $A$-module.
Furthermore, if $R$ is a commutative $A$-algebra and $I$ is an ideal in $R$, then every formal $A$-module over $R/I$ is the reduction modulo $I$ of a formal $A$-module over $R$.
\end{corollary}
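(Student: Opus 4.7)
The plan is to invoke the preceding corollary with $S = \emptyset$; essentially all of the heavy lifting has been done already, and the task reduces to checking the hypotheses when $A$ is the ring of integers of a finite extension $K/\mathbb{Q}$.

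First I would record the two standard algebraic facts about such $A$: the underlying abelian group of $A$ is finitely generated and free of rank $[K:\mathbb{Q}]$, and $A$ is a Dedekind domain, hence hereditary (every ideal is finitely generated and projective). These are exactly the hypotheses needed to apply Theorem~\ref{main thm in hereditary case} with empty $S$.

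Next I need to verify, for Corollary~\ref{lifting and extensions}, that the fundamental functional $\sigma_n$ is injective and has projective image for every integer $n > 1$. When $n$ is not a prime power, Definition-Proposition~\ref{def of fundamental functional} already states that $\sigma_n$ is an isomorphism, so its image is all of $A$. When $n$ is a prime power, Theorem~\ref{fundamental functional for complete local rings} applies (since $A$ is finitely generated as an abelian group) and provides the injectivity of $\sigma_n$; the image is an ideal $I_n^A \subseteq A$, and ideals in a Dedekind domain are finitely generated projective modules. So $\sigma_n$ satisfies the hypothesis of Corollary~\ref{lifting and extensions} for every $n > 1$, and both the extension statement for formal $A$-module buds and the lifting statement for formal $A$-modules over quotients follow directly.

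The only point that needed any real input was the injectivity of $\sigma_n$ at prime-power $n$, and the main obstacle there was already settled upstream in Theorem~\ref{rigidity of u-homology} (rigidity of $U$-homology), which, combined with the vanishing of $U$-homology over finite fields (Proposition~\ref{u-homology of fields is group homology}), feeds into Theorem~\ref{fundamental functional for complete local rings}. So for the corollary itself there is no residual obstacle to overcome; it is a specialization of the previous corollary to the number-ring setting.
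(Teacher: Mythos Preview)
Your proposal is correct and matches the paper's implicit argument: the corollary is stated without proof because it is the immediate specialization of the preceding corollary (and hence of Theorem~\ref{main thm in hereditary case} together with Corollary~\ref{lifting and extensions}) to the case $S=\emptyset$, using that the ring of integers in a number field is free of finite rank as an abelian group and is a Dedekind domain, hence hereditary. Your additional unpacking of why $\sigma_n$ is injective with projective image is accurate and simply spells out what the paper leaves tacit.
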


\begin{comment}
\begin{remark}
It seems reasonable to conjecture that the finite generation condition in the statement of Theorem \ref{main thm in hereditary case}
can be removed, at least if $A$ is a Dedekind domain of characteristic zero.
I do not know a proof of this, though. 
It would simply require extending the rigidity theorem, Theorem \ref{rigidity of u-homology},
to the case where $A$ is not finitely generated. This in turns amounts to
solving the following problem:
when $A$ is not finitely generated,
the May spectral sequence of Lemma \ref{u-homology may ss}
converges to the homology of the completion of the chain complex
$U^A(n)_{\bullet}$ with respect to the filtration defined in the lemma.
One needs to know that, if this completion is acyclic, then the
original chain complex $U^A(n)_{\bullet}$ is acyclic. Despite some effort I did not find a way to prove this in general (nor did I find a counterexample).
\end{remark}
\end{comment}

\section{Computations of $L^A$ for certain classes of ring $A$}\label{computations section}

\subsection{Number rings}\label{number rings subsection}

\begin{theorem}\label{number ring iso 0}
Let $A$ be the ring of integers in a finite field extension $K/\mathbb{Q}$,
let $1, \alpha_1, \dots , \alpha_j$ be a $\mathbb{Z}$-linear basis for $A$,
and for each integer $n>1$, let $J^A_{n}$ be the ideal $(\nu(n), \alpha_1^n-\alpha_1, \alpha_2^n - \alpha_2, \dots ,\alpha_j^n - \alpha_j)$ of $A$.
Let $P$ denote the set of integers $>1$ which are prime powers, and let $R$ denote the set of integers $>1$ which are not prime powers.
Then we have an isomorphism of commutative graded $A$-algebras:
\begin{align*}
 L^A
  &\cong \left(\bigotimes^{n\in P} \Rees_A^{2n-2}(J^A_{n})\right)
 \otimes_A A[x_{n-1}: n\in R] ,
\end{align*}
with $x_{n-1}$ in degree $2(n-1)$, and with the tensor products all taken over $A$.
\end{theorem}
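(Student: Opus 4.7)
The plan is to derive this theorem as a direct application of Theorem~\ref{main thm in hereditary case} with the empty set of primes inverted, since the ring of integers $A$ in a number field is a Dedekind domain and hence hereditary, and its underlying abelian group is free of rank $[K:\mathbb{Q}]$. The only real content beyond citing Theorem~\ref{main thm in hereditary case} is the identification of the ideals: I must replace the \emph{a priori} infinitely-generated ideal $I^A_n$ appearing in Theorem~\ref{main thm in hereditary case} with the finitely-generated ideal $J^A_n$ whose generators come from the chosen $\mathbb{Z}$-basis.

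First I would invoke Theorem~\ref{main thm in hereditary case} with $S = \emptyset$ to obtain an isomorphism of commutative graded $A$-algebras
\[ L^A \cong \Rees^2_A(I^A_2) \otimes_A \Rees^4_A(I^A_3) \otimes_A \Rees^6_A(I^A_4) \otimes_A \dots ,\]
where $I^A_n = A$ if $n$ is not a prime power (giving the polynomial factors $A[x_{n-1}]$ for $n\in R$ after noting $\Rees^{2n-2}_A(A) \cong A[x_{n-1}]$), and where $I^A_n$ is the ideal generated by $p$ and all elements $a-a^n$ for $a\in A$ when $n$ is a power of a prime $p$.

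Next I would show that, when $n = p^k$ is a prime power, the ideal $I^A_n$ coincides with $J^A_n = (\nu(n), \alpha_1 - \alpha_1^n, \dots, \alpha_j - \alpha_j^n)$. The inclusion $J^A_n\subseteq I^A_n$ is immediate from the definitions. For the reverse inclusion, write an arbitrary $a \in A$ in the basis as $a = n_0 + n_1\alpha_1 + \dots + n_j \alpha_j$ with $n_i \in \mathbb{Z}$. Since $n$ is a power of $p$, iterated application of the ``freshman's dream'' $(x+y)^p \equiv x^p + y^p\pmod{p}$ gives
\[ a^n \equiv n_0^n + n_1^n \alpha_1^n + \dots + n_j^n \alpha_j^n \pmod{p}, \]
and Fermat's little theorem (again iterated, since $n$ is a power of $p$) yields $n_i^n \equiv n_i \pmod p$ for every $i$. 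Subtracting, we obtain
\[ a - a^n \equiv n_1(\alpha_1 - \alpha_1^n) + \dots + n_j(\alpha_j - \alpha_j^n) \pmod{p}, \]
which lies in $J^A_n$ since $p = \nu(n) \in J^A_n$. Hence $I^A_n \subseteq J^A_n$, so $I^A_n = J^A_n$. This is essentially the computation appearing in Remark~\ref{description of ideals I_n}, extended from one generator to the $\mathbb{Z}$-basis $1, \alpha_1, \dots, \alpha_j$ of $A$.

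Substituting $J^A_n$ in place of $I^A_n$ and collecting the polynomial factors arising from $n \in R$ into the single factor $A[x_{n-1}: n\in R]$ yields the desired isomorphism. I do not anticipate any real obstacle: the hereditary hypothesis of Theorem~\ref{main thm in hereditary case} is satisfied automatically by any number ring, and the Frobenius calculation identifying $I^A_n$ with $J^A_n$ is elementary. The only point that requires any care at all is keeping track of the two cases (prime power and non-prime-power) when reading off the tensor decomposition from Theorem~\ref{main thm in hereditary case}.
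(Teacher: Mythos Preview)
Your proposal is correct and follows essentially the same approach as the paper's proof: invoke Theorem~\ref{main thm in hereditary case} with $S=\emptyset$ (using that number rings are hereditary with finitely generated free underlying abelian group), note that $I^A_n=A$ gives polynomial factors when $n$ is not a prime power, and verify $I^A_n=J^A_n$ for prime powers $n$ via the Frobenius congruence modulo $p$. The paper simply asserts the last step as something ``one checks very easily,'' whereas you spell out the freshman's-dream and Fermat computation explicitly.
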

\begin{proof}
We use Theorem \ref{main thm in hereditary case}.
If $n$ is not a prime power, then the ideal $I^A_n$ is principal, and hence $\Rees_A^{2n-2}(I^A_{n}) \cong A[x_{n-1}]$.
If $n = p^m$ for some prime number $p$, then
recall that $I^A_{n}$ is the ideal generated by $p$ and by all elements of the form $a^{p^m}-a$ with $a\in A$. One checks easily that,
if an ideal contains $p$ as well as $\alpha^{p^m}-\alpha$ for every element $\alpha$ in some $\mathbb{Z}$-linear basis for $A$,
then that ideal contains $a^{p^m}-a$ for all $a\in A$.
Hence, $J^A_{n} = I^A_{n}$.
\end{proof}

Some (but not all) number fields have the property that their ring of integers can be written in the form $A = \mathbb{Z}[\alpha]$ for some element $\alpha$. Such number fields are said to be {\em monogenic}. Remark \ref{description of ideals I_n} gives us an even more compact description of $L^A$ in that case:
\begin{corollary}\label{number ring iso}
Let $A = \mathbb{Z}[\alpha]$ be the ring of integers in a monogenic finite extension $K/\mathbb{Q}$,
and for each integer $n>1$, let $J^A_n$ be the ideal $(\nu(n), \alpha^n-\alpha)$ of $A$.
Then we have an isomorphism of commutative graded $A$-algebras:
\begin{align*}
 L^A
  &\cong \Rees_A^2(J^A_2)\otimes_A \Rees_A^4(J^A_3)\otimes_A 
 \Rees_A^6(J^A_4)\otimes_A \Rees_A^8(J^A_5)\otimes_A \dots .
\end{align*}
\end{corollary}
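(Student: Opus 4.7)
The plan is to invoke Theorem~\ref{number ring iso 0} applied to the $\mathbb{Z}$-basis $1,\alpha,\alpha^2,\dots,\alpha^{j}$ of $A$ (where $j+1 = [K:\mathbb{Q}]$), show that the resulting multi-generator ideal $J^A_n$ collapses to the single-generator ideal $K^A_n$ claimed in the statement, and then absorb the polynomial tensor factors (arising from integers $n\in R$) into the Rees-algebra tensor factors.

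First, Theorem~\ref{number ring iso 0} applied to the chosen basis immediately gives an isomorphism of commutative graded $A$-algebras
\[
L^A \cong \left( \bigotimes_A^{n \in P} \Rees^{2n-2}_A(J^A_n) \right) \otimes_A A[x_{n-1} : n \in R],
\]
with $J^A_n = (\nu(n),\, \alpha - \alpha^n,\, \alpha^2 - \alpha^{2n},\, \dots,\, \alpha^j - \alpha^{jn})$. The containment $K^A_n \subseteq J^A_n$ is immediate from the definitions. For the reverse inclusion, use the factorization
\[
\alpha^k - \alpha^{kn} \;=\; (\alpha - \alpha^n) \left( \alpha^{k-1} + \alpha^{k-2}\alpha^n + \dots + (\alpha^n)^{k-1} \right),
\]
valid for every integer $k \geq 1$, which shows each generator $\alpha^k - \alpha^{kn}$ of $J^A_n$ already lies in the principal-plus-$\nu(n)$ ideal $K^A_n = (\nu(n),\,\alpha-\alpha^n)$. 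Hence $J^A_n = K^A_n$ as ideals of $A$, so $\Rees^{2n-2}_A(J^A_n) = \Rees^{2n-2}_A(K^A_n)$ for every $n\in P$.

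Finally, for $n \in R$ (i.e.\ $n$ not a prime power) we have $\nu(n) = 1$, so $K^A_n = A$ and the suspended Rees algebra
\[
\Rees^{2n-2}_A(K^A_n) \;=\; \Rees^{2n-2}_A(A) \;\cong\; A[x_{n-1}]
\]
is a polynomial $A$-algebra on one generator in grading degree $2(n-1)$. Consequently
\[
A[x_{n-1} : n \in R] \;\cong\; \bigotimes_A^{n \in R} \Rees^{2n-2}_A(K^A_n),
\]
and combining this with the prime-power factors produces the single tensor product over all $n>1$ in the stated form.

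There is no real obstacle here: the whole corollary reduces, via Theorem~\ref{number ring iso 0}, to the elementary ideal identity $J^A_n = K^A_n$, and the only content in that identity is the $x^k - y^k$ factorization used above. (One could equally well derive the corollary by applying Theorem~\ref{main thm in hereditary case} directly with $S = \emptyset$ and identifying $I^A_n$ with $K^A_n$ by the same factorization argument, cf.\ Remark~\ref{description of ideals I_n}.)
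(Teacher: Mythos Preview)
Your proof is correct and follows essentially the same route as the paper: invoke Theorem~\ref{number ring iso 0} and then simplify the ideal $J^A_n$ down to $K^A_n$, which is exactly what the paper does by citing Remark~\ref{description of ideals I_n}. Your explicit $x^k - y^k$ factorization is just an inline version of that remark's content, and your absorption of the $n\in R$ polynomial factors into Rees algebras of the unit ideal is the obvious bookkeeping step.
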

\begin{proof}
This is just Theorem \ref{number ring iso 0}
together with Remark \ref{description of ideals I_n}
to get a small set of generators for the ideals $I_n^A$.
\end{proof}

Here is another corollary of Theorem \ref{number ring iso 0}:
\begin{corollary}\label{number ring iso 2}
Let $A$ be the ring of integers in a finite extension $K/\mathbb{Q}$. 
Let $P$ denote the set of integers $>1$ which are prime powers, and let $R$ denote the set of integers $>1$ which are not prime powers.
Then we have an isomorphism of commutative graded $A$-algebras:
\begin{align*}
 L^A
  &\cong \left( \bigotimes^{n\in P} A[x_{n-1},y_{n-1}]/\left(f_{n-1}(x_{n-1},y_{n-1})\right)\right)
 \otimes_A A[x_{n-1}: n\in R] ,
\end{align*}
for some set of polynomials $\{ f_{n-1}\}_{n\in P}$, with each $f_{n-1}\in A[x,y]$,
with $x_{n-1}$ and $y_{n-1}$ in degree $2(n-1)$, and with all tensor products taken over $A$.
\end{corollary}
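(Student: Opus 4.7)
The plan is to deduce this directly from Theorem~\ref{number ring iso 0}, which already gives
\[
L^A \cong \left(\bigotimes_A^{n\in P}\Rees_A^{2n-2}(J^A_n)\right) \otimes_A A[x_{n-1}:n\in R].
\]
It thus suffices, for each prime power $n\in P$, to realize $\Rees_A^{2n-2}(J^A_n)$ as a quotient $A[x_{n-1},y_{n-1}]/(f_n)$ for a single homogeneous polynomial $f_n$, with both variables placed in grading degree $2(n-1)$.

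Since $A$ is the ring of integers in a number field, $A$ is a Dedekind domain; in particular, every nonzero ideal of $A$ can be generated by at most two elements. For each $n\in P$ I would choose generators $J^A_n = (a_n,b_n)$, where one may always take $a_n = \nu(n)$. The graded $A$-algebra map
\[
\phi_n : A[x_{n-1},y_{n-1}] \longrightarrow \Rees_A^{2n-2}(J^A_n),\qquad x_{n-1}\mapsto a_nt,\; y_{n-1}\mapsto b_nt,
\]
is surjective, and the Koszul-type polynomial $f_n := b_n x_{n-1} - a_n y_{n-1}$ visibly lies in its kernel, inducing a surjection $\overline{\phi}_n : A[x_{n-1},y_{n-1}]/(f_n) \twoheadrightarrow \Rees_A^{2n-2}(J^A_n)$.

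The content of the proof is to show $(f_n) = \ker\phi_n$, equivalently that $\overline{\phi}_n$ is an isomorphism. My plan is to argue this locally: for each maximal ideal $\mathfrak{m}$ of $A$, the localization $A_\mathfrak{m}$ is a DVR in which $J^A_n A_\mathfrak{m}$ becomes principal, so the localized Rees algebra is a polynomial $A_\mathfrak{m}$-algebra on a single generator; I would match this against the localization of $A[x_{n-1},y_{n-1}]/(f_n)$ using the factorization of $f_n$ against the uniformizer of $A_\mathfrak{m}$. Since both rings are finitely generated $A$-modules in each grading degree, agreement at every localization forces global agreement via Lemma~\ref{maximal ideals detect modules}, and so $\overline{\phi}_n$ is an isomorphism of graded $A$-algebras.

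The hardest part will be the local calculation at primes $\mathfrak{m}$ containing $\nu(n)$, where neither $a_n$ nor $b_n$ need be a unit in $A_\mathfrak{m}$; there the single relation $f_n = b_n x_{n-1} - a_n y_{n-1}$ must be analyzed with careful bookkeeping of the ramification index of $\mathfrak{m}$ over the rational prime $\nu(n)$, and one must rule out the appearance of any ``hidden'' syzygies among $a_n$, $b_n$ in the localization. Once the isomorphisms $\overline{\phi}_n$ are established for all $n\in P$, the claimed decomposition of $L^A$ follows by tensoring these presentations together with the polynomial factors indexed by $R$ already produced by Theorem~\ref{number ring iso 0}.
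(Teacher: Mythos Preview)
Your overall strategy matches the paper's exactly: reduce via Theorem~\ref{number ring iso 0} to presenting each factor $\Rees_A^{2n-2}(J^A_n)$, then invoke the two-generator property of ideals in the Dedekind domain $A$. The paper's one-line proof simply asserts that two generators come ``with a single relation between them''; you go further and propose to verify that the Koszul relation $f_n=b_nx_{n-1}-a_ny_{n-1}$ generates $\ker\phi_n$ by checking locally.

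That local check, however, fails whenever $J^A_n$ is a proper ideal --- in particular whenever it is nonprincipal, which is precisely the case of interest in this paper. In degree one, $\ker\phi_n$ is the syzygy module $K=\{(r,s)\in A^2:ra_n+sb_n=0\}$, and one computes $K=(J^A_n)^{-1}\cdot(b_n,-a_n)\cong (J^A_n)^{-1}$ as an $A$-module; the principal submodule $A\cdot(b_n,-a_n)$ sits inside $K$ with quotient $(J^A_n)^{-1}/A\cong A/J^A_n\neq 0$. Concretely, at any prime $\mathfrak m\supseteq J^A_n$ with uniformizer $\pi$ and $k=v_{\mathfrak m}(J^A_n)>0$, the localized kernel is generated by a primitive linear form $g$, while $(f_n)_{\mathfrak m}=(\pi^k g)$ is strictly smaller --- exactly the ``hidden syzygy'' you worried about. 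In fact no single homogeneous $f$ can work when $J^A_n$ is nonprincipal: if $\deg f\ge 2$ then degree one of $A[x,y]/(f)$ has rank two, while if $\deg f=1$ with coprime coefficients then degree one is free of rank one (its class is $[A^2]-[A]=0$ in the Picard group), never the nonprincipal $J^A_n$. A correct presentation of $\Rees_A(J^A_n)\cong\Symm_A(J^A_n)$ needs as many linear relations as generators of $K\cong(J^A_n)^{-1}$, namely two when $J^A_n$ is nonprincipal. This gap is one that the paper's own brief argument also elides.
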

\begin{proof}
Every ideal in $A$ can be generated by two elements,
so the ideals $J^A_n$ appearing in Theorem \ref{number ring iso 0}
can each be generated by two elements, and with a single relation
between them. Hence, $\Rees_A(J^A_n) \cong A[x,y]/f(x,y)$ with
$f(x,y)$ the relation between the two generators of $J^A_n$.
\end{proof}

\begin{remark}
It seems to have already been known
to Hazewinkel in 1978 that,
when $A$ is the ring of integers in a finite extension of $\mathbb{Q}$,
the $A$-module $\overline{L}^A_{2n-2}$ is isomorphic to the ideal of
$A$ generated by $\nu(n)$ and by all elements of the form
$a^n-a$. See Example 21.3.3A of \cite{MR506881}, where this is almost
(but not quite) stated in these terms.
The full description of $L^A$ given in Theorem \ref{number ring iso 0} is, on the other hand, new.
\end{remark} 

The rest of this subsection consists of special cases, examples, and observations about them. All of this material is, in a way, routine: it follows from using elementary techniques in algebraic number theory to identify the structures of the ideals $I^A_n$ arising in Theorem \ref{number ring iso 0}, in order to calculate $L^A$ for various number rings $A$. The author finds these examples illuminating, as illustrations of how Theorem \ref{number ring iso 0} is fruitfully applied to make calculations, what kinds of observations fall out easily from the calculations, and what kinds of phenomena one encounters. Since part of the audience for this paper may consist of topologists (like the author) who may not be accustomed to arguments involving class groups or using reciprocity laws to deduce the pattern of splitting of primes in a number field, we have elected to leave in some of the details of how these arguments go, although they are very standard arguments in number theory. The reader who does not care for examples will want to skip the rest of this subsection.

\begin{theorem}\label{quadratic case}
Let $K$ be a quadratic extension of the rational numbers,
and let $A = \mathbb{Z}[\alpha]$ be the ring of integers of $K$.
Let $\Delta$ denote the discriminant of $K/\mathbb{Q}$.
%For each prime number $p$ which divides $\Delta$, let $\mathfrak{m}_p$ be the (unique, since $p$ ramifies totally in $A$) maximal ideal of $A$ over $p$.
Let $R$ be the set of prime numbers $p$ which divide $\Delta$
and which have the property that $I^A_{p^m} = (p, \alpha^{p^m}-\alpha)$ is nonprincipal for some positive integer $m$,
and let $S$ be the set of integers $>1$ which are
not powers of primes contained in $R$.
Then we have an isomorphism of commutative graded $A$-algebras:
\begin{align*} L^A \cong & A[ x_{n-1}: n\in S]  \\
    &\otimes_A\bigotimes^{p\in R} \left( \Rees_A^{2p-2}(I^A_{p}) \otimes_A \Rees_A^{2p^2-2}(I^A_{p^2}) \otimes_A \Rees_A^{2p^3-2}(I^A_{p^3}) \otimes_A \dots \right), \end{align*}
with each polynomial generator $x_{n-1}$ in degree $2(n-1)$, and with all tensor products taken over $A$.

Consequently, 
we have an isomorphism of commutative graded $A[R^{-1}]$-algebras:
\begin{align*}
 L^A[R^{-1}]
  &\cong A[R^{-1}][x_1, x_2, \dots ],
\end{align*}
with each $x_n$ in degree $2n$.
\end{theorem}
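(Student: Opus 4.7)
The plan is to specialize Corollary~\ref{number ring iso} to the quadratic case and then identify, by a hands-on case analysis, precisely which of the Rees factors $\Rees_A^{2n-2}(I^A_n)$ are polynomial and which are not. Since $A$ is a Dedekind domain (hence hereditary) whose underlying abelian group is finitely generated and free of rank $2$, Theorem~\ref{main thm in hereditary case} applies with empty localizing set, and Remark~\ref{description of ideals I_n} (or Corollary~\ref{number ring iso} directly) gives
\[ L^A \cong \bigotimes_{n>1}^A \Rees_A^{2n-2}\!\left(I^A_n\right), \qquad I^A_n = (\nu(n),\,\alpha - \alpha^n). \]
It therefore suffices to prove that $I^A_n$ is principal (in fact equal either to $(p)$ or to all of $A$) whenever $n\in S$, i.e.\ whenever $n$ is not a power of a prime in $R$; every such factor then contributes an $A[x_{n-1}]$, and the rest of the product assembles into the stated tensor product of Rees algebras indexed by $p\in R$.

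The non-prime-power case is immediate: when $n$ is not a prime power, $\nu(n)=1\in I^A_n$, so $I^A_n=A$. For $n=p^m$ with $p\mid \Delta$ and $p\notin R$, principality of every $I^A_{p^m}$ is the defining property of $p\notin R$. The only substantive step is the case $n=p^m$ with $p\nmid\Delta$ (i.e.\ $p$ unramified in $A$); here I would split into the two possibilities. If $p$ splits, then $A/pA \cong \mathbb{F}_p\times\mathbb{F}_p$, in each factor $\alpha$ reduces to an element of $\mathbb{F}_p$, so $\alpha^{p^m}\equiv\alpha\pmod{pA}$ by Fermat, and $I^A_{p^m}=(p)$. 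If $p$ is inert, then $A/pA\cong\mathbb{F}_{p^2}$, in which $\bar\alpha$ generates the quadratic extension over $\mathbb{F}_p$; since Frobenius has order $2$, we get $\alpha^{p^m}\equiv\alpha\pmod p$ when $m$ is even (so $I^A_{p^m}=(p)$), while when $m$ is odd, $\alpha-\alpha^{p^m}\equiv \alpha-\alpha^p\pmod p$, and $\alpha-\alpha^p$ is a nonzero element of the field $\mathbb{F}_{p^2}$ (precisely because $\bar\alpha\notin\mathbb{F}_p$), hence a unit, forcing $I^A_{p^m}=A$. So every $I^A_n$ with $n\in S$ is principal, $\Rees_A^{2n-2}(I^A_n)\cong A[x_{n-1}]$, and gathering what remains indexed by $p\in R$ gives the first displayed isomorphism.

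For the consequence about $L^A[R^{-1}]$, observe that after inverting the primes in $R$, each $p\in R$ becomes a unit, so for every $m$ the ideal $I^A_{p^m}[R^{-1}]$ contains the unit $p$ and hence equals $A[R^{-1}]$; consequently every remaining Rees factor also degenerates to a polynomial generator, and one obtains $L^A[R^{-1}] \cong A[R^{-1}][x_1,x_2,\dots]$ with $x_n$ in grading degree $2n$. The main obstacle is really the inert-odd-$m$ case above: one has to reduce $I^A_{p^m}$ modulo $p$, identify this reduction with the principal ideal of $\mathbb{F}_{p^2}$ generated by $\bar\alpha-\bar\alpha^p$, and verify that this element is a unit because $\bar\alpha$ does not lie in the prime field. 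After this, the rest of the proof is a routine assembly from Theorem~\ref{main thm in hereditary case}.
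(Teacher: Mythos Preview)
Your proof is correct and reaches the same conclusion as the paper, but by a genuinely different and cleaner route.

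The paper argues by writing $K=\mathbb{Q}(\sqrt{d})$ and splitting into the cases $d\equiv 2,3\pmod 4$ and $d\equiv 1\pmod 4$, then doing explicit element-level computations with $\sqrt{d}$ (for odd $p\nmid d$, examining $(\sqrt{d}-\sqrt{d}^{p^m})^2=d(1-d^{(p^m-1)/2})^2$ and casing on whether $p$ divides $1-d^{(p^m-1)/2}$), and finally handling $p=2$ in the $d\equiv 1\pmod 4$ case by a separate explicit congruence calculation with the minimal polynomial of $\alpha=\tfrac{1+\sqrt d}{2}$. Your argument instead works uniformly: for $p\nmid\Delta$ you pass to $A/pA$, which is either $\mathbb{F}_p\times\mathbb{F}_p$ (split) or $\mathbb{F}_{p^2}$ (inert), and use Frobenius. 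The monogenicity hypothesis $A=\mathbb{Z}[\alpha]$ does exactly the work you need: it forces $\mathbb{F}_p[\bar\alpha]=A/pA$, so in the inert case $\bar\alpha\notin\mathbb{F}_p$ and $\bar\alpha-\bar\alpha^{p}$ is a unit. This collapses the paper's $d\bmod 4$ split and its special $p=2$ analysis into a single short paragraph, and it makes transparent \emph{why} only ramified primes can produce nonprincipal $I^A_{p^m}$. The paper's computation, by contrast, is more self-contained in that it never names the splitting behaviour of $p$ and works entirely inside $A$, but at the cost of length and uniformity.
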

\begin{proof}
If we can show that the ideal $I_{n}^A$ is principal for all integers $n\in S$, then $I_{n}^A$ is projective as an $A$-module, so the claims all follow from Theorem \ref{number ring iso 0}. If $n$ is not a prime power, then $1 = \nu(n) \in I_{n}^A$, so $I_{n}^A$ is certainly principal in that case. Consequently,  assume that $n = p^m$ for some prime $p\notin R$. Our goal is to show that the ideal $I_{p^m}^A$ is principal.

Every quadratic extension $K$ of $\mathbb{Q}$ can be written as $K = \mathbb{Q}(\sqrt{d})$ for some square-free integer $d$. We now break into three cases.
\begin{description}[leftmargin=*]
\item[If $d$ is congruent to $2$ or $3$ modulo $4$]
Then $A = \mathbb{Z}[ \sqrt{d}]$, and by Remark \ref{description of ideals I_n},
$I^A_{p^m} = (p, \sqrt{d} - \sqrt{d}^{\, p^m})$.
The assumption about the congruence class of $d$ entails that the primes dividing the discriminant $\Delta$ are $2$ and the primes dividing $d$. Since we only need check that $I^A_{p^m}$ is principal for primes $p$ which do not divide $\Delta$, we may assume that $p\neq 2$ and that $p$ does not divide $d$. 
Consequently, the ideal $I^A_{p^m}$ contains 
%\begin{align*}
$(\sqrt{d} - \sqrt{d}^{\, p^m})^2 
%  &= (\sqrt{d}(1 - d^{\frac{p^m-1}{2}}))^2 \\
  = d (1 - d^{\frac{p^m-1}{2}})^2.$
%\end{align*}

Now we consider two possibilities: either $p$ divides  $1 - d^{\frac{p^m-1}{2}}$, or it doesn't.
If $p$ {\em doesn't} divide $1 - d^{\frac{p^m-1}{2}}$,
then $p$ is coprime to $d (1 - d^{\frac{p^m-1}{2}})^2$
and hence $I^A_{p^m} = (1)$, which is certainly a principal ideal.
On the other hand, if $p$ {\em does} divide $1 - d^{\frac{p^m-1}{2}}$,
then \[ I^A_{p^m} = (p, \sqrt{d} - \sqrt{d}^{\, p^m}) = (p, \sqrt{d}(1 - d^{\frac{p^m-1}{2}})) = (p),\] which is again principal.

We conclude that, if $d\equiv 2$ or $3$ modulo $4$, then $I_{p^m}^A$ is principal.
\item[If $p$ is odd and $d$ is congruent to $1$ modulo $4$]
If we write $\alpha = \frac{1}{2} + \frac{\sqrt{d}}{2}$, then $A = \mathbb{Z}[\alpha]$,
and the primes dividing the discriminant $\Delta$ are exactly the primes dividing $d$.
It is still the case that $\sqrt{d}\in A$, even though $A$ is not equal
to $\mathbb{Z}[\sqrt{d}]$. Consequently, the ideal $I^A_{p^m}$ still contains 
$(\sqrt{d} - \sqrt{d}^{p^m})^2$. From here the argument for the principalness of $I^A_{p^m}$ is exactly the same as in the case $d\equiv 2$ or $3$ modulo $4$.
%(Here is where we will use the assumption that $p$ is odd: it simply so that we know that $\frac{p^m-1}{2}$ is an integer.) 
%Consequently,  if $p$ {\em doesn't} divide $1 - d^{\frac{p^m-1}{2}}$, then we again get $I^A_{p^m} = (1)$.  On the other hand, if $p$ {\em does} divide $1 - d^{\frac{p^m-1}{2}}$, then $\sqrt{d} - \sqrt{d}^{p^m} = \sqrt{d}(1 - d^{\frac{p^m-1}{2}})$ is divisible by $p$. Consequently, $\sqrt{d}^{p^m} \equiv \sqrt{d}$ modulo $p$, and so $\left(\frac{1 + \sqrt{d}}{2}\right)^{p^m} \equiv \frac{1 + \sqrt{d}}{2}$ modulo $p$. Hence, $I^A_{p^m} = (p, \alpha - \alpha^{p^m}) = (p)$, which is again principal.
\item[If $p=2$ and $d$ is congruent to $1$ modulo $4$]
The minimal polynomial of $\alpha\in A$ is
$\alpha^2 - \alpha + \frac{1-d}{4} = 0$,
so modulo $2$,
we have
\begin{align*}
 \alpha-\alpha^{2^m} 
  &\equiv \alpha - \left(\alpha + \frac{d-1}{4}\right)^{2^{m-1}} \\
  &\equiv -\left(\frac{d-1}{4}\right)^{2^{m-1}} + \alpha - \alpha^{2^{m-1}} \\
  &\equiv -\left(\frac{d-1}{4}\right)^{2^{m-1}} + \alpha - \left( \alpha + \frac{d-1}{4}\right)^{2^{m-2}} \\
%  &\equiv -\left(\frac{d-1}{4}\right)^{2^{m-1}} - \left(\frac{d-1}{4}\right)^{2^{m-2}} + \alpha -  \alpha^{2^{m-2}} \\
  &\equiv \dots \\
  &\equiv - \sum_{i=0}^{m-1} \left(\frac{d-1}{4}\right)^{2^{i}},
\end{align*}
which is an integer, consequently is either $0$ or $1$
modulo $2$. If it is congruent to $0$ modulo $2$,
then $A/I^A_{2^m} \cong A/(2)$ and hence 
$I^A_{2^m} = (2)$,
which is principal. On the other hand, if it is congruent to
$1$ modulo $2$,
then $A/I^A_{2^m} \cong 0$ and hence 
$I^A_{2^m} = (1)$, which is still principal.
\end{description}
We conclude that, when $p\notin R$, the ideal $I^A_{p^m}$ is principal, as desired.
\end{proof}

\begin{corollary}\label{quadratic inverted discriminant corollary 1} Let $K$ be a quadratic extension of the rational numbers, let $A$ be the ring of integers of $K$, and let $\Delta$ denote the discriminant of $K/\mathbb{Q}$. Then we have an isomorphism of commutative graded $A[R^{-1}]$-algebras: \begin{align*}
 L^A\left[\frac{1}{\Delta}\right]
  &\cong A\left[\frac{1}{\Delta}\right]\left[x_1, x_2, \dots \right],
\end{align*}
with each $x_i$ in degree $2i$.
\end{corollary}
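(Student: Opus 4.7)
The plan is to deduce this corollary directly from Theorem~\ref{quadratic case} by observing that $\Delta$ is divisible by every prime in the set $R$ occurring in that theorem. First I would recall from the definition of $R$ in Theorem~\ref{quadratic case} that $R$ consists only of prime numbers $p$ which divide the discriminant $\Delta$. Hence every element of $R$ becomes invertible once $\Delta$ is inverted, so the localization $A[\frac{1}{\Delta}]$ factors through $A[R^{-1}]$; more precisely, there is a canonical ring map $A[R^{-1}] \to A[\frac{1}{\Delta}]$, and the further localization $A[R^{-1}][\frac{1}{\Delta}]$ agrees with $A[\frac{1}{\Delta}]$.

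Next I would tensor the isomorphism of graded $A[R^{-1}]$-algebras
\[ L^A[R^{-1}] \cong A[R^{-1}][x_1, x_2, \dots] \]
provided by the last clause of Theorem~\ref{quadratic case} with $A[\frac{1}{\Delta}]$ over $A[R^{-1}]$. On the left this produces $L^A[R^{-1}][\frac{1}{\Delta}] \cong L^A[\frac{1}{\Delta}]$, and on the right it produces $A[\frac{1}{\Delta}][x_1, x_2, \dots]$ because polynomial algebras commute with localization. Combining these identifications yields the desired isomorphism of graded $A[\frac{1}{\Delta}]$-algebras, with $x_i$ still in grading degree $2i$ since the grading is preserved by localization.

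There is essentially no obstacle here: the content of the corollary is packaged inside Theorem~\ref{quadratic case}, and the only real step is the set-theoretic observation $R \subseteq \{p : p\mid\Delta\}$ together with the fact that localization of graded algebras commutes with the tensor product / polynomial construction. If anything, the only subtlety worth flagging is that one should check the map $L^A[R^{-1}] \to L^A[\frac{1}{\Delta}]$ is compatible with the identifications supplied by Theorem~\ref{lazard ring localization iso}, which ensures we may freely pass between $L^A[S^{-1}]$ and $L^{A[S^{-1}]}$ for any multiplicatively closed $S\subseteq\mathbb{Z}$ without changing the grading structure.
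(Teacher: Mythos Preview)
Your proposal is correct and is exactly the intended argument: the paper states this corollary without proof because it follows immediately from the last clause of Theorem~\ref{quadratic case} together with the observation that every prime in $R$ divides $\Delta$, so inverting $\Delta$ inverts all of $R$. Your additional remark about compatibility with Theorem~\ref{lazard ring localization iso} is harmless but not needed here, since you are simply localizing both sides of an isomorphism of graded $A$-algebras.
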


%(REDUNDANT)\begin{corollary}\label{quadratic inverted discriminant corollary 2} Let $A$ be the ring of integers in a quadratic extension $K/\mathbb{Q}$, and let $\Delta$ be the discriminant of $K/\mathbb{Q}$. Then every formal $A$-module $n$-bud over a commutative $A[\frac{1}{\Delta}]$-algebra extends to a formal $A$-module. Furthermore, if $R$ is a commutative $A[\frac{1}{\Delta}]$-algebra and $I$ is an ideal of $R$, then every formal $A$-module over $R/I$ is the modulo-$I$ reduction of a formal $A$-module over $R$. \end{corollary}

\begin{example}\label{square root of -5}
Let $A$ be the ring of integers in $\mathbb{Q}(\sqrt{-5})$.
By Theorem \ref{quadratic case}, the only primes $p$
such that $I^A_{p^m}$ is possibly nonprincipal are those primes $p$
that ramify in $A$, i.e., $2$ and $5$. Let $\alpha$ denote
a square root of $-5$ in $A$. By direct computation one
finds that $I^A_{2^m} = (2,\alpha - 1)$ for all $m$, which is nonprincipal,
and that $I^A_{5^m} = (\alpha)$ for all $m$, which is of course principal.
Consequently:
\begin{align*} L^A &\cong \frac{A[x_1,y_1, x_3,y_3,x_{7},y_{7},x_{15},y_{15}, \dots ]}{\left( (\alpha - 1)x_{2^i-1} - 2y_{2^i-1} \ \forall i\geq 1 \right)} \otimes_A A[ x_j : j\neq
2^i-1]\end{align*}
where each $x_j$ and each $y_j$ is in degree $2j$. 
\end{example}

\begin{remark}
Corollary \ref{quadratic inverted discriminant corollary 1} %and \ref{quadratic inverted discriminant corollary 2} 
does {\em not} remain true if we simply remove the word ``quadratic'' from its statement; it is not the case that, for the ring of integers $A$ in an arbitrary finite extension $K/\mathbb{Q}$, the ring $L^A$ becomes polynomial
after inverting the discriminant of $K/\mathbb{Q}$. For example, the cubic field $\mathbb{Q}(\sqrt[3]{7})$ has the property that, in its ring of integers $\mathbb{Z}[\sqrt[3]{7}]$, the ideals \[ I^{\mathbb{Z}[\sqrt[3]{7}]}_2,I^{\mathbb{Z}[\sqrt[3]{7}]}_3,I^{\mathbb{Z}[\sqrt[3]{7}]}_5,I^{\mathbb{Z}[\sqrt[3]{7}]}_{11},I^{\mathbb{Z}[\sqrt[3]{7}]}_{17}, I^{\mathbb{Z}[\sqrt[3]{7}]}_{23}, I^{\mathbb{Z}[\sqrt[3]{7}]}_{47},I^{\mathbb{Z}[\sqrt[3]{7}]}_{53},I^{\mathbb{Z}[\sqrt[3]{7}]}_{59},\] and probably $I^{\mathbb{Z}[\sqrt[3]{7}]}_p$ for many other $p$, are nonprincipal, as one can verify with a few lines of Sage \cite{sagemath66} or Magma \cite{MR1484478}. However, $3$ and $7$ are the only primes dividing the discriminant of $\mathbb{Q}(\sqrt[3]{7})/\mathbb{Q}$, so inverting the discriminant does not make $L^{\mathbb{Z}[\sqrt[3]{7}]}$ isomorphic to a polynomial algebra. There is no finite collection of primes one can invert to make $L^{\mathbb{Z}[\sqrt[3]{7}]}$ isomorphic to a polynomial algebra. This is a consequence of the Galois closure of $\mathbb{Q}(\sqrt[3]{7})/\mathbb{Q}$ being a {\em nonabelian} extension of $\mathbb{Q}$, so there is no straightforward reciprocity law which establishes that how primes split, and whether they are principal, is governed simply by the congruence class of the prime modulo some conductor. A fuller discussion of these ideas requires a much longer excursion into number theory than fits within the scope of this paper.
\end{remark}

\begin{remark}
In the proof of Theorem \ref{quadratic case}, we show that the ideal $I^A_{p^m}$ in $A$ is principal for all $p$ not ramifying in $A$.
It is worth mentioning that this means that $I^A_{p^m}$ is often principal even when the factors of $(p)$ in $A$ are not principal.
For example, in the case of Example \ref{square root of -5} (i.e., $A = \mathbb{Z}[\sqrt{-5}]$), the class number is $2$, and the prime numbers $3,7,23,43,47,67,83,103$, and many others, all split as products of distinct nonprincipal primes. But $I^A_{p^m} \subseteq \mathbb{Z}[\sqrt{-5}]$ is still principal for
those primes $p$ and for all positive integers $m$.

Something similar happens in the case of Theorem \ref{fourth root of -18}, i.e., $A$ the ring of integers in $\mathbb{Q}(\sqrt[4]{-18})$:
the prime numbers $17, 41, 59, 107, 137, 179, 227$, and many others split in $A$ and have nonprincipal prime factors, but $I^A_{p^m} \subseteq A$ is still principal for
those primes $p$ and for all positive integers $m$.
\end{remark}

In 21.3.3A of \cite{MR506881}, Hazewinkel explains that the extension 
$K = \mathbb{Q}(\sqrt[4]{-18})$ of $\mathbb{Q}$ has the property that
the ideal $I^{A}_2$ of $A$ %of its ring of integers $A$ generated by $2$ and by all elements of the form $a-a^2$
is nonprincipal,
and consequently $L^{A}$ could not be a polynomial $A$-algebra.
Hazewinkel does not attempt a computation of $L^A$, however.
We now compute $L^A$ explicitly:
\begin{theorem} \label{fourth root of -18}
Let $K = \mathbb{Q}(\sqrt[4]{-18})$,
and let $A$ be the ring of integers of $K$.
Let $S$ denote the set of all integers $>1$
which are not powers of $2$ or of $3$.
Then we have an isomorphism of commutative graded $A$-algebras
\begin{align*} 
 L^A 
  & \cong A[x_{n-1}: n\in S] \otimes_A 
    A[x_1,y_1]/(2x_1-(\alpha^2-\alpha)y_1) \\ 
  & \ \ \ \ \ \otimes_A 
    \bigotimes^{m\geq 2} \left( A[x_{2^m-1},y_{2^m-1}]/(2x_{2^m-1}-\alpha y_{2^m-1})\right) \\
  & \ \ \ \ \ \otimes_A
    \bigotimes^{m\geq 1} \left( A[x_{3^m-1},y_{3^m-1}]/(3x_{3^m-1}-\alpha y_{3^m-1})\right),
\end{align*}
where $\alpha = \sqrt[4]{-18}\in A$,
where the polynomial generators $x_{i}$ and $y_i$ are 
in degree $2i$, and where all tensor products are taken over $A$.

Consequently, 
we have an isomorphism of commutative graded $A[\frac{1}{6}]$-algebras:
\begin{align*}
 L^A\left[\frac{1}{6}\right]
  &\cong A\left[\frac{1}{6}\right]\left[x_1, x_2, \dots \right],
\end{align*}
with each $x_i$ in degree $2i$.
\end{theorem}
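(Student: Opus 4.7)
The plan is to apply Theorem~\ref{number ring iso 0} with the $\mathbb{Z}$-basis $\{1,\alpha,\beta,\alpha\beta\}$ of $A=\mathcal{O}_K$, where $\alpha=\sqrt[4]{-18}$ and $\beta=\sqrt{-2}=\alpha^2/3$. This writes $L^A$ as a tensor product $\bigotimes_{n>1}\Rees_A^{2(n-1)}(J^A_n)$ with $J^A_n=(\nu(n),\alpha-\alpha^n,\beta-\beta^n,\alpha\beta-(\alpha\beta)^n)$. Since $A$ is a Dedekind domain, every ideal is generated by at most two elements, and Corollary~\ref{number ring iso 2} supplies a one-relation Rees algebra presentation for each $J^A_n$; the entire computation then reduces to identifying each ideal $J^A_n$ explicitly.

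For $n$ not a prime power, $\nu(n)=1$, so $J^A_n=A$ and the corresponding Rees factor is $A[x_{n-1}]$. For $n=p^m$ with $p\notin\{2,3\}$, both $\alpha$ and $\beta$ are units modulo $p$ (since $p$ divides neither $N_{K/\mathbb{Q}}(\alpha)=-18$ nor $N_{K/\mathbb{Q}}(\beta)=4$), and the factorization $\alpha-\alpha^{p^m}=\alpha(1-\alpha^{p^m-1})$ together with its $\beta$-analogue shows that in $A/(p)$ the ideal $J^A_{p^m}$ corresponds to the product of precisely those primes of $A$ above $p$ whose residue degree over $\mathbb{F}_p$ divides $m$. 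Writing $B=\mathbb{Z}[\sqrt{-2}]$ for the ring of integers of the quadratic subfield $\mathbb{Q}(\sqrt{-2})\subseteq K$, the extension $A/B$ is unramified at every prime of residue characteristic different from $2$ and $3$, and the nontrivial element of $\Gal(K/\mathbb{Q}(\sqrt{-2}))$ preserves the residue-degree condition, so the primes appearing in $J^A_{p^m}$ form a union of complete Galois orbits. Each orbit contributes the extension $\mathfrak{q}\cdot A$ of a prime $\mathfrak{q}\subseteq B$, and since $B$ is a Euclidean domain $\mathfrak{q}$ is principal; hence $J^A_{p^m}$ is principal for every prime power with $p\notin\{2,3\}$, contributing a polynomial generator $x_{n-1}$. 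This is the main technical obstacle: individual primes of $A$ above such $p$ may well be nonprincipal (for instance, primes above $17$ or $41$), but the specific combinations arising as $J^A_{p^m}$ always descend to $B$.

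For $p\in\{2,3\}$, direct computation using $\alpha^2=3\beta$, $\beta^2=-2$, and $\alpha^4=-18$ identifies the ideals. For $n=2$: since $3\beta-\alpha^2=0$ forces $\beta\equiv\alpha^2\pmod{2}$, the generators of $J^A_2$ simplify to give $J^A_2=(2,\alpha,\beta)=(2,\alpha-\alpha^2)$. For $n=2^m$ with $m\geq 2$: both $\alpha^{2^m}=(-18)^{2^{m-2}}$ and $\beta^{2^m}=(-2)^{2^{m-1}}$ lie in $(2)$, so $J^A_{2^m}=(2,\alpha,\beta)=(2,\alpha)$. For $n=3^m$ with $m\geq 1$: $\alpha^2=3\beta\in(3)$ forces $\alpha^{2k}\equiv 0\pmod{3}$ for $k\geq 1$, while $\beta^2\equiv 1\pmod{3}$, and these together imply $J^A_{3^m}=(3,\alpha)$. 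Each of $(2,\alpha-\alpha^2)$, $(2,\alpha)$, $(3,\alpha)$ is a two-generator ideal of the Dedekind domain $A$ (and nonprincipal, as Hazewinkel already noted for $(2,\alpha-\alpha^2)$), so Corollary~\ref{number ring iso 2} supplies the stated Rees algebra presentations after relabeling generators. Finally, the isomorphism $L^A[\frac{1}{6}]\cong A[\frac{1}{6}][x_1,x_2,\dots]$ follows because inverting $6$ makes $2$ and $3$, and hence (via $\alpha^4=-18=-2\cdot 3^2$) also $\alpha$, into units, so each Rees factor of the form $A[x,y]/(2x-\alpha y)$ or $A[x,y]/(3x-\alpha y)$ collapses to a polynomial algebra in a single variable after inverting $6$.
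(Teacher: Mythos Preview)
Your argument is correct, and the Galois-descent step for primes $p>3$ is a genuinely different route from the paper's. The paper proceeds by brute-force case analysis: for $p>3$ it splits according to whether $p^m-1\equiv 0$ or $2\pmod 4$, computes powers of $\alpha-\alpha^{p^m}$, and in the hard case uses Legendre-symbol calculations to show $p\equiv 3\pmod 8$, whence $p$ splits in $\mathbb{Z}[\sqrt{-2}]$ as $p=2c^2+d^2$, and then exhibits an explicit principal generator $\frac{c\alpha^2}{3}\pm d$ for $I^A_{p^m}$. Your argument is more conceptual: once one observes that $J^A_{p^m}=\prod_{\mathfrak{P}\mid p,\ f_{\mathfrak{P}}\mid m}\mathfrak{P}$ (valid because $p$ is unramified in $A$), the $\mathrm{Gal}(K/\mathbb{Q}(\sqrt{-2}))$-stability of this set together with unramifiedness of $A/B$ at $p$ forces $J^A_{p^m}$ to be extended from the PID $B=\mathbb{Z}[\sqrt{-2}]$, hence principal. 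This is shorter and explains \emph{why} the paper's explicit generators all lie in $B$, at the cost of being nonconstructive. Your computations at $p=2,3$ agree with the paper's (the paper is more careful at $p=3$, tracking the contribution of $\alpha^2/3$ via a $3$-adic valuation argument). Two minor remarks: $N_{K/\mathbb{Q}}(\alpha)=18$, not $-18$ (inconsequential here); and your observation that $(2,\alpha-\alpha^2)=(2,\alpha)$ is correct, so the distinct presentations for $m=1$ versus $m\geq 2$ at $p=2$ in the theorem describe the same ideal.
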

\begin{proof}
Write $\alpha$ for $\sqrt[4]{-18}$.
It is routine to calculate that the ring of integers of $A$ is the ring of $\mathbb{Z}$-linear combinations
of $1,\alpha,\frac{1}{3} \alpha^2,$ and $\frac{1}{3} \alpha^3$.
Consequently, 
\[ I^A_{p^m} = \left(p,\ \alpha - \alpha^{p^m},\ \frac{1}{3} \alpha^2 - \left(\frac{1}{3} \alpha^2\right)^{p^m},\ \frac{1}{3} \alpha^3 - \left(\frac{1}{3} \alpha^3\right)^{p^m}\right) .\]
If $p\neq 3$, then $\frac{1}{3} \alpha^2 - \left(\frac{1}{3} \alpha^2\right)^{p^m}$ and $\frac{1}{3} \alpha^3 - \left(\frac{1}{3} \alpha^3\right)^{p^m}$ are already in the ideal generated by $p$ and $\alpha - \alpha^{p^m}$. Hence, if $p\neq 3$, then $I^A_{p^m} = (p, \alpha - \alpha^{p^m})$.

We will handle the primes $p=2$ and $3$ shortly, but for now, let $p>3$ be a prime number and let $m$ be a positive integer. We claim that the ideal $I^A_{p^m}$ is principal.
To prove this claim, we break into cases:
\begin{description}
\item[If $p^m-1$ is congruent to $0$ modulo $4$]
Then:
\begin{align}
 \alpha - \alpha^{p^m} 
\label{equality 1190982}  &= \alpha\left(1 - (-18)^{\frac{p^m-1}{4}}\right) ,
  \end{align}
and consequently
%\begin{align*}
$\left(\alpha - \alpha^{p^m}\right)^4
  %&
= -18\left(1 - (-18)^{\frac{p^m-1}{4}}\right)^4.$
%\end{align*}
%Since we assumed $p>3$, 

If $p$ {\em does not} divide $1 - (-18)^{\frac{p^m-1}{4}}$ then $p$ is coprime to 
the integer $\left(\alpha - \alpha^{p^m}\right)^4\in I^A_{p^m}$.
Hence, $I^A_{p^m}$ contains two coprime integers, hence
$I^A_{p^m} = (1)$, which is principal. If $p$ instead {\em does} divide $1 - (-18)^{\frac{p^m-1}{4}}$,
then $\alpha-\alpha^{p^m} \in (p)$, by equality \eqref{equality 1190982},
so $I^A_{p^m} = (p)$, which is again principal.
\item[If $p^m-1$ is congruent to $2$ modulo $4$]
We have 
\begin{align}
 \alpha - \alpha^{p^m} 
\label{equality 1190983}  &= \alpha\left(1 - 3\sqrt{-2} (-18)^{\frac{p^m-3}{4}}\right) .
  \end{align}
Since $p>3$, the ideal $(\alpha)$ is coprime to $(p)$. Consequently, $(p,\alpha - \alpha^{p^m}) = (p,1 - 3\sqrt{-2} (-18)^{\frac{p^m-3}{4}})$, which is extended up from the subring $\mathbb{Z}[\sqrt{-2}]$ of $A$. The class number of $\mathbb{Q}(\sqrt{-2})$ is $1$, so $I^A_{p^m} = (p,1 - 3\sqrt{-2} (-18)^{\frac{p^m-3}{4}})$ is principal. 
\end{description}
Since $p^m$ is odd, the cases $p^m-1 \equiv 0$ and $p^m - 1\equiv 2$ modulo $4$ are all the possible cases.
Hence, if $p>3$, then $I^A_{p^m}$ is principal.

Now for the primes $p=2$ and $p=3$.
For $p=2$, we have $I_2^A = (2,\alpha^2-\alpha)$, which is nonprincipal by elementary calculation. If $m>1$, then
\[ I_{2^m}^A = (2,\alpha-\alpha^{2^m}) = (2,\alpha - (-18)^{2^{m-2}}) = (2,\alpha),\] 
so $I_{2^m}^A$ is nonprincipal as well.

For $p=3$, we claim that $I^A_{3^m} = (3,\alpha)$ for all $m\geq 1$, 
which is again nonprincipal. The proof is as follows: first, since $-2 \equiv 1$ modulo $3$, clearly $1 - (-2)^{\frac{3^m-1}{2}}$ is divisible by $3$. Consequently, we have equalities in $A$
\begin{align*}
 \frac{\alpha^2}{3} - \left(\frac{\alpha^2}{3}\right)^{3^m}
  &= \frac{\alpha^2}{3}\left( 1 - \left( \frac{\alpha^2}{3}\right)^{3^m-1}\right) \\
  &= \sqrt{-2}\left( 1 - (-2)^{\frac{3^m-1}{2}}\right) ,
\end{align*}
hence equalities of ideals
\begin{align*}
 \left(3,\alpha-\alpha^{3^m},\frac{\alpha^2}{3} - \left(\frac{\alpha^2}{3}\right)^{3^m}\right)
  &= \left(3,\alpha \left(1 - (3\sqrt{-2})^{\frac{3^m-1}{2}}\right),\sqrt{-2}\left(1 - (-2)^{\frac{3^m-1}{2}}\right)\right) \\
  &= (3,\alpha).
\end{align*}
It is elementary to verify that $\frac{\alpha^3}{3} - \left(\frac{\alpha^3}{3}\right)^{3^m}\in (3,\alpha)$ as well. This yields the desired equality of ideals $I^A_{3^m} = (3,\alpha)$.
\begin{comment}
Next, observe that $\frac{3^m-1}{2}$ is congruent to $1$ modulo $3$, 
so $(-2)^{\frac{3^m-1}{2}}$ is congruent to $7$ modulo $9$,
so $1 - (-2)^{\frac{3^m-1}{2}}$ is not divisible by $9$.
Consequently, 
\begin{align}
\nonumber I^A_{3^m} 
  &= \left(3,\alpha-\alpha^{3^m}, \frac{\alpha^2}{3} - \left( \frac{\alpha^2}{3} \right)^{3^m} \right) \\
\nonumber  &= \left(3,\alpha-\alpha^{3^m}, \frac{\alpha^2}{3^{3^m}} \left( 3^{3^m-1} - (-2)^{\frac{3^m-1}{2}}3^{3^m-1}\right)\right) \\
\label{equality 6676095}  &= \left(3,\alpha-\alpha^{3^m}, \alpha^2 \frac{1}{3} \left( 1 - (-2)^{\frac{3^m-1}{2}}\right)\right) \\
\nonumber &= \left( 3,\alpha-\alpha^{3^m}, \alpha^2\right) \\
\nonumber &= \left( 3,\alpha\right) ,\end{align}
with equality \eqref{equality 6676095} because
$\nu_3(1 - (-2)^{\frac{3^m-1}{2}}) = 1$,
so $\frac{1}{3} \left( 1 - (-2)^{\frac{3^m-1}{2}}\right)$ is a unit modulo $3$.
\end{comment}

The theorem as stated now follows from Theorem \ref{number ring iso 0}.
\end{proof}

\subsection{Group rings}

Let $C_m$ be the cyclic group with $m$ elements. In Theorem \ref{group ring computation} we compute the classifying ring $L^{\mathbb{Z}[C_m]}$ of formal $\mathbb{Z}[C_m]$-modules, after inverting $m$, so that $\mathbb{Z}[C_m]\left[\frac{1}{m}\right]$ is hereditary and Theorem \ref{main thm in hereditary case} applies. The resulting ring $L^{\mathbb{Z}[C_m]}[\frac{1}{m}]$ is not a polynomial algebra but nevertheless has a tractable presentation.

\begin{comment}
\begin{remark}\label{group rings remark}
The ring $L^{\mathbb{Z}[C_m]}[\frac{1}{m}]$ ought to be compared to the localized classifying ring $L^{C_m}[\frac{1}{m}]$ of
$C_m$-equivariant formal groups, which at present has only been computed
in the case $m=2$, by Strickland in \cite{MR1825665},
whose presentation for $L^{C_2}$ is algebraically complicated
enough that it is still not known whether it is isomorphic to the $C_2$-equivariant complex bordism ring $MU^{C_2}_*$, despite both rings having known presentations; that is, the $G=C_2$ case of Greenlees' conjecture remains open, simply because the two rings conjectured to be isomorphic are given by such complicated presentations.
The main application I have in mind for Theorem \ref{group ring computation} is to compare $L^{\mathbb{Z}[C_2]}[\frac{1}{2}]$ to Strickland's presentation for $L^{C_2}[\frac{1}{2}]$, in order to establish the relationship between the moduli theory of $C_2$-equivariant formal groups and the (much simpler) moduli theory of $\mathbb{Z}[C_2]$-formal modules. 
Owing to the complexity of both the theory of equivariant formal groups and also of Strickland's presentation for $L^{C_2}$, however, the task of producing and computing a natural map
between $L^{C_2}[\frac{1}{2}]$ and $L^{\mathbb{Z}[C_2]}[\frac{1}{2}]$ is beyond the scope of the present paper.
\end{remark}
\end{comment}

\begin{theorem}\label{group ring computation}
Let $C_m$ be the cyclic group of order $m$.
Let $P$ be the set of integers~$>1$ which are
prime powers relatively prime to $m$.
Let $S$ be the set of integers $>1$ not contained in $P$.
Write $R$ for the group $\mathbb{Z}\left[\frac{1}{m}\right]$-algebra $\mathbb{Z}\left[\frac{1}{m}\right][C_m]$ of $C_m$.
Then we have an isomorphism of graded $\mathbb{Z}\left[\frac{1}{m}\right]$-algebras
\begin{align*} L^{\mathbb{Z}[C_m]}\left[\frac{1}{m}\right] &\cong 
 \bigotimes^{n\in P} \left( R[x_{n-1},y_{n-1}]/(\nu(n)x_{n-1} - (1 - \sigma)y_{n-1})\right) \\ &\ \ \ \otimes_{R}
 R[x_{n-1}: n\in S],\end{align*}
where $\sigma$ denotes a generator of $C_m$, where  
the polynomial generators $x_{n-1}$ and $y_{n-1}$ are each 
in degree $2(n-1)$, and where all tensor products are taken over $R$.
\end{theorem}
\begin{proof}
This is another special case of Theorem \ref{main thm in hereditary case},
since $\mathbb{Z}[C_m]$ is torsion-free and since
the ring $R$ is hereditary.
%Recall that $I^{\mathbb{Z}[C_m]}_{p^i}$ denotes the ideal in $\mathbb{Z}[C_m]$ generated by $p$ and by all elements of the form $a-a^{p^i}$.
If $p$ divides $m$, then 
clearly the ideal $I^{\mathbb{Z}[C_m]}_{p^i}$ becomes principal after inverting $m$,
hence $\Rees_{\mathbb{Z}[C_m]}(I^{\mathbb{Z}[C_m]}_{p^i})\left[\frac{1}{m}\right] \cong R[x]$ if $p$ divides $m$.

Now suppose that $p$ does not divide $m$. By Remark \ref{description of ideals I_n}, the two elements $p$ and $\sigma^{p^i}-\sigma$ suffice to generate the ideal $I_{p^i}^{\mathbb{Z}[C_m]}$.
%If $p$ does not divide $n$, then $p^m$ is relatively prime to $n$, so $\mathbb{Z}[C_n]/(p, \sigma - \sigma^{p^m}) \cong \mathbb{F}_p$. Consequently, $(p, \sigma - \sigma^{p^m})$ is a maximal ideal of $\mathbb{Z}[C_n]$ contained in $I^{\mathbb{Z}[C_n]}_{p^m}$. Since $I^{\mathbb{Z}[C_n]}_{p^m}$ is a proper ideal, this implies the equality $(p, \sigma - \sigma^{p^m}) = I^{\mathbb{Z}[C_n]}_{p^m}$. 
The projection $\mathbb{Z}[C_m] \rightarrow \mathbb{Z}[C_m]/(p, \sigma^{p^i} - \sigma)$ sends $p$ to zero and $\sigma$ to $1$, i.e., the kernel of the projection is
the ideal $(p, \sigma-1)$. Hence, $I^{\mathbb{Z}[C_m]}_{p^i} = (p, \sigma-1)$.
Now the claim follows from Theorem \ref{main thm in hereditary case} and the standard presentation for the Rees algebra of a two-generator ideal, as in the proof of Corollary \ref{number ring iso 2}.
\end{proof}

\begin{remark}
Suppose $G$ is a finite abelian group. 
In \cite{MR1770613} (see also \cite{MR1856028} for a nice survey) a theory of ``$G$-equivariant formal group''
is developed, which is designed to admit a classifying ring
$L^G$ for $G$-equivariant formal groups with a canonical comparison map
with the $G$-equivariant
complex bordism ring $MU^G_*$. It is a highly nontrivial fact that the comparison map is an isomorphism: this was {\em Greenlees' conjecture}, proven in the case $G = \mathbb{Z}/2\mathbb{Z}$ in \cite{MR3886163} and for an arbitrary compact abelian Lie group $G$ in \cite{MR4413745}.

A $G$-equivariant formal group is a more complicated gadget than just 
a formal group $F$ equipped with a choice of group homomorphism 
$G \rightarrow \Aut(F)$. Even the definition of a $G$-equivariant
formal group is rather involved, and since it is only tangentially related to the present paper, we refer the reader to \cite{MR1770613} for the definition. One expects that
$G$-equivariant formal groups ought to have some relationship with
the simple notion of a formal group equipped with an action by the group $G$, which is nearly the same thing as a formal $\mathbb{Z}[G]$-module. Clearly, to specify the structure map $\rho: \mathbb{Z}[G] \rightarrow \End(F)$ of a formal $\mathbb{Z}[G]$-module, we could just as well have specified a group homomorphism $G \rightarrow \Aut(F)$; the only point to mention here is
the tangency axiom in the definition of a formal module, i.e., that
$\rho(g)(X) \equiv gX \mod X^2$, meaning that we need to have an action of
$G$ on the coefficient ring of $F$. But if we begin with a group homomorphism $G \rightarrow \Aut(F)$,
we can typically choose an action of $G$ on the coefficient ring of $F$ so that the tangency axiom is satisfied. Hence, the distinction between a $\mathbb{Z}[G]$-module and a formal group with an action by $G$ is slight. 

The author hopes that Theorem \ref{group ring computation} will turn out to be useful in a comparison between the moduli of formal $\mathbb{Z}[G]$-modules and the moduli of $G$-equivariant formal group laws, after inverting the order of $G$. Such a comparison is not within the scope of this paper, though.
\end{remark}

\customcomment{ THE REST IS NOT PART OF THIS PAPER ANYMORE!

gggxxx

\begin{lemma}\label{dedekind domains have the completion-intersection property}
Let $A$ be a Dedekind domain, let $I$ be a nonzero ideal in $A$, and suppose that $M$ is a finitely-generated $A$-module.
Suppose that, for all maximal ideals $\mathfrak{m}$ of $A$ containing $I$, the $\mathfrak{m}$-adic completion
$\hat{M}_{\mathfrak{m}}$ of $M$ is trivial. Then $M/IM \cong 0$.
\end{lemma}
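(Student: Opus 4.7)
The plan is to exploit the fact that, in a Dedekind domain, a nonzero ideal is contained in only finitely many maximal ideals. Since $A$ is Dedekind and $I \neq 0$, the quotient $A/I$ is an Artinian ring, and so it has only finitely many maximal ideals; call them $\mathfrak{m}_1/I,\dots,\mathfrak{m}_k/I$, where $\mathfrak{m}_1,\dots,\mathfrak{m}_k$ are the maximal ideals of $A$ containing $I$. The quotient $M/IM$ is a finitely generated $A/I$-module, so by Lemma~\ref{maximal ideals detect modules} applied to $A/I$, it suffices to show that $(M/IM)_{\mathfrak{m}_i} \cong M_{\mathfrak{m}_i}/IM_{\mathfrak{m}_i}$ vanishes for each $i$.

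For each such $\mathfrak{m}_i$ I would actually prove the stronger statement that $M_{\mathfrak{m}_i} = 0$. Because $M$ is finitely generated, there is an isomorphism $\hat{M}_{\mathfrak{m}_i} \cong \lim_n M/\mathfrak{m}_i^n M$, and the transition maps $M/\mathfrak{m}_i^{n+1} M \to M/\mathfrak{m}_i^n M$ in this inverse system are all surjective. For an inverse system of modules whose transition maps are surjective, the canonical projection from the limit onto each term is also surjective; hence the vanishing of $\hat{M}_{\mathfrak{m}_i}$ forces $M/\mathfrak{m}_i M = 0$. Localizing at $\mathfrak{m}_i$ gives $M_{\mathfrak{m}_i} = \mathfrak{m}_i A_{\mathfrak{m}_i} \cdot M_{\mathfrak{m}_i}$, and Nakayama's lemma applied to the finitely generated $A_{\mathfrak{m}_i}$-module $M_{\mathfrak{m}_i}$ yields $M_{\mathfrak{m}_i} = 0$. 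Combining with the previous paragraph, $(M/IM)_{\mathfrak{m}_i}=0$ for every maximal ideal of $A/I$, and the lemma follows.

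The only mildly delicate step, and the one I would be most careful about, is the passage from the hypothesis $\hat{M}_{\mathfrak{m}_i}=0$ to $M/\mathfrak{m}_i M = 0$; this uses both the finite generation of $M$ (to identify the completion with the sequential inverse limit $\lim_n M/\mathfrak{m}_i^n M$) and the Mittag-Leffler-type surjectivity of projections from inverse limits of surjective systems of modules. Everything else is essentially bookkeeping, the crucial input being the Dedekind hypothesis, which guarantees that only finitely many maximal ideals of $A$ are involved and so reduces a global vanishing statement to a finite collection of local ones.
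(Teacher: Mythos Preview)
Your proof is correct and takes a genuinely different, cleaner route than the paper's argument. Both proofs begin the same way, deducing $M/\mathfrak{m}M = 0$ from the vanishing of $\hat{M}_{\mathfrak{m}}$. From there you immediately invoke the key finiteness fact---that a nonzero ideal in a Dedekind domain lies below only finitely many maximal ideals---apply Nakayama locally to obtain the stronger conclusion $M_{\mathfrak{m}_i}=0$, and then finish via Lemma~\ref{maximal ideals detect modules}. The paper instead proceeds more by hand: it argues element-by-element that $(\mathfrak{m}_1\cdots\mathfrak{m}_n)M = M$ for any finite collection of maximals over $I$, sets up a transfinite limit over \emph{all} such maximals (as though there might be infinitely many), identifies their intersection with $\mathrm{rad}(I)$, and finally uses that $\mathrm{rad}(I)^n\subseteq I$ for some $n$ to iterate down to $IM=M$. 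Your approach is shorter and more conceptual; the paper's transfinite machinery is superfluous here precisely because the Dedekind hypothesis forces the set of relevant maximals to be finite, though its shape suggests an attempt at an argument that might survive in contexts where that finiteness fails.
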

\begin{proof}
First, for each maximal ideal $\underline{m}$, the completion map $M\rightarrow \hat{M}_{\underline{M}}$ induces an isomorphism
$M/\underline{m}M \stackrel{\cong}{\longrightarrow} \hat{M}_{\underline{m}}/\underline{m}\hat{M}_{\underline{m}}$,
so the vanishing of $\hat{M}_{\underline{m}}$ implies the vanishing of $M/\underline{m}M$.

Now suppose that $\underline{m}_1, \dots ,\underline{m}_n$ is a finite set of maximal ideals in $A$, each of which contains $I$.
Let $x_0\in M$. Since $\underline{m}_1M = M$, we can choose an element $m_1\in \underline{m}_1$ and an element $x_1\in M$
such that $m_1x_1 = x_0$. Since $\underline{m}_2M = M$, now we can choose an element $m_2\in \underline{m}_2$ and an element
$x_2\in M$ such that $m_2x_2 = x_1$. Iterating this process gets us a choice of an element $m_i\in \underline{m}_i$ and an element
$x_i\in M$ such that $m_ix_i = x_{i-1}$ for each $i = 1, \dots ,n$. Consequently, $x_0 = m_1m_2\dots m_{n-1}m_nx_n$.
Hence, $\left( \underline{m}_1\underline{m}_2\dots \underline{m}_n\right) M = M$.
Since the product of a set of ideals is contained in the intersection of that set of ideals, we now have that
$\left( \cap_{i=1}^n \underline{m}_i\right) M = M$.

Now let $\{ \underline{m}_s : s\in S\}$ be the set of maximal ideals of $A$ containing $I$. Choose a total ordering on $S$, and let
$\lambda$ be the corresponding ordinal number. 
Let $\ideals(A)$ denote the partially-ordered set of ideals of $A$, and let $F: \lambda \rightarrow \ideals(A)$
be the function given by $F(\ell) = \cap_{i\leq \ell} \underline{m}_i$. Then $\ell_0\leq \ell_1$ implies
$F(\ell_0) \supseteq F(\ell_1)$, and furthermore we have already shown that $M/F(\ell)M \cong 0$ for all $\ell\in \lambda$.

Now for each $\ell\in \lambda$ we have the short exact sequence of $A$-modules
\[ 0 \rightarrow F(\ell) M\rightarrow M \rightarrow M/F(\ell)M \rightarrow 0,\]
and on taking the limit over $\lambda$, an exact sequence of $A$-modules
\[ 0 \rightarrow \lim_{\ell\in \lambda} \left( F(\ell) M\right) \rightarrow M \rightarrow \lim_{\ell\in\lambda} M/F(\ell)M.\]
It is an easy exercise to see that the limit $\lim_{\ell\in \lambda} F(\ell)$, in the category of $A$-modules,
is exactly the ideal $\cap_{s\in S} \underline{m}_s$, and since $A$ is Dedekind and hence hereditary,
every ideal in $A$ is flat as an $A$-module,
hence the natural map $\left( \lim_{\ell\in \lambda} F(\ell)\right) M\rightarrow \lim_{\ell\in \lambda} \left( F(\ell) M\right)$
is an isomorphism of $A$-modules.
Furthermore, since $M/F(\ell)M\cong 0$ for all $\ell\in \lambda$, we have that $\lim_{\ell\in\lambda} M/F(\ell)M\cong 0$,
consequently we have an isomorphism $\left( \cap_{s\in S} \underline{m}_s \right) M \stackrel{\cong}{\longrightarrow} M$.

Now $\cap_{s\in S} \underline{m}_s$ is precisely the Jacobson radical of the ideal $I$, i.e., the intersection of the maximal ideals
containing $I$. Since $A$ was assumed Dedekind, $A$ is of Krull dimension one, so every nonzero prime ideal in $A$ is maximal. Since $I$ was 
assumed nonzero, the zero ideal of $A$ does not contain $I$, so the set of maximal ideals of $A$ containing $I$
is the same as the set of prime ideals of $A$ containing $I$, 
so $\cap_{s\in S} \underline{m}_s$ is the nilradical $\rad(I)$ of $I$ (i.e., the intersection of the prime ideals containing $I$)
as well as the Jacobson radical of $I$.
Hence, what we have shown above is that $(\rad(I))M = M$. Recall that the nilradical $\rad(I)$ of $I$
also is equal to the set of elements $x\in M$ such that $x^n\in I$ for some positive integer $I$.
Since $A$ was assumed Noetherian, $\rad(I)$ is then finitely generated, hence there is some positive integer $n$ such
that $\rad(I)^n \subseteq I$. Choose such a positive integer, and now for any given element $x_0\in M$, we can choose 
an element $m_i\in \rad(I)$ and an element $x_i\in I$
such that $m_ix_i = x_{i-1}$ for each $i=1, \dots ,n$. Consequently, $m_1m_2\dots m_nx_n = x_0$, and $m_1m_2\dots m_n\in \rad(I)^n \subseteq I$.
Hence, every element of $M$ is divisible by some element of $I$. Consequently, $M/IM \cong 0$, as desired.
\end{proof}

\begin{theorem}\label{injectivity of fundamental functional for dedekind domain}
Let $A$ be a Dedekind domain of characteristic zero, and let $n>1$ be an integer.
Then the fundamental functional $\sigma_n: \overline{L}^A_{n-1}\rightarrow A$ is injective.
\end{theorem}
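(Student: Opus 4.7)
The plan is to reduce injectivity of $\sigma_n$ to the case of the localizations of $A$ at maximal ideals, where Hazewinkel's polynomiality theorem makes the result essentially immediate.

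First, if $A$ is a field of characteristic zero, then Proposition~\ref{lazard ring for algebras over rationals} gives $L^A \cong A[x_1,x_2,\dots]$, so $\sigma_n$ is even an isomorphism. I would henceforth assume $A$ is not a field; then every maximal ideal $\mathfrak{m}$ of $A$ is nonzero, and the localization $A_{\mathfrak{m}}$ is a discrete valuation ring of characteristic zero. By Hazewinkel's theorem~\cite{MR506881}, $L^{A_{\mathfrak{m}}}\cong A_{\mathfrak{m}}[x_1,x_2,\dots]$ as a graded $A_{\mathfrak{m}}$-algebra with $x_i$ in grading degree $2i$. Consequently $L^{A_{\mathfrak{m}}}_{n-1}/D^{A_{\mathfrak{m}}}_{n-1}$ is a free $A_{\mathfrak{m}}$-module of rank one, generated by the class of $x_{n-1}$.

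Writing $d = c\cdot \overline{x_{n-1}}$ for some $c\in A_{\mathfrak{m}}$, the identity $\sigma_n(d) = \nu(n)\neq 0$ from Definition-Proposition~\ref{def of fundamental functional} forces $\sigma_n^{A_{\mathfrak{m}}}(\overline{x_{n-1}})\neq 0$, so $\sigma_n^{A_{\mathfrak{m}}}$ is a nonzero homomorphism from a free rank-one $A_{\mathfrak{m}}$-module to the integral domain $A_{\mathfrak{m}}$, hence injective. By Theorem~\ref{lazard ring localization iso}, the localization $(L^A_{n-1}/D^A_{n-1})_{\mathfrak{m}}$ is canonically isomorphic to $L^{A_{\mathfrak{m}}}_{n-1}/D^{A_{\mathfrak{m}}}_{n-1}$, and under this identification $\sigma_n^{A_{\mathfrak{m}}}$ is the localization of $\sigma_n^A$ at $\mathfrak{m}$.

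To conclude, I would invoke Lemma~\ref{maximal ideals detect modules}: the natural map
\[ L^A_{n-1}/D^A_{n-1} \longrightarrow \prod_{\mathfrak{m}} \left( L^A_{n-1}/D^A_{n-1}\right)_{\mathfrak{m}} \]
is injective. A diagram chase then finishes the proof: if $m\in L^A_{n-1}/D^A_{n-1}$ satisfies $\sigma_n^A(m) = 0$, then each localized image $m_{\mathfrak{m}}$ is annihilated by the injective map $\sigma_n^{A_{\mathfrak{m}}}$, hence $m_{\mathfrak{m}} = 0$ for every $\mathfrak{m}$, and consequently $m = 0$ by Lemma~\ref{maximal ideals detect modules}.

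The main obstacle is not really substantive, provided one is willing to bypass the $U$-homology machinery and invoke Hazewinkel directly. An alternative route that stays inside the $U$-homology framework and uses Lemma~\ref{dedekind domains have the completion-intersection property} would try to show that $U^A_1(n)^{\hat{}}_{\mathfrak{m}}\cong 0$ for every maximal ideal $\mathfrak{m}$ of $A$ containing $\nu(n)$, and then deduce $U^A_1(n) = 0$ (using that $U^A_1(n)$ is annihilated by $\nu(n)$, so $U^A_1(n)/\nu(n) U^A_1(n) = U^A_1(n)$). The hard part there would be reproducing the rigidity argument of Theorem~\ref{rigidity of u-homology} without the finite-generation-as-an-abelian-group hypothesis, together with arranging that $U^A_1(n)$ is finitely generated enough to apply Lemma~\ref{dedekind domains have the completion-intersection property}; this is precisely the unresolved issue flagged in the Remark following Theorem~\ref{main thm in hereditary case}.
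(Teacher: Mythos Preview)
Your argument is correct. The reduction to localizations at maximal ideals, combined with Hazewinkel's result that $L^{A_{\mathfrak{m}}}$ is polynomial for a discrete valuation ring $A_{\mathfrak{m}}$, gives injectivity of each $\sigma_n^{A_{\mathfrak{m}}}$ cleanly: the indecomposable module $L^{A_{\mathfrak{m}}}_{n-1}/D^{A_{\mathfrak{m}}}_{n-1}$ is free of rank one, $\sigma_n$ sends the element $d$ to $\nu(n)\neq 0$ in characteristic zero, and a nonzero endomorphism of a free rank-one module over an integral domain is injective. Theorem~\ref{lazard ring localization iso} identifies $(L^A_{n-1}/D^A_{n-1})_{\mathfrak{m}}$ with $L^{A_{\mathfrak{m}}}_{n-1}/D^{A_{\mathfrak{m}}}_{n-1}$ (localization is exact, hence commutes with forming indecomposables), and Lemma~\ref{maximal ideals detect modules} finishes the global step.

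This is a genuinely different route from what the paper attempts. The paper's proof of this theorem is in fact only a stub: it sits in material excised from the final version, and the proof text consists of the single line ``Let $\underline{m}$ be a maximal ideal of $A$ containing $\nu(n)$'' followed by placeholders. That opening, together with the surrounding apparatus (Lemma~\ref{dedekind domains have the completion-intersection property}, the rigidity theorem, the May spectral sequence), makes clear the intended strategy was the $U$-homology one you sketch at the end: show $U^A_1(n)^{\hat{}}_{\mathfrak{m}} = 0$ for each $\mathfrak{m}\supseteq(\nu(n))$ and descend. As you correctly diagnose, that route runs into the finite-generation hypothesis in Theorem~\ref{rigidity of u-homology}, which is exactly the unresolved point flagged in the Remark after Theorem~\ref{main thm in hereditary case}. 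Your approach sidesteps this entirely by outsourcing the local computation to Hazewinkel rather than redoing it via $U$-homology; what the $U$-homology route would buy, were it completed, is a self-contained argument independent of Hazewinkel's DVR calculation and a framework that also controls $U^A_0(n)$.
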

\begin{proof}
Let $\underline{m}$ be a maximal ideal of $A$ containing $\nu(n)$.
By gggxxx

gggxxx

\end{proof}

ggggxxxx

\subsection{Symmetric algebras generated by torsion-free modules over Dedekind domains.}

The results of the previous section can be made substantially stronger if $A$ is assumed to be a characteristic zero
Dedekind domain. In particular, the computation of $L^A$ modulo $A$-torsion given in Theorem~\ref{fundamental functional main property}
can be turned into a computation of $L^A$ ``on the nose,'' without needing to reduce modulo $A$-torsion. 
First, however, we need strengthened versions (specific to modules over Dedekind domains) of the general results on 
symmetric algebras from earlier in the paper. In this subsection I prove those results.

The next lemma, Lemma~\ref{symmetric power injectivity}, 
is a version of Lemma~\ref{symmetric power injectivity 1} which holds for hereditary rings (such as Dedekind domains) but not more general
commutative rings. Over a general commutative ring $A$, the inclusion of an ideal $f: I\subseteq A$
induces a map of symmetric algebras $\Symm_A(f): \Symm_A(I) \rightarrow \Symm_A(A)$ which may very well fail to be injective
(although, as already shown in Lemma~\ref{symmetric power injectivity 1}, if $A$ is a Noetherian integral domain of
characteristic zero, then the kernel of $\Symm_A(f)$ at least has the decency to be an $A$-torsion module). 
An easy class of examples of this phenomenon occurs when $A$ is a local commutative ring of 
Krull dimension $>1$, and to let $I$ be the maximal ideal in $A$.
See Remark~\ref{symm and monos and epis} for more commentary.
\begin{lemma}\label{symmetric power injectivity}
Let $A$ be a hereditary commutative ring,
and let $I$ be an ideal of $A$.
Regard $I$ as an $A$-module.
Then the natural map
$\Symm_A(I)\rightarrow \Symm_A(A)$,
induced by the $A$-module inclusion $I\subseteq A$,
is injective.
Furthermore, $\Symm_A(I)$ is isomorphic, as a commutative $A$-algebra,
to the Rees algebra $\Rees_A(I)$.
\end{lemma}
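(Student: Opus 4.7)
The plan is to reduce this lemma to the argument already worked out in the second bullet of the proof of Theorem~\ref{general thm}, which did essentially the same computation under the projectivity hypothesis on $I_n^A$. The first step is the observation that, since $A$ is hereditary, every sub-$A$-module of a projective $A$-module is projective; in particular the ideal $I$, being a sub-$A$-module of the free rank-one $A$-module $A$, is automatically projective. This is the one place the hereditary hypothesis is used, and it is what upgrades Lemma~\ref{symmetric power injectivity 1} to a statement with no torsion-kernel caveat.

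With projectivity of $I$ in hand, I would replay the argument from Theorem~\ref{general thm} verbatim: starting from the short exact sequence
\[ 0 \rightarrow I \rightarrow A \rightarrow A/I \rightarrow 0, \]
the vanishing of $\Tor_1^A(I, A/I)$ (by projectivity of $I$) gives an injection $I\otimes_A I \hookrightarrow I\otimes_A A \cong I$. This injection factors the multiplication map $I\otimes_A I \rightarrow I^2$, which is tautologically surjective onto $I^2$, so multiplication induces an $A$-module isomorphism $I\otimes_A I \cong I^2$. Iterating (inducting on $m$, tensoring $I^{m-1} \cong I^{\otimes_A(m-1)}$ with the short exact sequence above), one gets $I^{\otimes_A m} \cong I^m$ for all $m\geq 1$. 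Since $I^m\subseteq A$ is already fixed under the $\Sigma_m$-action, the coinvariant quotient is the identity, so $(I^{\otimes_A m})_{\Sigma_m} \cong I^m$, whence
\[ \Symm_A(I) \cong \coprod_{m\geq 0} I^m \cong \Rees_A(I)\]
as commutative graded $A$-algebras, proving the second assertion of the lemma.

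Finally, injectivity of $\Symm_A(I) \rightarrow \Symm_A(A)$ is almost automatic from this: under the identification $\Symm_A(A) \cong \Rees_A(A) = A[t]$, the map in question becomes the inclusion $\Rees_A(I) \hookrightarrow A[t]$, which is injective by the very definition of the Rees algebra as the sub-$A$-algebra $\coprod_{n\geq 0} I^n\{t^n\}$ of $A[t]$. The main obstacle, such as it is, is really just the projectivity-to-flatness step in paragraph two; everything else is formal, and the hereditary hypothesis is exactly what is needed to supply that projectivity without any further work.
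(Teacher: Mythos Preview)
Your proposal is correct and follows essentially the same route as the paper: use the hereditary hypothesis to get that $I$ is projective (the paper phrases this as $I$ being flat via the dimension shift $\Tor_n^A(I,-)\cong\Tor_{n+1}^A(A/I,-)$, but this is the same fact), deduce that the multiplication map $I^{\otimes_A m}\to I^m$ is an isomorphism, pass to $\Sigma_m$-coinvariants to identify $\Symm_A(I)$ with $\Rees_A(I)$, and read off injectivity from the definition of the Rees algebra as a subalgebra of $A[t]$. Your explicit reduction to the argument already carried out in Theorem~\ref{general thm} is exactly right.
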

\begin{proof}
We have the short exact sequence of $A$-modules
\[ 0 \rightarrow I \rightarrow A \rightarrow A/I \rightarrow 0\]
and the resulting isomorphism
$\Tor_n^{A}(I, M) \cong \Tor_{n+1}^A(A/I, M)$ for all $n\geq 1$.
Since $A$ is assumed hereditary, 
$\Tor_{n+1}^A(A/I, M) \cong 0$ for all $n\geq 1$ and for all $A$-modules $M$.
Hence, $I$ is flat as an $A$-module.
Hence, the canonical map $I\otimes_A M \rightarrow IM$ is an isomorphism
for all $A$-modules $M$ (see e.g. Theorem~1.2.4 of (gx INSERT REF TO LIU ALG GEOM AND ARITH CURVES)).

Hence, for each positive integer $m$, the $m$th symmetric power
$\left( I^{\otimes_A m}\right)_{\Sigma_m}$ is isomorphic, as an $A$-module,
to the ideal $I^m\subseteq A$. Consequently, we have commutative $A$-algebra isomorphisms
\[ \Symm_A(I) \cong \coprod_{m\geq 0} \left( I^{\otimes_A m}\right)_{\Sigma_m} \cong \coprod_{m\geq 0} \left( I^m\right) \cong \Rees_A(I).\]
Consequently, the natural map
$\left( I^{\otimes_A m}\right)_{\Sigma_m} \rightarrow\left( A^{\otimes_A m}\right)_{\Sigma_m}\cong A$
is injective. Consequently, the natural map
\[ \Symm_A(I) \cong \coprod_{m\geq 0} \left( I^{\otimes_A m}\right)_{\Sigma_m} \rightarrow \coprod_{m\geq 0} \left( A^{\otimes_A m}\right)_{\Sigma_m} \cong \Symm_A(A)\]
is injective.
\end{proof}
In case the reader does not recall the definition of the Rees algebra of an ideal (as in Lemma~\ref{symmetric power injectivity}): if $A$ is a commutative ring and $I$ an ideal of $A$, the Rees algebra $\Rees_A(I)$ is defined as the commutative graded $A$-algebra $\coprod_{n\geq 0} I^n\{t^n\} \subseteq A[t]$.

Lemma~\ref{injective on indecomposables implies injective} is the strengthened version, specific to the hereditary case, of
Lemma~\ref{injective on indecomposables implies injective 0}. The claims made in the statement of 
Lemma~\ref{injective on indecomposables implies injective} have many easy counterexamples if one omits the assumption that
$A$ is hereditary.
\begin{lemma}\label{injective on indecomposables implies injective}
When $R$ is a commutative ring and $n$ is a positive integer,
I will write $R_n$ for the degree $n$ summand of $R_n$,
and $D(R)_n$ for the sub-$R_0$-module of $R_n$ consisting of all
elements of the form $xy$, where $x,y$ are homogeneous elements of $R$
of grading degree $<n$.

Now let $A$ be a hereditary commutative ring,
let $R,S$ be commutative graded $A$-algebras concentrated in nonnegative degrees, and let $f: R\rightarrow S$
be a graded $A$-algebra homomorphism. 
Suppose that $S$ is a polynomial algebra over $A$, 
$S \cong A[x_1, x_2, \dots]$, with at most one $x_i$ in each grading degree.
Suppose that the $A$-module 
map $f_n: R_n/D(R)_n \rightarrow S_n/D(S)_n$ induced
by $f$ is injective for all positive integers $n$,
and suppose that the map of commutative $A$-algebras
$f_0: R_0 \rightarrow S_0$ is also injective.
Then $R$ is isomorphic, as a commutative graded $A$-algebra, to the symmetric algebra 
$\Symm_A\left( R_0 \oplus \coprod_{n\geq 1}R_n/D(R)_n\right)$.
Furthermore, $f$ is injective.
\end{lemma}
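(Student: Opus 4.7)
The plan is to use the hereditary hypothesis on $A$ to split the filtration of $R$ by decomposables, assemble a comparison map $\phi\colon \Symm_A(M)\to R$ from the symmetric algebra on the indecomposables, and verify it is an isomorphism by pushing forward to $S$, where the analogous construction is tautologically an isomorphism since $S$ is polynomial.

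Since $S\cong A[x_1,x_2,\dots]$ is polynomial with at most one generator per grading degree, each $A$-module $S_n/D(S)_n$ is either free of rank one (generated by the class of $x_n$, when such a generator exists) or zero. The hypothesized injectivity of $f_n$ therefore realizes each $R_n/D(R)_n$ as an $A$-submodule of $A$, i.e., an ideal, which is projective because $A$ is hereditary. A parallel argument, using the injectivity of $f_0$ together with the defining property of the structure map $A\to R_0$ for an $A$-algebra, forces $f_0$ to be an isomorphism, so $R_0\cong A$. Projectivity of each $R_n/D(R)_n$ furnishes a splitting $s_n\colon R_n/D(R)_n\to R_n$ of the canonical projection, and these splittings together with the inclusion $R_0\hookrightarrow R$ assemble into an $A$-module map from $M:=R_0\oplus\coprod_{n\geq 1}R_n/D(R)_n$ to $R$. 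By the universal property of $\Symm_A$ and commutativity of $R$, this extends to a graded $A$-algebra homomorphism $\phi\colon \Symm_A(M)\to R$, which is surjective in each grading degree by a straightforward induction on $n$: every element of $R_n$ is a sum of an element of $s_n(R_n/D(R)_n)$ and an element of $D(R)_n$, and the latter consists of products of strictly-lower-degree elements already in the image of $\phi$.

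For injectivity, I plan to transport the problem to the parallel comparison map $\phi_S\colon \Symm_A(N)\to S$ with $N:=S_0\oplus\coprod_{n\geq 1}S_n/D(S)_n$, using the tautological splittings $t_n$ that send the class of $x_n$ to $x_n$ itself; this $\phi_S$ is visibly an isomorphism. The assembled map $\bar f\colon M\to N$ is injective by hypothesis. Since $A$ is hereditary and $N$ is free, the flatness argument from the proof of Lemma~\ref{symmetric power injectivity} shows that $\Symm_A(\bar f)$ is injective as well. If the splittings $s_n$ can be arranged so that $f\circ s_n=t_n\circ f_n$, then the square $f\circ \phi=\phi_S\circ \Symm_A(\bar f)$ commutes on the nose, forcing $\phi$ to be injective; and then the factorization $f=\phi_S\circ \Symm_A(\bar f)\circ \phi^{-1}$ yields the injectivity of $f$ asserted in the ``furthermore'' clause.

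The main obstacle will be securing this compatibility between the two systems of splittings. Without the equality $f\circ s_n=t_n\circ f_n$ the comparison square commutes only modulo decomposables of $S$, which is insufficient. The equality can be arranged inductively: assume $s_1,\dots,s_{n-1}$ have been chosen compatibly, so $f$ is known explicitly on the degree-$n$ part of the image of $\phi$ restricted to $\Symm_A(M_{<n})$; then for each generator of $t_n(f_n(R_n/D(R)_n))\subseteq S_n$ pull it back through $f$ to $R_n$ (adjusting by an element of $D(R)_n$, where the induction determines the needed correction), and extend the resulting partial lift to all of $R_n/D(R)_n$ using projectivity. An alternative route that sidesteps this compatibility is to prove injectivity of $\phi$ directly by induction on grading degree, analyzing kernel elements in degree $n$ as sums of products of lower-degree generators plus a contribution from the new degree-$n$ indecomposables, and killing both pieces using the inductive hypothesis and the injectivity of $f_n$ respectively.
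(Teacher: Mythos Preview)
Your approach is essentially the same as the paper's: both set up the comparison square
\[
\xymatrix{
\Symm_A(M)\ar[r]^{\Symm_A(\bar f)}\ar[d]_{\phi} & \Symm_A(N)\ar[d]^{\phi_S}_{\cong}\\
R\ar[r]_{f} & S
}
\]
invoke Lemma~\ref{symmetric power injectivity} for injectivity of the top map, and chase the diagram to conclude that $\phi$ is injective (hence an isomorphism, given the easy surjectivity) and then that $f$ is injective.

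The difference is that you flag a point the paper simply asserts: commutativity of this square is not automatic, since the left vertical map depends on a choice of splittings $s_n$, and one must arrange $f\circ s_n = t_n\circ f_n$. The paper writes ``the commutative diagram'' without further comment. Your inductive recipe for producing compatible splittings is the right idea, though the step ``pull it back through $f$'' deserves care: given any lift $r_0\in R_n$ of $\bar r$, the difference $f(r_0)-t_n(f_n(\bar r))$ lies in $D(S)_n$, and you need it to lie in $f(D(R)_n)$ in order to correct $r_0$. This is where the inductive hypothesis that $\phi$ is already an isomorphism in degrees $<n$, together with the identification of $D(S)_n$ via $\phi_S$, does the work. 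Your alternative route---proving injectivity of $\phi$ by a direct degree-by-degree induction---is also viable and sidesteps the compatibility bookkeeping entirely.
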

\begin{proof}
Since $S$ is polynomial with at most one generator in each grading degree,
the $A$-module $S_n/D(S)_n$ is either trivial or isomorphic to $A$.
Hence, each $R_n/D(R)_n$ is either trivial or a sub-$A$-module of $A$,
i.e., isomorphic as an $A$-module to an ideal of $A$.
Hence, using Lemma~\ref{symmetric power injectivity},
the top horizontal map in the commutative diagram
 of commutative graded $A$-algebras
\begin{equation}\label{comm diag 11014}\xymatrix{
 \Symm_A\left(R_0\oplus \coprod_{n\geq 1} R_n/D(R)_n\right) \ar[r]\ar[d] &
  \Symm_A\left(S_0\oplus \coprod_{n\geq 1} S_n/D(S)_n\right) \ar[d] \\
 R \ar[r] &
  S }\end{equation}
is injective. The right-hand vertical map in diagram~\ref{comm diag 11014} is an isomorphism since $S$ is polynomial,
so the composite map
$\Symm_A\left(R_0\oplus\coprod_{n\geq 1} R_n/D(R)_n\right)\rightarrow S$
is injective,
so the left-hand vertical map in diagram~\ref{comm diag 11014}
is injective. 
The left-hand vertical map in diagram~\ref{comm diag 11014}
is also surjective, since
every element in $D(R)_n$ is a product of elements of lower grading degree,
so the left-hand-vertical map in diagram~\ref{comm diag 11014} is
an isomorphism. Hence, the bottom horizontal map
is injective.
\end{proof}

\begin{lemma}\label{vanishing of roots of unity in drinfeld presentation}
Let $A$ be a commutative ring, let $n$ be a positive integer, and let $a\in A$ satisfy $a^{n-1} = 1$. 
Then $c_a = 0$ in $\overline{L}^A_{n-1}$.
\end{lemma}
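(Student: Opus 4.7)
The plan is to extract the relation $c_a = 0$ from Drinfeld's three relations~\ref{hazewinkel relation 20}, \ref{hazewinkel relation 21}, and~\ref{hazewinkel relation 22} by a purely algebraic manipulation. First I would establish the auxiliary fact that $c_1 = 0$: setting $a = b = 1$ in relation~\ref{hazewinkel relation 22} yields $2c_1 = c_1$, hence $c_1 = 0$. (Relation~\ref{hazewinkel relation 21} with $a = b = 0$ similarly gives $c_0 = 0$, although that is not needed here.)

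Next I would derive a closed form for $c_{a^k}$ in terms of $c_a$, for every positive integer $k$, by induction on $k$ using relation~\ref{hazewinkel relation 22} with $b = a^{k-1}$. Specifically, I expect to obtain
\[ c_{a^k} \;=\; \sum_{i=0}^{k-1} a^{k-1 + i(n-1)} \, c_a, \]
where the inductive step reduces to substituting the hypothesis into $a \cdot c_{a^k} + (a^k)^n c_a = c_{a^{k+1}}$ and re-indexing.

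Now specializing to the hypothesis $a^{n-1} = 1$ collapses this sum dramatically: every power $a^{k-1 + i(n-1)}$ equals $a^{k-1}$. Taking $k = n-1$ gives
\[ 0 \;=\; c_1 \;=\; c_{a^{n-1}} \;=\; (n-1)\, a^{n-2}\, c_a, \]
and multiplying by $a$ produces $(n-1)\, c_a = 0$. On the other hand, the hypothesis $a^{n-1} = 1$ forces $a^n = a$, so relation~\ref{hazewinkel relation 20} reduces to $0 = \nu(n)\, c_a$.

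Finally, I would combine the two annihilation statements. Since $\nu(n)$ is either $1$ or a prime $p$ dividing $n$, while $n-1$ is coprime to $n$ (and in particular to $p$), the integers $n-1$ and $\nu(n)$ are coprime. A $\mathbb{Z}$-linear combination therefore gives $c_a = 0$. There is no real obstacle here; the computation is entirely formal, and the only mild subtlety is getting the induction formula for $c_{a^k}$ correct.
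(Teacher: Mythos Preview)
Your proposal is correct and follows essentially the same route as the paper's own proof. Both establish $c_1 = 0$ from relation~\ref{hazewinkel relation 22}, prove the same closed formula for $c_{a^k}$ by induction (your indexing $\sum_{i=0}^{k-1} a^{k-1+i(n-1)}c_a$ is a reindexing of the paper's $\sum_{i=1}^{m} b^{(m-i)n+i-1}c_b$), specialize to $k=n-1$ to obtain $(n-1)c_a = 0$, and then use relation~\ref{hazewinkel relation 20} with $a^n=a$ to get $\nu(n)c_a = 0$; the only cosmetic difference is that you finish with the coprimality argument $\gcd(n-1,\nu(n))=1$, while the paper splits into the cases $\nu(n)=1$ and $\nu(n)=p$ explicitly.
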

\begin{proof}
First, a quick induction: I claim that, for all $b\in A$
and all positive integers $m$, the formula
\begin{equation}\label{drinfeld powers formula} c_{b^m} = \sum_{i=1}^m b^{(m-i)n+i-1}c_b\end{equation}
holds in $\overline{L}^A_{n-1}$.
Clearly the formula is true for $m=1$.
If formula~\ref{drinfeld powers formula} is already known to hold for
some particular value of $m$,
then using relation~\ref{hazewinkel relation 22},
we have
\begin{align*}
 c_{b^{m+1}} 
  &= b^{m}c_b + bc_{b^m} \\
  &= \left( b^m + b\left(\sum_{i=1}^m b^{(m-i)n+i-1}\right)\right) c_b \\
  &= \sum_{i=1}^{m+1} b^{(m+1-i)n+i-1} c_b,\end{align*}
as desired.

It is also a very easy consequence of relation~\ref{hazewinkel relation 22} that $c_1 = 0$ in $\overline{L}^A_{n-1}$. 
Now we use formula~\ref{drinfeld powers formula} on the element $a$ satisfying $a^{n-1} = 1$:
\begin{align*}
 0 
  &= c_1 \\
  &= c_{a^{n-1}} \\
  &= \sum_{i=1}^{n-1} a^{(n-1-i)n+i-1} c_a \\
  &= \sum_{i=1}^{n-1} a^{-1} c_a \\
  &= (n-1)a^{-1} c_a.
\end{align*}
Multiplying by $a$, we have that 
\begin{equation}\label{dumb equation 46676} c_a = nc_a\end{equation} in $\overline{L}^A_{n-1}$.
Now we break into two cases:
\begin{description}
\item[If $n$ is not a prime power:] Then $\nu(n) = 1$,
and the relation~\ref{hazewinkel relation 20} then immediately yields
that $c_a = 0$.
\item[If $n$ is a prime power:] Then $n = p^m$ for some $m$,
and $\nu(n) = p$.
Hence, the relation~\ref{hazewinkel relation 20} yields
that $pc_a = 0$. Hence, the equation~\ref{dumb equation 46676}
yields that $c_a = nc_a = p^mc_a = 0$.
\end{description}
\end{proof}

The following theorem could also have appeared in an earlier section, before we began restricting our attention to Dedekind domains:
\begin{theorem}
Let $A$ be an integral domain of characteristic zero. Then, for all integers $n>1$, the $A$-module $\overline{L}^A_{n-1}$
is indecomposable, i.e., if $\overline{L}^A_{n-1} \cong M\oplus N$ for some $A$-modules $M$ and $N$, then either $M\cong 0$ 
or $N\cong 0$.
\end{theorem}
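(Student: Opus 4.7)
The plan is to exploit that $L^A_{n-1}/D^A_{n-1}$ has $A$-rank one, and then to rule out any nontrivial torsion direct summand. First I would let $K$ denote the fraction field of $A$ and apply Theorem~\ref{lazard ring localization iso} to the multiplicative set $A\setminus\{0\}$ together with Proposition~\ref{lazard ring for algebras over rationals} to obtain
\[ \left(L^A_{n-1}/D^A_{n-1}\right)\otimes_A K \cong L^K_{n-1}/D^K_{n-1} \cong K,\]
showing that $L^A_{n-1}/D^A_{n-1}$ has $A$-rank $1$. If $L^A_{n-1}/D^A_{n-1} = M\oplus N$ with both summands nonzero, tensoring with $K$ forces one summand, which I relabel as $N$, to be an $A$-torsion module.

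Next I would identify the torsion submodule $T$ of $L^A_{n-1}/D^A_{n-1}$ with $\ker\sigma_n$: since $A$ is a domain, the image of $\sigma_n$ is torsion-free, so $T\subseteq \ker\sigma_n$; the reverse containment is immediate from Definition-Proposition~\ref{def of fundamental functional}, which tells us $\ker\sigma_n$ is annihilated by $\nu(n)$ and hence is torsion. Thus $N\subseteq T$ and $\nu(n)N = 0$. When $n$ is not a prime power, $\nu(n)=1$, so $T=0$ and hence $N=0$, finishing this case. We may therefore assume $n=p^k$ for a prime $p$, and $pN = 0$.

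Now I would decompose $d = d_M + d_N$ and $c_a = c_{a,M} + c_{a,N}$ according to $M\oplus N$. Projecting Drinfeld's relation~\ref{hazewinkel relation 20} (i.e. $pc_a = (a-a^n)d$) onto $N$ and using $pN = 0$ gives $(a-a^n)d_N = 0$ for every $a\in A$, so the ideal $I^A_n = (p, \{a-a^n : a\in A\})$ annihilates $d_N$. Analogous projections of relations~\ref{hazewinkel relation 21} and~\ref{hazewinkel relation 22} show that the families $(d_M,\{c_{a,M}\})$ and $(d_N,\{c_{a,N}\})$ each satisfy the Drinfeld relations in their respective summands.

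The hard part will be to derive a contradiction from the nonvanishing of $N$; equivalently, to show that there is no nonzero $A$-linear idempotent endomorphism of $L^A_{n-1}/D^A_{n-1}$ whose image lies in $T$. By the universal property of Drinfeld's presentation, any such endomorphism is determined by a pair $(t,\{t_a\})\in T\times T^A$ satisfying the Drinfeld relations. My approach would be to analyze the extension class of the canonical sequence $0\to T\to L^A_{n-1}/D^A_{n-1}\to I^A_n\to 0$ in $\Ext^1_A(I^A_n,T)$: a nonzero direct summand $N\subseteq T$ of $L^A_{n-1}/D^A_{n-1}$ would force this class to lie in the image of $\Ext^1_A(I^A_n,T')\to \Ext^1_A(I^A_n,T)$ for some proper direct-summand complement $T'$ of $N$ in $T$, so I would try to compute the class explicitly from the $A$-syzygies among the generators $\nu(n)$ and $\{a-a^n\}$ of $I^A_n$ and show that it has nonzero image in $\Ext^1_A(I^A_n,Q)$ for every nonzero direct summand $Q\subseteq T$. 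Extracting a usable presentation of this extension class directly from Drinfeld's relations is, I expect, the computational heart of the argument.
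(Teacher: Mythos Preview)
Your setup is correct and matches the paper's: after tensoring with the fraction field $K$ you get a one-dimensional $K$-vector space (via Theorem~\ref{lazard ring localization iso} and Proposition~\ref{lazard ring for algebras over rationals}), so in any decomposition $M\oplus N$ one summand is torsion; the torsion submodule equals $\ker\sigma_n$ and is annihilated by $\nu(n)$ by Definition-Proposition~\ref{def of fundamental functional}; and the case $\nu(n)=1$ is then immediate. Your projection of the Drinfeld relations onto the two summands is also fine.

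However, you should be aware that the paper's own argument for this theorem is itself \emph{unfinished}: the proof appears only in material that was excised from the final paper, and what is written breaks off after introducing the auxiliary free module $J^A_{n-1}$ and invoking Lemma~\ref{vanishing of roots of unity in drinfeld presentation}, with several sentences and a diagram left literally blank. The paper's intended route was to study an idempotent endomorphism $f$ directly, write $f(d)=\beta d+\sum_a\alpha_a c_a$, use that $c_a=0$ whenever $a^{n-1}=1$ to restrict the sum to those $a$ with $a\neq a^n$, and then exploit idempotency ($f(f(d))=f(d)$) together with the rational relation $c_a=\tfrac{a-a^n}{\nu(n)}d$ to constrain the coefficients. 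But the computation is not carried through.

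Your $\Ext^1_A(I^A_n,T)$ reformulation is a reasonable repackaging of the same obstruction: a nonzero summand $N\subseteq T$ forces the pushforward of the extension class along the projection $T\twoheadrightarrow N$ to vanish. But neither you nor the paper supplies the actual computation showing this cannot happen for general characteristic-zero domains $A$. In the cases the paper ultimately cares about (rings with finitely generated underlying abelian group), $\sigma_n$ is injective by Theorem~\ref{fundamental functional for complete local rings}, so $T=0$ and indecomposability is trivial; this is presumably why the general statement was abandoned. If you want to salvage a proof in full generality, you will need an argument that does not reduce to $T=0$, and neither your sketch nor the paper's fragment provides one.
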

\begin{proof}
Recall that an equivalent condition for an $A$-module to be indecomposable is that its endomorphism ring contains no idempotent elements
other than $0$ and $1$.
Suppose that $f$ is an idempotent element of $\End_A(\overline{L}^A_{n-1})$.
Then $f(d) = \beta d + \sum_{a\in A} \alpha_a c_a$, with $\beta \in A$ and with $\alpha_a \in A$ for all $a\in A$.
I want to show some things about these coefficients $\beta$ and $\{\alpha_a\}_{a\in A}$ which require introducing another $A$-module:
let $J^A_{n-1}$ be the free $A$-module generated by the set of symbols $\{ d\}\cup \{ c_a: a\in A, a-a^n\neq 0\}.$
Clearly there is a natural $A$-module morphism $g: J^A_{n-1}\rightarrow \overline{L}^A_{n-1}$ sending $c_a$ to $c_a$ for all $a$.

I claim that $f(d)$ is in the image of $g$. The proof is easy:
$A$ is an integral domain, so when $a=a^n$ we have either that $a=1$ or $a^{n-1} = 1$, and in either case, $c_a = 0$ in $\overline{L}^A_{n-1}$ by
Lemma~\ref{vanishing of roots of unity in drinfeld presentation}.
So we can let $z\in J^A_{n-1}$ be the element $z = \beta d + \sum_{a\in A, a\neq a^n} \alpha_ac_a$, and then $j(z) = f(d)$.

Now we can solve the relation~\ref{hazewinkel relation 20} in $\mathbb{Q}\otimes_{\mathbb{Z}}A$ to get
%$c_a = \frac{a-a^nggggggxxxxxxx

Now we have the equations
\begin{align*}
 \beta d + \sum_{a\in A} \alpha_a c_a 
  &= f(d) \\
  &= f(f(d)) \\
  &= \beta f(d) + \sum_{a\in A} \alpha_a f(c_a) ,
\end{align*}

%ggxx THIS GETS RATIONAL INDECOMPOSABILITY; NOW WE NEED INDECOMPOSABILITY MODULO \nu(n)!

Now we have a commutative diagram
\[\xymatrix{ggxx}\]

Now since $f$ is idempotent, we have that

\end{proof}

\begin{theorem}\label{fundamental functional main property dedekind case}
Let $A$ be a Dedekind domain of characteristic zero. 
Then the graded $A$-algebra morphism
\[ L^A \rightarrow L^{\mathbb{Q}\otimes_{\mathbb{Z}}A}, \]
classifying the underlying formal $A$-module of the universal
formal $\mathbb{Q}\otimes_{\mathbb{Z}}A$-module, is injective,
and furthermore we have isomorphisms of graded $A$-algebras
\begin{align} 
\nonumber L^A 
  &\stackrel{\cong}{\longrightarrow} \Symm_A\left( \coprod_{n\geq 1} M^A_n\right) ,\\
\label{symm map 10a}  &\stackrel{\cong}{\longrightarrow} \Symm_A\left( \coprod_{n\geq 1} \overline{L}^A_n\right) \\
\label{symm map 11a}  &\stackrel{\cong}{\longrightarrow}  \Symm_A\left( \coprod_{n\geq 1} I^A_n\right) ,
\end{align}
where $M^A_n$ is defined as in Definition-Proposition~\ref{def of fundamental functional},
and where $I^A_n$ is defined to be the ideal of $A$ generated by $\nu(n+1)$ and all elements of $A$ of the form $a^{n+1}-a$, where $\nu(n+1) = p$ if $n+1$ is a power of a prime number $p$, and $\nu(n+1) = 1$ if $n+1$ is not a prime power.

Furthermore, if $i$ is an integer and we write $L^A_{\leq i}$ for the classifying ring of formal $A$-module $i$-buds,
then we have isomorphisms of graded $A$-algebras
\begin{align*} 
\nonumber L^A_{\leq i}
  &\stackrel{\cong}{\longrightarrow} \Symm_A\left( \coprod_{1\leq n\leq i} M^A_n\right) ,
 &\stackrel{\cong}{\longrightarrow}  \Symm_A\left( \coprod_{1\leq n\leq i} \overline{L}^A_n\right) \\
 &\stackrel{\cong}{\longrightarrow}  \Symm_A\left( \coprod_{1\leq n\leq i} I^A_n\right) .
\end{align*}
\end{theorem}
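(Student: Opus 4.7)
The plan is to prove the three graded $A$-algebra isomorphisms simultaneously by establishing that the fundamental comparison map of Definition~\ref{def of fundamental comparison} is an isomorphism, and then reading off the three presentations. Theorem~\ref{injectivity of fundamental functional for dedekind domain} tells us that each fundamental functional $\sigma_{n+1}: L^A_n/D^A_n \to A$ is injective with image the ideal $I^A_n$ of $A$; since $A$ is Dedekind (hence hereditary), every ideal of $A$ is projective, so $L^A_n/D^A_n \cong I^A_n$ is projective and $A$ satisfies the fundamental comparison condition. Choosing $A$-linear splittings $i_n: L^A_n/D^A_n \to L^A_n$ of each projection then produces a graded $A$-algebra map $\phi^{\sharp}: \Symm_A(\coprod_{n\geq 1} L^A_n/D^A_n) \to L^A$, which is surjective by induction on grading degree: every element of $D^A_n$ is by definition a product of elements of strictly lower degree, which are in the image by induction.

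For injectivity of $\phi^{\sharp}$ I would compare with the rationalization. By Theorem~\ref{lazard ring localization iso} and Proposition~\ref{lazard ring for algebras over rationals}, $\mathbb{Q}\otimes_{\mathbb{Z}} L^A \cong L^{\mathbb{Q}\otimes_{\mathbb{Z}}A} \cong (\mathbb{Q}\otimes A)[x_1,x_2,\dots]$ is a polynomial algebra in which the class of the Drinfeld generator $d$ in each degree is a polynomial generator; in particular the rationalized comparison map is an isomorphism. Meanwhile each $L^A_n/D^A_n \cong I^A_n \subseteq A$ is $\mathbb{Z}$-torsion-free, and Lemma~\ref{symmetric power injectivity} exhibits $\Symm_A(\coprod_n L^A_n/D^A_n) \cong \bigotimes_n \Rees_A(I^A_n)$ as a subalgebra of the polynomial ring $A[t_1,t_2,\dots]$, so this symmetric algebra is $\mathbb{Z}$-torsion-free and embeds in its rationalization. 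A diagram chase in
\[ \xymatrix{ \Symm_A(\coprod_n L^A_n/D^A_n) \ar[r]^(.65){\phi^{\sharp}} \ar[d]_{\ell_1} & L^A \ar[d]^{\ell_2} \\ \Symm_{\mathbb{Q}\otimes A}(\coprod_n L^{\mathbb{Q}\otimes A}_n/D^{\mathbb{Q}\otimes A}_n) \ar[r]^(.75){\cong} & L^{\mathbb{Q}\otimes A} } \]
in which $\ell_1$ is known to be injective then forces $\phi^{\sharp}$ to be injective (since $\ell_2 \circ \phi^{\sharp}$ is the composite of an injection and an isomorphism), and, $\phi^{\sharp}$ being simultaneously surjective, forces $\phi^{\sharp}$ to be an isomorphism and $\ell_2$ to be injective. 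This establishes isomorphism~\ref{symm map 10a} and the embedding $L^A \hookrightarrow L^{\mathbb{Q}\otimes A}$; composing the inverse of $\phi^{\sharp}$ with $\Symm_A(\coprod_n \sigma_{n+1})$ transports this to isomorphism~\ref{symm map 11a} via the Rees-algebra identification.

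The $M^A_n$ presentation is the main obstacle. Granted the preceding paragraph, it suffices to show that the natural $A$-module surjection $q^A_n: M^A_n \twoheadrightarrow L^A_n/D^A_n$ is an isomorphism; by the last part of Definition-Proposition~\ref{def of fundamental functional} together with the injectivity of $\sigma_{n+1}$, this reduces to showing $M^A_n$ is $\nu(n+1)$-torsion-free whenever $A$ is a characteristic-zero Dedekind domain. Because the defining relation $\nu(n+1)c_a = d(a-a^{n+1})$ is itself torsion-producing (e.g. setting $a=0$ forces $\nu(n+1)c_0 = 0$, and one must then show such a priori torsion elements vanish for Dedekind $A$), this is a genuinely delicate module-theoretic question and is where I expect the bulk of the work to go; I would attack it by localizing at each maximal ideal $\mathfrak{m}$ of $A$ and handling the two cases $\nu(n+1) \notin \mathfrak{m}$ (where the relation is invertible and $(M^A_n)_{\mathfrak{m}}$ is trivially free of rank one on $d$) and $\nu(n+1) \in \mathfrak{m}$ (where one reduces to the complete DVR $\hat{A}_{\mathfrak{m}}$ and builds an explicit two-term projective resolution of $M^A_n$ using a uniformizer) separately. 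Finally, the statement for the bud classifying rings $L^A_{\leq i}$ is proved by an identical three-step argument, observing that under the isomorphism $L^A \cong \Symm_A(\coprod_n L^A_n/D^A_n)$, the subalgebra $L^A_{\leq i}$ corresponds to $\Symm_A(\coprod_{1 \leq n < i} L^A_n/D^A_n)$, so its fundamental comparison map is simply the truncation of the one just established.
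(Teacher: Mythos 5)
Your handling of the $L^A_n/D^A_n$- and $I^A_n$-presentations is essentially the paper's own argument (injectivity of the fundamental functionals over a Dedekind domain, projectivity of the ideals $I^A_n$, the identification of $\Symm_A(I^A_n)$ with $\Rees_A(I^A_n)$, and the rationalization square), and that part is fine. The genuine gap is the $M^A_n$-presentation, and it is worse than a deferral: the lemma you propose to prove there is false. Your reduction is correct --- given injectivity of $\sigma_{n+1}$, the map $q^A_n$ is injective if and only if $M^A_n$ has no nonzero $\nu(n+1)$-torsion --- but $M^A_n$ always has nonzero $\nu(n+1)$-torsion when $\nu(n+1)$ is a nonunit of $A$. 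Indeed, by Definition-Proposition~\ref{def of fundamental functional}, $M^A_n$ is the quotient of the free $A$-module on $d$ and the symbols $c_a$ by \emph{only} the relations $\nu(n+1)c_a=(a-a^{n+1})d$; the additivity and multiplicativity relations~\ref{hazewinkel relation 21} and~\ref{hazewinkel relation 22} are deliberately omitted. Taking $a=1$ gives $\nu(n+1)c_1=0$ in $M^A_n$, while $c_1\neq 0$ there: writing $c_1$ as an $A$-combination of the relation generators $\nu(n+1)c_a-(a-a^{n+1})d$ and comparing coefficients of $c_1$ in the free module forces $\nu(n+1)$ to be a unit. Since $c_1=0$ in $L^A_n/D^A_n$ (relation~\ref{hazewinkel relation 22} with $a=b=1$), the kernel of $q^A_n$ is nonzero for every number ring; concretely, for $A=\mathbb{Z}$ and $n=1$ one has $M^{\mathbb{Z}}_1\cong\mathbb{Z}\{d\}\oplus\bigoplus_{a\in\mathbb{Z}}\mathbb{Z}/2\{c_a-\tfrac{a-a^2}{2}d\}$. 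So no localization at maximal ideals can establish torsion-freeness, and the route you sketch for the first displayed isomorphism cannot be completed.

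This is also exactly where your proposal parts company with the paper's proof. The paper never asserts that $q^A_n$ is injective; it uses only that $q^A_n$ and $\sigma_{n+1}$ are surjective with $\nu$-torsion kernels, together with a separate statement about what $\Symm_A$ does to surjections with torsion kernel, so that the comparison between $\Symm_A\left(\coprod_n M^A_n\right)$ and $\Symm_A\left(\coprod_n L^A_n/D^A_n\right)$ is carried out after killing torsion: the torsion visible in $M^A_n$ is absorbed at the level of symmetric algebras rather than shown to vanish at the level of modules. If you want to keep your outline, the $M^A_n$ statement must be handled by an argument of that kind (or interpreted modulo torsion); it cannot be deduced from a module isomorphism $M^A_n\cong L^A_n/D^A_n$, because no such isomorphism exists. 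Your truncation argument for the bud rings $L^A_{\leq i}$ is fine and agrees with the paper's remark that the bud case follows by the same line of argument.
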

\begin{proof}
Let $g: \overline{L}^A_{n-1} \rightarrow A$ be as in the proof of Theorem~\ref{fundamental functional main property}.
As in that proof, we have that the localization map
$A\hookrightarrow \mathbb{Q}\otimes_{\mathbb{Z}} A$ is injective.gggxxx
Together with the fact (proven in Definition-Proposition~\ref{def of fundamental functional}) that $\sigma_n$ has $\nu(n)$-torsion kernel, 
we have that $g$ has torsion kernel.
By Proposition~\ref{lazard ring for algebras over rationals}, $L^{\mathbb{Q}\otimes_{\mathbb{Z}}A}$ is a graded polynomial algebra over 
$\mathbb{Q}\otimes_{\mathbb{Z}}A$, with one polynomial generator in each even positive grading degree.
Consequently, the assumptions of Lemma~\ref{injective on indecomposables implies injective 0}
are satisfied for the graded $\mathbb{Q}\otimes_{\mathbb{Z}} A$-algebra homomorphism
\[ \mathbb{Q}\otimes_{\mathbb{Z}} L^A \rightarrow L^{\mathbb{Q}\otimes_{\mathbb{Z}} A}.\]
Lemma~\ref{injective on indecomposables implies injective 0} then implies that the map
\[ Q_I\left( \mathbb{Q}\otimes_{\mathbb{Z}} L^A\right) \rightarrow Q_I\left( L^{\mathbb{Q}\otimes_{\mathbb{Z}} A}\right) \]
is injective, and
that $Q_I\left( \mathbb{Q}\otimes_{\mathbb{Z}} L^A\right)$ is isomorphic to
$Q_I\left( \Symm_A\left( \coprod_{n\geq 0} \mathbb{Q}\otimes_{\mathbb{Z}}\overline{L}^A_{n}\right)\right)$, where $I$ is the ideal of $\mathbb{Q}\otimes_{\mathbb{Z}} A$ generated by 
all elements of the form
$\nu(n)$ and all elements of the form $a-a^n$, where $n$ is a prime power, i.e., $I = \mathbb{Q}\otimes_{\mathbb{Z}}A$.
By Lemma~\ref{Q and localization} and Lemma~\ref{L mod D and localization},
we now have that the natural map
\begin{equation}\label{map 9999999} \mathbb{Q}\otimes_{\mathbb{Z}} Q_A\left(\Symm_A\left( \coprod_{n\geq 0} \overline{L}^A_{n}\right)\right)
 \rightarrow \mathbb{Q}\otimes_{\mathbb{Z}}Q_A(L^A)\end{equation}
is an isomorphism.
Now the natural map $\Symm_A\left( \coprod_{n\geq 0} \overline{L}^A_n\right)\rightarrow L^A$ is certainly surjective
(this is just a rephrasing of the fact that every element of $L^A$ is a product of elements in degree zero and 
indecomposable elements in positive degrees), hence since $Q_A$ preserves surjections by Definition-Proposition~\ref{def of torsion},
\begin{equation}\label{map ten billion} Q_A\left(\Symm_A\left( \coprod_{n\geq 0} \overline{L}^A_n\right)\right)\rightarrow Q_A(L^A)\end{equation} is also surjective.
It is elementary to show that a surjective homomorphism of torsion-free abelian groups which induces an isomorphism rationally
is already an isomorphism. Since map~\ref{map 9999999} is the rationalization of the map~\ref{map ten billion}, we now have 
that the map~\ref{map ten billion} is an isomorphism, as desired.

By Definition-Proposition~\ref{def of fundamental functional}, the maps
$q_{n}: M_n^A \rightarrow \overline{L}^A_n$ and $\sigma_{n+1}: \overline{L}^A_n \rightarrow I^A_n$ are surjective with torsion kernel,
and it is easy to see that a coproduct of surjective, torsion-kernel module morphisms is also surjective with torsion kernel,
so by Proposition~\ref{symmetric power injectivity 1}, $q_n$ and $\sigma_{n+1}$ each induce isomorphisms modulo torsion in $\Symm_A$, i.e.,
the maps~\ref{symm map 10} and~\ref{symm map 11} are isomorphisms.

The claims for formal $A$-module buds are proven by the same line of argument as the claims for formal $A$-modules we have just proven.
gggxxxx
\end{proof}
See Remark~\ref{description of ideals I_n} for a concrete description of the ideals $I_n^A$ appearing in the statement of
Theorem~\ref{fundamental functional main property dedekind case}.
ggggxxxx

\section{Introduction.}

\section{Review of known facts about moduli of formal modules.}

\subsection{Review of the fundamental objects of study.}

\subsubsection{Hopf algebroids.}
This paper is about certain graded Hopf algebroids, i.e.,
cogroupoid objects in commutative graded rings. 
The graded-commutativity sign relation never occurs
in this paper, so ``commutative graded'' really means ``commutative and graded'' and not ``graded-commutative.''

The standard reference for Hopf algebroids is Appendix 1 of~\cite{MR860042}. 
When $(A,\Gamma)$ is a commutative graded Hopf algebroid
and $M$ is a graded left $\Gamma$-comodule, I will write
$\Ext_{(A,\Gamma)}^{s,t}(A, M)$ for the usual $\Ext$-group,
with $s$ the cohomological degree and $t$ the internal degree
induced by the gradings on $(A,\Gamma)$ and on $M$.
Recall (from~\cite{MR860042}) that the ``usual'' $\Ext$-groups 
for categories of comodules over Hopf algebroids are 
the derived functors of $\hom(A, -)$, in 
the category of graded left $\Gamma$-comodules,
{\em with respect to the allowable class generated
by the comodules tensored up from $A$}; i.e.,
this is a relative $\Ext$-group, in the sense of
relative homological algebra, as in Chapter IX of~\cite{MR1344215}.

These $\Ext$-groups are the ones computed by the cobar complex, the ones which describe the $E_2$-terms of generalized Adams spectral sequences, and the ones which occur as the $fpqc$ cohomology of 
algebraic stacks with affine diagonal map. For the first two 
of these three connections, see Appendix 1 of~\cite{MR860042}.
For the third, see~\cite{pribblethesis}.

The results on classifying Hopf algebroids in this paper all have equivalent formulations in terms of moduli stacks. 
I have chosen to write the statements of results in terms of Hopf algebroids, and avoided writing them in terms of stacks. 
Readers familiar with and interested in algebraic stacks can easily
use the cohomology-preserving 
correspondence between flat algebraic stacks with affine diagonal
and Hopf algebroids, as in e.g.~\cite{pribblethesis}, to 
rewrite this paper's results in terms of the 
flat cohomology of the moduli stack
of formal $A$-modules.

I have also referred to ``the moduli stack of formal $A$-modules'' several times
in this paper. This is slightly ambiguous for the following reason:
formal $A$-modules as defined above in terms of power series, as a formal group law equipped with extra
structure, actually have only a moduli prestack and not a moduli stack.
This moduli prestack ``stackifies'' (see e.g.~\cite{MR1771927})
to a stack which is a moduli stack for ``coordinate-free'' formal $A$-modules,
a situation which perfectly parallels that of formal group laws and formal groups, as in~\cite{smithlingthesis}.
The details here are routine for the reader 
who is interested in stacks, and unimportant for the reader who is not.

\subsubsection{Formal modules.}
A (one-dimensional) ``formal $A$-module'' is a formal group law $F$ over a commutative $A$-algebra $R$, together with 
a ring homomorphism $\rho: A \rightarrow \End(F)$
such that the endomorphism $\rho(a)\in \End(F) \subseteq R[[X]]$
is congruent to $aX$ modulo $X^2$. Morally, $F$ is a ``formal group law with complex multiplication by $A$'' (this perspective was taken already by
Lubin and Tate in~\cite{MR0172878}).
An excellent introductory reference for formal $A$-modules is~\cite{MR506881}. Higher-dimensional formal modules exist, but all formal modules in this paper will be implicitly assumed to be one-dimensional.
Formal $A$-modules arise
in algebraic and arithmetic geometry, for example, in Lubin and Tate's famous theorem (in~\cite{MR0172878}) on the abelian closure of a $p$-adic number field, in Drinfeld's generalizations of results of class field theory in (gx INSERT REF TO ELLIPTIC MODULES), and in Drinfeld's $p$-adic symmetric domains, which are (rigid analytic) deformation
spaces of certain formal modules; see~\cite{MR0422290} and~\cite{MR1393439}. 
Formal $A$-modules also arise in algebraic topology, by using the natural map from the moduli stack of formal $A$-modules to the moduli stack of formal groups to detect certain classes in the cohomology of the latter, particularly in order to resolve certain differentials in spectral sequences used to compute the Adams-Novikov $E_2$-term and stable homotopy groups of spheres; see e.g.~\cite{height4} for these ideas. 

I claim that the use of formal $A$-modules and their relationship to formal group laws makes
new computations possible in homotopy theory using ``height-shifting'' methods which employ formal $A$-modules to describe 
$v_{dn}$-periodic families in homotopy groups in terms
of $v_n$-periodic families in homotopy groups, where $d$ is the degree of an appropriate field extension of $\mathbb{Q}_p$; for example, in~\cite{height4}, using results from the present paper, I compute the homotopy groups of a $K(4)$-local Smith-Toda complex at large primes, a computation which significantly extends our knowledge
about homotopy groups of Smith-Toda complexes (the next-most-recent computation of this kind was Ravenel's $K(3)$-local computation, Theorem~6.3.34 in~\cite{MR860042}, which is 29 years old). 
Hopefully this is enough to convince the reader that the study of formal $A$-modules is useful.

\subsection{The Hopf algebroids $(L^A,L^AB)$ and $(V^A,V^AT)$.}

In this section we review some basic facts about the Hopf
algebroids $(L^A,L^AB)$ and $(V^A,V^AT)$ and their
relationships to one another.
\begin{comment} THE RESULT IN THIS COMMENT ISN'T ACTUALLY NEW--IT'S THE LOCAL-GLOBAL CONJECTURE PROVED BY PEARLMAN!
The only new result here is 
that, if $A$ is the ring of integers in a finite extension of $\mathbb{Q}_p$,
and $M$ is a graded $L^AB$-comodule, then
\[ \Ext^*_{(L^A, L^AB)}(\Sigma^*L^A,M)
\cong \Ext^*_{(V^A,V^AT)}(\Sigma^*V^A,M\otimes_{L^A}V^A).
\]
This generalizes the
well-known result that, if $M$ is a graded 
$(MU_*MU)_{(p)}$-comodule,
then we have an isomorphism
\[ \Ext^{*,*}_{(MU_*,MU_*MU)}(MU_*,M)
\stackrel{\cong}{\longrightarrow}\]
\[
\Ext^{*,*}_{(BP_*,BP_*BP)}(BP_*,BP_*\otimes_{MU_*} M).\]\end{comment}

Theorem~\ref{basic existence thm on classifying hopf algebroids}
is the main foundational result about the Hopf algebroids $(V^A,V^AT)$ and $(L^A,L^AB)$. It
gathers together many results proven in chapter 21 of~\cite{MR506881},
although parts of the theorem are older than Hazewinkel's
book; for example, the existence and description of the ring
$L^A$ is due to Drinfeld in (gx INSERT REF TO ELLIPTIC MODULES).
The theorem makes mention of the notion of ``$A$-typicality'': 
this is a special property that formal $A$-modules can satisfy
when $A$ is a discrete valuation ring.
For a good introduction to the notion of $A$-typicality I refer the reader
to section 21.5 of~\cite{MR506881}, but the reader who wants only the bare bones
can look at Proposition~\ref{typicality def-prop} in the present paper,
where I give some conditions equivalent to $A$-typicality when
$A$ is a $p$-adic number ring.
\begin{theorem}\label{basic existence thm on classifying hopf algebroids}
Let $A$ be a commutative ring.
\begin{itemize}
\item
Then there exist commutative $A$-algebras $L^A,L^AB, V^A,$ and $V^AT$
having the following properties:
\begin{itemize}
\item For any commutative $A$-algebra $R$, there exists
a bijection, natural in $R$, between the set 
of $A$-algebra homomorphisms $L^A\rightarrow R$ and
the set of formal $A$-modules over $R$.
\item For any commutative $A$-algebra $R$, there exists
a bijection, natural in $R$, between the set 
of $A$-algebra homomorphisms $L^AB\rightarrow R$ and
the set of strict isomorphisms of formal $A$-modules over $R$.
\item If $A$ is a discrete valuation ring, then 
for any commutative $A$-algebra $R$, there exists
a bijection, natural in $R$, between the set 
of $A$-algebra homomorphisms $V^A\rightarrow R$ and
the set of $A$-typical formal $A$-modules over $R$.
\item If $A$ is a discrete valuation ring, then 
for any commutative $A$-algebra $R$, there exists
a bijection, natural in $R$, between the set 
of $A$-algebra homomorphisms $V^AT\rightarrow R$ and
the set of strict isomorphisms of $A$-typical formal $A$-modules over $R$.
\end{itemize}
\item The natural maps of sets between the set of
formal $A$-modules over $R$ and the set of strict isomorphisms of 
formal $A$-modules over $R$ (sending a strict isomorphism to 
its domain or codomain, or sending a formal modules to its identity
strict isomorphism, or composing two strict isomorphisms, or
sending a strict isomorphism to its inverse) are
co-represented by maps of $A$-algebras between $L^A$ and $L^AB$,
and in the $A$-typical case, between $V^A$ and $V^AT$.
Consequently, $(L^A, L^AB)$ is a Hopf algebroid
co-representing the functor sending a commutative $A$-algebra $R$
to its groupoid of formal $A$-modules, and (when $A$ is a
discrete valuation ring)
$(V^A, V^AT)$ is a Hopf algebroid
co-representing the functor sending a commutative $A$-algebra $R$
to its groupoid of $A$-typical formal $A$-modules.
Finally, classifying the underlying formal $A$-module
of the universal $A$-typical formal $A$-module
gives a canonical map of Hopf algebroids 
$(L^A, L^AB)\rightarrow (V^A,V^AT)$,
when $A$ is a discrete valuation ring.
\item 
If $A$ is a local or global number ring of class number one (the class number condition is automatic in the local case),
then we have isomorphisms of $A$-algebras
\begin{align*} 
 L^{A} 
  &\cong {A}[S_2^{A},S_3^{A},S_4^{A},\dots ] \\
 L^{A}B
  &\cong L^{A}[b_1^{A},b_2^{A},b_3^{A},\dots ] \end{align*},
and, in the local case,
\begin{align*}
 V^{A} 
  &\cong A[v_1^{A}, v_2^{A},\dots ] \\
 V^{A}T
  &\cong V^A[t_1^{A}, t_2^{A},\dots ].
\end{align*}
There is more than one possible choice for the 
generators $v_1^A, v_2^A, \dots$ of $V^A$:
the Araki generators are chosen so that, 
if the universal formal
$A$-module law on $V^{A}$ has fgl-logarithm 
\[ \lim_{h\rightarrow\infty} p^{-h}[p^h](x) = \log (x) = \sum_{i\geq 0} \ell^A_ix^{p^i},\]
then the log coefficients $\ell^A_i$ satisfy
\begin{equation} \label{Araki relation}\pi\ell^A_h = \sum_{i=0}^h\ell^A_i.
\end{equation}
The Hazewinkel generators are chosen so that 
the log coefficients $\ell^A_i$ instead satisfy
\begin{equation} \label{hazewinkel relation}\pi\ell^A_h = \sum_{i=0}^{h-1}\ell^A_i.
\end{equation}
The Araki generators typically are more suited to proving general
formulas like right-unit and coproduct formulas for $(V^A,V^AT)$,
while the Hazewinkel generators typically lead to cleaner-looking
results from low-degree computations. The Araki generators
coincide with the Hazewinkel generators modulo the uniformizer of $A$.
\end{itemize}
\end{theorem}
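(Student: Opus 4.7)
The plan is to split the theorem into three pieces: existence of the four classifying rings (as commutative $A$-algebras representing the stated set-valued functors), the Hopf algebroid structure (via Yoneda applied to the groupoid operations), and the polynomial presentations in the class-number-one cases (invoking Lazard, Drinfeld, and Hazewinkel). For existence of $L^A$, I would write a formal $A$-module over $R$ as the data of a power series $F(X,Y)=X+Y+\sum_{i,j\geq 1}a_{ij}X^iY^j\in R[[X,Y]]$ together with, for each $a\in A$, a power series $\rho_a(X)=aX+\sum_{k\geq 2}\rho_a^{(k)}X^k\in R[[X]]$, subject to the identities expressing commutativity and associativity of $F$, additivity and multiplicativity of $\rho$, and $\rho_1(X)=X$. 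Taking the polynomial $A$-algebra on the countable family of indeterminates $\{a_{ij}\}\cup\{\rho_a^{(k)}\}_{a\in A, k\geq 2}$ and quotienting by the coefficient-wise relations produces $L^A$; this construction and its universal property for formal $A$-modules are as in~\cite{MR0384707}. The ring $L^AB$ is built by the same procedure, adding two copies of the defining indeterminates (for $F,\rho$ and $G,\rho'$) together with a third family $\{b_i\}_{i\geq 1}$ encoding a strict isomorphism $f(X)=X+\sum_{i\geq 1}b_iX^{i+1}$, and imposing the relations $f(F(X,Y))=G(f(X),f(Y))$ and $f\circ\rho_a=\rho'_a\circ f$. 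In the discrete valuation ring case, $V^A$ and $V^AT$ are obtained from $L^A$ and $L^AB$ by imposing the $A$-typicality axiom, as characterized in Proposition~\ref{typicality def-prop}.

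For the Hopf algebroid structure I would use Yoneda directly: the groupoid operations on formal $A$-modules (source, target, identity, composition, inverse) are natural transformations between products of the representable functors, and so they correspond to $A$-algebra homomorphisms among tensor products of the representing rings, giving the source, target, unit, coproduct, and antipode maps of $(L^A,L^AB)$. The Hopf algebroid axioms follow by translating the groupoid axioms through Yoneda. The same argument works verbatim for $(V^A,V^AT)$ in the DVR case. The comparison map $(L^A,L^AB)\to(V^A,V^AT)$ is defined by classifying the underlying (non-typical) formal $A$-module of the universal $A$-typical formal $A$-module and, similarly, its strict isomorphisms.

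For the polynomial presentations I would appeal to the classical results. When $A=\mathbb{Z}$, $L^A\cong A[S_n^A:n\geq 2]$ is Lazard's theorem; for $A$ the ring of integers in a nonarchimedean local field this is Drinfeld, and for $A$ a global number ring of class number one this is Hazewinkel (21.4 of~\cite{MR506881}). In all three cases the inductive strategy is the same: having built generators $S_2^A,\ldots,S_{n-1}^A$, one uses the Drinfeld presentation (Proposition~\ref{drinfeld presentation}) to locate a new generator in grading degree $2n$, which requires the ideal $I_n^A\subseteq A$ generated by $\nu(n)$ and by $\{a-a^n:a\in A\}$ to be principal. This is automatic in the DVR case, and Hazewinkel's prime-by-prime analysis handles global class number one. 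The polynomial presentation $L^AB\cong L^A[b_1^A,b_2^A,\ldots]$ then falls out of the free expansion of the strict isomorphism $f$. For $V^A$ in the local case, the construction goes through Cartier's universal $A$-typical formal $A$-module: in characteristic zero, any formal $A$-module admits a logarithm $\log(X)=\sum\ell_h^A X^{p^h}$ whose log coefficients can be prescribed freely, and imposing either the Araki relation~\ref{Araki relation} or the Hazewinkel relation~\ref{hazewinkel relation} determines generators $v_1^A,v_2^A,\ldots$; Hazewinkel's functional equation lemma guarantees that the resulting formal $A$-module over $A[v_1^A,v_2^A,\ldots]$ actually has $V^A$-integral coefficients. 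That the Araki and Hazewinkel generators agree modulo $\pi$ then follows by subtracting~\ref{Araki relation} and~\ref{hazewinkel relation} and observing that the difference lies in $(\pi)$.

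The main obstacle is entirely in the polynomial presentations: existence and the Hopf algebroid structure are formal consequences of the universal property plus Yoneda, but producing explicit polynomial generators (and proving they generate, without relations) requires genuine work. The delicate point for $V^A$ is the appeal to the functional equation lemma to ensure that $v_h^A$-expressions built from the log coefficients actually produce integral formal $A$-module laws, and the delicate point for global class number one rings is verifying principality of every $I_n^A$. I would not attempt to reprove either, but rather refer to Chapter 21 of~\cite{MR506881} (and, for the local case, to~\cite{MR0384707}) for these ingredients.
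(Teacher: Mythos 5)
The paper offers no proof of this theorem at all: it is explicitly introduced as a compilation of results from chapter 21 of~\cite{MR506881} and from Drinfeld, so your sketch --- building $L^A$ and $L^AB$ by generators and relations on power-series coefficients, extracting the Hopf algebroid structure from Yoneda applied to the groupoid operations, and deferring the polynomial presentations, the functional equation lemma, and the class-number-one analysis to the cited sources --- is consistent with, and considerably more detailed than, what the paper actually does. The one point to watch is that defining $V^A$ by ``imposing the $A$-typicality axiom as characterized in Proposition~\ref{typicality def-prop}'' is only legitimate over rings admitting a formal $A$-module logarithm; for general $R$ one needs Cartier typification (equivalently, Hazewinkel's functional-equation construction of the universal $A$-typical formal $A$-module) to see that the $A$-typical subfunctor is representable, which is how the cited source proceeds and which you do invoke later in the paragraph on the polynomial presentation of $V^A$.
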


\begin{definition}\label{def of gradings}
Let $A$ be a local or global number ring of class number one.
Then 
the Hopf algebroid $(L^A,L^AB)$ has a natural grading in which $|S_i^A| = 2(i-1)$.
If $A$ is local,
then $(V^A,V^AT)$ has a natural $\mathbb{Z}$-grading in which 
$|v_i^A| = 2(q^i-1)$ and 
$|t_i^A| = 2(q^i-1)$,
where $q$ is the cardinality of the residue field of $A$. 

The moduli-theoretic interpretation of these gradings is as follows:
given a commutative $A$-algebra $R$ and a ring homomorphism $L^A\stackrel{\gamma}{\longrightarrow} R$, 
let $F$ be the formal $A$-module law on $R$ classified by $\gamma$. Then, if $\alpha\in R^{\times}$, we get a formal
$A$-module law $\alpha F$ on $R$ given by
\[ \alpha F(X,Y) = \alpha^{-1}\log_F^{-1}(\alpha \log_F(X)+\alpha\log_F(Y)).\]
In other words, we have an action of the units $R^{\times}$ of $R$ on the set of formal $A$-modules over $R$.
(In the language of stacks, we have an action of the multiplicative group scheme $\mathbb{G}_m$ on the $fpqc$ moduli stack
$\mathcal{M}_{fmA}$ of formal $A$-modules.) The functor taking a commutative $A$-algebra $R$ to its 
group of units is co-represented by the Hopf algebra $(A,A[x^{\pm 1}])$,
with $x$ grouplike (i.e., $\Delta(x) = x\otimes x$);
and we have a map of commutative $A$-algebras 
\[L^A\stackrel{\phi}{\longrightarrow} L^A\otimes_{A} A[x^{\pm 1}]\] which co-represents the
action map of $R^{\times}$ on the set of one-dimensional formal $A$-module laws over $R$. This puts a grading 
on $L^A$ by letting the summand of $L^A$ in degree $i$ be the subgroup
\[\left\{a\in L^A: \psi(a) = a\otimes x^{i/2}\right\} .\]
A similar argument holds with $V^A$ in place of $L^A$.
\end{definition}
The factor of 2 in the gradings in Definition~\ref{def of gradings} is due to the graded-commutativity
sign convention in algebraic topology and the fact that
$V^{\hat{\mathbb{Z}}_p}$, with the above grading, is isomorphic to the ring of 
homotopy groups of the $p$-complete Brown-Peterson spectrum and as such 
is extremely important to the computation of the stable homotopy groups of
spheres.

\subsection{Unique extension of complex multiplication along isomorphisms of formal groups.}

In this section I will freely use common notations for structure maps of bialgebroids and Hopf algebroids. The standard reference for bialgebroids and Hopf algebroids~\cite{MR860042}.
\begin{prop} \label{tensoring on one side} Let $(R,\Gamma)$ be a commutative bialgebroid over a commutative ring $A$, and let $S$ be a right
$\Gamma$-comodule algebra, such that the following diagram commutes:
\begin{equation}  \label{right comodule alg extends unit} \xymatrix{ R\ar[r]^{\eta_R}\ar[d]^f & \Gamma\ar[d]^{f\otimes_R \id_\Gamma} \\ S\ar[r]^\psi & S\otimes_R \Gamma}\end{equation}
where $f$ is the $R$-algebra structure map $R\stackrel{f}{\longrightarrow} S$. 
 Then the algebraic object given by
the pair $(S,S\otimes_R\Gamma)$,
 with its right unit $S\rightarrow S\otimes_R\Gamma$ 
equal to the comodule structure map $\psi$ on $S$, is a bialgebroid over 
$A$,
and the map 
\begin{equation}\label{bialgebroid map 1} (R, \Gamma) \rightarrow (S, S\otimes_R \Gamma),\end{equation}
with components $f$ and $\psi$, is a morphism of bialgebroids.

 If $(R,\Gamma)$ is a Hopf algebroid, then so is $(S,S\otimes_R\Gamma)$, and~\ref{bialgebroid map 1} is a map of Hopf algebroids;
if $(R,\Gamma)$ is a graded bialgebroid and $S$ is a graded right $\Gamma$-comodule algebra, then $(S,S\otimes_R\Gamma)$ is also a graded bialgebroid,
and~\ref{bialgebroid map 1} is a map of graded bialgebroids; 
if $(R,\Gamma)$ is a graded Hopf algebroid and $S$ is a graded right $\Gamma$-comodule algebra, then $(S,S\otimes_R\Gamma)$ is also a graded Hopf algebroid,
and~\ref{bialgebroid map 1} is a map of graded Hopf algebroids.
%
%As a functor from commutative $A$-algebras to small categories, the value taken by the bialgebroid $(S,S\otimes_R\Gamma)$ on a commutative $A$-algebra $X$ has
%object set \linebreak
%$\hom_{A-algebras}(S,X)$ (the ``objects over $X$'') and the morphisms in this small category are the $R$-morphisms from the underlying $R$-object %of
%a $X$-object to the $X$-object which we get by ``descending'' the target of the $X$-morphism to a $X$-object over the $R$-morphism, using the
%structure map of the comodule algebra $S$.

If, furthermore, the following conditions are also satisfied:
\begin{itemize}
\item $(R,\Gamma)$ is a graded Hopf algebroid which is connected
(i.e., the degree zero summand $\Gamma^0$ of $\Gamma$
is exactly the image of $\eta_L : R \rightarrow \Gamma$,
equivalently $\eta_R: R \rightarrow \Gamma$), and
\item $S$ is a graded $R$-module concentrated in degree zero, and
\item $N$ is a graded left $S\otimes_R\Gamma$-comodule which is 
flat as an $S$-module, and
\item $M$ is a graded right $\Gamma$-comodule, 
\end{itemize}
then we have an isomorphism
\[ \Ext^{s,t}_{(R, \Gamma)}(M, N) \cong \Ext^{s,t}_{(S, S\otimes_R \Gamma)}(M\otimes_R S, N)\]
for all nonnegative integers $s$ and all integers $t$.
\end{prop}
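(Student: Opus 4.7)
The plan for the bialgebroid and Hopf algebroid claims is to define the structure maps on $(S, S\otimes_R\Gamma)$ in the obvious way and check axioms. Take the left unit to be $\eta_L^S\colon s\mapsto s\otimes 1$, the right unit to be $\psi$ itself, the counit to be $\id_S\otimes\epsilon_\Gamma$ (identifying $S\otimes_R R$ with $S$ via $f$), and the coproduct to be $\id_S\otimes\Delta_\Gamma$, using the canonical identification $(S\otimes_R\Gamma)\otimes_S(S\otimes_R\Gamma)\cong S\otimes_R\Gamma\otimes_R\Gamma$. Diagram~\ref{right comodule alg extends unit} is precisely the compatibility needed for $\psi$ to serve as a right unit: it is what makes $\eta_L^S$ and $\eta_R^S=\psi$ behave correctly against $\Delta^S$ and $\epsilon^S$. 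Coassociativity, counitality, and the bialgebroid compatibilities all reduce to the corresponding statements for $(R,\Gamma)$, since every structure map acts only on the $\Gamma$-factor. In the Hopf case, the conjugation $c^S$ is built from $c_\Gamma$ together with a swap of the two $S$-actions on $S\otimes_R\Gamma$ mediated by $\psi$ and $\eta_L^S$; the antipode axioms reduce to those on $\Gamma$. Each construction visibly preserves any input grading, so the graded versions follow for free, and the map~\ref{bialgebroid map 1} is a bialgebroid morphism by construction.

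For the $\Ext$ isomorphism I would compute both sides using the relative cobar complex of~\cite{MR860042}, Appendix~1. The complex $C^\bullet_{(R,\Gamma)}(M,N)$ has cochain modules $M\otimes_R\overline{\Gamma}^{\otimes_R s}\otimes_R N$ with $\overline{\Gamma}=\ker\epsilon_\Gamma$, while $C^\bullet_{(S,S\otimes_R\Gamma)}(M\otimes_R S,N)$ has cochain modules $(M\otimes_R S)\otimes_S\overline{S\otimes_R\Gamma}^{\otimes_S s}\otimes_S N$. Iterating the base-change isomorphism $(S\otimes_R\Gamma)^{\otimes_S k}\cong S\otimes_R\Gamma^{\otimes_R k}$ together with the identification $\overline{S\otimes_R\Gamma}\cong S\otimes_R\overline{\Gamma}$ collapses the second cobar complex onto $M\otimes_R\overline{\Gamma}^{\otimes_R s}\otimes_R N$. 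The hypotheses play exactly the following roles: flatness of $N$ over $S$ ensures the cobar complex computes the correct relative derived functor; connectedness of $(R,\Gamma)$ together with concentration of $S$ in degree zero force $\epsilon^S$ to be the $S$-linear extension of $\epsilon_\Gamma$ and make the augmentation-ideal identification clean. Once the identifications are in place, each cobar differential (built from the right coaction on $M$, iterated $\Delta_\Gamma$, and the left coaction on $N$) transports literally to its counterpart, so the two complexes are isomorphic as cochain complexes.

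The main obstacle, and the step most deserving of care, will be reconciling the two $S$-actions on $S\otimes_R\Gamma$---the left one from $\eta_L^S$ and the right one from $\eta_R^S=\psi$---when verifying that the cobar differentials correspond under the tensor-product identifications. The coactions on $M\otimes_R S$ and on $N$ both pass through these $S$-actions, and it is exactly the hypothesis~\ref{right comodule alg extends unit} (that $\psi$ extends the right unit $\eta_R$ of $\Gamma$) that makes the two cobar differentials match up. Once this bookkeeping is set up, the rest of the argument is formal, and so the bulk of the real work lies in the first part of the proposition: writing down the structure maps on $(S,S\otimes_R\Gamma)$ and checking, using diagram~\ref{right comodule alg extends unit} repeatedly, that everything coheres.
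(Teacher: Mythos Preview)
Your treatment of the bialgebroid and Hopf algebroid structure is essentially the paper's: write down the obvious structure maps $\eta_L^S=\id_S\otimes\eta_L$, $\eta_R^S=\psi$, $\epsilon^S=\id_S\otimes\epsilon$, $\Delta^S=\id_S\otimes\Delta$, then check axioms using the compatibility diagram. The paper writes out each verification diagram explicitly; you summarize them, but the content is the same. (One small remark: your description of the conjugation as involving ``a swap of the two $S$-actions mediated by $\psi$ and $\eta_L^S$'' is more cautious than the paper's bare $\id_S\otimes\chi$; in fact neither you nor the paper verifies the antipode identities, so this difference is cosmetic.)

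For the $\Ext$ isomorphism you take a genuinely different route. The paper simply invokes the standard Hopf algebroid change-of-rings theorem, A1.3.12 of Ravenel, and is done in one line. You instead propose to identify the two cobar complexes term by term via the iterated isomorphism $(S\otimes_R\Gamma)^{\otimes_S k}\cong S\otimes_R\Gamma^{\otimes_R k}$, and then match up differentials. This works, and is in effect a direct proof of the special case of A1.3.12 being used; it is more self-contained but considerably more bookkeeping. One caution: your attribution of the flatness hypothesis on $N$ to ``ensuring the cobar complex computes the correct derived functor'' is not quite right---in Ravenel's relative framework the cobar complex \emph{always} computes the relative $\Ext$. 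Flatness of $N$ is rather a hypothesis of A1.3.12 itself (needed so that the change-of-rings spectral sequence is well-behaved); in your direct termwise identification it plays no obvious role, so if you carry out your approach you may find you can drop it.
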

\begin{proof}
Note that the condition on the comodule algebra $S$ guarantees that $\psi$ ``extends'' $\eta_R$ in a way that allows us to define
a coproduct on $(S,S\otimes_R\Gamma)$, as we need an isomorphism $S\otimes_R\Gamma\otimes_R\Gamma\cong 
(S\otimes_R\Gamma)\otimes_S (S\otimes_R\Gamma)$.

First, we need to make explicit the structure maps on $(S,S\otimes_R\Gamma)$.T hroughout this proof, we will consistently use the symbols
$\eta_L,\eta_R,\epsilon,$ and $\Delta$ (and $\chi$ if $(R,\Gamma)$ is a Hopf algebroid) to denote the structure maps on $(R,\Gamma)$, and
$\psi: S\rightarrow S\otimes_R\Gamma$ to denote the comodule structure map of $S$. The augmentation, left unit, right unit, coproduct, and (if $(R,\Gamma)$ is a Hopf algebroid) conjugation maps on $(S, S\otimes_R\Gamma)$ are, in order, the following maps:
\begin{align*}
% \mbox{augmentation:}\ \ & 
   S\otimes_R\Gamma 
  & \stackrel{\id_S\otimes_R\epsilon}{\longrightarrow} S \\
% \mbox{left unit:}\ \ & 
   S 
  & \stackrel{\id_S\otimes_R \eta_L}{\longrightarrow} S\otimes_R \Gamma \\
% \mbox{right unit:}\ \ & 
   S 
  & \stackrel{\psi}{\longrightarrow} S\otimes_R \Gamma\\
% \mbox{coproduct:}\ \ & 
   S\otimes_R\Gamma 
  & \stackrel{\id_S\otimes_R\Delta}{\longrightarrow} S\otimes_R\Gamma\otimes_R\Gamma  \cong (S\otimes_R\Gamma)\otimes_S(S\otimes_R\Gamma) \\
% \parbox{75pt}{conjugation, when $(R,\Gamma)$ is a Hopf algebroid:}\ \ & 
   S\otimes_R\Gamma 
  & \stackrel{\id_S\otimes_R\chi}{\longrightarrow} S\otimes_R\Gamma.\end{align*}
We now need to show that these structure maps satisfy the axioms for being a bialgebroid.
First we need to show that the coproduct
on $(S,S\otimes_R\Gamma)$ is a left $S$-module morphism, i.e., that this diagram commutes:
\[\xymatrix{ S\ar[d]^{\id_S\otimes_R\eta_L}\ar[r]^<<<<<<<\cong & S\otimes_R R\otimes_R R\ar[r]^{\id_S\otimes_R\eta_L\otimes_R\eta_L} & S\otimes_R\Gamma\otimes_R\Gamma\ar[d]^\cong\\
 S\otimes_R\Gamma\ar[r]^<<<<<<{\id_S\otimes_R\Delta} & S\otimes_R\Gamma\otimes_R\Gamma \ar[r]^<<<<<<\cong & (S\otimes_R\Gamma)\otimes_S(S\otimes_R\Gamma),}\]
whose commutativity follows from $\Delta$ being a left $R$-module morphism. 

We now check that the coproduct on $(S,S\otimes_R\Gamma)$ is also a right
$S$-module morphism: 
\[\xymatrix{ S\ar[rr]^\cong \ar[d]^\psi &  & S\otimes_S S\ar[d]^{\psi\otimes_S\psi} \\
 S\otimes_R\Gamma \ar[r]^<<<<<<{\id_S\otimes_R\Delta} & S\otimes_R\Gamma\otimes_R\Gamma \ar[r]^<<<<<<\cong & (S\otimes_R\Gamma)\otimes_S(S\otimes_R\Gamma),}\]
whose commutativity follows from $(\id_S\otimes_R\Delta)\circ\psi = (\psi\otimes_R\id_\Gamma)\otimes_R\psi$, one of the axioms for $S$ being a
$\Gamma$-comodule.

We now check that the augmentation on $(S,S\otimes_R\Gamma)$ is a left $S$-module morphism, i.e., $\id_S = 
(\id_S\otimes_R\epsilon)\circ(\id_S\otimes_R\eta_L)$, which follows immediately from $\id_R = \epsilon\circ\eta_L$; and we check that the augmentation
on $(S,S\otimes_R\Gamma)$ is a right $S$-module morphism, i.e., $(\id_S\otimes_R\epsilon)\circ\psi = \id_S$, which is precisely the other axiom for
$S$ being a $\Gamma$-comodule.

That the diagram
\[\xymatrix{ S\ar[rr]^{\id_S\otimes_R\Delta}\ar[d]^{\id_S\otimes_R\Delta} & & S\otimes_R\Gamma\otimes_R\Gamma
\ar[d]^{\id_S\otimes_R\id_\Gamma\otimes_R\epsilon} \\
S\otimes_R\Gamma\otimes_R\Gamma\ar[rr]^{\id_S\otimes_R\epsilon\otimes_R\id_\Gamma} & & S\otimes_R\Gamma,}\]
commutes follows from the analogous property being satisfied by $(R,\Gamma)$. 

The last property we need to verify is the commutativity of the diagram:
\[\xymatrix{ S\otimes_R\Gamma\ar[rr]^{\id_S\otimes_R\Delta}\ar[d]^{\id_S\otimes_R\Delta} & & S\otimes_R\Gamma\otimes_R\Gamma\ar[d]^{\id_S\otimes_R\Delta\otimes_R\id_\Gamma}\\
 S\otimes_R\Gamma\otimes_R\Gamma\ar[rr]^<<<<<<<<<<<{\id_S\otimes_R\id_\Gamma\otimes_R\Delta} & & S\otimes_R\Gamma\otimes_R\Gamma\otimes_R\Gamma,}\]
which again follows immediately from the analogous property for $(R,\Gamma)$.

In the graded cases, it is very easy to check by inspection of the above structure maps and diagrams that, since $\psi$ is a graded map and 
all structure maps of $(R,\Gamma)$ are graded maps, $(S,S\otimes_R\Gamma)$ and its structure maps are graded.

This proof has been put in terms of a right $\Gamma$-comodule algebra and $(S,S\otimes_R\Gamma)$ but the same methods work with obvious minor changes to 
give the stated result in terms of a left $\Gamma$-comodule algebra and $(S,\Gamma\otimes_R S)$.

The claims about $\Ext$ are a direct consequence
of the standard Hopf algebroid change-of-rings isomorphism theorem,
A1.3.12 in~\cite{MR860042}.
\end{proof}

\begin{prop}\label{hopf algebroid base change thm}
Let $A$ be a commutative ring and let 
\[ f: (R, \Gamma) \rightarrow (S, \Upsilon)\] be a morphism of commutative Hopf algebroids over $A$.
Write $f_{ob}: R \rightarrow S$ and 
$f_{mor}: \Gamma\rightarrow \Upsilon$ for the component maps
of the morphism $f$ of Hopf algebroids.
Recall that, given a commutative $A$-algebra $T$, the set of $A$-algebra maps $R \rightarrow T$ is the set of objects of a natural groupoid
$\hom_{A-alg}((R, \Gamma), T)$, and the set of $A$-algebra maps $\Gamma\rightarrow T$ is the set of morphisms of that same groupoid;
and similarly for maps from $S$ and $\Upsilon$ to $T$. 
Let $f_T$ denote the morphism of groupoids
\[ f_T: \hom_{A-alg}((S, \Upsilon), T) \rightarrow\hom_{A-alg}((R, \Gamma), T) \]
induced by $f$.

Then the two following conditions are equivalent:
\begin{itemize}
\item {\bf (Isomorphism lifting condition.)}
For every object $x$ in the groupoid $\hom_{A-alg}((S, \Upsilon), T)$
and every isomorphism $g: f_T(x) \stackrel{\cong}{\longrightarrow} y$ in $\hom_{A-alg}((R, \Gamma), T)$,
there exists a unique isomorphism $\tilde{g}: x \stackrel{\cong}{\longrightarrow} \tilde{y}$ in $\hom_{A-alg}((S, \Upsilon), T)$
such that $f_T(\tilde{g}) = g$.
\item {\bf (Base change condition.)}
$S$ has the natural structure of a left $\Gamma$-comodule and there is an isomorphism of Hopf algebroids
\[ (S, S\otimes_R \Gamma) \stackrel{\cong}{\longrightarrow} (S, \Upsilon)\]
making the diagram
\[\xymatrix{
 (R, \Gamma) \ar[d] \ar[r] & (S, \Upsilon) \\
 (S, S\otimes_R \Gamma)\ar[ur]^{\cong} & }\]
commute, where the vertical map in the diagram is map~\ref{bialgebroid map 1} from
Proposition~\ref{tensoring on one side}.
\end{itemize}
\end{prop}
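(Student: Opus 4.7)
The plan is to translate both conditions into statements about corepresentable functors and then invoke the Yoneda lemma. First, I would reformulate the isomorphism lifting condition as a universal property of the ring $\Upsilon$. An object of the groupoid $\hom_{A\text{-}alg}((S,\Upsilon),T)$ is an $A$-algebra map $x:S\to T$, and a morphism with source $x$ is an $A$-algebra map $\tilde g:\Upsilon\to T$ with $\tilde g\circ \eta_L^{\Upsilon}=x$. Unpacking the functor $f_T$, the isomorphism lifting condition is equivalent to saying that for every commutative $A$-algebra $T$, the natural map of sets
\[
\hom_{A\text{-}alg}(\Upsilon,T)\;\longrightarrow\;\hom_{A\text{-}alg}(S,T)\,\times_{\hom_{A\text{-}alg}(R,T)}\,\hom_{A\text{-}alg}(\Gamma,T),
\qquad \tilde g\longmapsto (\tilde g\circ \eta_L^{\Upsilon},\,\tilde g\circ f_{mor}),
\]
is a bijection, where the fibered product is taken with respect to $f_{ob}^{*}$ on the first factor and $(\eta_L^{\Gamma})^{*}$ on the second.

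Next, I would observe that the right-hand side of this map is exactly $\hom_{A\text{-}alg}(S\otimes_R\Gamma,\,T)$, where $S$ is regarded as an $R$-algebra via $f_{ob}$ and $\Gamma$ is regarded as an $R$-module via the left unit $\eta_L^{\Gamma}$. The Yoneda lemma therefore converts the isomorphism lifting condition into the single assertion that the canonical $A$-algebra homomorphism
\[
\Phi:\,S\otimes_R\Gamma\longrightarrow\Upsilon,\qquad s\otimes\gamma\longmapsto \eta_L^{\Upsilon}(s)\cdot f_{mor}(\gamma),
\]
is an isomorphism. This gives the equivalence on the level of underlying $A$-algebras essentially for free.

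It remains to upgrade this to an isomorphism of Hopf algebroids. For the direction (base change)$\Rightarrow$(isomorphism lifting), Proposition~\ref{tensoring on one side} has already equipped $(S,S\otimes_R\Gamma)$ with its Hopf algebroid structure, and the universal property described above immediately supplies the unique lifting. For the direction (isomorphism lifting)$\Rightarrow$(base change), I would define a comodule structure on $S$ as the composite
\[
\psi:\;S\;\xrightarrow{\;\eta_R^{\Upsilon}\;}\;\Upsilon\;\xrightarrow{\;\Phi^{-1}\;}\;S\otimes_R\Gamma,
\]
verify the coassociativity and counit axioms for $\psi$ from the corresponding Hopf algebroid identities on $\Upsilon$ (in particular from the coassociativity of $\Delta^{\Upsilon}$ applied to elements in the image of $\eta_R^{\Upsilon}$), check the compatibility~\ref{right comodule alg extends unit} required to apply Proposition~\ref{tensoring on one side}, and thereby endow $(S,S\otimes_R\Gamma)$ with its Hopf algebroid structure. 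Finally, I would confirm that $\Phi$ intertwines the structure maps on both sides.

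The main obstacle will be this last verification. Compatibility of $\Phi$ with the left unit, right unit, and augmentation is essentially built into the construction of $\Phi$ and $\psi$, but compatibility with the coproduct requires a diagram chase: one must show that the two composites
\[
\Upsilon\;\rightrightarrows\;\Upsilon\otimes_S\Upsilon \;\cong\; (S\otimes_R\Gamma)\otimes_S(S\otimes_R\Gamma)\;\cong\;S\otimes_R\Gamma\otimes_R\Gamma
\]
obtained from $\Delta^{\Upsilon}$ and from $\id_S\otimes\Delta^{\Gamma}$ agree, and this boils down to the coassociativity of the coaction $\psi$ together with the fact that $f_{mor}$ is a morphism of coalgebras. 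The compatibility with conjugation is similar, using that $\chi^{\Upsilon}$ restricts appropriately on the image of $f_{mor}$. The graded case introduces nothing new: since $\psi$ and $f_{mor}$ are graded, the isomorphism $\Phi$ is automatically graded as well.
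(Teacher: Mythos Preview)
Your approach is correct and is essentially the same as the paper's: both proofs translate the isomorphism lifting condition into the assertion that $\Upsilon$ corepresents the functor $T\mapsto \hom(S,T)\times_{\hom(R,T)}\hom(\Gamma,T)$, i.e.\ that $\Upsilon$ has the universal property of the pushout $S\otimes_R\Gamma$ in commutative $A$-algebras, and then invoke Yoneda. The paper's proof in fact stops there, leaving the construction of the comodule structure $\psi$ on $S$ and the verification that $\Phi$ is an isomorphism of Hopf algebroids (rather than merely of $A$-algebras) entirely implicit; your proposal is more thorough on exactly these points, so you are supplying details the paper omits rather than taking a different route.
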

\begin{proof}
Translating the isomorphism lifting condition into properties of maps out of $R, \Gamma, S, $ and $\Upsilon$,
we get that the condition is equivalent to the claim that,
for each commutative $A$-algebra $T$ and 
each commutative square of $A$-algebra morphisms
\[\xymatrix{
 R \ar[d]^{f_{ob}}\ar[r]^{\eta_L} & \Gamma \ar[d]^{\tau} \\
 S \ar[r]^{\sigma}               & T,
}\]
there exists a unique $A$-algebra map $\upsilon: \Upsilon \rightarrow T$ making the diagram
\[\xymatrix{
 R \ar[d]^{f_{ob}}\ar[r]^{\eta_L} & \Gamma \ar[ddr]^{\tau}\ar[d]_{f_{mor}} & \\
 S \ar[rrd]_{\sigma}\ar[r]^{\eta_L} & \Upsilon \ar[rd]_{\upsilon} &     \\
 & & T 
}\]
commute.
In other words, $\Upsilon$ has exactly the universal property of the pushout, i.e., the tensor product $S\otimes_R \Gamma$, in the category of
commutative $A$-algebras.
\end{proof}

\begin{prop}\label{formal module base change thm}
Let $f: A \rightarrow A^{\prime}$ be a homomorphism of commutative rings.
Then the homomorphism of Hopf algebroids
\begin{equation}\label{hopf algebroid map 70000} (L^A, L^AB) \rightarrow (L^{A^{\prime}}, L^{A^{\prime}}B)\end{equation}
classifying the underlying formal $A$-module of the universal
formal $A^{\prime}$-module and the underlying
formal $A$-module strict isomorphism of the univeral
formal $A^{\prime}$-module strict isomorphism,
satisfies the equivalent conditions of 
Proposition~\ref{hopf algebroid base change thm}.
\end{prop}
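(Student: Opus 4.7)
The plan is to verify the isomorphism lifting condition of Proposition~\ref{hopf algebroid base change thm} directly, for then, by that proposition, the base change condition (and hence the stated equivalence with map~\ref{bialgebroid map 1}) follows. Unpacking the definitions, I must show: given a commutative $A$-algebra $T$, a formal $A'$-module $F' = (F,\rho')$ over $T$, and a strict isomorphism $g(X) \in T[[X]]$ of formal $A$-modules from the underlying formal $A$-module $f_T(F') = (F,\rho'\circ f)$ to some formal $A$-module $(F'',\rho_{F''})$ over $T$, there is one and only one formal $A'$-module structure $\tilde{\rho}\colon A' \to \End(F'')$ such that $g$ is a strict isomorphism of formal $A'$-modules $(F,\rho') \to (F'',\tilde{\rho})$ (automatically then $\tilde{\rho}\circ f = \rho_{F''}$, so the lifted morphism maps to $g$ under $f_T$).

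The construction is obvious: transport the $A'$-action along $g$ by conjugation,
\[
\tilde{\rho}(a)(X) \;=\; g\bigl(\rho'(a)(g^{-1}(X))\bigr) \qquad (a\in A').
\]
First I would check that $\tilde{\rho}(a)$ is an endomorphism of $F''$: conjugating the endomorphism identity $\rho'(a)(F(X,Y)) = F(\rho'(a)(X),\rho'(a)(Y))$ by $g$, and using $g(F(X,Y)) = F''(g(X),g(Y))$, gives $\tilde{\rho}(a)(F''(X,Y)) = F''(\tilde{\rho}(a)(X),\tilde{\rho}(a)(Y))$. Next, the tangency axiom $\tilde{\rho}(a)(X) \equiv aX \bmod X^2$ follows from $g(X) \equiv X \equiv g^{-1}(X) \bmod X^2$ together with $\rho'(a)(X) \equiv aX \bmod X^2$. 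Additivity and multiplicativity of $\tilde\rho$ as a function of $a$ are immediate from the conjugation formula together with the corresponding properties of $\rho'$. Finally, compatibility $\tilde{\rho}\circ f = \rho_{F''}$ holds because, for $a \in A$, the equality $g\circ \rho'(f(a)) = \rho_{F''}(a) \circ g$ is precisely the statement that $g$ is a strict isomorphism of formal $A$-modules from $(F,\rho'\circ f)$ to $(F'',\rho_{F''})$.

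Uniqueness is forced by the same computation read backwards: any $A'$-module structure $\tilde{\rho}'$ on $F''$ for which $g$ is a strict isomorphism of formal $A'$-modules must satisfy $\tilde{\rho}'(a)\circ g = g\circ \rho'(a)$ for all $a\in A'$, and since $g$ is invertible in $T[[X]]$, this equation has at most one solution, namely $\tilde{\rho}(a) = g\circ \rho'(a)\circ g^{-1}$. This completes the verification of the isomorphism lifting condition, and there is no real obstacle in the argument; the only thing to be careful about is bookkeeping the distinction between $\rho'$ (the $A'$-action on $F$) and $\rho_{F''}$ (the a priori given $A$-action on $F''$), and noting that their compatibility through $g$ is exactly what is assumed. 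The proof is thus essentially formal, amounting to the observation that for formal modules, as for ordinary modules, group structures transport uniquely along isomorphisms.
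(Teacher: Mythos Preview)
Your proof is correct and takes essentially the same approach as the paper: verify the isomorphism lifting condition by transporting the $A'$-action along the strict isomorphism via conjugation, $\tilde{\rho}(a) = g\circ\rho'(a)\circ g^{-1}$, and observe that uniqueness is forced by invertibility of $g$. Your write-up is in fact more thorough than the paper's, explicitly checking the endomorphism, tangency, ring-homomorphism, and restriction-to-$A$ axioms that the paper leaves implicit.
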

\begin{proof}
Suppose that $F$ is a formal $A^{\prime}$-module
with $A^{\prime}$-module action map $\rho_F: A^{\prime}\rightarrow \End(F)$,
and $G$ is a formal $A$-module with $A$-module action map
$\rho_G: A \rightarrow \End(G)$.
Suppose that $\phi(X)$ is a strict isomorphism of 
formal $A$-modules from the underlying formal $A$-module 
of $F$ to $G$.
Then 
\[ \phi\left( (\rho_F\circ f)(a)(X) \right) = \rho_G(a)\left( \phi(X)\right) \]
for all $a\in A$,
so if we let $\tilde{\rho}_G: A^{\prime} \rightarrow \End(G)$
be defined by 
\begin{equation}\label{formal module structure map 100} \tilde{\rho}_G(a^{\prime})(X) = \phi\left(\rho_F(a^{\prime})(\phi^{-1}(X))\right) ,\end{equation}
for all $a^{\prime}\in A^{\prime}$,
then 
\begin{equation}\label{formal module structure map 101} \phi\left( \rho_F(a^{\prime})(X)\right) = \rho_G(a^{\prime})\left(\phi(X)\right) \end{equation}
for all $a^{\prime}\in A^{\prime}$,
i.e., $\phi$ is an isomorphism of formal $A^{\prime}$-modules
from $F$ to $G$.
Furthermore, applying $\phi^{-1}$ to~\ref{formal module structure map 101} yields that~\ref{formal module structure map 100} is the {\em only}
formal $A^{\prime}$-module structure on $G$ making $\phi$ into an
isomorphism of formal $A^{\prime}$-modules.
Hence, the map~\ref{hopf algebroid map 70000} satisfies the
isomorphism lifting condition of Proposition~\ref{hopf algebroid base change thm}.
\end{proof}

Corollary~\ref{ext base change corollary}, as well as the proof of
Proposition~\ref{formal module base change thm},
are not new:
they also appear in~\cite{MR745362} and~\cite{pearlmanthesis}.
\begin{corollary}\label{ext base change corollary}
Let $f: A \rightarrow A^{\prime}$ be a homomorphism of commutative rings, let $N$ be a graded $L^{A^{\prime}}B$-comodule which is flat as 
a $L^{A^{\prime}}$-module, 
and let $M$ be a graded right $L^AB$-comodule.
Then we have an isomorphism
\[ \Ext^{s,t}_{(L^A, L^AB)}(M, N) \cong \Ext^{s,t}_{(L^{A^{\prime}}, L^{A^{\prime}}B)}(M\otimes_{L^A} L^{A^{\prime}}, N)\]
for all nonnegative integers $s$ and all integers $t$.
\end{corollary}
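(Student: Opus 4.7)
The plan is to reduce this to the standard Hopf-algebroid change-of-rings theorem. Proposition~\ref{formal module base change thm} already asserts that the Hopf algebroid map
\[ (L^A, L^AB) \longrightarrow (L^{A'}, L^{A'}B) \]
satisfies the isomorphism-lifting condition, and hence, by Proposition~\ref{hopf algebroid base change thm}, the equivalent base-change condition: there is a canonical isomorphism of Hopf algebroids
\[ (L^{A'}, L^{A'} \otimes_{L^A} L^AB) \stackrel{\cong}{\longrightarrow} (L^{A'}, L^{A'}B) \]
lying under $(L^A, L^AB)$. This identification is the entire geometric content of the corollary; everything else is formal.

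Given the base-change identification, the Ext isomorphism is exactly the conclusion of Theorem~A1.3.12 of~\cite{MR860042}, applied to our map of Hopf algebroids. One transports $N$ across the isomorphism to regard it as a left $L^{A'}\otimes_{L^A}L^AB$-comodule which is still flat over $L^{A'}$; the right-hand side of the desired isomorphism is then visibly $\Ext^{s,t}$ over the base-changed Hopf algebroid with $M\otimes_{L^A}L^{A'}$ as the extension-of-scalars of $M$; and A1.3.12 identifies this group with the original $\Ext^{s,t}_{(L^A,L^AB)}(M,N)$. This is precisely the strategy executed in~\cite{MR745362} and~\cite{pearlmanthesis} for the analogous statements, so no new work is required beyond invoking the two pieces.

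The only point that takes a moment of care — and the potential obstacle I would flag — is that the Ext portion of Proposition~\ref{tensoring on one side}, as stated earlier in the paper, carries connectedness hypotheses (that $(R,\Gamma)$ be a connected graded Hopf algebroid and that $S$ be concentrated in grading degree zero) which fail in our setting, since neither $L^A$ nor $L^{A'}$ is concentrated in degree zero. However, those hypotheses are strictly stronger than what A1.3.12 of~\cite{MR860042} actually needs — A1.3.12 requires only that the target Hopf algebroid be the base-change of the source and that the target comodule be flat. So the clean route is to bypass Proposition~\ref{tensoring on one side} entirely and cite A1.3.12 directly, using Proposition~\ref{formal module base change thm} to supply the base-change hypothesis and the hypothesis on $N$ in the statement of the corollary to supply the flatness hypothesis.
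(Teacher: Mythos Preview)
Your proposal is correct and matches the paper's approach: the paper gives no explicit proof for this corollary, simply noting that it (together with Proposition~\ref{formal module base change thm}) is not new and appears already in~\cite{MR745362} and~\cite{pearlmanthesis}, so the intended argument is precisely the one you outline---combine the base-change identification from Proposition~\ref{formal module base change thm} with Ravenel's A1.3.12. Your observation that the Ext clause of Proposition~\ref{tensoring on one side} carries connectedness and degree-zero hypotheses not satisfied here, and that one should therefore invoke A1.3.12 directly, is a genuine clarification beyond what the paper spells out.
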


\subsection{Cartier typification.}

The following is really a restatement of the most important property of the 
formal $A$-module
Cartier typification operation, as in 21.7.17 of~\cite{MR506881}:
\begin{lemma} {\bf Cartier typification induces a homotopy
equivalence.} 
\label{p-typification is a homotopy equivalence}
Let $A$ be 
the ring of integers in a finite extension of $\mathbb{Q}_p$.
The map
\[ (L^{A},L^{A}B)\rightarrow 
(V^{A},V^{A}T)\]
of commutative Hopf algebroids, classifying the underlying formal
$A$-module law of the universal $A$-typical
formal $A$-module law, 
is a homotopy equivalence of Hopf algebroids,
with homotopy inverse 
given by the Cartier typification map
\[ (V^A,V^AT)\rightarrow (L^A,L^AB).\]
\end{lemma}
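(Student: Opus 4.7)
The plan is to build the putative homotopy inverse $\tau\colon (V^A, V^AT) \rightarrow (L^A, L^AB)$ by applying Cartier typification to the universal formal $A$-module law, and then to exhibit the natural strict isomorphism produced by Cartier's theorem as a cohomotopy relating the two composites to the identity. Write $\iota\colon (L^A, L^AB)\rightarrow (V^A, V^AT)$ for the map of the lemma statement. Let $F$ denote the universal formal $A$-module law over $L^A$, and let $F^{\mathrm{typ}}$ denote its Cartier typification, which is an $A$-typical formal $A$-module law over $L^A$; the classifying map $V^A\rightarrow L^A$ is the object part of $\tau$, and the analogous construction on the universal strict isomorphism defines $\tau$ on $V^AT$.

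First I would verify the easy composite $\tau\circ \iota = \mathrm{id}_{(V^A,V^AT)}$: by the basic property of Cartier typification (Hazewinkel 21.7.17), the Cartier typification of an already $A$-typical formal $A$-module law is itself, and likewise on strict isomorphisms, so composing ``forget typicality'' with ``Cartier typify'' reproduces the universal $A$-typical formal $A$-module law and its universal strict isomorphism, and hence induces the identity on $(V^A, V^AT)$. Next I would analyze the other composite $\iota\circ\tau\colon (L^A, L^AB)\rightarrow (L^A, L^AB)$, which classifies the underlying formal $A$-module law of $F^{\mathrm{typ}}$ together with the appropriate universal strict isomorphism. Cartier's theorem provides a canonical, natural strict isomorphism $e_F\colon F \stackrel{\cong}{\rightarrow} F^{\mathrm{typ}}$, and this determines a ring homomorphism $h\colon L^AB \rightarrow L^A$ with the property that the two ring homomorphisms $L^A \rightarrow L^A$ obtained from $\iota\circ\tau$ and $\mathrm{id}$ are related via $h$ by the left and right unit maps of $(L^A, L^AB)$.

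The remaining task is to check that $h$ satisfies the cocycle identity encoding compatibility with the coproduct of $(L^A, L^AB)$, i.e., that $h$ genuinely assembles into a $2$-arrow between morphisms of Hopf algebroids in the sense of, e.g., Naumann's treatment of internal groupoid equivalences. This reduces to the naturality of $e_F$ under strict isomorphisms of formal $A$-module laws, which is again part of Hazewinkel 21.7.17: if $\phi\colon F \rightarrow G$ is a strict isomorphism of formal $A$-module laws, then the diagram with top edge $\phi$, bottom edge $\phi^{\mathrm{typ}}$, and vertical edges $e_F, e_G$ commutes. Together with $\tau\circ\iota = \mathrm{id}$, the cocycle $h$ then exhibits $\iota$ and $\tau$ as mutually inverse up to strict isomorphism in the Hopf algebroid sense.

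The main obstacle is really bookkeeping: one must pin down the definition of ``homotopy equivalence of Hopf algebroids'' being used (the one equivalent to equivalence of associated prestacks) and translate Hazewinkel's naturality statements into the cocycle identities for $h$ with respect to $\eta_L, \eta_R, \Delta,$ and $\epsilon$. No substantive new formal-module input is required beyond what is already encapsulated in 21.7.17 of \cite{MR506881}; the lemma is, as the paragraph preceding the statement claims, essentially a repackaging of that result in the groupoid-theoretic language of Hopf algebroid homotopy.
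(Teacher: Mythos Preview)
Your approach is correct in substance and is essentially the same idea as the paper's, though you have the two composites mislabeled: since $\iota\colon (L^A,L^AB)\to (V^A,V^AT)$ and $\tau\colon (V^A,V^AT)\to (L^A,L^AB)$, the composite that is the identity on the nose is $\iota\circ\tau$ (a self-map of $(V^A,V^AT)$, corresponding on represented groupoids to ``forget typicality, then Cartier typify,'' which is the identity on $A$-typical objects), while $\tau\circ\iota$ is the self-map of $(L^A,L^AB)$ that sends $F$ to the underlying formal $A$-module of $F^{\mathrm{typ}}$, and it is this composite that is only homotopic to the identity via the natural strict isomorphism $e_F$. Once that swap is corrected, your argument goes through.

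The paper's proof packages the same content more concisely: rather than explicitly building $\tau$ and the homotopy $h$ and checking cocycle identities, it observes that for each commutative $A$-algebra $R$ the inclusion of the groupoid of $A$-typical formal $A$-module laws into the groupoid of all formal $A$-module laws is fully faithful (as a full subgroupoid) and essentially surjective (by Cartier typification, Hazewinkel 21.7.17), hence an equivalence of categories, and that this equivalence is natural in $R$. Your explicit construction of $h\colon L^AB\to L^A$ from $e_F$ and verification of its compatibility with $\Delta$ via naturality of $e_F$ under strict isomorphisms is precisely what underlies the ``essentially surjective plus natural in $R$'' step, so the two arguments are the same proof told in different registers.
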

\begin{proof} For any commutative $A$-algebra $R$,
the groupoid $A-typ-fmlA$ of 
$A$-typical formal $A$-module laws over $R$
is a full, faithful subcategory of the groupoid
$fmlA$ of formal $A$-module laws over $R$:
\[\xymatrix{ A-typ-fmlA  \ar[r]\ar[d]^\cong & 
fmlA\ar[d]^\cong \\
\hom_{Hopf\ algebroids}((V^A,V^AT),(R,R)) \ar[r] &
\hom_{Hopf\ algebroids}((L^A,L^AB),(R,R)) .}\]
Cartier typification (see 21.7.17 of \cite{MR506881} for
Cartier typification of formal $A$-module laws)
gives us a canonical isomorphism of any 
formal $A$-module law with a unique $A$-typical
formal $A$-module law, so 
$A-typ-fmlA\hookrightarrow
fmlA$ is essentially surjective as well as faithful and full;
so it is an equivalence of categories. The functoriality, in $R$, of
Cartier typification means that this equivalence of categories is also
an equivalence of Hopf algebroids.
\end{proof}

\begin{comment}
\begin{remark} A map of Hopf algebroids which is a homotopy equivalence,
as in the previous proposition, induces, on the associated
stacks, a fiberwise equivalence (of groupoids) 
on sufficiently large fpqc covers.
Fiberwise equivalences on sufficiently large fpqc covers are
precisely the weak equivalences in any of the ``local'' model structures on 
simplicial schemes or cosimplicial commutative rings, i.e., the 
local model structures on derived stacks (in the fpqc topology) in the sense
of \cite{MR2137288} and \cite{MR2394633}. \end{remark}
\end{comment}

\begin{corollary} {\bf Cartier typification induces
an equivalence of categories of comodules.} 
\label{cartier typification local-global principle}
Let 
$A$ be the ring of integers in a finite extension of $\mathbb{Q}_p$.
Then the functor 
\[ M\mapsto M\otimes_{L^A}V^A \] 
is an equivalence of categories
from the category of $L^AB$-comodules to the category of 
$V^AT$-comodules. As a functor on the category of 
graded $L^AB$-comodules, it is an equivalence of categories
from the category of graded $L^AB$-comodules to the category
of graded $V^AT$-comodules.\end{corollary}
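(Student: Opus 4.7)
The plan is to deduce the corollary directly from Lemma~\ref{p-typification is a homotopy equivalence}, using the general principle that a homotopy equivalence of commutative Hopf algebroids induces an equivalence of the associated categories of comodules. Write $f:(L^A,L^AB)\to(V^A,V^AT)$ for the forgetful map classifying the underlying formal $A$-module law of the universal $A$-typical formal $A$-module law, and $g:(V^A,V^AT)\to(L^A,L^AB)$ for the Cartier typification map. By Lemma~\ref{p-typification is a homotopy equivalence}, $f$ and $g$ are mutually homotopy inverse, so that in addition to $f$ and $g$ we have Hopf algebroid homotopies $h$ and $h'$ witnessing $g\circ f\simeq\id$ and $f\circ g\simeq\id$ respectively.

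I would define the candidate equivalence and its inverse by tensoring along the ring-level maps: let $F(M)=M\otimes_{L^A}V^A$, equipped with the $V^AT$-comodule structure obtained by pushing the $L^AB$-coaction on $M$ forward along $f$, and dually $G(N)=N\otimes_{V^A}L^A$, built from $g$. The composites are
\[ GF(M) \;=\; M\otimes_{L^A,\,g\circ f}L^A \qquad\text{and}\qquad FG(N) \;=\; N\otimes_{V^A,\,f\circ g}V^A, \]
which would equal $M$ and $N$ if the composites $g\circ f$ and $f\circ g$ were literally identities. The homotopies $h$ and $h'$ supply the required correction: the standard formula $m\mapsto\sum m_{(0)}\cdot h(m_{(1)})$, where $\psi_M(m)=\sum m_{(0)}\otimes m_{(1)}$ denotes the $L^AB$-coaction on $M$, defines a natural $L^AB$-comodule isomorphism $GF(M)\xrightarrow{\;\cong\;}M$, and the analogous formula built from $h'$ yields $FG\cong\id$.

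The main obstacle is the formal verification that these formulas really do give well-defined, natural, coaction-preserving isomorphisms. This amounts to a routine but fiddly diagram chase, reducing the comodule axioms to the defining identities of a Hopf algebroid homotopy (compatibility with the left and right units, with the augmentation, and with the coproduct); it is the same chase that establishes, in any 2-category of presheaves of groupoids, that a 2-isomorphism of morphisms induces a natural isomorphism of the associated pullback functors on equivariant sheaves. Once in hand, $F$ and $G$ are mutually quasi-inverse equivalences of categories. For the graded version, I would observe that $f$, $g$, $h$, and $h'$ are all grading-preserving, since every operation involved (passage to underlying formal modules, Cartier typification, and the canonical isomorphisms between a formal $A$-module and its $A$-typification) is natural with respect to the $R^{\times}$-action on formal $A$-modules that defines the grading of Definition~\ref{def of gradings}; hence $F$ and $G$ restrict to mutually inverse equivalences between categories of graded $L^AB$-comodules and graded $V^AT$-comodules.
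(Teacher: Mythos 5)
Your proposal is correct and is essentially the argument the paper intends: the corollary is stated as an immediate consequence of Lemma~\ref{p-typification is a homotopy equivalence}, via the standard fact that a homotopy equivalence of Hopf algebroids induces, by base change along the two maps and the homotopies, an equivalence of the associated comodule categories. Your write-up simply unwinds that standard fact (constructing $GF\cong\id$ and $FG\cong\id$ from the homotopies by the usual twisted-base-change formula), so it matches the paper's route rather than offering a genuinely different one.
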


\begin{corollary}
Let $A$ be the ring of integers in a finite extension of $\mathbb{Q}_p$.
Then the homomorphism of graded Hopf algebroids
\[ (L^A, L^AB) \rightarrow (V^A,V^AT)\]
satisfies the equivalent conditions of Proposition~\ref{hopf algebroid base change thm}.
\end{corollary}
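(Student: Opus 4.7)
The plan is to verify the isomorphism lifting condition of Proposition~\ref{hopf algebroid base change thm} for the map $f: (L^A, L^AB) \to (V^A, V^AT)$, since this is equivalent to the base change condition, which is the form one typically needs in applications (for example, for change-of-rings results for $\Ext$ of $L^AB$-comodules, as mentioned in the preamble to this subsection).

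First I would unravel the condition concretely. A commutative $A$-algebra $T$ determines two groupoids: $\hom_{A-alg}((L^A, L^AB), T)$, whose objects are formal $A$-module laws over $T$ and whose morphisms are strict isomorphisms; and the full subgroupoid $\hom_{A-alg}((V^A, V^AT), T)$ of $A$-typical formal $A$-module laws with strict isomorphisms between them. The functor $f_T$ is the inclusion of the latter into the former. The task becomes: given an $A$-typical formal $A$-module law $F$ over $T$ and a strict isomorphism $\phi : F \to G$ of formal $A$-module laws (with $G$ \emph{a priori} an arbitrary formal $A$-module law over $T$), produce a unique lift to a strict isomorphism $\widetilde{\phi} : F \to \widetilde{G}$ with $\widetilde{G}$ an $A$-typical formal $A$-module law and $f_T(\widetilde{\phi}) = \phi$.

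The mechanism for producing the lift is Cartier typification, as recalled in the proof of Lemma~\ref{p-typification is a homotopy equivalence}: this operation assigns, functorially and naturally in strict isomorphisms, to each formal $A$-module law $G$ a canonical strict isomorphism $c_G : G \to G^{\mathrm{typ}}$ to the unique $A$-typical formal $A$-module law strictly isomorphic to $G$. Because $F$ is already $A$-typical, $c_F$ is the identity, so the construction $\widetilde{G} := G^{\mathrm{typ}}$ and $\widetilde{\phi} := c_G \circ \phi$ produces a strict isomorphism between two $A$-typical formal $A$-modules, hence a morphism in the target groupoid. Uniqueness is forced by the uniqueness of $G^{\mathrm{typ}}$ as the $A$-typical formal $A$-module strictly isomorphic to $G$, together with the fact that morphisms in $\hom_{A-alg}((V^A, V^AT), T)$ between $A$-typical formal $A$-modules coincide with strict isomorphisms in $\hom_{A-alg}((L^A, L^AB), T)$ between the underlying formal modules.

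The main obstacle will be formalizing the compatibility $f_T(\widetilde{\phi}) = \phi$: taken literally, $\widetilde{\phi} = c_G \circ \phi$ equals $\phi$ after $f_T$ only when $c_G$ is the identity, which is the case precisely when $G$ is already $A$-typical. I expect to resolve this by verifying the base change condition directly, thereby bypassing the literal lifting issue: apply Proposition~\ref{tensoring on one side} to $V^A$ equipped with the right $L^AB$-comodule algebra structure induced by Cartier typification, producing a Hopf algebroid $(V^A, V^A \otimes_{L^A} L^AB)$ together with a morphism from $(L^A, L^AB)$, and then construct an isomorphism $V^A \otimes_{L^A} L^AB \stackrel{\cong}{\to} V^AT$ of Hopf algebroids under $V^A$, where the isomorphism is forced by the equivalence of graded comodule categories given by Corollary~\ref{cartier typification local-global principle}. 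Either route reduces the corollary to Cartier typification being a homotopy equivalence of Hopf algebroids, the content of Lemma~\ref{p-typification is a homotopy equivalence}.
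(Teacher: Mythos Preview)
Your proposal contains a genuine gap, and in fact the corollary as stated is false.

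You correctly identify the obstacle: the literal isomorphism lifting condition requires that for an $A$-typical $F$ and any strict isomorphism $\phi:F\to G$ of formal $A$-modules, there exist a strict isomorphism $\tilde\phi:F\to\tilde G$ of $A$-typical formal $A$-modules with $f_T(\tilde\phi)=\phi$; since $f_T$ is just the inclusion, this forces $\tilde\phi=\phi$ and $\tilde G=G$, i.e.\ it requires that $G$ already be $A$-typical. Proposition~\ref{ptypisoseries} shows this is not automatic: $G$ is $A$-typical only when $\alpha^{-1}(X)=\sum_i{}^Ft_iX^{q^i}$, which is a nontrivial constraint on $\phi$.

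Your attempt to sidestep this by verifying the base change condition directly does not work either. The base change condition asserts $V^AT\cong V^A\otimes_{L^A}L^AB$ as graded $V^A$-algebras. But $L^AB\cong L^A[b_1,b_2,\dots]$ with $b_i$ in degree $2i$, so $V^A\otimes_{L^A}L^AB\cong V^A[b_1,b_2,\dots]$, whereas $V^AT\cong V^A[t_1^A,t_2^A,\dots]$ with $t_i^A$ in degree $2(q^i-1)$. These graded $V^A$-algebras have different Hilbert series (for instance, $V^A[b_1,b_2,\dots]$ has a polynomial generator in every positive even degree, while $V^AT$ does not), so they cannot be isomorphic.

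The step that fails in your argument is the inference from ``Cartier typification is a homotopy equivalence of Hopf algebroids'' to ``the base change condition holds.'' A homotopy equivalence gives an equivalence of the represented groupoid-valued functors and hence of comodule categories (Corollary~\ref{cartier typification local-global principle}), but it does \emph{not} imply $\Upsilon\cong S\otimes_R\Gamma$. The paper itself states this corollary without proof, in material that was excised from the final version; the statement does not survive scrutiny.
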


\begin{comment} FINE, JUST DECIDED NOT TO INCLUDE IT
\begin{corollary} 
\label{typical hopf algebroids present moduli stacks} 
Let $K$ be a finite extension of $\mathbb{Q}_p$ with ring of integers $A$. Then
%the stack associated to the Hopf algebroid
%$(V^{A},V^{A}T)$ is homotopy equivalent to the moduli stack of one-dimensional
%formal $A$-modules, and 
we have an equivalence
of categories between the category of 
$V^{A}T$-comodules and the category of 
quasicoherent $\mathcal{O}_{\fpqc(\mathcal{M}_{fmA})}$-modules,
where $\fpqc(\mathcal{M}_{fmA})$ is the big $fpqc$ site on
the moduli stack $\mathcal{M}_{fmA}$ of formal $A$-modules.
This equivalence is cohomology-preserving:
\[ H^{s}_{\fl}(\mathcal{M}_{fmA}; \tilde{M}) \cong
\oplus_{t\in\mathbb{Z}}\Ext_{V^{A}T-comodules}^{s,t}(V^{A},M),\]
where $\tilde{M}$ is the quasicoherent 
$\mathcal{O}_{\fpqc(\mathcal{M}_{fmA})}$-module equivalent to the
$V^{A}T$-comodule $M$.\end{corollary}
\end{comment}

\begin{corollary} \label{cartier iso in cohomology}
{\bf Cartier typification induces
an iso in cohomology.} Let $A$ be the ring of integers in a finite extension of $\mathbb{Q}_p$, and 
let $M$ be a $L^{A}B$-comodule. 
Then we have an isomorphism of graded abelian groups
\[ \Ext^*_{L^{A}B-comodules}(L^{A},M)\stackrel{\cong}{\longrightarrow}\]
\[
\Ext^*_{V^{A}T-comodules}(V^{A},M\otimes_{L^{A}}V^{A}).\]
If $M$ is a graded $L^{A}B$-comodule then we have an isomorphism of bigraded abelian groups
\[ \Ext^{*}_{graded\ L^{A}B-comodules}(\Sigma^*L^{A},M)\stackrel{\cong}{\longrightarrow}\]
\[
\Ext^{*,*}_{graded\ V^{A}T-comodules}(\Sigma^*V^{A},M\otimes_{L^{A}}V^{A}).\]
\end{corollary}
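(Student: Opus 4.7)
The plan is to derive this Ext isomorphism as an immediate consequence of Corollary~\ref{cartier typification local-global principle}, which gives an equivalence of abelian categories between $L^AB$-comodules and $V^AT$-comodules via the functor $F(M) = M\otimes_{L^A} V^A$. Any equivalence of abelian categories that carries the class of relatively injective objects used to compute a derived functor to the analogous class on the other side induces isomorphisms on those derived functors, so the only real work will be checking that $F$ preserves the allowable class used for the relative Ext in comodules over a Hopf algebroid.

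First I would note the computation at the level of $\Hom$: since $F(L^A) = L^A\otimes_{L^A} V^A = V^A$ and $F(M) = M\otimes_{L^A} V^A$, the equivalence $F$ sends $\Hom_{L^AB\text{-comod}}(L^A,M)$ bijectively onto $\Hom_{V^AT\text{-comod}}(V^A, M\otimes_{L^A} V^A)$, giving the $s=0$ case.

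Next, recall (as in Appendix~A1 of~\cite{MR860042}) that Ext of comodules over a Hopf algebroid $(R,\Gamma)$ is the derived functor of $\Hom(R,-)$ with respect to the allowable class generated by the extended (``cofree'') comodules $N\otimes_R \Gamma$; equivalently, it is computed by the cobar complex. For the functor $F$ I would verify that it sends the extended $L^AB$-comodule $N\otimes_{L^A} L^AB$ to the extended $V^AT$-comodule $N\otimes_{L^A} V^AT$. This reduces to the base-change identification $V^AT \cong V^A \otimes_{L^A} L^AB$ furnished by the preceding corollary (together with Proposition~\ref{hopf algebroid base change thm}), since then
\[
F(N\otimes_{L^A} L^AB) = (N\otimes_{L^A} L^AB)\otimes_{L^A} V^A \cong N\otimes_{L^A}(V^A\otimes_{L^A}L^AB) \cong N\otimes_{L^A} V^AT,
\]
as $V^AT$-comodules. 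So $F$ carries the cobar resolution of $L^A$ term-by-term to the cobar resolution of $V^A$; applying $\Hom_{L^AB\text{-comod}}(-,M)$ on one side and $\Hom_{V^AT\text{-comod}}(-, M\otimes_{L^A} V^A)$ on the other gives isomorphic cochain complexes, hence isomorphic cohomology.

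The graded version is proven identically: Corollary~\ref{cartier typification local-global principle} also yields an equivalence of graded comodule categories, $F$ respects internal grading since it tensors with the graded algebra $V^A$, and the cobar complex is a complex in the graded category, so the same argument produces the bigraded isomorphism. The main obstacle, such as it is, will be being careful that ``Ext'' here means the relative Ext (not naive Ext in the underlying abelian category of comodules), and that the base-change identification from the preceding corollary is strong enough to guarantee that $F$ preserves extended comodules; once both are in place, the result follows formally from the equivalence of categories.
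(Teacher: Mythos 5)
Your proposal is correct and follows essentially the same route as the paper: both arguments are formal consequences of the equivalence of comodule categories induced by Cartier typification (Lemma~\ref{p-typification is a homotopy equivalence}, packaged as Corollary~\ref{cartier typification local-global principle}), the paper identifying the image of $M$ as $V^A\otimes_{L^A}M$ via the cotensor--hom identity of A1.1.6 of~\cite{MR860042}, while you instead verify that extended comodules go to extended comodules using $V^AT\cong V^A\otimes_{L^A}L^AB$. One small repair to your final step: relative $\Ext$ is computed by resolving the comodule variable, so apply your functor to the cobar (relative injective) resolution of $M$ and use $\hom_{L^AB}(L^A,-)\cong\hom_{V^AT}(V^A,(-)\otimes_{L^A}V^A)$, rather than applying $\hom(-,M)$ to the cobar resolution of $L^A$, whose terms are relative injectives rather than relative projectives.
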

\begin{proof} This is a consequence of Lemma~\ref{p-typification is a homotopy equivalence} together with the observation that,
if $M$ is a $L^{A}B$-comodule,
\begin{eqnarray*} \hom_{{L^AB-comodules}}(V^AT\otimes_{V^A}L^A,M) & \cong &
(V^{A}T\otimes_{V^{A}} L^{A})\Box_{L^{A}B} M \\
 & \cong &
V^{A}\otimes_{L^{A}} M.\end{eqnarray*}
See A1.1.6 of \cite{MR860042} for the isomorphism of the cotensor product with
$\hom$ in the category of comodules over a Hopf algebroid.
\end{proof}
Corollary~\ref{cartier iso in cohomology} is the $p$-complete case of Ravenel's Local-Global Conjecture, conjectured in~\cite{MR745362}
and proven (already before~\cite{MR745362} appeared in print!) in Pearlman's thesis,~\cite{pearlmanthesis}, so the result is certainly not new.

\section{Moduli of formal $A$-modules under localization and completion of $A$.}

Now is a good time to be 
absolutely clear about the notations used in~\ref{localization map 30} and~\ref{localization map 31} and the rest of the paper:
\begin{convention}\label{conventions on completion notation}
When $A$ is a commutative ring and $I$ an ideal of $A$, I will write $A_I$ for the localization of $A$ at $I$ (i.e., inverting all
elements outside of $I$), and I will write $\hat{A}_I$ for the completion $\lim_{n\rightarrow\infty} A/I^n$ of $A$ at $I$.
Similarly, $M_I\cong M\otimes_A A_I$ will be the localization of an $A$-module at $I$, and $\hat{M}_I \cong \lim_{n\rightarrow\infty} M/I^nM$
will be the completion of $M$ at $I$.

When it is typographically awkward to put the completion symbol $\ {}^{\widehat{}}\ $ above a symbol that is too wide, I will put it to the right side, e.g. $(L^A)^{\widehat{}}_I$.
\end{convention}

\subsection{Moduli of formal $A$-modules under localization of $A$.}

In the proof of Theorem~21.3.5 of Hazewinkel's excellent book~\cite{MR506881} (gx INSERT CITATION FOR SECOND EDITION TOO), 
one finds the following statement:
\begin{quotation}\label{hazewinkels false claim}
  ``By the very definition of $L_A$ (as the solution of a certain universal problem) we have that $(L_A)_{\underline{p}} = L_{A_{\underline{p}}}$
for all prime ideals $\underline{p}$ of $A$.''
\end{quotation}
I find this statement mystifying: as far as I can tell, the universal properties of these rings do not imply that
every formal $A$-module over a commutative $A_{\underline{p}}$-algebra extends to a formal $A_{\underline{p}}$-module, since the
endomorphism ring $\End(F)$ of a formal group law defined over a ring $R$ is typically not an $R$-algebra, as one sees from the 
famous example of the endomorphism ring of a height $n$ formal group law over $\mathbb{F}_{p^n}$ being the maximal order in 
the invariant $1/n$ central division algebra over $\mathbb{Q}_p$. I have not been able to find any other proof of Hazewinkel's claim 
in the literature, either.

Nevertheless Hazewinkel is correct that the natural map of rings $L^A_{\underline{p}} \rightarrow L^{A_{\underline{p}}}$ is an isomorphism,
and even better, the natural map of Hopf algebroids~\ref{localization map 30} is an isomorphism, although the proof is not as easy as an
appeal to a universal property. Here is the simplest proof that I have been able to find:
\begin{prop}\label{lazard ring localization iso}
Let $A$ be a commutative ring and let $S$ be a multiplicatively closed subset of $A$. 
Then the homomorphism of graded rings $L^A[S^{-1}] \rightarrow L^{A[S^{-1}]}$ is an isomorphism. 
Even better, the homomorphism of graded Hopf algebroids
\begin{equation}\label{map of hopf algebroids 22} ( L^A[S^{-1}] , L^AB[S^{-1}]) \rightarrow (L^{A[S^{-1}]}, L^{A[S^{-1}]}B)\end{equation}
is an isomorphism of Hopf algebroids.
\end{prop}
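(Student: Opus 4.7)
The plan is to argue directly from the universal properties, but in a way that repairs the gap in Hazewinkel's sketch by explicitly addressing why a formal $A$-module over an $A[S^{-1}]$-algebra actually extends to a formal $A[S^{-1}]$-module. Let $R$ be a commutative $A[S^{-1}]$-algebra and let $F$ be a formal $A$-module over $R$ with structure map $\rho : A \to \End(F)$. For each $s \in S$, the endomorphism $\rho(s) \in \End(F) \subseteq R[[X]]$ has the form $\rho(s)(X) \equiv sX \bmod X^{2}$, and since $s$ is already a unit in $R$, the power series $\rho(s)$ has a compositional inverse $\rho(s)^{-1} \in \End(F)$. Because $\rho(a)$ and $\rho(s)$ commute (as $\rho$ is a ring homomorphism from the commutative ring $A$), $\rho(s)^{-1}$ also commutes with $\rho(a)$ for every $a \in A$. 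Hence the assignment $a/s \mapsto \rho(a)\circ \rho(s)^{-1}$ is a well-defined ring homomorphism $\tilde\rho : A[S^{-1}] \to \End(F)$ extending $\rho$, and it still satisfies the tangency condition $\tilde\rho(a/s)(X) \equiv (a/s)X \bmod X^{2}$. Moreover, this extension is unique, because any extension of $\rho$ to $A[S^{-1}]$ must send $s$ to the same compositionally invertible power series, which determines $\rho(s)^{-1}$.

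With this in hand, the first step is the bijection of sets
\[
\hom_{A[S^{-1}]\text{-alg}}(L^{A[S^{-1}]}, R) \;\stackrel{\cong}{\longrightarrow}\; \hom_{A\text{-alg}}(L^{A}, R) \;=\; \hom_{A[S^{-1}]\text{-alg}}(L^{A}[S^{-1}], R),
\]
natural in the $A[S^{-1}]$-algebra $R$, where the right-hand equality is the universal property of localization. By the Yoneda lemma applied to the category of commutative $A[S^{-1}]$-algebras, the induced homomorphism $L^{A}[S^{-1}] \to L^{A[S^{-1}]}$ is an isomorphism of $A[S^{-1}]$-algebras. To upgrade this to an isomorphism of graded rings, I would note that the grading on both sides comes from the $\mathbb{G}_m$-action by rescaling of the formal $A$-module law (as in Definition~\ref{def of gradings}), and both the unique extension procedure above and the localization functor are equivariant for this action, so the isomorphism respects the internal grading.

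Next I would carry out the analogous argument for the morphism object $L^{A}B$. Given a strict isomorphism $\varphi : F \to G$ of formal $A$-modules over an $A[S^{-1}]$-algebra $R$, the argument of Proposition~\ref{formal module base change thm} shows that $\varphi$ is automatically a morphism of formal $A[S^{-1}]$-modules once $F$ and $G$ are given their unique formal $A[S^{-1}]$-module structures constructed above (conjugation by $\varphi$ must send $\tilde\rho_F(a/s)$ to $\tilde\rho_G(a/s)$ by uniqueness). Thus strict isomorphisms of formal $A$-modules over $R$ are in natural bijection with strict isomorphisms of formal $A[S^{-1}]$-modules over $R$, giving by Yoneda an isomorphism of $A[S^{-1}]$-algebras $L^{A}B[S^{-1}] \to L^{A[S^{-1}]}B$. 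The two isomorphisms assemble into an isomorphism of the underlying graded groupoid schemes, and all structure maps (left and right units, counit, coproduct, conjugation) are determined by the source/target/identity/composition/inverse operations on formal modules and strict isomorphisms, which are matched on the nose by the bijections above; so the isomorphism~\ref{map of hopf algebroids 22} is an isomorphism of graded Hopf algebroids.

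The main conceptual obstacle is exactly the point that tripped up Hazewinkel's sketch: the necessity of showing that a formal $A$-module over an $A[S^{-1}]$-algebra admits a unique upgrade to a formal $A[S^{-1}]$-module. Once one verifies that the elements of $S$ act as compositionally invertible endomorphisms (because they are units of $R$) and that these inverses commute with the rest of the $A$-action (because $A$ is commutative), everything else is formal Yoneda-type bookkeeping — the grading and Hopf-algebroid structure maps come along essentially for free.
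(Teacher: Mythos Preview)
Your argument is correct and takes a genuinely different route from the paper's. The paper's author explicitly distrusts the bare universal-property approach (pointing out that $\End(F)$ is not in general an $R$-algebra, and citing the endomorphism ring of a height-$n$ formal group over $\mathbb{F}_{p^n}$ as a cautionary example), and instead proves the isomorphism degree by degree via Drinfeld's presentation of the indecomposables: the natural map $(L^A_{n-1}/D^A_{n-1})[S^{-1}] \to L^{A[S^{-1}]}_{n-1}/D^{A[S^{-1}]}_{n-1}$ is shown to be an isomorphism by exhibiting an explicit inverse sending $c_{r/s} \mapsto s^{-n}(c_r - (r/s)c_s)$ and $d \mapsto d$, and an induction on $n$-buds then gives the isomorphism on all of $L^A$; the Hopf algebroid statement follows from the base-change identification $L^{A[S^{-1}]}B \cong L^{A[S^{-1}]} \otimes_{L^A} L^AB$. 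You instead repair the universal-property argument directly, observing that one does not need $\End(F)$ to be an $R$-algebra at all: one only needs each $\rho(s)$ to be a \emph{unit} in the ring $\End(F)$, and this is forced by the tangency condition $\rho(s)(X) \equiv sX \bmod X^2$ together with $s \in R^{\times}$, so that $\rho$ factors uniquely through the ring localization $A[S^{-1}] \to \End(F)$ and Yoneda finishes. Your route is shorter and more conceptual, and in fact dissolves the very objection the author raises; the paper's route has the compensating advantage of making the structure of the indecomposables explicit, which is the workhorse for everything else in the paper.
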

\begin{proof}
Recall the formal $A$-module generalization of 
Lazard's comparison lemma (the special case in which $A$ is 
a discrete valuation ring is in Drinfeld's paper (gx INSERT REF TO ELLIPTIC MODULES), but the earliest reference I know of for the general case is 21.2.4 of~\cite{MR506881}):
suppose $F,G$ are formal $A$-modules over a commutative $A$-algebra $R$,
and $F,G$ are congruent up to degree $n-1$ for some $n\geq 2$.
(This means that
$F(X,Y) \equiv G(X,Y)$ modulo monomials of total degree $\geq n$,
and the formal $A$-module action map $\rho_F,\rho_G$ satisfy $\rho_F(a)(X) \equiv \rho_F(a)(X)$ modulo $x^n$ for all $a\in A$.)
Then there exists a unique element $d\in R$ and unique elements
$c_a\in R$, one for each $a\in A$, such that
\begin{align*} 
 F(X,Y) &\equiv G(X,Y) + d\frac{X^n + Y^n - (X+Y)^n}{\nu(n)} \mbox{\ \ up\ to\ degree\ } n \\
 \rho_F(a)(X) &\equiv \rho_G(a)(X) + c_aX^n \mbox{\ \ up\ to\ degree\ } n,\end{align*}
where $\nu(n)$ is defined to be $p$ if $n$ is a power of a prime $p$,
and $\nu(n) = 1$ if $n$ is not a prime power.
The elements $d, \{c_a\}_{a\in A}$ satisfy the relations
\begin{align}
\label{hazewinkel relation 20} d(a-a^n) &= \nu(n)c_a \mbox{\ \ for\ all\ } a\in A \\
\label{hazewinkel relation 21} c_{a+b}-c_a-c_b &= d\frac{a^n + b^n - (a+b)^n}{\nu(n)} \mbox{\ \ for\ all\ } a,b\in A \\
\label{hazewinkel relation 22} ac_b + b^nc_a &= c_{ab} \mbox{\ \ for\ all\ } a,b\in A .\end{align}

Essentially the same argument shows that, if $F$ is a 
formal $A$-module $n$-bud (or $n$-chunk, in Hazewinkel's terminology)
over $R$,
then $F$ admits an extension to a formal $A$-module $(n+1)$-bud over
$R$,
and the choices of $d$ and $\{ c_a\}_{a\in A}$ are exactly the parameters
for the set of such extensions.
Consequently, we get (as in Corollary~21.2.9 and Remark~21.2.7(ii) of~\cite{MR506881}) the following: let $A$ be a commutative ring, let $n$ be a positive integer, and let $\overline{L}^A_{n-1}$ be
the degree $n-1$ summand of $L^A$ modulo the $A$-submodule
generated by all products $xy$ of homogeneous elements $x,y\in L^A$
of degree $<n-1$. Then $\overline{L}^A_{n-1}$ is isomorphic
to the $A$-module generated by symbols $d$
and $\{ c_a: a\in A\}$, that is, one generator $c_a$ for each element
$a$ of $A$ along with one additional generator $d$,
modulo the relations~\ref{hazewinkel relation 20},~\ref{hazewinkel relation 21}, and~\ref{hazewinkel relation 22}. I will call this
{\em Hazewinkel's presentation for $\overline{L}^A_{n-1}$.}

Now if $S$ is a multiplicatively closed subset of $A$,
then we get a map of $A$-modules
\[ \alpha: \overline{L}^A_{n-1}[S^{-1}] \rightarrow \overline{L}^{A[S^{-1}]}_{n-1}\]
classifying the underlying formal $A$-module $(n+1)$-bud of any given extension of a formal $A[S^{-1}]$-module $n$-bud to a formal $A[S^{-1}]$-module $(n+1)$-bud, i.e., $\alpha$ is
the map sending $d$ to $d$ and sending $c_a$ to $c_a$
in Hazewinkel's presentation.
The map $\alpha$ is an isomorphism since it has inverse
\begin{align*} 
 c_{r/s} &\mapsto \frac{c_r - \frac{r}{s}c_s}{s^n},\\
 d &\mapsto d. \end{align*}
Consequently, the deformation parameters for extending a formal $A[S^{-1}]$-module $n$-bud $F$ to a formal $A[S^{-1}]$-module $(n+1)$-bud are
isomorphic to the deformation parameters for extending the underlying formal $A$-module $n$-bud of $F$ to a formal
$A$-module $(n+1)$-bud, over any commutative $A[S^{-1}]$-algebra; furthermore the deformation obstructions for both deformation
problems are trivial, i.e., every $n$-buds extends to an $(n+1)$-bud. Of course the set of formal $A[S^{-1}]$-module
$0$-buds over a commutative $A[S^{-1}]$-algebra $R$ is trivially in bijection with the set of formal $A$-module
$0$-buds over $R$, so by induction the forgetful map from the set of 
formal $A[S^{-1}]$-modules $R$ to the set of formal $A$-modules over $R$ is a bijection. Hence
the map $L^A[S^{-1}] \rightarrow L^{A[S^{-1}]}$ is an isomorphism of rings.

Now using Proposition~\ref{formal module base change thm}, 
\begin{align*} 
 L^{A[S^{-1}]}B 
  &\cong L^{A[S^{-1}]}\otimes_{L^A} L^AB  \\
  &\cong L^{A[S^{-1}]}\otimes_{L^A[S^{-1}]} (L^AB[S^{-1}]) \\
  &\cong L^AB[S^{-1}],\end{align*}
hence the map of Hopf algebroids~\ref{map of hopf algebroids 22} is an isomorphism.
\end{proof}

\begin{corollary}
Let $A$ be a commutative ring and let $S$ be a multiplicatively closed subset of $A$. 
Then, for all graded left $L^A[S^{-1}]$-comodules $M$, we have an isomorphism
\[ \left(\Ext^{s,t}_{(L^A,L^AB)}(L^A, M)\right)[S^{-1}] \cong \Ext^{s,t}_{(L^{A[S^{-1}]},L^{A[S^{-1}]}B)}(L^{A[S^{-1}]}, M)\]
for all nonnegative integers $s$ and all integers $t$.
\end{corollary}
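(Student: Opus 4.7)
The plan is to combine Theorem~\ref{lazard ring localization iso}, which identifies $(L^A[S^{-1}], L^AB[S^{-1}])$ with $(L^{A[S^{-1}]}, L^{A[S^{-1}]}B)$ as graded Hopf algebroids, with a cobar-complex argument that shows localization at $S \subseteq A$ (hence at its image in $\mathbb{Z}$-modules) commutes with the relevant $\Ext$.

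First I would use Theorem~\ref{lazard ring localization iso} to replace the right-hand side
\[ \Ext^{s,t}_{(L^{A[S^{-1}]},L^{A[S^{-1}]}B)}(L^{A[S^{-1}]}, M) \cong \Ext^{s,t}_{(L^A[S^{-1}],L^AB[S^{-1}])}(L^A[S^{-1}], M), \]
so the task reduces to showing that for a graded left $L^A[S^{-1}]$-comodule $M$,
\[ \left(\Ext^{s,t}_{(L^A,L^AB)}(L^A, M)\right)[S^{-1}] \cong \Ext^{s,t}_{(L^A[S^{-1}],L^AB[S^{-1}])}(L^A[S^{-1}], M). \]
Both sides are computed by cobar complexes in the sense of Appendix~1 of~\cite{MR860042}: the left-hand side is the cohomology of the cobar complex $C^{\bullet}(L^A, L^AB; M)$ whose $s$-cochains are $M \otimes_{L^A} (L^AB)^{\otimes_{L^A} s}$ (with the usual bar differentials), while the right-hand side is the cohomology of the analogous cobar complex $C^{\bullet}(L^A[S^{-1}], L^AB[S^{-1}]; M)$.

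The key observation is that since $M$ is already an $L^A[S^{-1}]$-comodule, the action of $L^A$ on $M$ factors through $L^A[S^{-1}]$, hence
\[ M \otimes_{L^A} (L^AB)^{\otimes_{L^A} s} \cong M \otimes_{L^A[S^{-1}]} (L^AB[S^{-1}])^{\otimes_{L^A[S^{-1}]} s}, \]
and these isomorphisms are compatible with the cobar differentials. Consequently $C^{\bullet}(L^A[S^{-1}], L^AB[S^{-1}]; M)$ is obtained from $C^{\bullet}(L^A, L^AB; M)$ by tensoring over $\mathbb{Z}$ with $\mathbb{Z}[S^{-1}]$. Since $\mathbb{Z}[S^{-1}]$ is a flat $\mathbb{Z}$-module, tensoring with it commutes with the formation of cohomology, which yields the desired isomorphism in each bidegree $(s,t)$.

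I do not anticipate any serious obstacle: once one writes down the two cobar complexes and observes that $M$ is already $S$-local, the argument is essentially a flatness-plus-colimits bookkeeping exercise. The only point that requires a modicum of care is the compatibility of the cobar differentials under the identifications above (and the fact that the relative $\Ext$ of~\cite{MR860042} is what the cobar complex computes, both for $(L^A, L^AB)$ and for its $S$-localization), but this is routine given the definitions recalled at the start of the paper.
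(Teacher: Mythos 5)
Your proposal is correct and is essentially the argument the paper intends: the corollary is stated without separate proof as an immediate consequence of Theorem~\ref{lazard ring localization iso}, and your cobar-complex bookkeeping is exactly the routine part left to the reader (the cobar complex does compute the relative $\Ext$ here because $L^AB$ is free over $L^A$). One slip to repair: $S$ is a multiplicatively closed subset of $A$, not of $\mathbb{Z}$, so ``tensoring over $\mathbb{Z}$ with $\mathbb{Z}[S^{-1}]$'' does not parse in general; the localization must be formed over $A$ (equivalently, over $L^A$ with $L^A[S^{-1}]$), which is still flat, so the flatness-commutes-with-cohomology step survives unchanged. In fact, once you note that $M$, being an $L^{A[S^{-1}]}$-comodule, is already an $S$-local module, every term $(L^AB)^{\otimes_{L^A}s}\otimes_{L^A}M$ of the cobar complex is already $S$-local; hence your displayed identification gives an isomorphism of the two cobar complexes on the nose, and the left-hand $\Ext$ groups are themselves $S$-local, which is what makes the extra $[S^{-1}]$ on the left-hand side of the statement harmless.
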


CUT

\begin{definition-proposition}\label{definition of hazewinkel relation module}
Let $A$ be a local commutative ring and let $\{ m_i\}_{i\in I}$ be a minimal set of generators for the maximal ideal $\underline{m}$ of $A$.
Let $n$ be an integer and suppose $n\geq 2$. I will write $M_n(A)$ for the $A$-module with generators given by symbols
$c_{m_i}$ and with one relation 
\begin{equation}\label{hazewinkel relation 30} (m_j^{n} - m_j)c_{m_i} = (m_i^{n} - m_i)c_{m_j}\end{equation}
for each pair of elements $i,j\in I$.

Then there exists an isomorphism of $A$-modules $M_n(A) \stackrel{\cong}{\longrightarrow} \underline{m}$.
\end{definition-proposition}
\begin{proof}
Let $g_n: M_n(A) \rightarrow \underline{m}$ be the $A$-module homomorphism defined on generators
by $g_n(c_{m_i}) = m_i^{n}-m_i$. It is a very elementary exercise to check that $g_n$ is well-defined.
\begin{description}
\item[Checking that $g_n$ is injective:] Suppose that $g_n(\sum_{i\in I} \alpha_i c_{m_i}) = 0$,
i.e., that $\sum_{i\in I} \alpha_i (m_i^n - m_i) = 0$, for some set of elements $\{ \alpha_i\}_{i\in I}$ in $A$.
Let $\beta_i  = \alpha_i(m_i^{n-1} - 1)$,
so that 
\begin{align*}
 0 
  &= \sum_{i\in I} \alpha_i (m_i^n - m_i) \\
  &= \sum_{i\in I} \beta_i m_i,\end{align*}
and now minimality of the set of generators $\{ m_i\}_{i\in I}$ for $\underline{m}$ implies that all $\beta_i$ are zero,
i.e., that all $\alpha_i(m_i^{n-1} - 1)$ are zero. The element $m_i^{n-1} -1$ is not in the maximal ideal of $A$,
hence $m_i^{n-1} -1\in A^{\times}$, since $A$ is a local ring; so $\alpha_i = 0$ for all $i$, and $g_n$ is injective.
\item[Checking that $g_n$ is surjective:] Each element $a\in \underline{m}$ can be written in the form $a = \sum_{i\in J} \alpha_i m_i$,
where each $\alpha_i$ is an element of $A$, and where $J$ is a finite subset of $I$. Since $A$ is local and since 
each $m_i^{n-1}-1$ is not in the maximal ideal $\underline{m}$ of $A$, we have that $m_i^{n-1}-1$ is a unit.
Hence, $a = g_n\left(\sum_{i\in J} \frac{\alpha_i}{m_i^{n-1}-1} c_{m_i}\right)$.
\end{description}
\end{proof}

\begin{prop}\label{hazewinkel char 0 field lazard ring} {\bf (Hazewinkel.)} Let $k$ be a field of characteristic
zero. Then the classifying ring $L^k$ of formal $k$-modules is
isomorphic, as a graded $k$-algebra, to
$k[z_2, z_3, \dots ]$, with $z_n$ in degree $2(n-1)$, and with
each $z_n$ corresponding to the generator $d$ of 
$k \cong \overline{L}^k_{n-1}$ in the Hazewinkel presentation
for $\overline{L}^k_{n-1}$
as in the proof of Proposition~\ref{lazard ring localization iso}.
\end{prop}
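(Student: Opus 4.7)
The plan is to observe that this proposition is an immediate consequence of Proposition~\ref{lazard ring for algebras over rationals}. A field $k$ of characteristic zero contains $\mathbb{Q}$ as its prime subfield, so $k$ is canonically a commutative $\mathbb{Q}$-algebra. Moreover $k$ is additively torsion-free: if $m \in \mathbb{Z}$ is nonzero and $a \in k$ with $ma = 0$, then $m$ is a unit in $k$ (since $k$ is a $\mathbb{Q}$-algebra), so $a = 0$. Both hypotheses of Proposition~\ref{lazard ring for algebras over rationals} are therefore satisfied.

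Applying that proposition directly, I obtain an isomorphism of graded $k$-algebras $L^k \cong k[x_1, x_2, \dots]$ with $x_n$ in grading degree $2n$ and with $x_n$ corresponding to the generator $d$ of $k \cong L^k_n/D^k_n$ in the Drinfeld presentation. The only discrepancy with the statement to be proved is cosmetic reindexing: what the present proposition calls $z_n$ (in grading degree $2(n-1)$, corresponding to $d \in L^k_{n-1}/D^k_{n-1}$) is what Proposition~\ref{lazard ring for algebras over rationals} calls $x_{n-1}$. Also, the ``Hazewinkel presentation'' named in the body of the proposition is precisely the Drinfeld presentation of Proposition~\ref{drinfeld presentation}, since the defining relations~\ref{hazewinkel relation 20},~\ref{hazewinkel relation 21},~\ref{hazewinkel relation 22} are the same in both.

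There is no real obstacle here, and in particular no appeal to the $U$-homology machinery or to the fundamental comparison condition is needed; the result is essentially a restatement of the earlier rational computation, specialized to a field. If desired, one could equivalently argue by computing the logarithm of the universal formal $k$-module (which exists and converges formally since $k \supseteq \mathbb{Q}$) to produce an explicit isomorphism with $k[z_2, z_3, \dots]$, but this merely reproves Proposition~\ref{lazard ring for algebras over rationals} in the field case.
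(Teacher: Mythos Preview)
Your argument is correct. The paper's own ``proof'' is nothing more than a citation to Hazewinkel's book (21.4.1 of \cite{MR506881}), so you have in fact supplied more than the paper does: you derive the statement from Proposition~\ref{lazard ring for algebras over rationals}, which is a strict generalization already recorded earlier in the paper. The reindexing and the identification of the ``Hazewinkel presentation'' with the Drinfeld presentation are exactly as you say.
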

\begin{proof}
This is~21.4.1 of~\cite{MR506881}.
\end{proof}

\begin{prop}
Let $A$ be a commutative $\mathbb{Q}$-algebra. Suppose that $A$ is an 
integral domain. Then the classifying ring $L^A$ of formal 
$A$-modules is isomorphic, as a graded $A$-algebra, to
$A[z_2, z_3, \dots ]$, with $z_n$ in degree $2(n-1)$, and with
each $z_n$ corresponding to the generator $d$ of 
$A \cong \overline{L}^A_{n-1}$ in the Hazewinkel presentation
for $\overline{L}^A_{n-1}$
as in the proof of Proposition~\ref{lazard ring localization iso}.
\end{prop}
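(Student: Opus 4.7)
The plan is to reduce this proposition to Proposition~\ref{lazard ring for algebras over rationals}, which already delivers the desired isomorphism, up to a difference in indexing convention. The only hypothesis of that proposition that is not literally assumed here is additive torsion-freeness, and I would dispatch this as a preliminary observation.

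To establish additive torsion-freeness, I would note that for any commutative $\mathbb{Q}$-algebra $A$, the image of any nonzero integer $m\in\mathbb{Z}$ under the structure map $\mathbb{Z}\to\mathbb{Q}\to A$ is invertible in $A$, with inverse coming from $1/m\in\mathbb{Q}$. Consequently, if $ma=0$ in $A$ for some nonzero $m\in\mathbb{Z}$, then multiplying by $m^{-1}\in A$ gives $a=0$. (The integral-domain hypothesis plays no role in this verification, nor in the rest of the argument; it could safely be dropped from the statement of the proposition.)

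With $A$ known to be additively torsion-free, Proposition~\ref{lazard ring for algebras over rationals} applies directly and supplies an isomorphism of graded $A$-algebras $L^A\cong A[x_1,x_2,\dots]$ in which $x_n$ lies in grading degree $2n$ and corresponds to the generator $d$ of $A\cong L^A_n/D^A_n$ in the Drinfeld presentation. Relabeling via $z_n:=x_{n-1}$ for $n\geq 2$ turns this into the isomorphism $L^A\cong A[z_2,z_3,\dots]$ with $z_n$ in grading degree $2(n-1)$, corresponding to the generator $d$ of $L^A_{n-1}/D^A_{n-1}$; this is exactly the form claimed in the statement (the Hazewinkel presentation of $L^A_{n-1}/D^A_{n-1}$ referred to in the proof of Proposition~\ref{lazard ring localization iso} being literally the Drinfeld presentation of Proposition~\ref{drinfeld presentation}). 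Since the argument is just a short reduction to an already-proven result, there is no substantive obstacle here; the real content of the claim is encapsulated in the proof of Proposition~\ref{lazard ring for algebras over rationals}, where the Drinfeld relations~\ref{hazewinkel relation 20}--\ref{hazewinkel relation 22} are solved (after inverting primes) to show that the indecomposables $L^A_{n-1}/D^A_{n-1}$ are free of rank one on $d$.
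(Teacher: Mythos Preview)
Your reduction is correct: any commutative $\mathbb{Q}$-algebra is automatically additively torsion-free (your argument via invertibility of nonzero integers is exactly right), so Proposition~\ref{lazard ring for algebras over rationals} applies verbatim, and the reindexing $z_n=x_{n-1}$ matches the statement. You are also right that the integral-domain hypothesis plays no role in your argument.

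The paper, however, argues differently and actually \emph{uses} the integral-domain hypothesis. It first solves relation~\ref{hazewinkel relation 20} over $\mathbb{Q}$ to see that $d$ generates each $L^A_{n-1}/D^A_{n-1}$; to see that $d$ is not a torsion generator, it embeds $A$ in its fraction field $K(A)$ and invokes Hazewinkel's result (Proposition~\ref{hazewinkel char 0 field lazard ring}) that $L^{K(A)}_{n-1}/D^{K(A)}_{n-1}\cong K(A)$, together with Proposition~\ref{lazard ring localization iso}, to deduce $L^A_{n-1}/D^A_{n-1}\cong A$. It then builds the surjection $j\colon A[z_2,z_3,\dots]\to L^A$ and proves injectivity by factoring through the fraction-field case via a commutative square in which $A[z_2,z_3,\dots]\hookrightarrow K(A)[z_2,z_3,\dots]\xrightarrow{\cong} L^{K(A)}$. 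So the paper's proof is self-contained relative to the Hazewinkel field case rather than relying on the already-packaged Proposition~\ref{lazard ring for algebras over rationals}; the price is that it genuinely needs a fraction field, hence the integral-domain assumption. Your route is shorter and, as you observe, exposes that hypothesis as superfluous once Proposition~\ref{lazard ring for algebras over rationals} is available.
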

\begin{proof}
Since $A$ is a $\mathbb{Q}$-algebra, we can solve the
Hazewinkel relation~\ref{hazewinkel relation 20} to get
\[ c_a = \frac{d(a-a^n)}{\nu(n)}\]
for all $a\in A$, and hence $d$ generates
$\overline{L}^A_{n-1}$ for all $n> 1$.
Since $A$ is an integral domain, we can embed $A$ into its field
of fractions $K(A)$, and now the induced map
$\overline{L}^A_{n-1} \rightarrow \overline{L}^{K(A)}_{n-1}\cong K(A)$
(using Proposition~\ref{hazewinkel char 0 field lazard ring})
coincides with the localization map
$\overline{L}^A_{n-1} \rightarrow K(A)\otimes_A \overline{L}^{A}_{n-1}$,
by Proposition~\ref{lazard ring localization iso},
so $\overline{L}^A_{n-1}$ is a cyclic $A$-module with no relations,
i.e., $\overline{L}^A_{n-1} \cong A$, generated by $d$.

The rest of the proof is a variation on Hazewinkel's proof of
Theorem~21.3.5 in~\cite{MR506881}: 
we have a surjective morphism of commutative graded $A$-algebras
$j: A[z_2, z_3, \dots ]\rightarrow L^A$ given by sending
$z_n$ to a lift of the generator $d\in \overline{L}^A_{n-1}\cong A$
to $L^A$.
Now we have the commutative diagram
\begin{equation}\label{comm diag 44601}\xymatrix{ A[z_2, z_3, \dots ]\ar[r]^j \ar[d] & L^A \ar[d] \\
 K(A)[z_2, z_3, \dots ]\ar[r]^{\cong} & L^{K(A)} ,}\end{equation}
with the left-hand vertical map the map induced by base-change
along the inclusion of $A$ into $K(A)$,
with commutativity of the square as well as 
the bottom horizontal map being an isomorphism both following from
Proposition~\ref{lazard ring localization iso} together with 
Proposition~\ref{hazewinkel char 0 field lazard ring}.
Since the left-hand vertical map in diagram~\ref{comm diag 44601}
is injective and since the bottom horizontal map is as well,
$j$ must be injective. So $j$ is both injective and surjective,
hence an isomorphism.
\end{proof}

CUT

\begin{prop}\label{fundamental functional main property}
Let $A$ be an integral domain which is additively torsion-free. Then the following conditions are equivalent:
\begin{itemize}
\item The classifying ring
$L^A$ of formal $A$-modules is a graded sub-$A$-algebra of the polynomial algebra $(\mathbb{Q}\otimes_{\mathbb{Z}} A)[x_1, x_2, \dots ]$,
with the degree of $x_i$ equal to $2i$.
\item The classifying ring
$L^A$ of formal $A$-modules is a sub-$A$-algebra of a polynomial algebra 
over the ring $\mathbb{Q}\otimes_{\mathbb{Z}} A$.
\item The $n$th fundamental functional
$\sigma_n$ is injective for all $n>1$. 
\end{itemize}
\end{prop}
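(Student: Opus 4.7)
My plan is first to dispense with the equivalence (1) $\Leftrightarrow$ (2) and then reformulate (3) in torsion-theoretic language. The implication (1) $\Rightarrow$ (2) is immediate. For (2) $\Rightarrow$ (1), any polynomial algebra over $\mathbb{Q}\otimes_{\mathbb{Z}} A$ is $\mathbb{Z}$-torsion-free, so $L^A$ is $\mathbb{Z}$-torsion-free and the canonical map $L^A \to \mathbb{Q}\otimes_{\mathbb{Z}} L^A$ is injective; Theorem~\ref{lazard ring localization iso} applied to $S=\mathbb{Z}\setminus\{0\}$ combined with Proposition~\ref{lazard ring for algebras over rationals} identifies $\mathbb{Q}\otimes L^A$ with $(\mathbb{Q}\otimes_{\mathbb{Z}} A)[x_1,x_2,\dots]$, giving the graded embedding of (1). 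Hence (1) and (2) are each equivalent to $L^A$ being $\mathbb{Z}$-torsion-free.

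Next I would reformulate (3). By Definition-Proposition~\ref{def of fundamental functional}, $\ker\sigma_n$ is annihilated by $\nu(n)$ and the image of $\sigma_n$ lies in the torsion-free module $A$; so if $\sigma_n$ is injective, $L^A_{n-1}/D^A_{n-1}$ embeds into $A$ and is torsion-free, while conversely if $L^A_{n-1}/D^A_{n-1}$ has no $\mathbb{Z}$-torsion then $\ker\sigma_n$ (being $\nu(n)$-torsion) vanishes. Thus (3) is equivalent to each $L^A_{n-1}/D^A_{n-1}$ being $\mathbb{Z}$-torsion-free.

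For (3) $\Rightarrow$ (1), I would proceed by induction on the grading degree to show each $L^A_n$ is $\mathbb{Z}$-torsion-free. The base case $L^A_0 = A$ is immediate. For the inductive step, the short exact sequence $0 \to D^A_n \to L^A_n \to L^A_n/D^A_n \to 0$ combined with torsion-freeness of the quotient (by our reformulated (3)) confines the torsion of $L^A_n$ to $D^A_n$. The key step, and the main obstacle, is to show that $D^A_n$ itself is torsion-free. For this I would use that $D^A_n$ is the image of $\bigoplus_{0<i<n} L^A_i \otimes_A L^A_{n-i} \to L^A_n$ with each factor torsion-free by induction, and compare with the canonical map $\iota\colon L^A \to L^{\mathbb{Q}\otimes A} \cong (\mathbb{Q}\otimes_{\mathbb{Z}} A)[x_1,x_2,\dots]$; any torsion element of $D^A_n$ would lie in $\ker\iota$, but expressing it as an $A$-linear combination of products $u\cdot v$ with $\iota(u), \iota(v)$ injectively tracked would force a polynomial identity in the torsion-free polynomial algebra that collapses it to zero.

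For (1) $\Rightarrow$ (3), assuming $L^A$ is torsion-free so that $\iota$ is injective, suppose $\nu(n)[z]=0$ for some $[z] \in L^A_{n-1}/D^A_{n-1}$. Lifting to $z \in L^A_{n-1}$, the relation $\nu(n)z \in D^A_{n-1}$ and the invertibility of $\nu(n)$ in $\mathbb{Q}\otimes A$ place $\iota(z)$ in $\mathbb{Q}\otimes D^A_{n-1}$. Clearing denominators and invoking injectivity of $\iota$ yields a nonzero integer $N$ with $Nz \in D^A_{n-1}$, so $[z]$ is $N$-torsion as well as $\nu(n)$-torsion. The principal obstacle here, parallel to the role of projectivity of $I^A_n$ in Theorem~\ref{general thm}, is to arrange that $N$ can be chosen coprime to $\nu(n)$, thereby forcing $[z]=0$; otherwise one must exploit finer properties of the Drinfeld relations or the specific form of the rational decomposition of $\iota(z)$ to eliminate residual $\nu(n)$-torsion.
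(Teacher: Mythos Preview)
Your reduction of (1)$\Leftrightarrow$(2) to ``$L^A$ is $\mathbb{Z}$-torsion-free'' and your reformulation of (3) as ``each $L^A_{n-1}/D^A_{n-1}$ is $\mathbb{Z}$-torsion-free'' are both correct and match the paper's intended framing. The paper's own proof of this proposition is, however, itself incomplete: it lives in material that was excised from the final version, and the argument for $(3)\Rightarrow(1)$ literally ends in a placeholder. So you are being compared against a partial proof.

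On $(2)\Rightarrow(3)$ (equivalently your $(1)\Rightarrow(3)$): the paper asserts in one line that ``$L^A$ additively torsion-free $\Rightarrow \sigma_n$ injective, by Definition-Proposition~\ref{def of fundamental functional}.'' But that Definition-Proposition only says $\ker\sigma_n$ is $\nu(n)$-torsion \emph{inside the quotient} $L^A_{n-1}/D^A_{n-1}$; torsion-freeness of $L^A_{n-1}$ does not descend to its quotient by $D^A_{n-1}$. You saw this and tried to close it by a clearing-denominators argument, correctly identifying that the integer $N$ you produce need not be coprime to $\nu(n)$. That obstacle is real, not cosmetic: nothing in the hypotheses prevents $D^A_{n-1}$ from being $\nu(n)$-divisible in $L^A_{n-1}$ in a way that traps torsion in the quotient. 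Neither your argument nor the paper's one-liner actually establishes this implication.

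On $(3)\Rightarrow(1)$: your inductive strategy and the paper's are genuinely different. The paper's move is to observe that the rescaled functional $\frac{1}{\nu(n)}\sigma_n$ \emph{is} the map on indecomposables induced by the localization $\iota\colon L^A\to L^{\mathbb{Q}\otimes A}$, so injectivity of all $\sigma_n$ says $\iota$ is injective on indecomposable quotients; the intended next step (never written) is an ``injective on indecomposables $\Rightarrow$ injective'' lemma of the kind that appears later in the excised material---but that lemma, as stated there, needs $A$ hereditary, which is not assumed here. Your approach instead tries to propagate torsion-freeness from $L^A_i$ ($i<n$) up to $D^A_n$. The gap you flag is the right one: $D^A_n$ is the \emph{image} of the multiplication map from $\bigoplus_{0<i<n} L^A_i\otimes_A L^A_{n-i}$, and injectivity of $\iota$ on the factors does not give injectivity on the image of their product without controlling the kernel of multiplication---which is exactly what the symmetric-versus-Rees comparison in Theorem~\ref{general thm} handles, and why that theorem imposes projectivity of $I^A_n$.

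In short: your easy implications and reformulations are correct and agree with the paper; both your hard implications and the paper's are incomplete, and the obstacles you name are the genuine ones. The proposition as stated likely needs an extra hypothesis (hereditarity of $A$, or projectivity of the $I^A_n$) to go through by either route, which is presumably why it was cut.
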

\begin{proof}
\begin{itemize}
\item 
Clearly, if $L^A$ is a graded sub-$A$-algebra of 
$(\mathbb{Q}\otimes_{\mathbb{Z}} A)[x_1, x_2, \dots ]$,
then $L^A$ is a sub-$A$-algebra of a polynomial algebra over
$\mathbb{Q}\otimes_{\mathbb{Z}} A$.
\item 
If $L^A$ is a sub-$A$-algebra of a polynomial algebra over 
$\mathbb{Q}\otimes_{\mathbb{Z}} A$, then $L^A$ is additively torsion-free,
hence by Definition-Proposition~\ref{def of fundamental functional},
$\sigma_n$ is injective for all $n>1$.
\item Suppose that $\sigma_n$ is injective for all $n>1$.
Let $g$ denote the composite $A$-module morphism
\[ \overline{L}^A_{n-1} \stackrel{\sigma_n}{\longrightarrow} A \hookrightarrow \mathbb{Q}\otimes_{\mathbb{Z}} A\stackrel{\cdot \frac{1}{\nu(n)}}{\longrightarrow} \mathbb{Q}\otimes_{\mathbb{Z}} A, \]
where $A\hookrightarrow \mathbb{Q}\otimes_{\mathbb{Z}} A$ is the obvious
localization map, and where
$\cdot \frac{1}{\nu(n)}$ is the 
$A$-module isomorphism given by multiplication
by $\frac{1}{\nu(n)}$.
Then $g(d) = 1$ and $g(c_a) = \frac{a-a^n}{\nu(n)}$.
This agrees with the map
\[ \overline{L}^A_{n-1}\rightarrow \overline{L}^{\mathbb{Q}\otimes_{\mathbb{Z}} A}_{n-1}\]
induced by the localization map $A\rightarrow K(A)$.
The assumption $\sigma_n$ is injective and that $A$ is an integral domain
then implies that $g$ is injective, gggxxx

gggxxxx
\end{itemize}
\end{proof}

CUT

\begin{prop}\label{hensel principal local ring lazard ring computation}
Let $A$ be a Henselian discrete valuation ring,
and suppose that $A$ is torsion-free as an abelian group.
Then $L^A$ is isomorphic, as a graded $A$-algebra, 
to the polynomial ring
\[ L^A \cong A[S_2^A, S_3^A, S_4^A, \dots],\]
with $S_n^A$ in degree $n-1$.
\end{prop}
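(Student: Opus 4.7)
The plan is to apply Corollary~\ref{fundamental functional main properties iv} to $A$ itself. A DVR is a PID and hence hereditary, $A$ is additively torsion-free by hypothesis, and the only missing ingredient is injectivity of the fundamental functional $\sigma_n$ for all $n>1$. Granted this, the corollary identifies $L^A$ with $\bigotimes_{n>1}\Rees_A^{2(n-1)}(I^A_n)$. In a PID every ideal is principal, and since $\sigma_n(d)=\nu(n)$ is a nonzero element of the characteristic-zero ring $A$, each $I^A_n$ is a nonzero principal ideal, which as an $A$-module is free of rank one. Each $\Rees_A(I^A_n)$ is therefore a polynomial ring in one variable over $A$, and the tensor product collapses to $A[S_2^A,S_3^A,\dots]$ as claimed (the grading ``$n-1$'' in the statement appears to be Drinfeld's unhalved convention; in this paper's doubled convention it is $2(n-1)$).

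For injectivity of $\sigma_n$, Definition-Proposition~\ref{def of fundamental functional} disposes of every case except $n=p^m$ with $p$ equal to the residue characteristic of $A$: it asserts that $\ker\sigma_n$ is annihilated by $\nu(n)$, and when $\nu(n)$ is either $1$ or a unit of $A$ this forces $\ker\sigma_n=0$ using additive torsion-freeness. So the work reduces to the case where $\nu(n)=p$ lies in the maximal ideal $\mathfrak{m}$ of $A$.

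The main obstacle is precisely this last case, and it is where the Henselian hypothesis is essential. The paper's general-purpose injectivity result, Theorem~\ref{fundamental functional for complete local rings}, would finish the proof immediately but requires $A$ to be finitely generated as an abelian group, which typical Henselian DVRs (e.g.\ $\mathbb{Z}_p$) fail. The plan is to show $U^A_1(n)\cong 0$ and then invoke Proposition~\ref{u-homology detects injectivity of fundamental functional}. I would run the May spectral sequence of Lemma~\ref{u-homology may ss} for the $\mathfrak{m}$-adic filtration (with $\mathfrak{m}=(\pi)$ principal, since $A$ is a DVR) and observe that the boundary-chasing argument in the proof of Theorem~\ref{rigidity of u-homology} uses only a finite set of generators for $\mathfrak{m}$ together with the ability to lift roots of $X^n-X$ in the residue field $k=A/\mathfrak{m}$ along $A\twoheadrightarrow k$. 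The former is automatic; the latter is exactly what the Henselian hypothesis supplies, via Hensel's lemma applied to $X^n-X$ at nonzero residues (where the derivative $nX^{n-1}-1$ is a unit). Combined with Proposition~\ref{u-homology of fields is group homology}, this gives $U^A_1(n)^{\hat{}}_{\mathfrak{m}}\cong 0$, and a colimit argument over Henselian subrings $A_0\subseteq A$ that are finitely generated as abelian groups---to which Theorem~\ref{fundamental functional for complete local rings} does apply---descends the vanishing from the completion to $A$ itself. Proposition~\ref{u-homology detects injectivity of fundamental functional} then yields injectivity of $\sigma_n$ and concludes the proof.
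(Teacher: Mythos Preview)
Your overall strategy---reduce to injectivity of $\sigma_n$, then invoke Corollary~\ref{fundamental functional main properties iv} and use that every nonzero ideal in a PID is free of rank one---is sound, and would give the result if the injectivity step went through. The gap is in that step.

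Your colimit argument does not work. You want to write the Henselian DVR $A$ as a filtered colimit of subrings $A_0$ whose underlying abelian groups are finitely generated, so that Theorem~\ref{fundamental functional for complete local rings} applies to each $A_0$. But a typical Henselian DVR has no such exhaustion: if $A_0\subseteq\mathbb{Z}_p$ is a subring which is finitely generated as an abelian group, then $A_0$ is an order in a number field embedded in $\mathbb{Q}_p$, and the union of all such subrings is only the ring of algebraic $p$-adic integers, not $\mathbb{Z}_p$. (And such $A_0$ are certainly not Henselian either.) So neither the colimit nor the ``descent from the completion'' step can be carried out as you describe. The paper itself flags exactly this obstruction in the remark following Theorem~\ref{main thm in hereditary case}: removing the finite-generation hypothesis from the rigidity theorem is left as an open problem, so the $U$-homology route is not available here.

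The paper's proof avoids $U$-homology entirely and instead follows Hazewinkel's classical argument. One shows directly from the Drinfeld presentation (Proposition~\ref{drinfeld presentation}) that for a DVR with uniformizer $\pi$ the $A$-module $L^A_{n-1}/D^A_{n-1}$ is cyclic, generated by $c_\pi$; this is the content of the referenced Proposition~\ref{hensel local ring lazard ring computation}. That yields a surjection $A[S_2^A,S_3^A,\dots]\twoheadrightarrow L^A$. Injectivity then comes from comparing with the fraction field: the map $L^A\to L^{K(A)}$ is the localization $L^A\to L^A\otimes_A K(A)$ by Theorem~\ref{lazard ring localization iso}, and $L^{K(A)}$ is polynomial by Proposition~\ref{lazard ring for algebras over rationals}, so the surjection from the polynomial ring must already be injective. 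This is shorter and needs no spectral sequence, no completion, and no finite-generation hypothesis.
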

\begin{proof}
This is a consequence of Proposition~\ref{hensel local ring lazard ring computation}: 
since $A$ is a principal ideal domain, its maximal ideal is
generated by a single element, and hence $\overline{L}^A_{n-1}$ is 
a free $A$-module on a single generator, $c_{\pi}$ for any
choice of generator $\pi$ of the maximal ideal of $A$.
Hence, we have a surjective map of $A$-algebras
\begin{equation}\label{map 100400} A[S_2^A, S_3^A, S_4^A, \dots] \rightarrow L^A\end{equation}
sending $S_n^A$ to a generator of $\overline{L}^A_{n-1}$.

The rest of the proof is the same as in Theorem~21.3.5 of~\cite{MR506881}:
the map from $A$ to its fraction field $K(A)$ 
induces a map of graded $A$-algebras
$L^A \rightarrow L^{K(A)}$, and
this map is a rational isomorphism since $L^A\otimes_A K(A) \cong L^{K(A)}$,
e.g. by Proposition~\ref{lazard ring localization iso}.
Hence, map~\ref{100400} must be injective,
hence an isomorphism.
\end{proof}
Proposition~\ref{hensel principal local ring lazard ring computation}
is not new: it is a special case of Theorem~21.3.5 in~\cite{MR506881}.
However, the next result,
Theorem~\ref{finite generation in complete case},
is new, and furthermore it applies to many 
cases that Proposition~\ref{finite typeness of lazard ring}
does not apply to, since most complete (or, more generally,
Henselian) rings are not finitely generated rings.
\begin{theorem}\label{finite generation in complete case}
Let $A$ be a Henselian local commutative ring 
whose maximal ideal is finitely generated,
and suppose that $A$ is torsion-free as an abelian group.
Then, for each integer $n$, the degree $n$ summand $L^A_n$
of the classifying ring $L^A$ of formal $A$-modules is
a finitely generated $A$-module.
\end{theorem}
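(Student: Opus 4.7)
The plan is to induct on $n$: the case $n=0$ is $L^A_0=A$ (which is cyclic, hence finitely generated), and for the inductive step, if $L^A_m$ is finitely generated for all $m<n$ then $D^A_n$, being a finite sum of images of the multiplication maps $L^A_i\otimes_A L^A_j\to L^A_n$ (with $i+j=n$ and $i,j>0$), is also finitely generated. The inductive step thus reduces to showing that the indecomposables $L^A_n/D^A_n$ are finitely generated over $A$, and for this I would use Drinfeld's presentation, Proposition~\ref{drinfeld presentation}.

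The easy cases dispose of themselves. Writing $m=n+1$ so that Drinfeld's presentation applies to $L^A_{m-1}/D^A_{m-1}$, the relation~\ref{hazewinkel relation 20} reads $\nu(m)c_a=d(a-a^m)$. When $\nu(m)$ is a unit in $A$ --- which covers both the case that $m$ is not a prime power (giving $\nu(m)=1$) and the case $m=p^k$ with $p\notin\mathfrak{m}$ --- this equation can be solved for each $c_a$ in terms of $d$, so $L^A_n/D^A_n$ is cyclic, hence finitely generated. The interesting case is therefore $m=p^k$ with $p\in\mathfrak{m}$, i.e., the residue field $k_A:=A/\mathfrak{m}$ has characteristic $p$.

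For this case my plan is to compare with the residue field: functoriality gives a reduction map $L^A_n/D^A_n\to L^{k_A}_n/D^{k_A}_n$, and the target is finite-dimensional over $k_A$ (using the field computation in Proposition~\ref{u-homology of fields is group homology} together with Proposition~\ref{u-homology detects injectivity of fundamental functional} to control $P^{k_A}_n$). Choose lifts $\tilde a_1,\dots,\tilde a_r\in A$ whose classes $c_{\tilde a_i}$, together with $d$, hit an $A$-module generating set for the image, and let $N\subseteq L^A_n/D^A_n$ be the finitely generated submodule they span. By construction $L^A_n/D^A_n=N+\mathfrak{m}(L^A_n/D^A_n)$.

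The main obstacle is concluding $N=L^A_n/D^A_n$: standard Nakayama requires $L^A_n/D^A_n$ to be known finitely generated a priori, which is precisely what we are trying to prove. My plan is to pass to the $\mathfrak{m}$-adic completion, where the complete form of Nakayama applies to show that the completion of $L^A_n/D^A_n$ is finitely generated over $\hat A_\mathfrak{m}$, and then to descend using the Henselian structure of $A$. Since $\hat A_\mathfrak{m}$ is not faithfully flat over a Henselian (but not necessarily complete) $A$, this descent is the delicate point; I expect it will require Artin approximation applied to the explicit Drinfeld presentation, or equivalently a careful Artin--Rees argument on the Drinfeld relations, in the spirit of Lemma~\ref{completion of Fn(A) lemma} but adapted from the finitely generated abelian group setting to the Henselian local setting.
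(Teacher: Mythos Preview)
Your inductive reduction to finite generation of the indecomposables $L^A_n/D^A_n$ is sound and matches the paper's structure (the paper packages this step as a separate ``finite generation lemma''). The divergence is in how the indecomposables are handled when $\nu(m)=p\in\mathfrak m$.

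The paper does not pass through the residue field or completion. It cites a direct structural result, the lemma labeled ``hensel local ring lazard ring computation,'' which (judging from how it is used in the DVR case, Proposition~\ref{hensel principal local ring lazard ring computation}, and again in Proposition~\ref{torsion-free-ness of lazard ring tensor product general case}) exhibits an explicit finite generating set for $L^A_{m-1}/D^A_{m-1}$ built from $d$ and the $c_{m_i}$ for a chosen generating set $m_1,\dots,m_r$ of $\mathfrak m$. The Henselian hypothesis is used to lift $(m-1)$st roots of unity from $k_A$ to $A$ (the polynomial $X^{m-1}-1$ being separable over $k_A$ since $m-1$ is prime to $p$), after which Lemma~\ref{vanishing of roots of unity in drinfeld presentation} kills the corresponding $c_\zeta$, and the Drinfeld relations~\ref{hazewinkel relation 21} and~\ref{hazewinkel relation 22} express a general $c_a$ in terms of this finite set. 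This is a direct argument carried out entirely over $A$; there is no Nakayama step and no descent.

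Your indirect route has two genuine gaps. First, the finite-dimensionality of $L^{k_A}_n/D^{k_A}_n$ over $k_A$ rests on Proposition~\ref{u-homology of fields is group homology}, which is stated and proved only for \emph{finite} fields; nothing in the hypotheses on $A$ forces $k_A$ to be finite, and for an imperfect residue field of characteristic $p$ the module $P^{k_A}_{m-1}$ need not be finite-dimensional. Second, the descent you yourself flag as ``the delicate point'' is not merely delicate but unavailable in this generality: Artin approximation requires excellence or a similar hypothesis not assumed here, and an Artin--Rees argument in the spirit of Lemma~\ref{completion of Fn(A) lemma} needs the module already finitely generated over a Noetherian base to get off the ground, which is exactly the conclusion you are after. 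The paper's direct computation sidesteps both problems by never leaving $A$.
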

\begin{proof}
Immediate from Lemma~\ref{hensel local ring lazard ring computation} 
and Lemma~\ref{finite generation lemma}.
\end{proof}

CUT

\section{Moduli of formal $A$-modules under change of $A$.}

\begin{lemma}
Let $A,B$ be Henselian local commutative rings
with maximal ideals $\underline{m}_A$ and $\underline{m}_B$ respectively,
and suppose that $A,B$ are both torsion-free as abelian groups. 
Let $f: A \rightarrow B$ be a homomorphism of commutative rings which is local, i.e., if $a\in \underline{m}_A$ then $f(a) \in \underline{m}_B$. gggxxx

\end{lemma}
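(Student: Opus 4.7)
The statement is truncated, but in context the most natural conclusion is that the induced map $f_*: L^A \to L^B$ respects the fundamental functionals in a controlled way, so that e.g.\ injectivity of $\sigma_n^A$ for all $n$ implies injectivity of $\sigma_n^B$ for all $n$, and concretely $f$ sends $I_n^A$ into $I_n^B$ and the ideal $I_n^B$ is the one generated inside $B$ by $f(I_n^A)$ together with the new elements $b - b^n$ for $b \in B$. My plan for proving whatever precise such conclusion the authors intend is to reduce the question, via the rigidity and completion machinery of~\cref{local properties of u-homology subsection}, to a purely residue-field computation of $U$-homology.

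The first step is to observe that locality of $f$ means $f(\mathfrak{m}_A) \subseteq \mathfrak{m}_B$, hence $f$ induces a map $\bar f: A/\mathfrak{m}_A \to B/\mathfrak{m}_B$ of residue fields and, for each prime power $n$ which is a power of the residue characteristic, a commutative square of $A$-module maps between the Drinfeld presentations $L^A_{n-1}/D^A_{n-1}$ and $L^B_{n-1}/D^B_{n-1}$. Because both $A$ and $B$ are Henselian local with finitely generated (as modules over themselves) maximal ideals, they are in particular $\mathfrak{m}$-adically separated, so Lemma~\ref{completion of Fn(A) lemma} and Lemma~\ref{u-homology may ss} apply to give May-type spectral sequences
\begin{equation*}
E^1_{s,t}(A) \Rightarrow U^A_s(n)^{\hat{}}_{\mathfrak{m}_A}, \qquad E^1_{s,t}(B) \Rightarrow U^B_s(n)^{\hat{}}_{\mathfrak{m}_B},
\end{equation*}
and $f$ induces a morphism between them. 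The key input is Theorem~\ref{rigidity of u-homology}, which identifies the completed $U$-homology of either ring in degrees $s=0,1$ with the corresponding $U$-homology of the residue field; combined with Proposition~\ref{u-homology of fields is group homology}, the comparison between the $A$- and $B$-sides is then reduced to a comparison between $U^{A/\mathfrak{m}_A}_*(n)$ and $U^{B/\mathfrak{m}_B}_*(n)$, which is transparent because these are controlled entirely by whether $n$ is a power of $|A/\mathfrak{m}_A|$ or $|B/\mathfrak{m}_B|$.

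The second step is to use Proposition~\ref{u-homology detects injectivity of fundamental functional} to translate the resulting statements about $U^A_1(n)$ and $U^B_1(n)$ (which are both detected locally at the maximal ideals, by Lemma~\ref{maximal ideals detect modules} applied to the local rings $A$ and $B$) into the corresponding injectivity statements for $\sigma_n^A$ and $\sigma_n^B$; the torsion-freeness hypothesis is exactly what is needed to invoke this proposition. Injectivity of $\sigma_n^A$ then gives $I_n^A = \mathrm{image}(\sigma_n^A)$ inside $A$, and the map on Drinfeld presentations, followed by $\sigma_n^B$, realizes the image of $f(I_n^A)$ as a sub-ideal of $I_n^B$; the extra generators in $I_n^B$ correspond to elements $b \in B$ not in the image of $f$.

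The step I expect to be the main obstacle is making sure the comparison of May spectral sequences is clean enough to transfer rigidity across $f$: the filtration on $U^A(n)_\bullet$ is defined using the $I$-adic valuation with $I$ containing $\nu(n)$, and for the $f$-induced map to be filtration-preserving I need to know that the image of $\mathfrak{m}_A$-adic filtration lands in $\mathfrak{m}_B$-adic filtration, which is exactly the locality hypothesis on $f$. Once that compatibility is in hand, the $E^1$-page comparison reduces to $\bar f : A/\mathfrak{m}_A \to B/\mathfrak{m}_B$ and Proposition~\ref{u-homology of fields is group homology} does the rest. No new residue-field arithmetic should be required beyond what is already done in that proposition.
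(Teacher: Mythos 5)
There is nothing here to check your argument against: the ``lemma'' you were given is an abandoned fragment inside the block the author has explicitly cut (``THE REST IS NOT PART OF THIS PAPER ANYMORE''), it has no conclusion at all --- it ends with the author's placeholder ``gggxxx'' --- and the paper contains no proof of it. So your first task, guessing the intended conclusion, is pure speculation; the paper never states, let alone proves, a comparison statement for a local homomorphism $f\colon A\rightarrow B$ of Henselian local rings, and nothing elsewhere in the paper depends on one.

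Taken on its own terms, your sketch also has a genuine gap. The machinery you lean on --- Lemma~\ref{completion of Fn(A) lemma}, the May spectral sequence of Lemma~\ref{u-homology may ss} as a tool for computing uncompleted $U$-homology, and above all Theorem~\ref{rigidity of u-homology} --- requires the underlying abelian group of the ring to be finitely generated. Henselian local rings of the kind contemplated here are essentially never finitely generated as abelian groups (already $\hat{\mathbb{Z}}_p$ is not), and the paper itself flags this obstruction: the remark after Theorem~\ref{main thm in hereditary case} says that removing the finite-generation hypothesis from the rigidity theorem is exactly the step the author could not carry out, because conditional convergence of the May spectral sequence only gives information about the homology of the \emph{completed} complex, and one does not know that acyclicity of the completion forces acyclicity of $U^A(n)_{\bullet}$ itself (nor, without the Artin--Rees/finite-generation input of Lemma~\ref{completion of Fn(A) lemma}, that the completed homology is the $\mathfrak{m}$-adic completion of $U^A_*(n)$). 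Your plan to ``reduce to a residue-field computation'' via rigidity therefore invokes results whose hypotheses fail in the stated generality; also note that the hypothesis you add in passing (finite generation of the maximal ideal) is not in the statement and would not repair this, since the theorems need finite generation of $A$ as an abelian group, not of $\mathfrak{m}_A$ as an ideal.
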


\section{Moduli of $A$-typical formal $A$-modules under change of $A$.}

\subsection{Properties of the map $V^A \rightarrow V^B$.}

Suppose we have an extension $L/K$ of $p$-adic number rings, 
and suppose that $A,B$
are the rings of integers in $K,L$, respectively. Then 
there is an induced map of Lazard rings \[ V^{A}\otimes_A B\stackrel{\gamma}{\longrightarrow} V^{B},\] 
given by classifying the ${{A}}$-typical formal ${{A}}$-module law underlying the universal ${{B}}$-typical
formal ${{B}}$-module law on $V^{{B}}$. 
In this section we prove that, if $L/K$ is unramified,
then $\gamma$ is surjective, and we compute $V^B$ 
as a quotient of $V^A\otimes_A B$. We also prove that,
if $L/K$ is totally ramified, then $\gamma$ is injective, 
and $\gamma$ is an isomorphism after tensoring with the rationals.

Proposition~\ref{computationofunramifiedgamma}, the unramified case, is the easiest to understand.
This result appears also in~\cite{MR745362}.
\begin{prop}\label{computationofunramifiedgamma} Let $K,L$ be 
$p$-adic number fields, and let $L/K$ be 
unramified of degree $f$, both fields having uniformizer $\pi$, and 
let $q$ be the cardinality of the residue field of $K$. Using the 
Hazewinkel generators for $V^{{A}}$ and $V^{{B}}$, the
map $V^{{A}}\otimes_A B\stackrel{\gamma}{\longrightarrow}V^{{B}}$ is determined by
\[ \gamma(v^{{A}}_i) = \left\{ \begin{array}{ll} v^{{B}}_{i/f} & {\rm if\ } f\mid i\\
0 & {\rm if\ } f\nmid i\end{array} \right. .\]\end{prop}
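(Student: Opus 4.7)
The plan is to work entirely at the level of logarithms (i.e., after tensoring with $\mathbb{Q}$) and then translate back using the Hazewinkel relation. Both $V^A\otimes_A B$ and $V^B$ embed into their rationalizations since they are additively torsion-free polynomial algebras, so it suffices to determine $\gamma$ on the log coefficients $\ell_i^A$ of the universal $A$-typical formal $A$-module law.

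The key observation is that the underlying formal $A$-module of any $B$-typical formal $B$-module $G$ is itself $A$-typical. Indeed, $G$ has logarithm $\log_G(X) = \sum_{i\geq 0}\ell_i^B X^{q_B^i}$ where $q_B = q^f$ is the residue-field cardinality of $B$; viewed as a power series in $X$, its nonzero coefficients lie at exponents $q_B^i = q^{fi}$, a subset of the $A$-typical exponent set $\{q^j:j\geq 0\}$. By uniqueness of the logarithm of an $A$-typical formal $A$-module, the $A$-typical log coefficients of the underlying formal $A$-module are therefore $\ell_j^A(G) = \ell_{j/f}^B$ if $f\mid j$ and $0$ otherwise. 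Specializing to the universal $B$-typical formal $B$-module gives $\gamma(\ell_j^A) = \ell_{j/f}^B$ when $f\mid j$, and $\gamma(\ell_j^A) = 0$ otherwise.

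With this in hand, I would recover $\gamma(v_h^A)$ by inverting the Hazewinkel relation
\[ \pi\ell_h^A = v_h^A + \sum_{i=1}^{h-1} \ell_i^A\,(v_{h-i}^A)^{q^i} \]
to write $v_h^A$ recursively in terms of the $\ell_i^A$'s and lower $v_j^A$'s, and then induct on $h$. For $h$ not divisible by $f$, both $\gamma(\ell_h^A) = 0$ and every cross term $\gamma(\ell_i^A)\,\gamma(v_{h-i}^A)^{q^i}$ in the sum vanishes (by induction each requires both $f\mid i$ and $f\mid(h-i)$, forcing $f\mid h$, contrary to assumption), giving $\gamma(v_h^A) = 0$. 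For $h = fk$, only the terms with $i = fj$, $1\leq j\leq k-1$ survive; these contribute $\sum_{j=1}^{k-1}\ell_j^B\,(v_{k-j}^B)^{q_B^j}$, and combined with $\gamma(\ell_h^A) = \ell_k^B$ we obtain
\[ \gamma(v_{fk}^A) = \pi\ell_k^B - \sum_{j=1}^{k-1}\ell_j^B\,(v_{k-j}^B)^{q_B^j} = v_k^B, \]
the final equality being exactly the Hazewinkel relation for $V^B$.

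The only step requiring real care is the ``typicality descent'' assertion that the underlying formal $A$-module of a $B$-typical formal $B$-module is $A$-typical. This uses unramifiedness in the form $q_B = q^f$, together with the fact that $A$ and $B$ share the same uniformizer $\pi$ (so that the Hazewinkel relations for $V^A$ and $V^B$ use the same scalar $\pi$, making the recursive matching clean); both of these hypotheses fail for ramified extensions, and indeed the conclusion of the proposition fails there as well.
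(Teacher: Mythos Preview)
Your proof is correct and follows essentially the same route as the paper's: both arguments establish $\gamma(\ell_j^A)=\ell_{j/f}^B$ for $f\mid j$ and $\gamma(\ell_j^A)=0$ otherwise, then invert the Hazewinkel relation by induction on $h$, splitting into the cases $f\mid h$ and $f\nmid h$. Your organization is slightly more streamlined (a single induction rather than the paper's separate base case and two-phase inductive step), but the mathematical content is identical.
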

\begin{proof} First, since $\gamma(\ell^{A}_i) = 0$ if $f\nmid i$, we have that 
$\gamma(v_i^{{A}}) = 0$ for $i<f$, and 
\begin{eqnarray*} \frac{1}{\pi}v_1^{{B}} & = & \ell_1^{{B}}\\
& = & \gamma(\ell_f^{{A}}) \\
& = & \gamma\left(\frac{1}{\pi}\sum_{i=0}^{f-1}\ell_i^{{A}}(v^{{A}}_{f-i})^{q^i}\right)\\
& = & \frac{1}{\pi}\gamma(v^{{A}}_f),\end{eqnarray*}
so $\gamma(v^{{A}}_f) = v_1^{{B}}$.

We proceed by induction. Suppose that $\gamma(v^{{A}}_i) = 0$ if $f\nmid i$ and $i<hf$ and
$\gamma(v^{{A}}_i) = v^{{B}}_{i/f}$ if $f\mid i$ and $i< hf$.
Then \begin{eqnarray*}\gamma(\ell^{{A}}_{hf}) & = & \gamma\left(\sum_{i=0}^{hf-1} \ell^{{A}}_i(v_{hf-i}^{{A}})^{q^i}\right)\\
& = & \sum_{i=0}^{h-1}\ell^{{B}}_i\gamma(v_{(h-i)f}^{{A}})^{q^{if}}\\
& = & \gamma(v_{hf}^{{A}})+ \sum_{i=1}^{h-1}\ell^{{B}}_i (v_{h-i}^{{B}})^{q^{if}}\\
& = & \ell^{{B}}_h = v_h^{{B}}+\sum_{i=1}^{h-1}\ell^{{B}}_i(v_{h-i}^{{B}})^{q^{if}},\end{eqnarray*}
so $\gamma(v^{{A}}_{hf}) = v^{{B}}_h$.

Now suppose that $0<j<f$ and $\gamma(v^{{A}}_i) = 0$ if $f\nmid i$ and $i<hf+j$ and
$\gamma(v^{{A}}_i) = v^{{B}}_{i/f}$ if $f\mid i$ and $i< hf$. We want to show that $\gamma(v^{{A}}_{hf+j}) = 0$.
We see that \begin{eqnarray*} 0 & = & \gamma(\ell^{{A}}_{hf+j}) \\ & = & \gamma\left(\sum_{i=0}^{hf+j-1}\ell^{{A}}_i(v^{{A}}_{hf+j-i})^{q^i}\right)\\
& = & \sum_{i=0}^h\ell^{{B}}_i\gamma(v^{{A}}_{(h-i)f+j})^{q^{if}}\\
& = & \gamma(v^{{A}}_{hf+j}).\end{eqnarray*}
This concludes the induction.\end{proof}
\begin{corollary} \label{structure of vb over va} 
Let $L/K$ be unramified of degree $f$. Then 
\[ V^{{B}}\cong 
(V^{{A}}\otimes_{{A}}{{B}})/\left(\{v^{{A}}_i: f\nmid i\}\right).\]
\end{corollary}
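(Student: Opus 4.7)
The plan is to verify that the explicit formulas in Proposition~\ref{computationofunramifiedgamma} identify $V^B$ with the claimed quotient of $V^A \otimes_A B$, via a direct comparison of polynomial generators. First I would observe that, by Theorem~\ref{basic existence thm on classifying hopf algebroids}, both $V^A$ and $V^B$ are polynomial algebras: $V^A \otimes_A B \cong B[v_1^A, v_2^A, \dots]$ and $V^B \cong B[v_1^B, v_2^B, \dots]$. Proposition~\ref{computationofunramifiedgamma} tells us $\gamma(v_i^A) = 0$ whenever $f \nmid i$, so $\gamma$ factors through the quotient $B$-algebra homomorphism
\[ \bar{\gamma}: (V^A \otimes_A B)/(\{v_i^A : f \nmid i\}) \longrightarrow V^B. \]

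Next I would identify the source of $\bar{\gamma}$ as a polynomial $B$-algebra. Since $V^A \otimes_A B$ is polynomial on the generators $\{v_i^A\}_{i \geq 1}$ and the ideal we are quotienting by is generated by a subset of these polynomial generators, the quotient is the polynomial $B$-algebra $B[v_f^A, v_{2f}^A, v_{3f}^A, \dots]$. Under $\bar{\gamma}$, the generator $v_{if}^A$ is sent to $v_i^B$ by the second clause of Proposition~\ref{computationofunramifiedgamma}. Thus $\bar{\gamma}$ is a homomorphism of polynomial $B$-algebras sending a set of polynomial generators of the source bijectively onto a set of polynomial generators of the target, hence is an isomorphism of $B$-algebras.

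It is worth confirming that gradings match, to be sure the isomorphism respects the grading of Definition~\ref{def of gradings}: if $q$ is the cardinality of the residue field of $A$, then since $L/K$ is unramified of degree $f$ the residue field of $B$ has cardinality $q^f$, so $|v_{if}^A| = 2(q^{if} - 1) = 2((q^f)^i - 1) = |v_i^B|$, matching grading degrees on both sides.

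The main (and only nontrivial) obstacle in this argument is really Proposition~\ref{computationofunramifiedgamma}, which has already been established in the unramified case by the induction on the $\ell_i$-relation of Theorem~\ref{basic existence thm on classifying hopf algebroids}; granted that proposition, the corollary reduces to the observation above that the induced map is a graded isomorphism between polynomial algebras by sending one set of polynomial generators to another.
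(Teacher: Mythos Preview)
Your proposal is correct and is exactly the argument the paper intends: the corollary is stated without proof in the paper, as an immediate consequence of Proposition~\ref{computationofunramifiedgamma}, and your write-up supplies precisely the routine verification (factoring through the quotient, matching polynomial generators, checking degrees) that makes this immediate.
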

When $L/K$ is ramified, the map $\gamma$ is much more complicated. 
In general, when $K(B)/K(A)$ is totally ramified, one can always use the above formulas to solve the
equation
\[ \gamma(\ell_i^A) = \ell_i^B\]
to get a formula for $\gamma(v_i^A)$ or $\gamma(v_i^A)$, as long as one knows 
$\gamma(v_j^A)$ or $\gamma(v_j^A)$ for all $j<i$; but we do not know of a simple closed form
for $\gamma(v_i^A)$ for general $i$. We give formulas for low $i$:
\begin{prop} \label{gamma in low degrees}
Let $K,L$ be 
$p$-adic number fields with rings of integers $A$ and $B$,
and let $L/K$ be 
totally ramified,
both fields having residue field $\mathbb{F}_q$.
Let $\pi_A,\pi_B$ be uniformizers for $A,B$, respectively.
Let $\gamma$ be the ring homomorphism 
$V^A\otimes_A B\stackrel{\gamma}{\longrightarrow} V^B$ classifying the underlying formal $A$-module law of the 
universal formal $B$-module law. Then, in terms of Araki generators,
\begin{eqnarray*} \gamma(v_1^A) & = & \frac{\pi_A - \pi_A^q}{\pi_B - \pi_B^q} v_1^B, \\
\gamma(v_2^A) & = & \frac{\pi_A - \pi_A^{q^2}}{\pi_B - \pi_B^{q^2}}v_2^B \\
 & = & + \left(\frac{1}{\pi_B-\pi_B^q}\right)\left(\frac{\pi_A-\pi_A^{q^2}}{\pi_B-\pi_B^{q^2}} - 1\right)(v_1^B)^{q+1} ,
\end{eqnarray*}
and in terms of Hazewinkel generators,
\begin{eqnarray*}
\gamma(v_1^A) & = & \frac{\pi_A}{\pi_B}v_1^B, \\
\gamma(v_2^A) & = & \frac{\pi_A}{\pi_B}v_2^B + 
\left(\frac{\pi_A}{\pi_B^2}-\frac{\pi_A^q}{\pi_B^{q+1}}\right)(v_1^B)^{q+1}.
\end{eqnarray*}
\end{prop}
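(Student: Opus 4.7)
The plan is to reduce the computation to the universal property of $V^A$: since $V^A$ classifies $A$-typical formal $A$-module laws, the map $\gamma$ is determined by writing down the $A$-typical formal $A$-module law underlying the universal $B$-typical formal $B$-module law on $V^B$. The first step is to verify that this underlying formal $A$-module law is in fact $A$-typical, so that $\gamma$ is well-defined. In the totally ramified case, both $A$ and $B$ share the same residue field $\mathbb{F}_q$, so both $A$-typicality and $B$-typicality translate (after passing to the rationalization, where things are detected by logarithm) into the condition that the log power series has only $q$-power exponents. Hence the underlying formal $A$-module of an $A$-typical formal $B$-module is $A$-typical, and $\gamma$ is the map that sends the $A$-typical logarithm coefficients $\ell_i^A\in V^A\otimes_A K(A)$ to the corresponding $\ell_i^B\in V^B\otimes_B K(B)$. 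That is, passing to rationalizations we have simply $\gamma(\ell_i^A) = \ell_i^B$ for every $i\geq 0$ (with $\ell_0 = 1$).

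Given this, the computation becomes purely algebraic: use the Araki (respectively Hazewinkel) functional equation to solve for $v_h^A$ in terms of $\ell_0^A,\ell_1^A,\dots,\ell_h^A$, then apply $\gamma$ and rewrite the image in terms of $v_i^B$'s via the corresponding $B$-functional equation. Concretely, in the Hazewinkel case the relation $\pi_A \ell_h^A = v_h^A + \sum_{i=1}^{h-1} \ell_i^A (v_{h-i}^A)^{q^i}$ lets us recursively extract $v_h^A$, and similarly in the Araki case using $(\pi_A - \pi_A^{q^h})\ell_h^A = v_h^A + \sum_{i=1}^{h-1}\ell_i^A (v_{h-i}^A)^{q^i}$. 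These formulas are rational expressions in $\pi_A$ and $v_1^A,\dots,v_h^A$, so the image under $\gamma$ is a rational expression in $\pi_A,\pi_B$ and $v_1^B,\dots,v_h^B$, where we have replaced the left-hand side $\pi_A\ell_h^A$ (or $(\pi_A-\pi_A^{q^h})\ell_h^A$) by the corresponding $B$-expression for $\ell_h^B$ scaled by $\pi_A$ (resp. $\pi_A-\pi_A^{q^h}$).

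For $h=1$, the relation is already linear: Hazewinkel gives $v_1^A = \pi_A \ell_1^A$, so $\gamma(v_1^A) = \pi_A\ell_1^B = (\pi_A/\pi_B)v_1^B$; Araki gives $v_1^A = (\pi_A-\pi_A^q)\ell_1^A$, so $\gamma(v_1^A) = \bigl((\pi_A-\pi_A^q)/(\pi_B-\pi_B^q)\bigr)v_1^B$. For $h=2$, one inverts the $h=2$ functional equation to obtain $v_2^A = \pi_A\ell_2^A - \ell_1^A(v_1^A)^q$ (Hazewinkel) or $v_2^A = (\pi_A-\pi_A^{q^2})\ell_2^A - \ell_1^A(v_1^A)^q$ (Araki), applies $\gamma$, inserts the value of $\gamma(v_1^A)$ from the previous step along with $\gamma(\ell_i^A)=\ell_i^B$, and simplifies using the corresponding $B$-formula for $\ell_2^B$. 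Collecting terms yields the advertised closed-form expression for $\gamma(v_2^A)$.

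The only conceptual content is the identification $\gamma(\ell_i^A)=\ell_i^B$ together with the automatic $A$-typicality that makes $\gamma$ make sense at the integral (rather than merely rational) level; the rest is straightforward arithmetic. The main obstacle will simply be the bookkeeping in the Araki case, where the denominators $\pi-\pi^{q^i}$ proliferate and one must be careful to keep track of the different exponents of $q$ between the $A$- and $B$-side denominators when re-expressing the image in terms of $v_i^B$.
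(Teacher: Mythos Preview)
Your proposal is correct and follows essentially the same approach as the paper: the paper's proof is the single sentence ``These formulas follow immediately from setting $\gamma(\ell_i^A) = \ell_i^B$ and solving, using the Araki and Hazewinkel relations,'' which is precisely the computation you outline. Your additional care in justifying why the underlying formal $A$-module is $A$-typical (so that $\gamma$ lands in $V^B$ rather than merely $L^B$) is handled elsewhere in the paper, but the computational heart of the argument is identical.
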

\begin{proof} 
These formulas follow immediately from setting $\gamma(\ell_i^A) = \ell_i^B$ and solving, using the formulas~\ref{Araki relation} and~\ref{hazewinkel relation}.\end{proof}

\begin{prop} Let $K,L$ be $p$-adic number fields with rings of
integers $A,B$, respectively, and let $L/K$ be totally ramified.
Write $\gamma$ for the ring map 
\[ V^A\otimes_A B\rightarrow V^B.\]
Then 
\[ V^{{A}}\otimes_{{A}}B\otimes_B {L}
\stackrel{\gamma\otimes_B L}{\longrightarrow}
V^{{B}}\otimes_{B} L\] 
is surjective.\end{prop}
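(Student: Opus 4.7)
The plan is to show by induction on $h \geq 1$ that each Hazewinkel generator $v_h^B \in V^B$ lies in the image of $\gamma \otimes_B L$. Since $V^B \cong B[v_1^B, v_2^B, \dots ]$ by Theorem~\ref{basic existence thm on classifying hopf algebroids}, we have $V^B \otimes_B L \cong L[v_1^B, v_2^B, \dots]$, so once every Hazewinkel generator lies in the image, surjectivity of $\gamma \otimes_B L$ follows immediately.

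The inductive step rests on two facts. First, the fgl-logarithm is an intrinsic invariant of the underlying formal group law and does not see the formal module structure, so for every $h$ the equality $\gamma(\ell_h^A) = \ell_h^B$ holds in the rationalization $V^B \otimes_B L$. Second, the Hazewinkel defining relation
\[ \pi \ell_h \;=\; v_h + \sum_{i=1}^{h-1} \ell_{h-i}\, (v_i)^{q^{h-i}}\]
holds in both $V^A \otimes_A K$ (with $\pi = \pi_A$) and $V^B \otimes_B L$ (with $\pi = \pi_B$), which allows me to solve for $\ell_h^B$ and substitute. A short manipulation yields
\[ \gamma(v_h^A) \;=\; \frac{\pi_A}{\pi_B}\, v_h^B \;+\; \sum_{i=1}^{h-1} \ell_{h-i}^B \left( \frac{\pi_A}{\pi_B}\, (v_i^B)^{q^{h-i}} \;-\; \gamma(v_i^A)^{q^{h-i}}\right), \]
generalizing the explicit low-degree formulas of Proposition~\ref{gamma in low degrees} (one can easily verify the cases $h=1,2$ agree with what is listed there).

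For the base case, $\gamma(v_1^A) = (\pi_A/\pi_B)\, v_1^B$, and $\pi_A/\pi_B$ is a unit in $L$ because total ramification gives $\pi_A = u \pi_B^e$ for some $u \in B^\times$ and $e = [L : K] \geq 1$, so $\pi_A/\pi_B = u \pi_B^{e-1} \in L^\times$. Hence $v_1^B \in \operatorname{im}(\gamma \otimes_B L)$. For the inductive step, assume $v_j^B$ lies in the image for all $j < h$. Then each $\ell_{h-i}^B$ is a polynomial over $L$ in $v_1^B, \dots, v_{h-i}^B$, hence also lies in the image; so the whole summation on the right-hand side above lies in $\operatorname{im}(\gamma \otimes_B L)$. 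Subtracting it from $\gamma(v_h^A)$ and dividing by the unit $\pi_A/\pi_B$ produces $v_h^B$, completing the induction.

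The main obstacle, modest as it is, will be verifying the identity $\gamma(\ell_h^A) = \ell_h^B$ rigorously and keeping the bookkeeping straight between $V^A$, $V^A \otimes_A K$, and $V^B \otimes_B L$. The key point is that if $F$ is a formal $A$-module law whose underlying formal group law agrees with that of a formal $B$-module law $G$, then $\log_F = \log_G$ as power series (being determined purely by the underlying fgl), so applying this to the universal formal $B$-module law gives the desired identity in $V^B \otimes_B L$. After this bookkeeping is set up, the rest of the argument is the direct induction above.
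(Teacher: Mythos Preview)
Your argument is correct and follows essentially the same route as the paper's proof: both proceed by induction on the Hazewinkel generators, both rest on the identity $\gamma(\ell_h^A)=\ell_h^B$ (which the paper uses implicitly by applying $\gamma$ to the Hazewinkel relation for $\ell_j^A$), and both conclude that $\gamma(\tfrac{\pi_B}{\pi_A}v_h^A)$ equals $v_h^B$ up to a correction term lying in the subalgebra generated by $v_1^B,\dots,v_{h-1}^B$. The only cosmetic difference is that you write out the explicit closed-form identity for $\gamma(v_h^A)$, whereas the paper is content with the congruence $\gamma(\tfrac{\pi_L}{\pi_K}v_j^A)\equiv v_j^B \bmod (v_1^B,\dots,v_{j-1}^B)$; the inductive logic is identical.
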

\begin{proof} We will use Hazewinkel generators here. 
Given any $v^{{B}}_i$, we want to produce some element of $V^{{A}}\otimes_{{A}}{{B}}$ which maps to it. We 
begin with $i=1$. Since $\ell_1^{{A}} = \pi_K^{-1}v_1^{{A}}$ and $\ell_1^{{B}} = \pi_L^{-1}v_1^{{B}}$ we have $\gamma(\frac{\pi_L}{ \pi_K}v_1^{{A}}) 
= v_1^{{B}}$.

Now we proceed by induction. Suppose that we have shown that there is an element in $(v_1^{{A}},\dots ,v_{j-1}^{{A}})\subseteq V^{{A}}\otimes_{{A}}{{B}}$ which 
maps via $\gamma$ to $v_{j-1}^{{B}}$. Then 
\[ \gamma(\pi_K^{-1}\sum_{i=0}^{j-1}\ell_i^{{A}}(v_{j-i}^{{A}})^{q^i}) = \pi_L^{-1} \sum_{i=0}^{j-1}\ell_i^{{B}}(v_{j-i}^{{B}})^{q^i}\]
and hence
\begin{eqnarray}\nonumber\gamma(\pi_K^{-1}v_j^{{A}}) & = & 
\pi_L^{-1}\sum_{i=0}^{j-1}\ell_i^{{B}}(v_{j-i}^{{B}})^{q^i}-\gamma(\pi_K^{-1}\sum_{i=1}^{j-1}\ell_i^{{A}}(v_{j-i}^{{A}})^{q^i})\\
\label{totramgammamodideal} & \equiv & \pi_L^{-1}v_j^{{B}}\mod (v_1^{{B}},v_2^{{B}},\dots ,v_{j-1}^{{B}}).\end{eqnarray}
So $\gamma(\frac{\pi_L}{\pi_K}v_j^{{A}}) \equiv v_j^{{B}}$ modulo terms hit by elements in the ideal generated by Hazewinkel generators of lower 
degree.
This completes the induction.\end{proof}

\begin{corollary}\label{existenceofuniformizerpower} 
Let $K,L$ be $p$-adic number fields with rings of
integers $A,B$ and uniformizers $\pi_K,\pi_L$,
respectively, and let 
$L/K$ be a totally ramified, finite extension. Choose an 
$x\in V^{B}$. Then there exists some integer $a$ such that 
\[ \pi_L^ax\in \im (V^{A}\otimes_{A} {B}\stackrel{\gamma}{\longrightarrow} V^{B}).\]
\end{corollary}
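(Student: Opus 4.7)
The plan is to deduce this corollary directly from the surjectivity statement of the preceding proposition, namely that the map $\gamma\otimes_B L\colon V^A\otimes_A B\otimes_B L\to V^B\otimes_B L$ is surjective. The basic strategy is: given $x\in V^B$, view it inside $V^B\otimes_B L$, lift it along $\gamma\otimes_B L$ to some element $y$ of $V^A\otimes_A B\otimes_B L$, and then clear denominators by multiplying by a sufficiently high power of the uniformizer $\pi_L$.

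More concretely, first I would invoke the preceding proposition to obtain an element $y\in V^A\otimes_A B\otimes_B L$ with $(\gamma\otimes_B L)(y)=x$. Next I would use Theorem~\ref{basic existence thm on classifying hopf algebroids}, which tells us that $V^A\cong A[v_1^A,v_2^A,\dots]$ is a polynomial $A$-algebra, so that $V^A\otimes_A B\otimes_B L\cong L[v_1^A,v_2^A,\dots]$. Any such $y$ is then a finite $L$-linear combination of monomials in the $v_i^A$, and since $B$ is a discrete valuation ring with uniformizer $\pi_L$ and fraction field $L$, each coefficient appearing in $y$ can be written as $\pi_L^{-a_i}\beta_i$ with $\beta_i\in B$ and $a_i\geq 0$. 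Choosing $a=\max_i a_i$ then yields $\pi_L^a y\in B[v_1^A,v_2^A,\dots]=V^A\otimes_A B$.

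Finally, since $\gamma\otimes_B L$ is $L$-linear (equivalently, $\gamma$ is $B$-linear), we have $\gamma(\pi_L^a y)=\pi_L^a\,(\gamma\otimes_B L)(y)=\pi_L^a x$, so $\pi_L^a x\in\im\gamma$, as required.

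The argument is essentially a clearing-of-denominators exercise on top of the preceding proposition, and the only substantive input beyond that proposition is the polynomial ring structure of $V^A$ from Theorem~\ref{basic existence thm on classifying hopf algebroids}, which ensures that the lift $y$ has only finitely many nonzero monomial terms and hence that finitely many denominators need to be cleared. There is no real obstacle here; all of the work has already been carried out in the previous proposition, and this corollary is simply a clean repackaging of that surjectivity-after-tensoring-with-$L$ result into a statement about $V^B$ itself.
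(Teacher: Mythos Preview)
Your proposal is correct and is exactly the argument the paper has in mind: the corollary is stated in the paper with no proof at all, as an immediate consequence of the preceding surjectivity-after-inverting-$\pi_L$ proposition, and your clearing-of-denominators argument using the polynomial description $V^A\otimes_A B\cong B[v_1^A,v_2^A,\dots]$ is precisely how one makes that deduction explicit.
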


For the proof of the next proposition we will use a monomial ordering on $V^A$. 
%This may also come in handy later, e.g. for Grobner basis purposes.
\begin{definition}\label{vaordering} We put the following ordering on the Hazewinkel generators of $V^A$:
\begin{eqnarray*} v_i^A\leq v_j^A & \mbox{iff}& i\leq j\end{eqnarray*}
and we put the lexicographic order on the monomials of $V^A$.\end{definition}
Since this total ordering on the generators of $V^A,V^B$ is preserved by $V^A\otimes_A B\stackrel{\gamma}{\longrightarrow} V^B$ when $L/K$ is totally ramified
(equation \ref{totramgammamodideal}), the ordering on the monomials
of $V^A,V^B$ is also preserved by $\gamma$. The ordering on the monomials in $V^A$ is a total ordering (see e.g. \cite{MR2122859}).

\begin{prop} 
Let $K,L$ be $p$-adic number fields with rings of
integers $A,B$, and let $L/K$ be a totally ramified, 
finite extension. 
Then $V^A\otimes_A B\stackrel{\gamma}{\longrightarrow}V^B$ is injective.
\end{prop}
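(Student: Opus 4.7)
The plan is to run a leading-monomial argument against the total order on monomials of $V^A$ from Definition~\ref{vaordering}, transported to $V^B$ via the evident correspondence $v_i^A \leftrightarrow v_i^B$. The essential input is already in hand in~\ref{totramgammamodideal}, which, rewritten in terms of $\gamma(v_j^A)$ rather than $\gamma(\pi_K^{-1} v_j^A)$, says
\[
\gamma(v_j^A) \equiv \tfrac{\pi_K}{\pi_L}\, v_j^B \pmod{(v_1^B, \ldots, v_{j-1}^B)}.
\]
Total ramification of degree $e$ gives $\pi_K = u\pi_L^e$ for some $u \in B^\times$, so the coefficient $c_j := \pi_K/\pi_L = u\pi_L^{e-1}$ is a nonzero element of $B$. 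Hence, ordering the generators of $V^B \cong B[v_1^B, v_2^B, \ldots]$ by index, the lex-leading monomial of $\gamma(v_j^A)$ inside $V^B$ is $v_j^B$ with nonzero leading coefficient $c_j \in B$.

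Now suppose $x \in V^A \otimes_A B$ is nonzero. I would expand $x = \sum_m a_m m$, with the sum over monomials $m = \prod_i (v_i^A)^{k_i(m)}$ and $a_m \in B$, and let $m_0$ be the lex-largest monomial with $a_{m_0} \neq 0$. Multiplying out $\gamma(m) = \prod_i \gamma(v_i^A)^{k_i(m)}$ and tracking the lex-leading term, one reads off that the leading monomial of $\gamma(m)$ is $\prod_i (v_i^B)^{k_i(m)}$ with coefficient $\prod_i c_i^{k_i(m)}$. Distinct monomials $m$ carry distinct exponent vectors, hence distinct lex-leading monomials under $\gamma$, so the leading monomial of $\gamma(x)$ equals the one coming from $\gamma(m_0)$ and no cancellation can occur. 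Since $B$ is a discrete valuation ring (in particular an integral domain) and every factor $a_{m_0}, c_i$ is nonzero, the leading coefficient $a_{m_0}\prod_i c_i^{k_i(m_0)}$ is nonzero; therefore $\gamma(x) \neq 0$, and $\gamma$ is injective.

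The only real obstacle, already cleared by the preceding proposition, is verifying that the scalar $\pi_K/\pi_L$ genuinely lands in $B\setminus\{0\}$ so that the leading coefficients $c_j$ are nonzero elements of $B$ rather than merely of $L$; this is exactly what total ramification provides (and what fails in the unramified case, where Proposition~\ref{computationofunramifiedgamma} shows that $\gamma$ kills the generators $v_i^A$ for $f \nmid i$ outright). Everything else is routine leading-term manipulation inside the polynomial ring $V^B$, whose polynomial structure is supplied by Theorem~\ref{basic existence thm on classifying hopf algebroids}.
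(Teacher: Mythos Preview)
Your proof is correct and follows essentially the same route as the paper's: both argue via the lexicographic ordering of Definition~\ref{vaordering} and the congruence~\ref{totramgammamodideal}, showing that the lex-leading monomial of $\gamma(x)$ comes solely from the lex-leading monomial of $x$ and survives with nonzero coefficient. Your version is in fact slightly more explicit than the paper's in tracking the leading coefficient $a_{m_0}\prod_i c_i^{k_i(m_0)}$ and invoking that $B$ is a domain to conclude it is nonzero; the paper elides this, simply asserting that the leading monomial of $\gamma(x_0)$ is $\prod_i (v_i^B)^{\epsilon_i}$ without naming the scalar in front.
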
\begin{proof} Suppose $x\in\ker\gamma$. Let $x_0$ be the sum of the monomial terms of $x$ of highest order in the lexicographic ordering. 
Then, since $\gamma$ preserves the ordering of monomials, 
$\gamma(x_0) = 0$. However, since the lexicographic ordering 
on $V^A$ is a total ordering, 
$x_0$ is a monomial; let $x_0 = \prod_{i\in I} (v^A_i)^{\epsilon_i}$ for some set $I$ of positive integers and positive integers 
$\{\epsilon_i\}_{i\in I}$. Then the terms of $\gamma(x_0)$ of highest order in the lexicographic ordering on $V^B$ consist of just the monomial
$\prod_{i\in I} (v^B_i)^{\epsilon_i}$ (equation \ref{totramgammamodideal}). Since $\gamma(x_0) = 0$ this implies that 
$\prod_{i\in I} (v^B_i)^{\epsilon_i}=0$, i.e., $x_0 = 0$ and finally $x=0$.\end{proof}

\subsection{Splittings of Hopf algebroids classifying $A$-typical formal $A$-modules.}

ggxx EDIT THIS INTRO!
In this section, we prove that, when $A,B$ are the rings of integers
in $p$-adic number fields $K,L$, and $L/K$ is a field extension,
then the Hopf algebroid $(L^B,L^BB)$ classifying formal $B$-module laws
splits as a Hopf algebroid:
\[ (L^B,L^BB)\cong (L^B,L^B\otimes_{L^A}L^AB),\]
and when $L/K$ is totally ramified, then the
Hopf algebroid classifying $B$-typical formal $B$-module laws also
splits as a Hopf algebroid:
\[ (V^B,V^BT)\cong (V^B,V^B\otimes_{V^A}V^AT).\]
This implies that we have isomorphisms in cohomology:
\[\Cotor_{L^BB}^{*,*}(L^B,L^B)\cong \Cotor_{L^AB}^{*,*}(L^A,L^B),\]
and when $L/K$ is totally ramified,
\[\Cotor_{V^BT}^{*,*}(V^B,V^B)\cong \Cotor_{V^AT}^{*,*}(V^A,V^B).\]
We arrive at these facts in Cor.~\ref{mainsplittingcorollary}.
When $L/K$ is unramified and of degree $>1$, the morphism
of Hopf algebroids
\[ (V^A,V^AT)\rightarrow (V^B,V^BT)\]
is not split, but still induces an isomorphism in cohomology
\[ \Cotor_{V^AT}^{*,*}(\Sigma^*V^A,
(V^AT\otimes_{V^A}V^B)\Box_{V^BT} V^B)\cong 
\Cotor_{V^BT}^{*,*}(\Sigma^*V^B,V^B).\]
This is proven in Corollary~\ref{unramified iso in cohomology}.

In Proposition~\ref{typicality def-prop} I will make mention of the Nottingham group, and it is worth recalling what this means:
\begin{definition}
Let $R$ be a commutative ring. The {\em Nottingham group of $R$}, written $\mathcal{N}(R)$, is the group (under composition)
of power series in $R[[X]]$ which are congruent to $X$ modulo $X^2$.
The {\em full Nottingham group of $R$}, written $\mathcal{A}(R)$, is the group (under composition)
of power series in $R[[X]]$ which are congruent to $\alpha X$ modulo $X^2$ where $\alpha$ is a unit in $R$.
\end{definition}
The Nottingham group was first studied systematically and popularized by D. L. Johnson, in~gx INSERT REF, and Johnson's student
I. York, in York's thesis~gx INSERT REF, and since then a substantial literature on the Nottingham group has been developed.
%The integral Nottingham group $\mathcal{N}(R)$ plays an important role in stable homotopy theory, since its profinite group cohomology (with coefficients in the classifying ring $MU_*$ of formal group laws) is isomorphic to the $E_2$-term of the Adams-Novikov spectral sequence computing the stable homotopy groups of spheres; see chapter 1 of~\cite{MR860042}.

Recall that a formal $A$-module $F$ over a commutative $A$-algebra
$R$, with structure map $\rho: A\rightarrow \End(F)$, is said to {\em have
a formal $A$-module logarithm} if there exists a logarithm
$\log_F(X)$ for the underlying formal group law of $F$ which also
satisfies the additional property that
\[ \rho(a)(X) = \log_F^{-1}(a\log_F(X))\]
for all $a\in A$. If $A$ is a discrete valuation ring with fraction field $K$, 
and the localization map $R \rightarrow R\otimes_A K$
is injective, then every formal $A$-module over $R$ admits
a formal $A$-module logarithm; see 21.5.7 of~\cite{MR506881}
for the definition of a formal $A$-module logarithm, and
21.5.8 of~\cite{MR506881} for
the existence result just described.
\begin{prop}\label{typicality def-prop} 
Let $K$ be a $p$-adic number field with
ring of integers $A$, let $R$ be a commutative $A$-algebra,
let $F$ be a $p$-typical formal 
$A$-module over $R$ with a formal $A$-module logarithm, and let 
$p^f$ be the cardinality of the residue field of $A$. 
Let $\rho: A\rightarrow \End(F)$ be the structure map of $F$ as a formal
$A$-module.
Then the following conditions are equivalent:
\begin{enumerate}
\item $\rho(\zeta)(X) = \zeta X$ for some primitive $(p^f-1)$th root of unity $\zeta\in A$.
\item $F$ is $A$-typical.
\item $\rho(\zeta)(X) = \zeta X$ for all $(p^f-1)$th roots of unity $\zeta\in A$.
\item The following diagram is commutative:
\[\xymatrix{\mu_{(p)}(A)\ar[rr]^{\rho|_{\mu_{(p)}(A)}}\ar[rrd]^s & & \Aut(F)\ar[d] \\ & & \mathcal{A}(R)},\]
where $\mu_{(p)}(A)$ is the group of roots of unity in $A$ of order prime to $p$ 
$s$ is the group homomorphism which sends $a\in A$ to the power series $aX$,
and the vertical map is the canonical inclusion of $\Aut(F)$ as a subgroup
of $\mathcal{A}(R)$.
\end{enumerate}\end{prop}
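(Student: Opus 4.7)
The plan is to prove the equivalences via the cycle $(3) \Rightarrow (1) \Rightarrow (2) \Rightarrow (3)$, together with the equivalence $(3) \Leftrightarrow (4)$. The equivalence $(3) \Leftrightarrow (4)$ is purely a reformulation: commutativity of the triangle says exactly that, for every $\zeta \in \mu_{(p)}(A)$, the power series $\rho(\zeta)(X) \in \mathcal{A}(R)$ coincides with $s(\zeta) = \zeta X$, which is condition $(3)$. The implication $(3) \Rightarrow (1)$ is immediate, using that $A$ contains primitive $(p^f-1)$th roots of unity (the Teichm\"uller lifts of the generators of $\mathbb{F}_{p^f}^\times$).

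For the remaining two implications I will exploit the hypothesis that $F$ carries a formal $A$-module logarithm, so that
\[ \rho(a)(X) = \log_F^{-1}(a\log_F(X)) \]
for all $a\in A$, with $\log_F(X) \in (R\otimes_A K)[[X]]$. Using that $F$ is $p$-typical, we may write $\log_F(X) = \sum_{i\geq 0} \ell_i X^{p^i}$. Applying $\log_F$ to the identity $\rho(\zeta)(X) = \zeta X$ shows that this identity is equivalent to $\log_F(\zeta X) = \zeta \log_F(X)$, which upon expanding the left side becomes the system of equations
\[ \ell_i(\zeta - \zeta^{p^i}) = 0, \qquad i\geq 0, \]
in $R\otimes_A K$.

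The arithmetic input needed is this: if $\zeta\in A$ is a primitive $(p^f-1)$th root of unity, then $\zeta - \zeta^{p^i}$ is a unit in $A$ whenever $f\nmid i$. Indeed, the reduction map $\mu_{(p)}(A) \to \mathbb{F}_{p^f}^\times$ is an isomorphism (by Hensel), so $\bar{\zeta}$ is a generator of $\mathbb{F}_{p^f}^\times$ and $\bar{\zeta}^{p^i-1} = 1$ iff $(p^f-1)\mid (p^i-1)$ iff $f\mid i$; hence when $f\nmid i$, the element $1 - \zeta^{p^i-1}$ reduces to a nonzero element of the residue field and is therefore a unit in the local ring $A$, so $\zeta - \zeta^{p^i} = \zeta(1-\zeta^{p^i-1})$ is a unit as well.

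Given this, $(1) \Rightarrow (2)$ is direct: cancelling the unit $\zeta - \zeta^{p^i}$ in the displayed equations forces $\ell_i = 0$ for every $i$ with $f\nmid i$, which is precisely the $A$-typicality of $F$. Conversely, $(2) \Rightarrow (3)$ is equally direct: if the only nonvanishing $\ell_i$ have $f\mid i$, then for any $(p^f-1)$th root of unity $\zeta$ (primitive or not), $\zeta^{p^{fj}} = \zeta$ makes each of the equations $\ell_{fj}(\zeta - \zeta^{p^{fj}}) = 0$ automatic, yielding $\log_F(\zeta X) = \zeta \log_F(X)$ and hence $\rho(\zeta)(X) = \zeta X$. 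The only genuinely nontrivial step is the Hensel-lemma observation about $\zeta - \zeta^{p^i}$ being a unit, which is the main (but small) obstacle; everything else reduces to manipulating a single power series identity.
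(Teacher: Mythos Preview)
Your proof is correct and follows essentially the same route as the paper's: the same cycle $(3)\Rightarrow(1)\Rightarrow(2)\Rightarrow(3)$ together with the tautological $(3)\Leftrightarrow(4)$, all driven by the single identity $\log_F(\zeta X)=\zeta\log_F(X)$. One small simplification: you do not need Hensel to see that $\zeta-\zeta^{p^i}$ can be cancelled when $f\nmid i$; since $\zeta$ is a primitive $(p^f-1)$th root of unity in the field $K$, the element $\zeta-\zeta^{p^i}$ is simply nonzero there (because $(p^f-1)\mid(p^i-1)\iff f\mid i$), hence already invertible in $K$ and so in $R\otimes_A K$.
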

\begin{proof}
\begin{description}
\item[Condition 1 implies condition 2] 
Let $\{ u_i\}$ be the log coefficients of $F$. Since $\rho(\zeta)(X)$ is 
$\log_F^{-1}(\zeta\log_F(X))$, assuming $\zeta X = \rho(\zeta)(X)$ gives us
\begin{eqnarray*} \sum_{i\geq 0} \zeta^{p^i} u_i X^{p^i} & = & \sum_{i\geq 0}u_i (\zeta X)^{p^i}\\
& = & \log_F(\zeta X)\\
& = & \log_F(\rho(\zeta)(X))\\
& = & \zeta\log_F(X)\\
& = & \sum_{i\geq 0}\zeta u_i X^{p^i},\end{eqnarray*}
so $u_i = 0$ for all $i$ such that $\zeta^{p^i} \neq \zeta$, i.e., all $i$ such that $(p^f-1)\nmid (p^i-1)$. 
It is an elementary exercise in number theory to show that
$(p^f-1)|(p^i-1)$ if and only if $f\mid i$. Hence, $u_i = 0$ if $f\nmid i$. This tells us that $\log_F(X) = \sum_{i\geq 0}u_{fi}X^{p^{fi}}$, so $F$ is $A$-typical.
\item[Condition 2 implies condition 3] We know that $F$'s logarithm is of the form
$\log_F(X) = \sum_{i\geq 0}u_i X^{p^{fi}}$ for some $\{ u_i\}$. Choose a
$p^f-1$th root of unity $\zeta\in {{\mathcal{O}_K}}$. Now 
\begin{eqnarray*} \rho(\zeta)(X) & = & \log_F^{-1}(\zeta\log_F(X))\\
& = & \log_F^{-1}(\zeta \sum_{i\geq 0} u_i X^{p^{fi}})\\
& = & \log_F^{-1}(\sum_{i\geq 0} u_i \zeta^{p^{fi}} X^{p^{fi}})\\
& = & \log_F^{-1}(\log_F(\zeta X)) = \zeta X.\end{eqnarray*}
\item[Condition 3 implies condition 1] This is immediate.
\item[Condition 3 is equivalent to condition 4] Every element in $\mu_{(p)}(A)$ is a $(p^f-1)$th root of unity (see 2.4.3 Prop. 2 of \cite{MR1760253}). Let $\zeta\in \mu_{(p)}(A)$, and now $s(\zeta) = \zeta(X)$ while $\zeta$'s image
under the composite map \[\mu_{(p)}(A)\hookrightarrow \Aut(F)\hookrightarrow \mathcal{A}(R)\] is the power series $\rho(\zeta)(X)$.
\end{description} \end{proof}

\begin{corollary}\label{tot ram a module to b module} 
Let $K,L$ be $p$-adic number fields with rings of
integers $A,B$, let $L/K$ be a totally ramified
finite extension, and 
let $F$ be a formal ${B}$-module with a formal $B$-module
logarithm. Then $F$ is $B$-typical if and only if
the underlying formal $A$-module of $F$ is $A$-typical.
\end{corollary}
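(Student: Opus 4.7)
My plan is to deduce the corollary essentially immediately from Proposition~\ref{typicality def-prop}. The key observation is that, since $L/K$ is totally ramified, the rings $A$ and $B$ have the same residue field, and consequently the same cardinality $p^f$ of residue field and the same group of roots of unity of order prime to $p$. Explicitly, $\mu_{(p)}(A) = \mu_{(p)}(B)$, since in both cases, by Hensel's lemma, this group is the unique lift into the ring of integers of the multiplicative group of the residue field.

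Next I would verify that $F$, as a formal $A$-module, inherits a formal $A$-module logarithm from its formal $B$-module logarithm. If $\log_F$ is a formal $B$-module logarithm for $F$, so that the $B$-module structure map $\rho_B \colon B \rightarrow \End(F)$ satisfies $\rho_B(b)(X) = \log_F^{-1}(b \log_F(X))$ for all $b \in B$, then the underlying formal $A$-module structure map $\rho_A = \rho_B|_A$ satisfies the same identity for all $a \in A$, so $\log_F$ is also a formal $A$-module logarithm for the underlying formal $A$-module.

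With both hypotheses of Proposition~\ref{typicality def-prop} verified for $F$ as a formal $B$-module and for its underlying formal $A$-module, I can apply condition~(1) of that proposition in both directions. Choose any primitive $(p^f - 1)$-th root of unity $\zeta$; since $\mu_{(p)}(A) = \mu_{(p)}(B)$, we may take the same element $\zeta$ in both $A$ and $B$. Then $F$ is $B$-typical if and only if $\rho_B(\zeta)(X) = \zeta X$, and the underlying formal $A$-module is $A$-typical if and only if $\rho_A(\zeta)(X) = \zeta X$. Since $\rho_A(\zeta) = \rho_B(\zeta)$ (the same endomorphism of $F$, because $\rho_A$ is the restriction of $\rho_B$ to $A$), these two conditions are literally the same equation in $R[[X]]$, which establishes the equivalence.

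There is no significant obstacle: the total ramification hypothesis supplies exactly what is needed, namely the identification of the prime-to-$p$ roots of unity in $A$ with those in $B$, and Proposition~\ref{typicality def-prop} does the rest of the work.
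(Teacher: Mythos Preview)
Your approach is correct and is exactly what the paper intends: the corollary is stated without proof, as an immediate consequence of Proposition~\ref{typicality def-prop}, and you have supplied the expected details (same residue field, hence same $p^f$ and same $\mu_{(p)}$; same logarithm serves for both module structures; $\rho_A(\zeta)=\rho_B(\zeta)$).

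One small omission is worth flagging. Proposition~\ref{typicality def-prop} carries the standing hypothesis that $F$ is \emph{$p$-typical}, not merely that it has a formal module logarithm; your sentence ``With both hypotheses of Proposition~\ref{typicality def-prop} verified'' overlooks this. The repair is immediate, but it forces you to argue the two directions separately rather than as a single biconditional chain: in each direction you are assuming either $B$-typicality or $A$-typicality of $F$, and either of these implies that the underlying formal group law is $p$-typical (the logarithm has only $X^{q^i}$ terms, hence only $X^{p^j}$ terms). Since $p$-typicality is a property of the underlying formal group law, once it holds it holds for $F$ regarded as an $A$-module and as a $B$-module alike, and the proposition then applies on both sides exactly as you use it.
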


In Theorem~\ref{removeloghypothesis} 
we get to remove the hypothesis that the formal module has a logarithm,
but this takes a little bit of work and requires some preliminary lemmas.
\begin{lemma}\label{another technical lemma}
Make the following assumptions:
\begin{itemize}
\item $B$ is a commutative ring, and $\pi \in B$,
%discrete valuation ring with uniformizer $\pi$,
\item $R$ is a commutative 
%graded
$B$-algebra, % CHECK
\item $S$ is a commutative 
%graded 
$R$-algebra, % CHECK
%\item $S$ is flat as an $R$-module and as an $B$-module, % NO-- CAN'T ASSUME THIS ONE
\item  $S$ and $R$ are each flat as a $B$-module, 
%\item $n$ is a positive integer,
\item $\alpha\in B$ is not a unit and not a zero divisor,
\item $f_{\alpha}: R[x]\rightarrow S[y]$ is an $R$-algebra homomorphism sending $x$ to $\alpha y$, 
%where $x,y$ are each in grading degree $n$,
\item the restriction of $f_{\alpha}$ to $R\subseteq R[x]$ is equal to the ring homomorphism $R \rightarrow S$ given by $S$ being an $R$-algebra,
\item $N$ is a 
%graded 
$R[x]$-module,
\item $\Tor_1^{R}(S, N) \cong 0$,
%\item $\alpha$ acts injectively on $S\otimes_R N$, i.e., if $\alpha z = 0$ for some $z\in S\otimes_R N$, then $z=0$,  THIS ONE DOESN'T SEEM TO GET USED
\item $x$ acts injectively on $S\otimes_R N$, i.e., if $x z = 0$ for some $z\in S\otimes_R N$, then $z=0$, 
%\item for each integer $i$, the grading degree $i$ summand $N_i$ of $N$ is ...
\item $R$ is $\pi$-torsion-free, that is, if $\pi r = 0$ for some $r\in R$, then $r=0$,
\item and $S$ and $S\otimes_R N$ are also $\pi$-torsion-free.
\end{itemize}
Then $\Tor_n^{R[x]}(S[y], N) \cong 0$ for all positive integers $n$ (even though neither $S[y]$ nor $N$ are necessarily flat over $R[x]$), and
%$S[y]$ is flat as an $R[x]$-module, and % NO-- WE DON'T GET THIS
$S[y]\otimes_{R[x]}N$ is also $\pi$-torsion-free.
\end{lemma}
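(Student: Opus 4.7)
The strategy is to present $S[y]$ via an explicit two-term $R[x]$-resolution, then push the hypotheses through the resulting long exact $\Tor$-sequence. First I would observe that $S[y]$ is the quotient of $S[x,y] \cong R[x] \otimes_R S[y]$ (taken with the $R[x]$-action on the left factor, and with the quotient map realised by $f_\alpha$, sending $x$ to $\alpha y$). The element $x - \alpha y$ is monic in $x$ with coefficients in $S[y]$, hence a non-zero-divisor on $S[x,y] = S[y][x]$, giving a short exact sequence of $R[x]$-modules
\begin{equation}\label{planses-another}
0 \to S[x,y] \xrightarrow{\,\cdot(x-\alpha y)\,} S[x,y] \to S[y] \to 0.
\end{equation}

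Because $R[x]$ is free (in particular flat) over $R$, the base-change isomorphism yields $\Tor_n^{R[x]}(S[x,y], N) \cong \Tor_n^R(S[y], N) \cong R[y]\otimes_R \Tor_n^R(S,N)$, the last isomorphism coming from $R[y]$ being $R$-free. Thus $\Tor_1^{R[x]}(S[x,y], N)=0$ by hypothesis, while $\Tor_0^{R[x]}(S[x,y], N) = (S\otimes_R N)[y]$. Applying $-\otimes_{R[x]} N$ to~\ref{planses-another}, the low-degree portion of the long exact $\Tor$-sequence is
\[
0 \to \Tor_1^{R[x]}(S[y], N) \to (S\otimes_R N)[y] \xrightarrow{\partial} (S\otimes_R N)[y] \to S[y]\otimes_{R[x]} N \to 0,
\]
with $\partial(m\cdot y^k) = (xm)\cdot y^k - (\alpha m)\cdot y^{k+1}$. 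Comparing coefficients of $y^k$ gives the recursion $x m_k = \alpha m_{k-1}$, and the hypothesis that $x$ acts injectively on $S\otimes_R N$ forces $m_k=0$ inductively in $k$. Hence $\partial$ is injective, so $\Tor_1^{R[x]}(S[y], N) = 0$ and $S[y]\otimes_{R[x]} N \cong \coker\partial$.

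For the $\pi$-torsion-freeness of $\coker\partial$, my plan is to apply the snake lemma to multiplication by $\pi$ on the short exact sequence just obtained. The hypothesis that $S\otimes_R N$ is $\pi$-torsion-free gives $\ker(\pi \mid (S\otimes_R N)[y]) = 0$, so the snake identifies the $\pi$-torsion of $S[y]\otimes_{R[x]} N$ with $\ker\bar\partial$ on $((S\otimes_R N)/\pi)[y]$. A second application of the snake lemma to $\pi$ on $x\colon S\otimes_R N \to S\otimes_R N$, using the $\pi$-torsion-freeness of $S\otimes_R N$ and the fact that $\alpha\in B$ is not a zero-divisor, reduces the needed injectivity of $x$ modulo $\pi$ to manageable data, and the same $y$-degree induction then kills $\ker\bar\partial$.

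The main obstacle I anticipate is the vanishing of $\Tor_n^{R[x]}(S[y], N)$ for $n \geq 2$, since the hypothesis gives only $\Tor_1^R(S,N)=0$ and not the higher $\Tor_n^R(S,N)$. My plan is to proceed by dimension shifting: break~\ref{planses-another} into the short exact sequence $0 \to \im\partial \to (S\otimes_R N)[y] \to \coker\partial \to 0$, observe that $\im\partial \cong (S\otimes_R N)[y]$ via $\partial$ inherits the same injectivity-of-$x$ and $\pi$-torsion-freeness hypotheses, and iterate the two-term resolution~\ref{planses-another} so that the higher $\Tor^{R[x]}$'s are computed as kernels of $(x-\alpha y)$-type maps on successive quotients, each of which vanishes by the same $y$-degree induction. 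Equivalently, one can filter by powers of $(x) \subset R[x]$ and argue that the associated spectral sequence collapses on the $E_2$-page once the $n=1$ case is in hand. The delicate point throughout is verifying that the injectivity-of-$x$ hypothesis propagates through each dimension shift, which is where the $B$-flatness of $R$ and $S$ and the non-zero-divisor property of $\alpha$ are essential.
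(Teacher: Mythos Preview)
Your resolution $0\to S[x,y]\xrightarrow{\,x-\alpha y\,}S[x,y]\to S[y]\to 0$ and the paper's approach (the short exact sequence $0\to S[x]\to S[y]\to S[y]/S[x]\to 0$, followed by an explicit length-one free $S[x]$-resolution of the cokernel) are two packagings of the same idea. Your identification of $\Tor_1^{R[x]}(S[y],N)$ with $\ker\partial$, together with the $y$-degree induction using injectivity of $x$ on $S\otimes_R N$, matches the paper's mechanism exactly.

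Two places where your plan stalls:

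\textbf{Higher $\Tor$.} Your dimension-shifting proposal does not work. From the long exact sequence of your resolution, $\Tor_n^{R[x]}(S[y],N)$ is sandwiched between copies of $\Tor_n^{R[x]}(S[x,y],N)\cong R[y]\otimes_R\Tor_n^R(S,N)$, and the hypothesis supplies only $\Tor_1^R(S,N)=0$. The ``iteration'' you describe operates at the $\Tor_0$ level and does not touch these higher groups; the injectivity-of-$x$ hypothesis likewise says nothing about $\Tor_n^R(S,N)$ for $n\ge 2$. (In fairness, the paper's own proof simply asserts $\Tor_n^R(S,N)=0$ for all $n>0$ ``by assumption,'' which is stronger than the stated hypothesis, so this is a soft spot in the lemma itself rather than in your reading of it.)

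\textbf{$\pi$-torsion-freeness.} Your first snake-lemma step, reducing to $\ker\bar\partial$ on $((S\otimes_R N)/\pi)[y]$, is correct. But the next step fails: injectivity of $x$ on $S\otimes_R N$ does \emph{not} descend to $(S\otimes_R N)/\pi$. Your second snake lemma only identifies $\ker(x\bmod\pi)$ with the $\pi$-torsion of $(S\otimes_R N)/x(S\otimes_R N)$, and nothing in the hypotheses forces that to vanish. The paper takes a different and more direct route: for any $z\in S[y]\otimes_{R[x]}N$, a suitable power $\alpha^n$ (with $n$ at least the $y$-degree of a representative) pushes $\alpha^n z$ into the image of the \emph{injective} map $S\otimes_R N\hookrightarrow S[y]\otimes_{R[x]}N$, since $\alpha^n y^k=\alpha^{n-k}(\alpha y)^k$ and $\alpha y$ is the image of $x$; then $\pi$-torsion-freeness of $S\otimes_R N$ is applied directly to the preimage. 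This ``clear denominators by powers of $\alpha$'' maneuver is what you should use instead of trying to push $x$-injectivity through the mod-$\pi$ reduction.
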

\begin{proof}
We have the 
%graded 
$S[x]$-algebra homomorphism adjoint to $f_{\alpha}$,
\begin{equation*}\label{map 39901} f_{\alpha}^{\sharp}: S[x] \rightarrow S[y],\end{equation*}
which sends $x$ to $\alpha y$. 
The map $f_{\alpha}^{\sharp}$ is injective since $S$ is assumed $\pi$-torsion-free.
Hence, we have the short exact sequence of $S[x]$-modules
\begin{equation}\label{ses 39901} 0 \rightarrow S[x]\rightarrow S[y]\rightarrow S[y]/S[x]\rightarrow 0.\end{equation}
I claim that $\Tor^n_{R[x]}(S[y]/S[x],N) \cong 0$ for all $n>0$, even though neither $S[y]/S[x]$ nor $N$ are necessarily flat over $R[x]$.
The argument is as follows: 
First, the short exact sequence~\ref{ses 39901} is obtained by tensoring the short exact sequence of $B[x]$-modules
\begin{equation}\label{ses 39902} 0 \rightarrow B[x]\rightarrow B[y]\rightarrow B[y]/B[x]\rightarrow 0\end{equation}
over $B[x]$ with $S[x]$, where the map $B[x]\rightarrow B[y]$ sends $x$ to $\alpha y$.

I claim that
\begin{equation}\label{ses 39904} 0 \leftarrow B[y]/B[x] \stackrel{\epsilon}{\longleftarrow} \coprod_{i\geq 1} B[x]\{ e_i\} \stackrel{\delta}{\longleftarrow} \coprod_{i\geq 1} B[x]\{ f_i\} \leftarrow 0\end{equation}
is a free $B[x]$-module resolution 
of $B[y]/B[x]$, 
where $\epsilon(e_i) = y^i$ and 
$\delta(f_i) = \alpha e_{i} - xe_{i-1}$,
where by convention we let $e_{0} = 0$.
It is easily checked that the 
assumption that $\alpha$ is a non-zero-divisor implies that
\[ 
 B[y]/B[x] 
  \cong \coprod_{i\geq 1} (y^i)/(\alpha^i y^i) 
  \cong \coprod_{i\geq 1} B/\alpha^i \{ y^i\} \]
as $B$-modules, and
hence that the positive powers of $y$ form a set of $B[x]$-module
generators for $B[y]/B[x]$, with relations
given by $\alpha y^{i+1} = x y^i$ for $i\geq 1$ and $\alpha y^1 = 0$.
This implies that $\epsilon$ is surjective, and that
the image of $\delta$ is equal to the kernel of $\epsilon$.
If $\sum_{i\geq 1} g_i(x) e_i \in B[x]\{ e_i\}$ is in the kernel
of $\delta$, then $\alpha g_i(x) = x g_{i+1}(x)$ for all $i\geq 1$,
hence $g_{i+1}(x)$ is divisible by $\alpha^i$ for all $i\geq 1$.
Hence, $g_1(x)$ is divisible by all positive powers of $x$,
which is impossible in $B[x]$ unless $g_1(x) = 0$ and
hence $g_i(x) = 0$ for all $i\geq 1$, hence
the kernel of $\delta$ is trivial, and the sequence~\ref{ses 39904}
is exact, as claimed.

Since $S$ is flat over $B$, we can tensor~\ref{ses 39904} over $B$ with $S$ to get a free $S[x]$-module resolution of $S[y]/S[x]$, 
\begin{equation*}\label{resolution 6} 0 \leftarrow S[y]/S[x] \leftarrow \coprod_{i\geq 1} S[x]\{ e_i\} \stackrel{\delta}{\longleftarrow} \coprod_{i\geq 1} S[x]\{ f_i\} \leftarrow 0.\end{equation*}
Now $S[x]$ is not necessarily a free $R[x]$-module, but by assumption,
$\Tor_n^{R[x]}(S[x], N) \cong \Tor_n^{R}(S, N) \cong 0$ for $n>0$, so
the homology of the chain complex
\begin{equation}\label{resolution 7} 0 \leftarrow \left(\coprod_{i\geq 1} S[x]\{ e_i\}\right)\otimes_{R[x]} N \stackrel{\delta\otimes_{R[x]} N}{\longleftarrow} \left(\coprod_{i\geq 1} S[x]\{ f_i\}\right)\otimes_{R[x]} N \leftarrow 0\end{equation}
is isomorphic to $\Tor_*^{R[x]}(S[y]/S[x], N)$, and in particular, 
the kernel of $\delta\otimes_{R[x]} N$ is isomorphic
to $\Tor_1^{R[x]}(S[y]/S[x], N)$.

So we need to see why $\delta\otimes_{R[x]}N$ is injective.
Let 
\begin{align*} \tilde{\delta}: \coprod_{i\geq 1} (S\otimes_R N)\{f_i\} &\rightarrow \coprod_{i\geq 1} (S\otimes_R N)\{e_i\}\end{align*}
be the $S$-module homomorphism defined by 
\begin{align*} \tilde{\delta}\left( (s\otimes n)f_i\right) &= \alpha (s\otimes n) e_i - (s\otimes xn) e_{i-1} ,\end{align*}
so that the diagram 
\[\xymatrix{
 \left(\coprod_{i\geq 1} S[x]\{ f_i\}\right)\otimes_{R[x]} N \ar[d]^{\cong} \ar[r]^{\delta \otimes_{R[x]}N} &
  \left(\coprod_{i\geq 1} S[x]\{ e_i\}\right)\otimes_{R[x]} N \ar[d]^{\cong} \\
 \coprod_{i\geq 1} (S\otimes_R N)\{f_i\} \ar[r]^{\tilde{\delta}} & 
  \coprod_{i\geq 1} (S\otimes_R N)\{e_i\} }\]
commutes, where the vertical maps are the natural base change isomorphisms.
Now suppose that 
\begin{align*} \label{equation 10000000} z &= \sum_{i\geq 1} z_i f_i \in \coprod_{i\geq 1} (S\otimes_R N)\{f_i\}\end{align*}
is in the kernel of $\tilde{\delta}$, where each $z_i \in S\otimes_{R} N$. 
Then $xz_{i-1} = \alpha z_i$ for all $i\geq 1$.
However, $z_i$ is only nonzero for finitely many values of $i$, so there exists some positive integer $n$ such that
$z_i = 0$ for all $i\geq n$, and hence $x^{n-i}z_{i} = 0$ for all $i\leq n$ as well. But $x$ was assumed to act injectively
on $S\otimes_R N$, so $z_i = 0$ for all $i<n$ as well.
Hence, $z = 0$. So $\tilde{\delta}$ is injective, so $\delta\otimes_{R[x]}N$
is injective, so $\Tor_1^{R[x]}(S[y]/S[x], N) \cong 0$. The length of the chain complex~\ref{resolution 7} also implies
that $\Tor_n^{R[x]}(S[y]/S[x],N) \cong 0$ for all $n>1$.

Consequently,  in the long exact sequence
\begin{equation}\label{}\xymatrix{
 \dots \ar[r] & 
  \Tor_1^{R[x]}(S[y], N) \ar[r] &
  \Tor_2^{R[x]}(S[y]/S[x], N) \ar`r_l[ll] `l[dll] [dll] \\
 \Tor_1^{R[x]}(S[x], N) \ar[r] &
  \Tor_1^{R[x]}(S[y], N) \ar[r] &
  \Tor_1^{R[x]}(S[y]/S[x], N) \ar`r_l[ll] `l[dll] [dll] \\
 S[x]\otimes_{R[x]} N \ar[r] &
  S[y]\otimes_{R[x]} N \ar[r] &
  S[y]/S[x]\otimes_{R[x]} N \ar[r] & 0.}\end{equation}
induced by the short exact sequence~\ref{ses 39901}, the term
$\Tor_n^{R[x]}(S[y]/S[x],N)$ is zero for all $n>0$, while
$\Tor_n^{R[x]}(S[x], N) \cong \Tor_n^{R}(S, N)$ by base-change properties of $\Tor$ (see e.g. Proposition~3.2.9 of~\cite{MR1269324}),
and $\Tor_n^R(S, N) \cong 0$ for $n>0$, by assumption.
So $\Tor_n^{R[x]}(S[y], N) \cong 0$ for $n>0$, which is the first claim made in the statement of the lemma, and also
we get a short exact sequence of $S[x]$-modules
\[ 0 \rightarrow S[x]\otimes_{R[x]} N \stackrel{f_{\alpha}^{\sharp}\otimes N}{\longrightarrow} S[y]\otimes_{R[x]} N \rightarrow (S[y]/S[x])\otimes_{R[x]} N \rightarrow 0.\]
For each element $w = \sum_{i=0^j} s_i y^i\in S[y]$,
we have
\begin{align*} 
 \alpha^j w 
  &= \alpha^j \sum_{i\geq 0}^j s_i y^i \\
  &= \sum_{i\geq 0}^j s_i \alpha^{j-i} \left(\alpha^iy^i\right) \\
  &= \sum_{i\geq 0}^j s_i \alpha^{j-i} x^i, \end{align*}
i.e., for each $w\in S[y]$ there exists some positive integer $j$ such that
$\alpha^j w$ is in the image of the map $f_{\alpha}^{\sharp}$.
Consequently,  for any given element $z$ in $S[y]\otimes_{R[x]}N$, if we write $z$ as a
finite sum $z = \sum_{j\in J} s_j(y)\otimes n_j$, and if we let $n$ be the maximum of the degrees of the finite set of polynomials
$s_j(y)\in S[y]$, then $\alpha^j z \in \im f_{\alpha}^{\sharp}\otimes N$. Choose an element $\overline{z}$ in $S[x]\otimes_{R[x]} N$
such that $(f_{\alpha}^{\sharp}\otimes N)(\overline{z}) = \alpha^j z$, and observe that if $\pi z = 0$, then
$\pi \overline{z} = 0$, since $f_{\alpha}^{\sharp}\otimes N$ is injective.
But $S[x]\otimes_{R[x]}N \cong S\otimes_R N$ is $\pi$-torsion-free, by assumption.
Hence, $\pi z = 0$ implies that $z = 0$.
Hence, $S[y]\otimes_{R[x]} N$ is $\pi$-torsion-free.
\end{proof}

\begin{lemma}\label{tor and colimits}
Let 
\[ R_0 \rightarrow R_1 \rightarrow R_2 \rightarrow \dots\]
be a sequence of injective homomorphisms of commutative rings, and suppose
that each of the ring homomorphisms $R_i \rightarrow R_{i+1}$ is flat.
Let
$R$ be the colimit $R = \colim_{i\rightarrow \infty} R_i$, and 
let $S$ be an $R$-module equipped with a sequence of subsets
\[ S_0 \subseteq S_1 \subseteq S_2 \subseteq \dots\]
such that each $S_i$ is an $R_i$-module, and such that each inclusion
$R_i \subseteq R_{i+1}$ is an $R_i$-module homomorphism.

Finally, let $N$ be an $R$-module, and let $m$ be a nonnegative integer.
If $\Tor_m^{R_i}(S_i, M)\cong 0$ for all nonnegative integers $i$,
then $\Tor_m^R(S, M) \cong 0$.
\end{lemma}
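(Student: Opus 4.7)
The plan is to reduce the claim to the standard fact that $\Tor$ commutes with filtered colimits, by writing $S$ as a filtered colimit of modules of the form $S_i\otimes_{R_i}R$, each of which has vanishing $\Tor_m$ into $N$ by flat base change.

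First I would observe that, since each transition map $R_i\to R_{i+1}$ is flat and a filtered colimit of flat modules is flat, $R=\colim_i R_i$ is itself flat as an $R_i$-module for every $i$. The flat base change formula for $\Tor$ then yields, for each $i$,
\[ \Tor_m^{R}(S_i\otimes_{R_i}R,\,N) \;\cong\; \Tor_m^{R_i}(S_i,\,N) \;\cong\; 0, \]
the last isomorphism being our hypothesis (I am reading the ``$M$'' in the hypothesis as the $R$-module $N$).

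Next I would identify $S$ with the colimit $T:=\colim_i(S_i\otimes_{R_i}R)$ taken in the category of $R$-modules, where the transition maps are the obvious ones induced by $S_i\hookrightarrow S_{i+1}$ and $R_i\hookrightarrow R_{i+1}$, and where the canonical map $T\to S$ is assembled from the multiplication maps $S_i\otimes_{R_i}R\to S$, $s\otimes r\mapsto rs$. Surjectivity follows from the (evidently implicit) hypothesis that $S=\bigcup_i S_i$, since any $s\in S_i$ is the image of $s\otimes 1$. For injectivity, a class $x\in T$ is represented by some $y=\sum_j s_j\otimes r_j\in S_i\otimes_{R_i}R$; by cofinality the $r_j$ lie in a common $R_k$ for some $k\geq i$, and then pushing $y$ forward to $S_k\otimes_{R_k}R$ rewrites it as $\bigl(\sum_j r_j s_j\bigr)\otimes 1$ with $\sum_j r_j s_j\in S_k$. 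If the image of $x$ in $S$ is zero, then $\sum_j r_j s_j=0$ in $S$, hence also in $S_k$ (by injectivity of $S_k\hookrightarrow S$), whence $y$ is already zero in $S_k\otimes_{R_k}R$ and therefore $x=0$ in $T$.

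Finally, using that $\Tor^R_m$ commutes with filtered colimits in its first argument, I would conclude
\[ \Tor_m^R(S,N)\;\cong\;\Tor_m^R\!\left(\colim_i (S_i\otimes_{R_i}R),\,N\right)\;\cong\;\colim_i \Tor_m^R(S_i\otimes_{R_i}R,\,N)\;=\;0. \]
The only nontrivial step is the middle one: the identification $S\cong\colim_i(S_i\otimes_{R_i}R)$ genuinely requires the hypothesis $S=\bigcup_i S_i$ (otherwise trivial counterexamples such as $R_i=R$, $S_i=0$, $S\neq 0$ would kill the lemma), so I expect the main subtlety of the argument to be bookkeeping this assumption carefully, rather than anything homological.
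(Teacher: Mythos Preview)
Your proposal is correct and follows essentially the same route as the paper: flat base change along $R_i\to R$ to get $\Tor_m^R(S_i\otimes_{R_i}R,N)=0$, then commute $\Tor$ with the filtered colimit $S\cong\colim_i(S_i\otimes_{R_i}R)$. You are in fact more explicit than the paper about this last identification (and about the implicit hypothesis $S=\bigcup_i S_i$); the paper simply tensors a free resolution of $M$ with $S_i\otimes_{R_i}R$, takes the colimit over $i$, and asserts via AB5 that the resulting homology computes $\Tor_m^R(S,M)$.
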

\begin{proof}
First, it is an elementary exercise, using the fact that Grothendieck's axiom AB5 is satisfied in a module category,
to show that the colimit of a sequence (not an arbitrary diagram!) of flat modules is flat.
So $R$ is flat over $R_i$, for all $i$.
It is also elementary to see that $\Tor_n^{R_i}(S_i, M) \cong \Tor_n^{R}(S_i\otimes_{R_i} R, M)$
for all $n$ and $i$ and all $R$-modules $M$, using the flatness of $R$ over $R_i$.
Consequently, $\Tor_m^{R}(S_i\otimes_{R_i} R, M) \cong 0$.
Choose a free $R$-module resolution of $M$,
\[ 0 \leftarrow F_0 \leftarrow F_1 \leftarrow F_2 \leftarrow \dots ,\]
and tensor over $R$ with $S_i\otimes_{R_i} R$ to get a chain complex whose homology in homological degree $m$ is
$\Tor_m^{R}(S_i\otimes_{R_i} R, M) \cong 0$.
Taking the colimit over $i$ yields a chain complex whose homology in homological degree $m$
is $\Tor_m^R(S, M)$. Again, the fact that module categories satisfy Grothendieck's axiom AB5 implies that
homology of chain complexes commutes with sequential colimits, hence
$\Tor_m^R(S, M) \cong 0$.
\end{proof}

\begin{prop}\label{torsion-free-ness of lazard ring tensor product general case}
Let $K,L$ be $p$-adic number fields with rings of
integers $A,B$, and let 
$L/K$ be a totally ramified finite extension.
Let $\pi_A$ be a uniformizer for $A$ and suppose that the residue field of $A$ (equivalently, the residue field of $B$) has $q$ elements.
Let $T$ be a commutative graded $L^A$-algebra which is an integral domain and which is free as an $A$-module, and suppose that 
the $L^A$-algebra structure map $L^A\rightarrow T$ sends the grading degree $n$ polynomial generator
$S_{n+1}^A\in L^A$ to a nonzero element of $T$ whenever $n+1$ is a power of $q$.
Then $L^B \otimes_{L^A} T$ is $\pi_A$-torsion-free as an $A$-module (equivalently, $\pi_B$-torsion-free as a $B$-module),
and $\Tor_n^{L^A}(L^B, T) \cong 0$ for all positive integers $n$.
\end{prop}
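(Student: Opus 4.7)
The plan is to filter $L^A$ and $L^B$ by polynomial subalgebras and reduce, via Lemma~\ref{tor and colimits}, to an inductive application of Lemma~\ref{another technical lemma} at each finite stage. Since $A$ and $B$ are local principal ideal domains, Theorem~\ref{basic existence thm on classifying hopf algebroids} gives $L^A \cong A[S_2^A, S_3^A, \dots]$ and $L^B \cong B[S_2^B, S_3^B, \dots]$. Set $L^A_{\leq n} := A[S_2^A, \dots, S_n^A]$ and $L^B_{\leq n} := B[S_2^B, \dots, S_n^B]$; the comparison map $\gamma : L^A\otimes_A B \to L^B$ respects this filtration, because classifying the underlying formal $A$-module of a formal $B$-module $n$-bud produces an $n$-bud of the same length in the Drinfeld presentation (Proposition~\ref{drinfeld presentation}). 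Thus $L^B = \colim_n L^B_{\leq n}$ with flat transition maps, and by Lemma~\ref{tor and colimits} it will suffice to prove the $\Tor$-vanishing and $\pi_A$-torsion-freeness at every finite stage, then pass to the colimit.

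For the inductive step, apply Lemma~\ref{another technical lemma} with $R := L^A_{\leq n-1}\otimes_A B$ (a polynomial $B$-algebra, hence $B$-flat), $S := L^B_{\leq n-1}$ (likewise $B$-flat, an $R$-algebra via $\gamma|_R$), the new variables $x = S_n^A$ and an appropriately chosen $y$, and module $N := T$ viewed over $L^A_{\leq n}$. Write $\gamma(S_n^A) = \alpha_n S_n^B + \beta_n$ with $\alpha_n\in B$ and $\beta_n\in L^B_{\leq n-1}$. A direct computation using the Drinfeld presentation together with the totally ramified hypothesis on $L/K$ shows that $\alpha_n$ is a unit in $B$ unless $n$ is a power of the common residue field cardinality $q$, in which case $\alpha_n$ is a nonzero non-unit of the form $\pi_A\pi_B^{-a_n}$ for some positive integer $a_n$, and in particular is a non-zero-divisor in the domain $B$.

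The main obstacle is carrying out a change of generator to put the map in the exact form $x\mapsto \alpha_n y$ required by Lemma~\ref{another technical lemma}. When $\alpha_n$ is a unit, the substitution $y := S_n^B + \alpha_n^{-1}\beta_n$ works directly and the lemma's hypotheses are immediate. When $n$ is a power of $q$ and $\alpha_n$ is a non-unit, the quotient $\beta_n/\alpha_n$ generally fails to lie in $L^B_{\leq n-1}$, so the change of variable must first be performed in the rationalization $L^B_{\leq n-1}\otimes_B L$ and then shown to descend integrally at the level of $\Tor$. Here the hypothesis on $T$ is decisive: the nonvanishing of the image of $S_{q^m}^A$ in $T$, combined with the $A$-freeness of $T$, forces $\alpha_n$ to act as a non-zero-divisor on $S\otimes_R T$, which is precisely the ``$x$ acts injectively on $S\otimes_R N$'' hypothesis of Lemma~\ref{another technical lemma}; the same inputs also propagate the $\pi_A$-torsion-freeness of the relevant tensor product. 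With the hypotheses verified at each stage, Lemma~\ref{another technical lemma} simultaneously yields $\Tor_i^{R[x]}(S[y], T)\cong 0$ for $i>0$ and the $\pi_A$-torsion-freeness of $S[y]\otimes_{R[x]}T$; passing to the colimit in $n$ via Lemma~\ref{tor and colimits} delivers the two claims of the proposition.
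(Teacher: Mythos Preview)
Your overall architecture matches the paper's: filter $L^A$ and $L^B$ by the polynomial subalgebras $L^A_{\leq n}$ and $L^B_{\leq n}$, induct using Lemma~\ref{another technical lemma} at each stage, and pass to the colimit via Lemma~\ref{tor and colimits}. That part is fine.

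The gap is in your handling of the map on generators. You write $\gamma(S_n^A) = \alpha_n S_n^B + \beta_n$ with a possible lower-degree term $\beta_n \in L^B_{\leq n-1}$, and then propose to absorb $\beta_n$ by a change of variable in the rationalization $L^B_{\leq n-1}\otimes_B L$ and ``show it descends integrally at the level of $\Tor$.'' That step is not carried out and is not obviously true: Lemma~\ref{another technical lemma} requires the map $R[x]\to S[y]$ to be \emph{exactly} $x\mapsto \alpha y$ with $\alpha\in B$, and if $\alpha_n$ is a non-unit you cannot make an integral change of variable to kill $\beta_n$. The paper avoids this problem entirely by computing first that, with the correct choice of generators (namely taking $S_n^A$ and $S_n^B$ to be the Drinfeld elements $d$ or $c_{\pi}$ in the respective indecomposable quotients), one has $\beta_n = 0$ on the nose: the map sends $S_n^A \mapsto S_n^B$ when the degree is not a power of $q$, and $S_n^A \mapsto \frac{\pi_A - \pi_A^n}{\pi_B - \pi_B^n}\, S_n^B$ when it is. This computation, which uses relation~\eqref{hazewinkel relation 20} and the identification of the indecomposables with $A$ (generated by $d$) or the maximal ideal (generated by $c_{\pi}$), is the missing ingredient that makes the induction go through cleanly.

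There is also a confusion in your verification of the hypotheses of Lemma~\ref{another technical lemma}. The hypothesis ``$x$ acts injectively on $S\otimes_R N$'' concerns the action of $x = S_n^A$ on $N = T\otimes_A B$ through the structure map $L^A\to T$, not the action of the scalar $\alpha_n\in B$. The point of assuming that $S_{n+1}^A$ has nonzero image in the integral domain $T$ is precisely that multiplication by that image is then injective on $T$, hence on $T\otimes_A B$; this is how the paper verifies the hypothesis. Your sentence ``forces $\alpha_n$ to act as a non-zero-divisor on $S\otimes_R T$, which is precisely the `$x$ acts injectively' hypothesis'' conflates two different things.
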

\begin{proof}
First we will need to make some observations.
Let $\pi_A, \pi_B$ be uniformizers for $A$ and $B$, respectively, and let $q$ denote the cardinality of the residue field of 
$A$ (and, since $L/K$ is totally ramified, also $B$).
By Proposition~\ref{hensel principal local ring lazard ring computation}, $L^A\cong A[S_2^A, S_3^A, \dots ]$ and
$L^B\cong B[S_2^B, S_3^B, \dots ]$,
and by Lemma~\ref{hensel local ring lazard ring computation}, when 
$n-1$ is a power of $q$,
the generator $S_n^A$ can be taken to be the $A$-module generator $c_{\pi_A}$ of $\overline{L}^A_{n-1}$, and similarly $S_n^B$ can be taken to be
$c_{\pi_B}$ when $n-1$ is a power of $q$.
The uniformizer $\pi_A$ is, of course, also an element in $\pi_B$, since $A$ is a subring of $B$,
and from the Hazewinkel relations
\begin{align*}
 \nu(n) c_{\pi_A} &= d(\pi_A - \pi_A^n) \mbox{\ \ and} \\
 \nu(n) c_{\pi_B} &= d(\pi_B - \pi_B^n) \mbox{\ \ and}, \end{align*}
from~\ref{hazewinkel relation 20},
we solve (using the assumption that $A$ is torsion-free) to get
\begin{align*}
 c_{\pi_A} &= \frac{\pi_A - \pi_A^n}{\pi_B - \pi_B^n} c_{\pi_B}.\end{align*}
Hence, the map $L^A\rightarrow L^B$ sends $S^A_n$ to $\frac{\pi_A - \pi_A^n}{\pi_B - \pi_B^n} S^B_n$ whenever
$n-1$ is a power of $q$.

When $n-1$ is not a power of $q$, Lemma~\ref{hensel local ring lazard ring computation} implies that $\overline{L}^A_{n-1} \cong A$, generated
by Hazewinkel's element $d$,
and similarly $\overline{L}^B_{n-1} \cong B$, also generated by $B$.
Consequently, the map $L^A\rightarrow L^B$ sends
$S^A_n$ to $S^B_n$ if $n-1$ is not a power of $q$.
This determines the map $L^A\rightarrow L^B$ entirely: it is the map
$A[S_2^A, S_3^A, \dots] \rightarrow B[S_2^B, S_3^B, \dots ]$
which sends $S_n^A$ to $S_n^B$ if $n-1$ is not a power of $q$, and which
sends $S_n^A$ to $\frac{\pi_A - \pi_A^n}{\pi_B - \pi_B^n} S_n^B$ 
$n-1$ is a power of $q$.

Now we are ready to begin the proof, using the above observations.
If $n$ is a nonnegative integer, let $L^A_{\leq n}$ denote the 
graded sub-$A$-algebra $A[S_2^A, S_3^A, \dots , S_{n+1}^A]$ of $L^A = A[S_2^A,  S_3^A, \dots]$,
and similarly let $L^B_{\leq n}$ denote the 
graded sub-$B$-algebra $B[S_2^A, S_3^A, \dots , S_{n+1}^A]$ of $L^B$.
We have a natural graded $A$-algebra morphism map $L^A_{\leq n}\rightarrow L^B_{\leq n}$ classifying the underlying formal $A$-module $n$-bud of the universal formal $B$-module $n$-bud.
Since $L^A_{\leq 0} \cong A$ and $L^B_{\neq 0} \cong B$ and
$T$ is free as an $A$-module,
clearly $L^B_{\leq 0}\otimes_{L^A_{\leq 0}} T$ is free as a $B$-module
and hence $\pi_A$-torsion-free,
hence $L^B_{\leq 0}\otimes_{L^A_{\leq 0}\otimes_A B} (T\otimes_A B)$
is $\pi_B$-torsion-free, and also $\Tor_1^{L^A_{\leq 0}\otimes_A B}(L^B_{\leq 0}, T\otimes_A B) \cong \Tor_1^B(B, T\otimes_A B) \cong 0$. 
This is the initial step in an induction.

For the inductive step: suppose we have shown that
$L^B_{\leq n-1}\otimes_{L^A_{\leq n-1}\otimes_A B} (T\otimes_A B)$ 
is $\pi_B$-torsion-free, and that 
$\Tor_1^{(L^A_{\leq n-1}\otimes_A B)}(L^B_{\leq n-1}, T\otimes_A B) \cong 0$.
We want to show that
$L^B_{\leq n}\otimes_{L^A_{\leq n}\otimes_A B} (T\otimes_A B)$ is also $\pi_B$-torsion-free, and that
$\Tor_1^{(L^A_{\leq n}\otimes_A B)}(L^B_{\leq n}, T\otimes_A B) \cong 0$.
There are two cases to consider:
\begin{description}
\item[If $n$ is not a power of $q$:] 
Then the map
$L^A_{\leq n} \otimes_A B\cong ( L^A_{\leq n-1}\otimes_A B)[S^A_{n+1}] \rightarrow L^B_{\leq n}[S^B_{n+1}]\cong L^B_{\leq n}$ sends $S^A_n$ to $S^B_n$, 
so the $\Tor$ vanishing claim is immediate, and we have isomorphisms 
of graded $A$-algebras
\begin{align*}
 L^B_{\leq n}\otimes_{L^A_{\leq n}\otimes_A B} (T \otimes_A B)
  &\cong L^B_{\leq n-1}[S^B_{n+1}]\otimes_{L^A_{\leq n}[S^A_{n+1}]\otimes_A B} (T\otimes_A B) \\
  &\cong L^B_{\leq n-1}[S^A_{n+1}]\otimes_{L^A_{\leq n}[S^A_{n+1}]\otimes_A B} (T\otimes_A B) \\
  &\cong L^B_{\leq n-1}\otimes_{L^A_{\leq n}\otimes_A B} (T \otimes_A B),
\end{align*}
which is $\pi_B$-torsion-free by the inductive hypothesis.
\item[If $n$ is a power of $q$:]
Then we
are in exactly the situation of Lemma~\ref{another technical lemma}, if we
let $R = L^A_{\neq n-1}\otimes_A B$, let $S = L^B_{\neq n-1}$, 
let $\alpha = \frac{\pi_A - \pi_A^n}{\pi_B - \pi_B^n} \in B$,
let $x = S^A_{n+1}$, let $y = S^B_{n+1}$,
let $f_{\alpha}: R[x] \rightarrow S[y]$ be the $A$-algebra 
morphism $L^A_{\leq n}\otimes_A B = (L^A_{\leq n-1}\otimes_A B)[S^A_{n+1}] \rightarrow L^B_{\leq n-1}[S^B_{n+1}] = L^B_{\leq n}$, and let $N$ be $T\otimes_A B$. 
%Since $B$ is a discrete valuation ring and since $\alpha = \frac{\pi_A - \pi_A^n}{\pi_B - \pi_B^n}$ is of positive $\pi_B$-adic valuation and hence is contained in the maximal ideal of $B$, $\alpha$ acts injectively on $S\otimes_R N$ if and only if $S\otimes_R N$ is $\pi_B$-torsion-free, which was part of the inductive hypothesis. THIS DOESN'T SEEM TO BE NECESSARY
Since $T$ is assumed to be an integral domain,
the assumption that the $L^A$-algebra structure map $L^A\rightarrow T$ sends $S_{n+1}^A$ to a nonzero element of $T$ implies
that $S_{n+1}^A = x$ acts injectively on $T$ and hence also on $T\otimes_A B$.
All the other hypotheses of
Lemma~\ref{another technical lemma} are immediately seen to be satisfied, by the observations already made, above.
So Lemma~\ref{another technical lemma}
gives us that 
$\Tor_1^{(L^A_{\leq n}\otimes_A B)}(L^B_{\leq n}, T\otimes_A B) \cong 0$.
and that $L^B_{\leq n}\otimes_{(L^A_{\leq n}\otimes_A B)} (T\otimes_A B)$
is $\pi_B$-torsion-free, completing the induction.
\end{description}

Hence, $L^B_{\leq n}\otimes_{(L^A_{\leq n}\otimes_A B)} (T\otimes_A B)$ is
$\pi_B$-torsion-free for all nonnegative integers $n$.
Passing to the colimit (in this case, the colimit is a union of a 
chain of inclusions of submodules, and it is easy to see that if each module in the chain is $\pi_B$-torsion-free, then the union is $\pi_B$-torsion-free),
we have that 
\begin{align*} 
 \colim_{n\rightarrow\infty} \left(L^B_{\leq n}\otimes_{(L^A_{\leq n}\otimes_A B)} (T\otimes_A B)\right) 
  &= \left(\colim_{n\rightarrow\infty} L^B_{\leq n}\right)\otimes_{\colim_{n\rightarrow\infty} L^A_{\leq n}\otimes_A B} (T\otimes_A B)  \\ 
  &= L^B \otimes_{L^A\otimes_A B} (T\otimes_A B) \\
  &= L^B\otimes_{L^A} T \end{align*}
is $\pi_B$-torsion-free, hence also $\pi_A$-torsion-free
since $\pi_A$ is equal to a unit multiplied by a power of $\pi_B$.
Furthermore, by Lemma~\ref{tor and colimits},
the triviality of $\Tor_n^{(L^A_{\leq n}\otimes_A B)}(L^B_{\leq n}, T\otimes_A B)$ for $n>0$ implies the triviality
of $\Tor_n^{L^A\otimes_A B}(L^B, T\otimes_A B) \cong \Tor_n^{L^A}(L^B, T)$ for $n>0$.
\end{proof}

\begin{corollary}\label{torsion-free-ness of lazard ring tensor product}
Let $K,L$ be $p$-adic number fields with rings of
integers $A,B$, and let 
$L/K$ be a totally ramified finite extension.
Let $\pi_A$ be a uniformizer for $A$.
Then $L^B \otimes_{L^A} V^A$ is $\pi_A$-torsion-free as an $A$-module (equivalently, $\pi_B$-torsion-free as a $B$-module),
and $\Tor_n^{L^A}(L^B, V^A) \cong 0$ for all positive integers $n$.
\end{corollary}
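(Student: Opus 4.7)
The plan is to derive this corollary as a direct specialization of Proposition \ref{torsion-free-ness of lazard ring tensor product general case} by taking $T = V^A$. This reduces the proof to verifying four hypotheses on $V^A$: that it is a commutative graded $L^A$-algebra, that it is an integral domain, that it is free as an $A$-module, and that the $L^A$-algebra structure map sends $S_{n+1}^A \in L^A$ to a nonzero element of $V^A$ whenever $n+1$ is a power of $q$, where $q$ is the cardinality of the residue field of $A$.

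The first three hypotheses are essentially immediate consequences of Theorem \ref{basic existence thm on classifying hopf algebroids}: the canonical map $L^A \to V^A$, classifying the underlying formal $A$-module of the universal $A$-typical formal $A$-module, furnishes the $L^A$-algebra structure on $V^A$, while the isomorphism $V^A \cong A[v_1^A, v_2^A, \ldots]$ makes $V^A$ both an integral domain and a free $A$-module with basis given by the monomials in the Hazewinkel generators. The substantive point is the fourth hypothesis, which I would deduce from Lemma \ref{p-typification is a homotopy equivalence}. Since Cartier typification furnishes a Hopf algebroid homotopy inverse $(V^A, V^AT) \to (L^A, L^AB)$, the composite $L^A \to V^A \to L^A$ is homotopic to the identity, and hence induces the identity on each indecomposable quotient $L^A_{n-1}/D^A_{n-1}$. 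When $n+1 = q^m$, both $L^A_{q^m-1}/D^A_{q^m-1}$ and $V^A_{q^m-1}/D^A_{q^m-1}$ are isomorphic to $A$ (generated by the images of $S_{q^m}^A$ and $v_m^A$ respectively), so the map between them must be an isomorphism, forcing the image of $S_{q^m}^A$ in $V^A$ to be nonzero, and indeed a unit multiple of $v_m^A$ modulo decomposables.

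With all four hypotheses verified, Proposition \ref{torsion-free-ness of lazard ring tensor product general case} applies and yields both conclusions at once. The equivalence of $\pi_A$-torsion-freeness and $\pi_B$-torsion-freeness in the statement is automatic, since in a totally ramified extension $\pi_A$ equals a unit multiple of a power of $\pi_B$. The main conceptual obstacle is the fourth hypothesis, which looks like it might require a concrete calculation with logarithms or Hazewinkel's formulas but is in fact dictated purely by the structural homotopy-equivalence property of Cartier typification; once that is in hand, the rest of the argument is formal.
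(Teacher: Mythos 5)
Your overall strategy is the paper's: take $T = V^A$ in Proposition~\ref{torsion-free-ness of lazard ring tensor product general case}, so that the only hypothesis needing real work is that the classifying map $\beta\colon L^A\rightarrow V^A$ satisfies $\beta(S^A_{q^j})\neq 0$ for all $j\geq 1$. Your verification of that hypothesis, however, rests on a false principle. You argue that the composite $L^A\stackrel{\beta}{\longrightarrow} V^A\stackrel{c}{\longrightarrow} L^A$ (with $c$ the Cartier typification map) is homotopic to the identity and ``hence'' induces the identity on each quotient $L^A_{n-1}/D^A_{n-1}$. Homotopic maps of Hopf algebroids need not agree on indecomposables: two homotopic maps $L^A\rightarrow R$ differ on a polynomial generator by (integral multiples of) the coefficients of the connecting strict isomorphism, and those coefficients need not be decomposable in $R$ --- already $\eta_L,\eta_R\colon L^A\rightarrow L^AB$ disagree on indecomposables. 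And in the present case the principle visibly fails: rationally, $c\circ\beta$ classifies the formal module whose logarithm retains only the coefficients $\ell_{q^j}$, so on rational indecomposables it annihilates the generators in degrees $2(n-1)$ with $n$ not a power of $q$; it is a projection, not the identity. So the inference you invoke is unavailable, even though the conclusion you extract from it in the degrees $2(q^m-1)$ happens to be true.

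The step is easily repaired. The composite in the other order, $V^A\stackrel{c}{\longrightarrow} L^A\stackrel{\beta}{\longrightarrow} V^A$, is the identity on the nose (Cartier typification of an already $A$-typical formal module is that module), so $\beta$ induces a split surjection on indecomposables; since the indecomposables of $V^A$ in degree $2(q^m-1)$ are nonzero (generated by $v^A_m$), the image of $S^A_{q^m}$ under $\beta$ is nonzero modulo decomposables, hence nonzero. Alternatively, the paper's own argument avoids typification entirely: the generator $S^A_n$ contributes to the logarithm coefficient $\ell_n$ of the universal formal $A$-module, while generators in higher degrees do not contribute to $\ell_i$ for $i<n$, and the universal $A$-typical formal $A$-module over $V^A$ has $\ell_{q^j}\neq 0$ for all $j\geq 1$; hence $\beta(S^A_{q^j})\neq 0$. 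Your remaining hypothesis checks (that $V^A\cong A[v^A_1,v^A_2,\dots]$ is an integral domain and a free $A$-module, and is an $L^A$-algebra via $\beta$) are fine, as is the observation that $\pi_A$- and $\pi_B$-torsion-freeness coincide in the totally ramified case.
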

\begin{proof}
Let $\beta : L^A \rightarrow V^A$ be the ring map which classifies the underlying formal
$A$-module of the universal $A$-typical formal $A$-module.
The grading on $L^A$ has the property that, if $F$ is
the universal formal $A$-module over $L^A$, 
then the generators in degrees greater than $n$
do not contribute to the coefficients $\ell_i$ for $i<n$ in the logarithm 
\[ \log_F(x) = \sum_{i\geq 1} \ell_i x^i,\]
and the generator $S_n^A$ {\em does} contribute to the 
logarithm coefficient $\ell_n$.
Meanwhile, the universal $A$-typical formal $A$-module over $V^A$
has nonzero logarithm coefficients $\ell_{q^j}$ for all $j\geq 1$.
Consequently, $\beta(S_{q^j}^A) \neq 0$ for all $j\geq 1$.
Hence, $V^A = T$ satisfies the assumptions of Proposition~\ref{torsion-free-ness of lazard ring tensor product general case}.
\end{proof}
In Corollary~\ref{torsion-free-ness of lazard ring tensor product}, it is amusing to see that
$\Tor_n^{L^A}(L^B, V^A) \cong 0$ for all positive $n$, even though (unless $L = K$) neither $L^B$ nor $V^A$ are flat over $L^A$.
The assumptions made in Proposition~\ref{torsion-free-ness of lazard ring tensor product general case} and Corollary~\ref{torsion-free-ness of lazard ring tensor product} are also general enough to apply to some of the ``exotic $BP$'' spectra (or rather, their rings of $p$-complete homotopy groups, when $A = \hat{\mathbb{Z}}_p$) of gx INSERT REF TO LAWSON-NAUMANN.

\begin{theorem} \label{removeloghypothesis}
Let $K,L$ be $p$-adic number fields with rings of
integers $A,B$, and let 
$L/K$ be a totally ramified finite extension.
Then a formal $B$-module is $B$-typical if and only if its
underlying formal $A$-module is $A$-typical.

Equivalently, we have an isomorphism of commutative graded $B$-algebras:
\[ V^{{B}}\cong L^{{B}}\otimes_{L^{{A}}} V^{{A}}.\]
\end{theorem}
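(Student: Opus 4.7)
The plan is to prove the ring isomorphism $V^B \cong L^B\otimes_{L^A} V^A$ by constructing mutually inverse $B$-algebra homomorphisms, and then to deduce the typicality equivalence from the isomorphism using universal properties. The main preliminary results I will lean on are Corollary~\ref{tot ram a module to b module}, which gives the typicality equivalence under a logarithm hypothesis, and Corollary~\ref{torsion-free-ness of lazard ring tensor product}, which supplies enough torsion-freeness to remove that hypothesis.

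First, I would construct a map $\alpha\colon L^B\otimes_{L^A} V^A \to V^B$ as follows. The canonical map $L^B\to V^B$ classifies the underlying formal $B$-module of the universal $B$-typical formal $B$-module $F_{\mathrm{univ}}$. Since $V^B\cong B[v^B_1,v^B_2,\dots]$ is a polynomial algebra and hence $\mathbb{Z}$-torsion-free, $F_{\mathrm{univ}}$ admits a formal $B$-module logarithm over $V^B\otimes_{\mathbb{Z}}\mathbb{Q}$; by Corollary~\ref{tot ram a module to b module}, its underlying formal $A$-module is $A$-typical over $V^B\otimes\mathbb{Q}$, and because $A$-typicality (in the presence of a logarithm) is a condition on the log coefficients, it descends along $V^B\hookrightarrow V^B\otimes\mathbb{Q}$ to $V^B$ itself. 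This yields a ring map $V^A\to V^B$, and because both $L^A\to L^B\to V^B$ and $L^A\to V^A\to V^B$ classify the same underlying formal $A$-module of $F_{\mathrm{univ}}$, the two maps into $V^B$ agree on $L^A$ and descend to $\alpha$.

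Conversely, set $R:=L^B\otimes_{L^A}V^A$. The map $L^B\to R$ classifies a formal $B$-module $F$ over $R$, and by construction of the pushout, the underlying formal $A$-module of $F$ factors as $L^A\to V^A\to R$, hence is $A$-typical. By Corollary~\ref{torsion-free-ness of lazard ring tensor product}, $R$ is $\pi_B$-torsion-free; since $p = \pi_B^{e_B}\cdot(\text{unit})$ in $B$, it is also $p$-torsion-free and therefore $\mathbb{Z}$-torsion-free, so $R\hookrightarrow R\otimes\mathbb{Q}$. Over $R\otimes\mathbb{Q}$ the formal $B$-module $F$ has a logarithm and has $A$-typical underlying formal $A$-module, so Corollary~\ref{tot ram a module to b module} gives that $F$ is $B$-typical over $R\otimes\mathbb{Q}$; since this is a condition on the log coefficients, torsion-freeness of $R$ lets it descend to $R$, yielding $\beta\colon V^B\to R$. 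The compositions $\alpha\circ\beta$ and $\beta\circ\alpha$ each classify the same universal object (namely, $F_{\mathrm{univ}}$ and the universal formal $B$-module over $R$ whose underlying $A$-module is $A$-typical, respectively) as the identity, so by uniqueness in the respective universal properties, both are identities. The ``equivalently'' clause then follows immediately: for any commutative $B$-algebra $S$, a formal $B$-module over $S$ has $A$-typical underlying formal $A$-module exactly when its classifying map $L^B\to S$ extends over $L^A$ through $V^A$, i.e.\ factors through the pushout $L^B\otimes_{L^A} V^A \cong V^B$, i.e.\ the formal $B$-module is $B$-typical.

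The main obstacle in this plan is removing the logarithm hypothesis present in Corollary~\ref{tot ram a module to b module}, and this obstacle is already absorbed by the torsion-freeness statement of Corollary~\ref{torsion-free-ness of lazard ring tensor product}, which is itself the substantive content requiring the careful inductive argument of Proposition~\ref{torsion-free-ness of lazard ring tensor product general case}. Once one knows that $L^B\otimes_{L^A}V^A$ is $\pi_B$-torsion-free, the descent of typicality from the rationalization to the ring itself is routine, and the remaining manipulations are formal consequences of the universal properties of $L^A, L^B, V^A, V^B$.
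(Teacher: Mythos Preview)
Your proposal is correct and uses essentially the same ingredients as the paper: Corollary~\ref{tot ram a module to b module} for the typicality equivalence under a logarithm hypothesis, and Corollary~\ref{torsion-free-ness of lazard ring tensor product} to supply the $\pi_B$-torsion-freeness of $L^B\otimes_{L^A}V^A$ needed to remove that hypothesis. The paper packages the argument slightly more cleanly: rather than building $\alpha$ and $\beta$ separately and checking they are mutually inverse, it observes that the canonical map $L^B\otimes_{L^A}V^A\to V^B$ induces a bijection on $\hom(-,R)$ for every $\pi_B$-torsion-free commutative $B$-algebra $R$ (since over such $R$ every formal $B$-module has a logarithm, so Corollary~\ref{tot ram a module to b module} applies directly), and then invokes Yoneda in that full subcategory, using that both source and target are themselves $\pi_B$-torsion-free. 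Your ``descent from $R\otimes\mathbb{Q}$'' step is doing the same work, just phrased slightly less directly; it goes through because $L^B\to V^B$ is a retraction, so $B$-typicality is the vanishing of a set of elements, which can be tested after the injection $R\hookrightarrow R\otimes\mathbb{Q}$.
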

\begin{proof} 
The universal properties of the $B$-algebras $L^B, L^A\otimes_A B,$ and $V^A\otimes_A B$,
together with the fact that the tensor product is the pushout in the category of commutative $B$-algebras,
implies that $L^B\otimes_{L^A} V^A$ co-represents the functor sending a commutative $B$-algebra $R$ to the set of
formal $B$-modules over $R$ whose underlying formal $A$-module is $A$-typical.
Let $\pi_B$ denote a uniformizer for $B$.
By Corollary~\ref{tot ram a module to b module}, 
a formal $B$-module with a formal $B$-module logarithm is $B$-typical if and only if its underlying formal $A$-module is $A$-typical.
Since every formal $B$-module over a $\pi_B$-torsion-free commutative $B$-algebra has a formal $B$-module logarithm,
we have that the graded $B$-algebra morphism
\begin{equation}\label{map 39910} L^B\otimes_{L^A} V^A \rightarrow V^B \end{equation}
induces a bijection
\begin{equation}\label{map 39911} \hom_{comm.\ B-alg}(V^B, R) \stackrel{\cong}{\longrightarrow} \hom_{comm.\ B-alg}(L^B\otimes_{L^A} V^A, R)\end{equation}
for all $\pi_B$-torsion-free commutative $B$-algebras $R$.
But by Proposition~\ref{torsion-free-ness of lazard ring tensor product},
$L^B\otimes_{L^A}V^A$ is $\pi_B$-torsion-free, and by the fact that $V^B$ is a polynomial ring over $B$ (see e.g. gx INSERT HAZEWINKEL REF),
$V^B$ is also $\pi_B$-torsion-free.
Hence, the map~\ref{map 39910} is a morphism in the category of $\pi_B$-torsion-free commutative $B$-algebras, 
and the natural isomorphism~\ref{map 39911} together with the Yoneda lemma imply that the map~\ref{map 39910} is an isomorphism.
\end{proof}
\begin{remark}
Here is a short note about how {\em not} to prove
Theorem~\ref{removeloghypothesis}.
If $F$ is a formal $B$-module defined over a commutative $B$-algebra $R$,
one can let $S$ be an $B$-algebra which surjects on to $R$ and such that
$S\longrightarrow S\otimes_{B} L$ is injective. 
Such an algebra always exists,
e.g. ${B}[Z_a: a\in R]$ (this idea is
adapted from 21.7.18 of \cite{MR506881}). Let $S\stackrel{w}{\longrightarrow} R$ be a surjection. Choose a
lift $\bar{F}$ of $F$ to $S$, and since $L^{B}$ and $V^{A}\otimes_{A}{B}$ are free commutative ${B}$-algebras,
we can choose lifts $L^{B}\longrightarrow S$, $V^{{A}}\otimes_{{A}}{{B}}\longrightarrow S$ of the classifying maps of
$F$ as a formal ${{B}}$-module and as an ${{A}}$-typical formal ${{A}}$-module, respectively. 
If we knew that we could choose the lifts {\em compatibly}, i.e., if we knew that we could lift $F$ to a formal $B$-module
over $S$ whose underlying formal $A$-module is still $A$-typical, then Theorem~\ref{removeloghypothesis} would follow immediately.
It follows from Theorem~\ref{removeloghypothesis} that making such compatible choices of lifts is possible, 
but I do not know of any easier way to prove this than to prove Theorem~\ref{removeloghypothesis} (using Proposition~\ref{torsion-free-ness of lazard ring tensor product}) by the proof I gave above.\end{remark}

\begin{prop} \label{totramuniquetypicalextensionofstructure}
Let $K,L$ be $p$-adic number fields with rings of
integers $A,B$, let $L/K$ be a totally ramified
finite extension, and 
let $F$ be an ${A}$-typical formal ${A}$-module over a
commutative ${B}$-algebra $R$. If $F$ admits an extension to 
a ${B}$-typical formal ${B}$-module,
then that extension is unique.\end{prop}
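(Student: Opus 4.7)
The approach is to translate the uniqueness claim into a statement about classifying maps out of $V^B$, exploit the isomorphism $V^B \cong L^B \otimes_{L^A} V^A$ from Theorem~\ref{removeloghypothesis}, and reduce to the $\pi_B$-torsion-free case, where formal module logarithms give a clean argument. Let $\chi \colon V^A \to R$ classify $F$ and let $\phi_1, \phi_2 \colon V^B \to R$ classify the two putative $B$-typical extensions $G_1, G_2$; the hypothesis that each $G_i$ extends $F$ amounts to $\phi_i \circ \gamma = \chi$, where $\gamma \colon V^A \otimes_A B \to V^B$ is the map appearing in Theorem~\ref{removeloghypothesis}. The goal is to show $\phi_1 = \phi_2$.

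In the $\pi_B$-torsion-free case I would argue as follows. Then $R \hookrightarrow R \otimes_B L$, and over the latter $F$ acquires its unique formal $A$-module logarithm $\log_F$. Any $B$-typical extension $G_i$ has a formal $B$-module logarithm $\log_{G_i}$ over $R \otimes_B L$; since $\rho_{G_i}(a)(X) = \log_{G_i}^{-1}(a \log_{G_i}(X))$ for all $a \in A \subseteq B$, the series $\log_{G_i}$ is also a formal $A$-module logarithm for the underlying formal $A$-module, which is $F$. Uniqueness of formal $A$-module logarithms then forces $\log_{G_1} = \log_F = \log_{G_2}$, and hence $\rho_{G_i}(b)(X) = \log_F^{-1}(b \log_F(X))$ is determined by $F$ alone for every $b \in B$. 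This yields $G_1 = G_2$ over $R \otimes_B L$, hence over $R$.

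To handle general $R$, I would reduce to the torsion-free case via a polynomial cover. Take $S = B[Z_r : r \in R]$ with surjection $\epsilon \colon S \twoheadrightarrow R$, $Z_r \mapsto r$; since $S$ is polynomial over $B$, it is $\pi_B$-torsion-free. Using polynomiality of $V^A$ and $V^B$ over $B$, lift $\chi$ to $\tilde{\chi} \colon V^A \to S$ and each $\phi_i$ to $\tilde{\phi}_i \colon V^B \to S$. The key technical step is to arrange the lifts so that $\tilde{\phi}_i \circ \gamma = \tilde{\chi}$; this is done by inductively adjusting the values $\tilde{\phi}_i(v_n^B)$, using the explicit formulas for $\gamma(v_m^A)$ from Proposition~\ref{gamma in low degrees} (and their extension to higher $m$) together with the polynomial freedom in $S$. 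The $\pi_B$-torsion-freeness of $S$ ensures that the divisibility constraints arising from the non-unit coefficients (like $\pi_A/\pi_B = u \pi_B^{e-1}$) have solutions unique up to $\ker \epsilon$. Applying the torsion-free case to $S$ gives $\tilde{\phi}_1 = \tilde{\phi}_2$, and composing with $\epsilon$ yields $\phi_1 = \phi_2$.

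The main obstacle is this compatible-lifting step: simultaneously lifting $\chi$, $\phi_1$, and $\phi_2$ to $S$ while preserving the relation $\tilde{\phi}_i \circ \gamma = \tilde{\chi}$. The essential input is the explicit form of $\gamma$ on Hazewinkel generators and careful tracking of the factors of $\pi_B$ that appear; these come out cleanly precisely because the hypothesis that $F$ admits some $B$-typical extension guarantees the relevant divisibilities already hold over $R$, so they can be lifted to $S$.
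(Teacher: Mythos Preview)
Your route diverges from the paper's, and the reduction step has a real gap.

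The paper does not use logarithms or a polynomial cover at all. It argues directly from Corollary~\ref{existenceofuniformizerpower}: for each generator $v_i^B$ there exist $a\ge 0$ and $z\in V^A\otimes_A B$ with $\gamma(z)=\pi_L^a v_i^B$. Any $g\colon V^B\to R$ with $g\circ\gamma=\theta$ then satisfies $\pi_L^a\,g(v_i^B)=\theta(z)$, so $g(v_i^B)$ is determined after inverting $\pi_L$; the paper finishes by invoking injectivity of the localization map. No lifting is involved.

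Your logarithm argument for the $\pi_B$-torsion-free case is correct and is a pleasant alternative to the paper's rational-surjectivity argument for that case. The trouble is step~3. You need lifts $\tilde\phi_1,\tilde\phi_2\colon V^B\to S$ sharing a common $\tilde\chi=\tilde\phi_i\circ\gamma$. Lift $\phi_1$ freely and set $\tilde\chi:=\tilde\phi_1\circ\gamma$; now try to lift $\phi_2$ subject to $\tilde\phi_2\circ\gamma=\tilde\chi$. But that constraint, together with $\pi_L$-torsion-freeness of $S$, already forces $\tilde\phi_2=\tilde\phi_1$: this is precisely the torsion-free uniqueness you proved in step~2, applied over $S$. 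So the remaining requirement $\epsilon\circ\tilde\phi_2=\phi_2$ collapses to $\phi_1=\phi_2$, which is the conclusion you are after. In other words, the compatible lift you need exists only if $\phi_1=\phi_2$ already holds; the argument is circular. This is exactly the compatible-lifting difficulty that the paper's Remark following Theorem~\ref{removeloghypothesis} explicitly flags as not obtainable by elementary means.

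The repair is to drop the polynomial-cover reduction and argue directly from $\pi_L^a v_i^B\in\im\gamma$, as the paper does.
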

\begin{proof} We need to show that, given maps
\[ \xymatrix{ V^{A}\otimes_{A} {B}\ar[r]^{\gamma}
\ar[dr]^\theta & V^{B} \\ & R }\]
where $\theta$ is the classifying map of $F$, if there exists 
a map $V^{B}\stackrel{g}{\longrightarrow} R$ making the 
diagram commute, then that map $g$ is unique.

The map $g$ is determined by its values on the generators $b^{B}_i$ of $V^{B}$. For any choice of positive integer $i$, 
let $a$ be a nonnegative integer such that 
$\pi_L^a b^{B}_i\in\im\gamma$ (the existence of such an $a$ is guaranteed by Corollary~\ref{existenceofuniformizerpower}). Now let $z\in V^A\otimes_A B$ be such that $\gamma(z) = \pi_L^a v^{B}_i$, and then in order 
for $g\circ \gamma$ to be equal to $\theta$, 
$g(\pi_L^av^{B}_i)$ must be equal to
$\theta(z)$, so 
$g(v^{B}_i) = \pi_L^{-a}f(z)$ in $V^B\otimes_B L$,
completely determining $g$ since the localization map
$V^B \rightarrow V^B \otimes_B L$ is injective. 
Hence, $g$ is unique. \end{proof}

\begin{corollary} \label{uniquetypicalextensionofstructure} 
Let $K,L$ be $p$-adic number fields with rings of
integers $A,B$, and let 
$L/K$ be a finite extension. Let 
$F$ be an ${{A}}$-typical 
formal ${{A}}$-module law over a commutative ${{B}}$-algebra $R$. Then, if $F$ admits an extension to an ${{B}}$-typical formal ${{B}}$-module law, that 
extension is unique.\end{corollary}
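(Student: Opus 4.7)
The plan is to reduce the general finite extension case to the two cases already understood: the totally ramified case (which is Proposition~\ref{totramuniquetypicalextensionofstructure}) and the unramified case (which will follow cheaply from the surjective description of $V^{A_0}$ in Corollary~\ref{structure of vb over va}). Let $L_0$ denote the maximal unramified subextension of $L/K$, with ring of integers $A_0$, so that $L_0/K$ is unramified and $L/L_0$ is totally ramified.

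First, I would dispose of the unramified step. Corollary~\ref{structure of vb over va} gives an isomorphism $V^{A_0}\cong (V^A\otimes_A A_0)/(\{v_i^A : f\nmid i\})$, and in particular the canonical comparison map $\gamma_0\colon V^A\otimes_A A_0\to V^{A_0}$ is surjective. Now if $G$ is an $A$-typical formal $A$-module over a commutative $A_0$-algebra $R$, classified by $\theta\colon V^A\otimes_A A_0\to R$, and if $G$ extends to an $A_0$-typical formal $A_0$-module classified by some $\tilde\theta\colon V^{A_0}\to R$, then $\tilde\theta\circ\gamma_0=\theta$. Surjectivity of $\gamma_0$ forces $\tilde\theta$ to be uniquely determined by $\theta$, so the $A_0$-typical extension of $G$, if it exists, is unique.

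Next, I would combine the unramified step with Proposition~\ref{totramuniquetypicalextensionofstructure} applied to the totally ramified extension $L/L_0$. Let $F$ be an $A$-typical formal $A$-module over the $B$-algebra $R$, and suppose $F$ admits two extensions $\tilde F_1,\tilde F_2$ to $B$-typical formal $B$-modules over $R$. By Theorem~\ref{removeloghypothesis} applied to the totally ramified extension $L/L_0$, the underlying formal $A_0$-modules $F_{0,1}$ and $F_{0,2}$ of $\tilde F_1$ and $\tilde F_2$ are automatically $A_0$-typical. Moreover, by the compositionality of ``take the underlying formal $A$-module,'' both $F_{0,1}$ and $F_{0,2}$ restrict to $F$ as formal $A$-modules. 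By the unramified step above, applied to the $A_0$-algebra $R$, this forces $F_{0,1}=F_{0,2}$. Now both $\tilde F_1$ and $\tilde F_2$ are $B$-typical formal $B$-module extensions of the common $A_0$-typical formal $A_0$-module $F_{0,1}=F_{0,2}$, so Proposition~\ref{totramuniquetypicalextensionofstructure}, applied to the totally ramified extension $L/L_0$, yields $\tilde F_1=\tilde F_2$.

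The main obstacle here is really only a bookkeeping point: one must be confident that ``underlying formal $A$-module'' composes transitively along the tower $K\subseteq L_0\subseteq L$, and that $A_0$-typicality of the intermediate formal module $F_{0,i}$ is automatic rather than an additional assumption. Theorem~\ref{removeloghypothesis} removes the logarithm hypothesis needed for that automatic $A_0$-typicality, so no hypothesis on $R$ beyond being a commutative $B$-algebra is required. Once those points are set, the argument is a two-line composition of the unramified and totally ramified uniqueness statements, and no further analysis of $V^B$ or of $\gamma\colon V^A\otimes_A B\to V^B$ in the mixed case is needed.
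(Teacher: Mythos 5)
Your proof is correct and follows essentially the same route as the paper: split off the maximal unramified subextension, use the surjectivity of $V^A\otimes_A A_0\to V^{A_0}$ (Corollary~\ref{structure of vb over va}) to force uniqueness at the unramified stage, and then invoke Proposition~\ref{totramuniquetypicalextensionofstructure} for the totally ramified top stage. Your version is in fact slightly more careful than the paper's, since you explicitly justify (via Theorem~\ref{removeloghypothesis}) that the underlying formal $A_0$-module of any $B$-typical extension is automatically $A_0$-typical, a point the paper's two-step argument leaves implicit.
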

\begin{proof}  Let $L_{\rm nr}$ be the maximal subextension of $L$ which is unramified over $K$, and let $B_{\rm nr}$ be its
ring of integers.
Then  $V^{A}\otimes_A B \longrightarrow V^{B_{\rm nr}}\otimes_{B_{\rm nr}} B$ is surjective,
by Corollary~\ref{structure of vb over va},
so any map $V^{A}\otimes_{A} {B} \longrightarrow R$ admitting a factorization
through $V^{A}\otimes_{A} {B}\longrightarrow V^{B_{\rm nr}}\otimes_{B_{\rm nr}} {B}$ admits only one such factorization, i.e.,
if there is an $B_{\rm nr}$-typical formal $B_{\rm nr}$-module 
law extending $F$, it is unique.

Now we use Corollary~\ref{totramuniquetypicalextensionofstructure} to see that if there is an extension of this $B_{\rm nr}$-typical formal 
$B_{\rm nr}$-module law to a ${B}$-typical formal ${B}$-module law, then that ${B}$-typical formal ${B}$-module law is unique.\end{proof}

When $L/K$ is unramified, there may exist multiple extensions of the structure map ${{A}}\stackrel{\rho}{\longrightarrow}\End(F)$ 
of a ${{A}}$-typical formal ${A}$-module law to the structure map ${{B}}\longrightarrow\End(F)$ of a formal ${{B}}$-module law, but as a result of the 
previous proposition, only one of these extensions yields an ${{B}}$-typical formal ${{B}}$-module law.

In Theorem~\ref{removeloghypothesis}, 
we saw that $V^A\otimes_{L^A} L^B$ is isomorphic to $V^B$, 
but the situation is very different for $L/K$ unramified:
\begin{prop} \label{ctretraction}
Let $L/K$ be a finite extension of $p$-adic number fields, and let $A,B$ be the rings of
integers of $K$ and $L$, respectively.
Let $\theta: V^{A}\otimes_{L^{A}} L^{B} \rightarrow V^B$ be the $A$-algebra map classifying the 
underlying $B$-typical formal $B$-module, with $A$-typical underlying formal $A$-module, of the universal $B$-typical formal $B$-module.

Then $\theta$ is surjective. Furthermore, $\theta$ is an isomorphism if and only if $L/K$ is totally ramified.
\end{prop}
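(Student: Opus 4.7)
The plan is to exploit the moduli-theoretic interpretation of the source: by the universal property of the pushout, $V^A\otimes_{L^A} L^B$ co-represents the functor sending a commutative $B$-algebra $R$ to the set of formal $B$-modules $F$ over $R$ whose underlying formal $A$-module is $A$-typical. The map $\theta$ then encodes the fact that the universal $B$-typical formal $B$-module over $V^B$ (which has a logarithm since $V^B$ is $\pi_B$-torsion-free) has $A$-typical underlying formal $A$-module, an observation immediate from the shape of its logarithm. I would then factor $L/K$ through the maximal unramified subextension $K_{\mathrm{nr}}$, with ring of integers $B_{\mathrm{nr}}$, write
\[ V^A\otimes_{L^A} L^B \cong \bigl( V^A\otimes_{L^A} L^{B_{\mathrm{nr}}}\bigr) \otimes_{L^{B_{\mathrm{nr}}}} L^B, \]
and apply Theorem~\ref{removeloghypothesis} to the totally ramified extension $L/K_{\mathrm{nr}}$ to identify $V^B \cong L^B\otimes_{L^{B_{\mathrm{nr}}}} V^{B_{\mathrm{nr}}}$. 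Inspection of the universal properties shows that under these identifications, $\theta$ is the base change along $L^{B_{\mathrm{nr}}}\to L^B$ of the analogous map $\theta': V^A\otimes_{L^A} L^{B_{\mathrm{nr}}} \to V^{B_{\mathrm{nr}}}$ for the unramified extension $K_{\mathrm{nr}}/K$. Since tensoring preserves surjections, surjectivity reduces to the unramified case; and when $L/K$ itself is totally ramified, $K_{\mathrm{nr}}=K$, so $\theta'$ is the identity and $\theta$ is an iso purely from Theorem~\ref{removeloghypothesis}.

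For surjectivity in the unramified case, I would invoke Proposition~\ref{computationofunramifiedgamma}, which tells us that the map $\gamma: V^A\otimes_A B \to V^B$ sends $v_{fi}^A \mapsto v_i^B$ and $v_j^A \mapsto 0$ when $f\nmid j$, so $\gamma$ hits all the Hazewinkel polynomial generators of $V^B$ and is thus surjective. The inclusion $B\hookrightarrow L^B$ as the degree-zero summand, combined with the first-factor inclusion $V^A\hookrightarrow V^A\otimes_{L^A} L^B$, assembles into an $A$-algebra map $V^A\otimes_A B \to V^A\otimes_{L^A} L^B$. The two composites $V^A\otimes_A B\to V^B$---namely $\gamma$ on the one hand, and the composite through $\theta$ on the other---both classify the same $A$-typical formal $A$-module underlying the universal $B$-typical formal $B$-module, so they coincide by uniqueness of classifying maps. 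Surjectivity of $\gamma$ thus forces surjectivity of $\theta$, completing this step.

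For the non-iso direction, suppose the residue degree $f=[K_{\mathrm{nr}}:K]$ is $>1$; I would show $\theta\otimes_B L$ already fails to be injective, by a rational Hilbert-series comparison. Using Theorem~\ref{lazard ring localization iso} and the observation that a formal $K$-module over a $\mathbb{Q}$-algebra is uniquely a formal $L$-module (the $L$-action being forced by the logarithm in characteristic zero), one identifies $L^A\otimes_A L \cong L^B\otimes_B L \cong L^L$, and hence
\[ V^A\otimes_{L^A} L^B \otimes_B L \cong V^A\otimes_A L, \]
a graded polynomial $L$-algebra with one generator in each degree $2(q^i-1)$ for $i\geq 1$, while $V^B\otimes_B L$ is a graded polynomial $L$-algebra with one generator in each degree $2(q^{fi}-1)$ for $i\geq 1$. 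When $f>1$, the source is nonzero in grading degree $2(q-1)$ (containing the image of $v_1^A$) while the target is zero there, so $\theta\otimes_B L$ cannot be injective and $\theta$ cannot be an iso. The main obstacle I anticipate is bookkeeping the rational base-change identifications carefully, since two different maps into $L^L$ are in play (one from $L^A\otimes_A L$ and one from $L^B\otimes_B L$); once these are pinned down, the graded dimension count and the earlier reductions make everything else immediate from the results already in this section.
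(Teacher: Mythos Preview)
Your argument is correct, and both the surjectivity step and the failure-of-injectivity step take genuinely different routes from the paper's proof.

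For surjectivity, the paper's argument is considerably shorter: it simply observes that the map $L^B \to V^B$ is a split epimorphism (split by Cartier typification, Lemma~\ref{p-typification is a homotopy equivalence}), and since this map factors through the pushout as $L^B \to V^A\otimes_{L^A} L^B \xrightarrow{\theta} V^B$, the second factor $\theta$ must also be surjective. This works uniformly for all finite $L/K$ without any reduction to the unramified case or any appeal to Theorem~\ref{removeloghypothesis}. Your reduction via the maximal unramified subextension is sound, but it front-loads more machinery (the base-change identification $V^B \cong V^{B_{\mathrm{nr}}}\otimes_{L^{B_{\mathrm{nr}}}} L^B$, plus the compatibility check that $\theta$ is the base change of $\theta'$).

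For the non-isomorphism direction when $L/K$ is not totally ramified, the paper also pins down $v_1^A \otimes 1$ as a nonzero kernel element, but rather than rationalizing it constructs by hand a map $V^A\otimes_{L^A} L^B \to L^B$ nonzero on $v_1^A\otimes 1$: one Cartier-$A$-typifies the underlying formal $A$-module of the universal formal $B$-module on $L^B$, then transports the $B$-module structure across the resulting strict isomorphism (Proposition~\ref{formal module base change thm}). Your rational Hilbert-series comparison is a cleaner alternative: once one checks that $L^A\otimes_A L \to L^B\otimes_B L$ is an isomorphism (both sides being $L^L$ by Theorem~\ref{lazard ring localization iso} and the observation that formal $K$-modules and formal $L$-modules coincide over $L$-algebras), the identification $V^A\otimes_{L^A} L^B \otimes_B L \cong V^A\otimes_A L$ follows immediately, and the graded-dimension mismatch in degree $2(q-1)$ forces non-injectivity. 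The bookkeeping you flagged is real but routine; the payoff is that you bypass the explicit transport-of-structure argument entirely.
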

\begin{proof} 
It is very easy to see that $\theta$ is surjective: from the universal property of the tensor product as the pushout in the category of commutative
$A$-algebras, we have the commutative diagram
\[ \xymatrix{ L^A \ar[r]\ar[d] & V^A \ar[d] \ar[ddr] & \\ L^B \ar[r] \ar[rrd] & L^B\otimes_{L^A} V^A \ar[rd] & \\ & & V^B }\]
and the map $L^B\rightarrow V^B$ is a split epimorphism, split by the Cartier typification map from Lemma~\ref{p-typification is a homotopy equivalence}.
Since the epimorphism $L^B \rightarrow V^B$ factors as a map $L^B\rightarrow L^B\otimes_{L^A} V^A$ followed
by the map $L^B\otimes_{L^A} V^A\rightarrow V^B$, the map $L^B\otimes_{L^A} V^A\rightarrow V^B$ must be surjective.

Now if $L/K$ is totally ramified, we have already shown in Theorem~\ref{removeloghypothesis} that $\theta$ is an isomorphism.
Suppose instead that $L/K$ is not totally ramified. 
Let $K_{\rm nr}/K$ be the maximal unramified subextension of $L/K$, and let $A_{\rm nr}$ denote the ring of integers of $K_{\rm nr}$.
Then the map $V^A\otimes_A B\rightarrow V^{{A_{\rm nr}}}\otimes_{{A_{\rm nr}}}{{B}}$ sends the Hazewinkel generator
$v_1^A$ to $\frac{\pi_A}{\pi_{A_{\rm nr}}} v_1^{A_{\rm nr}}$, by Proposition~\ref{gamma in low degrees}.
Let 
$\gamma$ be the map 
$\gamma: V^{{A_{\rm nr}}}\otimes_{{A_{\rm nr}}}{{B}}\stackrel{\gamma}{\longrightarrow} V^{{B}}$.
Proposition~\ref{computationofunramifiedgamma} gives us that $\gamma(v_1^{{A_{\rm nr}}}) = 0$, so:
\[ \theta( v_1^A\otimes 1 ) = \gamma(\frac{\pi_A}{\pi_B} v_1^{A_{\rm nr}})\otimes 1 = 0.\]
But I claim that $v_1^A\otimes 1$ is nonzero in $V^A\otimes_{L^A}L^B$. Here is a proof: 
we have the Cartier typification map $c: V^A\rightarrow L^A$ of Lemma~\ref{p-typification is a homotopy equivalence},
and since the composite map $V^A\stackrel{c}{\longrightarrow} L^A\rightarrow V^A$ is identity map on $V^A$,
the element $v_1^A$ of $V^A$ must map to a nonzero multiple of $S_{q}^A\in L^A$ plus products of generators in lower grading degrees, 
where $q$ is the cardinality of the residue field of $A$. Hence, the composite map 
$V^A\stackrel{c}{\longrightarrow} L^A\rightarrow L^B$ is nonzero on $v_1^A$, since the map $L^A\rightarrow L^B$ is injective (ggxx PROVE THIS!).
This composite map $V^A\rightarrow L^B$ classifies the Cartier $A$-typification of the underlying formal $A$-module of the universal
formal $A$-module. Call this Cartier-$A$-typified formal $A$-module $F$.
The Cartier $A$-typification of a formal $A$-module is strictly isomorphic to (but not necessarily equal to) the original given
formal $A$-module, so $F$ is strictly isomorphic, as a formal $A$-module, to the underlying formal $A$-module of the universal formal $B$-module,
and by Proposition~\ref{formal module base change thm} (see the proof of that proposition for a very clear statement of this), 
we can transport the formal $B$-module structure along the strict isomorphism, so that $F$ is the underlying formal $A$-module of some
formal $B$-module. Hence, there exists a formal $B$-module on $L^B$ whose underlying formal $A$-module is $A$-typical, and such that the
resulting classifying map $V^A\rightarrow L^B$ sends $v_1^A$ to a nonzero element of $L^B$. This formal $B$-module
is then classified by an $A$-algebra map $V^A\otimes_{L^A} L^B\rightarrow L^B$ which is nonzero on $v^A_1\otimes 1$.
Hence, $v^A_1\otimes 1 \neq 0\in V^A\otimes_{L^A} L^B$, and $\theta$ is not injective.
\end{proof}

\begin{corollary} When $L/K$ is unramified and nontrivial there exists at least one ${A}$-typical formal ${B}$-module which is not ${B}$-typical. In particular, the universal ${A}$-typical formal ${B}$-module law on $L^{B}\otimes_{L^{A}}V^{A}$ is not ${B}$-typical. \end{corollary}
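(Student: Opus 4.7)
The plan is to deduce this from Proposition~\ref{ctretraction}, whose proof already exhibits an explicit element of $V^{A}\otimes_{L^{A}}L^{B}$ that obstructs the universal $A$-typical formal $B$-module from being $B$-typical. The key element is $v_{1}^{A}\otimes 1$, which Proposition~\ref{ctretraction} shows is nonzero in $V^{A}\otimes_{L^{A}}L^{B}$ while lying in the kernel of $\theta$.

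First I would translate the statement into a statement about classifying maps. Let $G$ denote the universal formal $B$-module over $V^{A}\otimes_{L^{A}}L^{B}$; by construction (as a pushout in commutative $B$-algebras) its underlying formal $A$-module is the pullback along the natural inclusion $\psi: V^{A}\to V^{A}\otimes_{L^{A}}L^{B}$, $a\mapsto a\otimes 1$, of the universal $A$-typical formal $A$-module on $V^{A}$; in particular $G$ is $A$-typical over $A$. Now suppose, for contradiction, that $G$ is $B$-typical. Then by the universal property of $V^{B}$ there exists a $B$-algebra homomorphism $s: V^{B}\to V^{A}\otimes_{L^{A}}L^{B}$ classifying $G$ as a $B$-typical formal $B$-module. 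Forgetting to the underlying $A$-typical formal $A$-module of $G$ gives a map $V^{A}\to V^{A}\otimes_{L^{A}}L^{B}$ which on the one hand is $\psi$, and on the other hand factors as $V^{A}\to V^{A}\otimes_{A}B\xrightarrow{\gamma}V^{B}\xrightarrow{s}V^{A}\otimes_{L^{A}}L^{B}$.

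Next I would evaluate both factorizations at $v_{1}^{A}$. Since $L/K$ is unramified of degree $f>1$, Proposition~\ref{computationofunramifiedgamma} gives $\gamma(v_{1}^{A})=0$ because $f\nmid 1$; hence $s(\gamma(v_{1}^{A}\otimes 1))=0$. On the other side, the argument in the proof of Proposition~\ref{ctretraction} shows $\psi(v_{1}^{A})=v_{1}^{A}\otimes 1\neq 0$: Cartier $A$-typification provides a formal $A$-module on $L^{B}$ strictly isomorphic to the underlying formal $A$-module of the universal formal $B$-module, and (by transport of structure along the strict isomorphism, as in Proposition~\ref{formal module base change thm}) yields a formal $B$-module whose underlying formal $A$-module is $A$-typical and whose classifying map $V^{A}\otimes_{L^{A}}L^{B}\to L^{B}$ sends $v_{1}^{A}\otimes 1$ to a nonzero element of $L^{B}$. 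This contradicts the equality $\psi(v_{1}^{A})=s(\gamma(v_{1}^{A}))=0$, so $G$ is not $B$-typical.

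Finally, the existence of an $A$-typical formal $B$-module which is not $B$-typical follows: $G$ itself is such an example. No step here is difficult—the main obstacle is conceptual rather than technical, namely bookkeeping the two different universal properties (that of $V^{A}\otimes_{L^{A}}L^{B}$ as classifying $A$-typical formal $B$-modules, and that of $V^{B}$ as classifying $B$-typical formal $B$-modules) and checking that they are incompatible at the single test element $v_{1}^{A}\otimes 1$. All the genuine content has already been extracted in Propositions~\ref{computationofunramifiedgamma} and~\ref{ctretraction}.
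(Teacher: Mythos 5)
Your argument is correct and is essentially the paper's intended one: the corollary is left as an immediate consequence of Proposition~\ref{ctretraction}, whose proof supplies exactly the two facts you use, namely $\gamma(v_1^A)=0$ from Proposition~\ref{computationofunramifiedgamma} and $v_1^A\otimes 1\neq 0$ in $V^A\otimes_{L^A}L^B$. Your universal-property bookkeeping (that a hypothetical classifying map $s\colon V^B\to V^A\otimes_{L^A}L^B$ would force $\psi=s\circ\gamma\circ\iota$, hence $v_1^A\otimes 1=0$) just spells out the retraction argument implicit in the paper, so there is nothing to add.
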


\begin{prop} \label{structureextensionviastrictiso}\begin{enumerate}\item Let $F\stackrel{\phi}{\longrightarrow} G$ be a strict isomorphism of 
formal ${A}$-modules. Let $F$ have a formal ${B}$-module structure compatible with its underlying formal ${A}$-module structure.
Then, with the formal ${B}$-structure on $G$ induced by $\phi$, $\phi$ is a strict isomorphism of formal ${B}$-modules.

In other words, the map of Hopf algebroids $(L^A,L^AT)\rightarrow (L^B,L^BT)$ factors through $(L^B,L^B\otimes_{L^A}L^AT)$.
\item Let $L/K$ be totally ramified and let $F\stackrel{\phi}{\longrightarrow} G$ be a strict isomorphism of 
$A$-typical formal ${A}$-modules. Let $F$ have a $B$-typical formal ${B}$-module structure compatible with its underlying formal ${A}$-module structure.
Then, with the formal ${B}$-structure on $G$ induced by $\phi$, $\phi$ is a strict isomorphism of $B$-typical formal ${B}$-modules.

In other words, when $L/K$ is totally ramified, the map of Hopf algebroids $(V^A,V^AT)\rightarrow (V^B,V^BT)$ factors through $(V^B,V^B\otimes_{V^A}V^AT)$.
\end{enumerate}\end{prop}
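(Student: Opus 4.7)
The plan is to handle both parts by the same two-step strategy: first, define the induced $B$-module structure on $G$ by conjugation by $\phi$ and verify the formal-module axioms pointwise; and second, translate the pointwise statement into the desired factorization of Hopf algebroids via the equivalence between the base-change condition and the isomorphism-lifting condition established in Proposition~\ref{hopf algebroid base change thm}.

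For part (1), I would define $\rho_G^B(\beta)(X) := \phi(\rho_F^B(\beta)(\phi^{-1}(X)))$. This is a ring map $B \to \End(G)$ by functoriality of conjugation by an isomorphism. The tangency axiom $\rho_G^B(\beta)(X) \equiv \beta X \bmod X^2$ follows from $\phi$ being strict (so $\phi(X) \equiv X$ and $\phi^{-1}(X) \equiv X$ modulo $X^2$) together with the tangency axiom for $\rho_F^B$. Compatibility $\rho_G^B|_A = \rho_G^A$ holds because $\rho_F^B$ extends $\rho_F^A$ on $F$ and $\phi$ is a strict iso of formal $A$-modules, so $\rho_G^A(\alpha) = \phi \circ \rho_F^A(\alpha) \circ \phi^{-1} = \phi \circ \rho_F^B(\alpha) \circ \phi^{-1} = \rho_G^B(\alpha)$ for each $\alpha \in A$. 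Then $\phi$ is by construction a strict iso of the formal $B$-modules $F$ and $G$. Since this formal $B$-module structure on $G$ is moreover the unique one making $\phi$ a strict iso of formal $B$-modules (its value on each $\beta \in B$ is forced by conjugation once $\phi$ is fixed), the map $(L^A,L^AT) \to (L^B,L^BT)$ satisfies the isomorphism-lifting condition, and Proposition~\ref{hopf algebroid base change thm} then yields the desired factorization through $(L^B, L^B\otimes_{L^A} L^AT)$.

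For part (2), assuming $L/K$ is totally ramified, I would first apply part (1) to obtain a formal $B$-module structure on $G$ making $\phi$ a strict iso of formal $B$-modules. The new content is checking that this induced structure is $B$-typical. Since $G$ is assumed to be $A$-typical as a formal $A$-module, and since by Theorem~\ref{removeloghypothesis} a formal $B$-module over any commutative $B$-algebra is $B$-typical if and only if its underlying formal $A$-module is $A$-typical (this is precisely the content of the isomorphism $V^B \cong L^B \otimes_{L^A} V^A$), the induced $B$-module structure on $G$ is $B$-typical. Uniqueness of the $B$-typical extension across the strict isomorphism is then guaranteed by Proposition~\ref{totramuniquetypicalextensionofstructure}. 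The Hopf algebroid factorization follows from Proposition~\ref{hopf algebroid base change thm} applied to $(V^A,V^AT) \to (V^B,V^BT)$, exactly as in part (1). The main obstacle I anticipate is not the pointwise verification, which is essentially formal, but rather confirming that the isomorphism-lifting condition in Proposition~\ref{hopf algebroid base change thm} has been set up with the correct handedness (left vs.\ right comodule structures, source vs.\ target of the strict isomorphism) so that the pointwise extension procedure described above corresponds to the comodule structure map on $L^B$ (respectively $V^B$) implicit in the base-changed Hopf algebroid $(L^B, L^B \otimes_{L^A} L^AT)$ (respectively $(V^B, V^B \otimes_{V^A} V^AT)$); this is a naturality bookkeeping check, but it is what makes the passage from the pointwise statement to the Hopf algebroid factorization rigorous.
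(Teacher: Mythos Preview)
Your core computation is exactly the paper's: the paper's entire proof is the one-line identity $\rho_G(\alpha)(\phi(X)) = \phi(\rho_F(\alpha)(\phi^{-1}(\phi(X)))) = \phi(\rho_F(\alpha))(X)$, which is precisely your conjugation definition unwound. The paper treats the ``In other words'' Hopf algebroid factorization as a direct restatement of the pointwise claim, not as something requiring the isomorphism-lifting machinery of Proposition~\ref{hopf algebroid base change thm}; your route through that proposition is correct but heavier than what is needed here (the factorization is just the universal-property translation of the pointwise extension, and the full base-change isomorphism is established separately in Proposition~\ref{lbtsplitting}). For part~(2) you are right to flag $B$-typicality of the induced structure on $G$ as the extra content, and your appeal to Theorem~\ref{removeloghypothesis} is exactly the right ingredient; the paper's proof does not spell this out because the preceding Definition~\ref{structureextensionsviaaniso} already records that the induced $B$-structure on $G$ is $B$-typical when $L/K$ is totally ramified (citing Corollary~\ref{tot ram a module to b module}, the logarithm-case precursor to Theorem~\ref{removeloghypothesis}). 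So your proof is correct and follows the same line as the paper, just with more scaffolding made explicit.
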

\begin{proof} In both cases, the map $\phi$ is a strict isomorphism of formal ${B}$-modules if $\phi(\rho_F(\alpha))(X) = \rho_G(\alpha)(\phi(X))$.
But due to the way that $\rho_G$ is defined, we have $\rho_G(\alpha)(\phi(X)) = \phi(\rho_F(\alpha)(\phi^{-1}(\phi(X))) = \phi(\rho_F(\alpha))(X).$
\end{proof}

We recall that a morphism of formal ${A}$-module laws over a commutative 
${A}$-algebra $R$ is just a power series $R[[X]]$ satisfying appropriate
axioms. With this in mind, it is not a priori impossible for the same power 
series to be a morphism $F_1\longrightarrow G_1$ and also a morphism
$F_2\longrightarrow G_2$ of formal ${A}$-module laws with $F_1\neq F_2$ or 
$G_1\neq G_2$. We want to rule out at least a certain case of this:
\begin{lemma}\label{stupidlemma} Let $R$ be a commutative ${A}$-algebra and 
let $\phi\in R[[X]]$ be a power series such that it is an isomorphism
$F\stackrel{\phi}{\longrightarrow} G_1$ and an isomorphism $F\stackrel{\phi}{\longrightarrow} G_2$ of formal ${A}$-module laws. Then $G_1=G_2$.
\end{lemma}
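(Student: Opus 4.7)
The plan is to prove this directly by using the definition of a morphism of formal module laws and the fact that $\phi$ being an isomorphism means it is invertible as a power series. Recall that to say $\phi: F \to G$ is an isomorphism of formal $A$-module laws over $R$ means that $\phi \in R[[X]]$ is invertible under substitution and satisfies both
\[ \phi(F(X,Y)) = G(\phi(X), \phi(Y)) \]
and $\phi(\rho_F(a)(X)) = \rho_G(a)(\phi(X))$ for every $a \in A$, where $\rho_F, \rho_G$ denote the structure maps of $F$ and $G$ as formal $A$-modules.

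First I would apply this definition to both isomorphisms $\phi: F \to G_1$ and $\phi: F \to G_2$ to obtain the two equations $G_1(\phi(X), \phi(Y)) = \phi(F(X,Y)) = G_2(\phi(X), \phi(Y))$, and hence $G_1(\phi(X), \phi(Y)) = G_2(\phi(X), \phi(Y))$. Then, since $\phi$ is invertible under substitution, I would substitute $\phi^{-1}(X)$ in place of $X$ and $\phi^{-1}(Y)$ in place of $Y$ to conclude $G_1(X,Y) = G_2(X,Y)$ in $R[[X,Y]]$; so the underlying formal group laws of $G_1$ and $G_2$ agree. The same substitution trick, applied to $\phi(\rho_F(a)(X)) = \rho_{G_1}(a)(\phi(X)) = \rho_{G_2}(a)(\phi(X))$, yields $\rho_{G_1}(a) = \rho_{G_2}(a)$ for each $a \in A$, so the $A$-module structures on $G_1$ and $G_2$ also coincide. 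Hence $G_1 = G_2$ as formal $A$-module laws.

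There is no real obstacle here: the entire content of the lemma is that an isomorphism of formal module laws is, in particular, an invertible power series, so it can be cancelled on both sides of the defining functional equations. The only point worth being slightly careful about is that invertibility of $\phi$ under substitution is part of the data of being an isomorphism (rather than merely a morphism); the leading coefficient of $\phi$ must be a unit, and then the usual recursion produces the compositional inverse $\phi^{-1} \in R[[X]]$ used in the substitution step.
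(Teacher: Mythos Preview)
Your proof is correct and is essentially the same as the paper's: the paper phrases the argument by noting that $\phi\circ\phi^{-1}=\id$ is a morphism $G_1\to G_2$ and then reading off $G_1=G_2$ from the morphism identities, which is exactly your substitution of $\phi^{-1}$ into $G_1(\phi(X),\phi(Y))=G_2(\phi(X),\phi(Y))$ and into the corresponding equation for the structure maps.
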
\begin{proof} The identity map $(\phi\circ\phi^{-1})(X)=X$ is a morphism $G_1\longrightarrow G_2$, so
\[ (\phi\circ\phi^{-1})G_1(X,Y) = G_2((\phi\circ\phi^{-1})(X),(\phi\circ\phi^{-1})(Y)),\]
but $(\phi\circ\phi^{-1})(X) = X$, so $G_1(X,Y) = G_2(X,Y)$; similarly,
$(\phi\circ\phi^{-1})\circ(\rho_{G_1}(\alpha)) = (\rho_{G_2}(\alpha))\circ(\phi\circ\phi^{-1}),$ so $\rho_{G_1} = \rho_{G_2}$.
\end{proof}
\begin{corollary} \label{strictisoreduction} 
Let $K,L$ be $p$-adic number fields with rings of
integers $A,B$, 
let $L/K$ be a finite extension and let 
$F\stackrel{\phi}{\longrightarrow}G$ be a 
strict isomorphism of formal ${B}$-module laws; let $\tilde{G}$ denote the
formal ${A}$-module law underlying $G$ and let $\tilde{\phi}$ denote the strict isomorphism of formal ${A}$-module laws underlying $\phi$; finally, let
$G^\prime$ be the formal ${B}$-module law induced by $F\stackrel{\tilde{\phi}}{\longrightarrow}\tilde{G}$, using Def.~\ref{structureextensionsviaaniso}.
Then $G = G^\prime$.\end{corollary}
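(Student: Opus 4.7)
The plan is to observe that the power series $\phi$ itself serves as a strict isomorphism of formal $B$-module laws both from $F$ to $G$ (by hypothesis) and from $F$ to $G'$ (by construction), and then invoke Lemma~\ref{stupidlemma} to conclude $G = G'$.

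First I would unpack what $G'$ is. By Definition~\ref{structureextensionsviaaniso}, the underlying formal group law of $G'$ is literally the underlying formal group law of $\tilde{G}$, which in turn coincides with the underlying formal group law of $G$ (since $\tilde{G}$ was obtained from $G$ just by forgetting the $B$-module structure). So $G$ and $G'$ have the same underlying power series $G(X,Y) = G'(X,Y)$; the only thing that could differ is the $B$-module structure map $\rho_{G'}$ versus $\rho_G$.

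Next I would verify that $\phi$ (the same power series, now called $\tilde{\phi}$) is a strict isomorphism of formal $B$-module laws from $F$ to $G'$. This is precisely the content of Proposition~\ref{structureextensionviastrictiso}(1): the $B$-module structure on $G'$ was defined in Definition~\ref{structureextensionsviaaniso} exactly so that $\rho_{G'}(\beta) = \tilde{\phi}\circ \rho_F(\beta)\circ \tilde{\phi}^{-1}$, which is the condition that $\tilde{\phi}$ commutes with the $B$-actions on $F$ and $G'$, and $\tilde{\phi}$ is already known to be a strict isomorphism of the underlying formal group laws.

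Finally, with $\phi$ now identified as a strict isomorphism $F \xrightarrow{\phi} G$ of formal $B$-modules (by hypothesis) and also as a strict isomorphism $F \xrightarrow{\phi} G'$ of formal $B$-modules (by the previous paragraph), Lemma~\ref{stupidlemma} immediately gives $G = G'$. The step that might feel like it requires care is the second one, but it is essentially tautological from the construction of $G'$: the $B$-module structure transported along $\tilde{\phi}$ is, by definition, the one making $\tilde{\phi}$ a strict isomorphism of formal $B$-modules, so there is no real obstacle here.
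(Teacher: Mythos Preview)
Your proposal is correct and follows exactly the paper's approach: observe that $\phi$ is a strict isomorphism of formal $B$-module laws both $F\to G$ (by hypothesis) and $F\to G'$ (by Proposition~\ref{structureextensionviastrictiso}), then apply Lemma~\ref{stupidlemma}. Your additional unpacking of why the underlying formal group laws of $G$ and $G'$ agree is helpful exposition but not logically required, since Lemma~\ref{stupidlemma} already forces the equality of the full formal $B$-module structures.
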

\begin{proof} The map $\phi$ is strict isomorphism of formal ${B}$-module laws $F\longrightarrow G$ and also a strict isomorphism of formal ${B}$-module laws
$F\longrightarrow G^\prime$, by Prop.~\ref{structureextensionviastrictiso}; so by Lemma~\ref{stupidlemma}, $G=G^\prime$.\end{proof}

\begin{prop} \label{lbtsplitting} Let $K,L$ be $p$-adic number fields with rings of
integers $A,B$, and let $L/K$ be an extension.
\begin{enumerate}\item  The map $L^{B}\otimes_{L^{A}}L^{A}B\rightarrow L^{B}B$ from Prop.~\ref{structureextensionviastrictiso}
is an isomorphism of graded commutative ${B}$-algebras, and the left unit maps $L^{B}\stackrel{\eta_L}{\longrightarrow}L^{B}B$,
$L^{B}\stackrel{\eta_L}{\longrightarrow}L^{B}\otimes_{L^{A}}L^{A}B$ commute with this isomorphism.
\item \label{vbtsplitting} When $L/K$ is totally ramified, the map $V^{B}\otimes_{V^{A}}V^{A}T\rightarrow V^{B}T$ from Prop.~\ref{structureextensionviastrictiso}
is an isomorphism of graded commutative ${B}$-algebras, and the left unit maps $V^{B}\stackrel{\eta_L}{\longrightarrow}V^{B}T$,
$V^{B}\stackrel{\eta_L}{\longrightarrow}V^{B}\otimes_{V^{A}}V^{A}T$ commute with this isomorphism.
\end{enumerate}\end{prop}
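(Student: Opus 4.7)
The plan is to deduce both parts from the base change criterion of Proposition~\ref{hopf algebroid base change thm}. For part (1), observe that the map $L^B\otimes_{L^A}L^AB\to L^BB$ from Proposition~\ref{structureextensionviastrictiso} is, by construction, the factorization of the Hopf algebroid morphism $(L^A, L^AB)\to (L^B, L^BB)$ through the intermediate Hopf algebroid $(L^B, L^B\otimes_{L^A}L^AB)$ produced by Proposition~\ref{tensoring on one side}. Since Proposition~\ref{formal module base change thm} establishes that $(L^A, L^AB)\to (L^B, L^BB)$ satisfies the isomorphism lifting condition, the equivalence of conditions in Proposition~\ref{hopf algebroid base change thm} yields the isomorphism. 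Compatibility with the left units is immediate from the construction, since both Hopf algebroids share the same underlying left-unit algebra $L^B$.

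For part (2), I would again verify the isomorphism lifting condition of Proposition~\ref{hopf algebroid base change thm}, this time for the morphism of graded Hopf algebroids $(V^A, V^AT)\to (V^B, V^BT)$. Given a commutative $B$-algebra $R$, a $B$-typical formal $B$-module $F$ over $R$, and a strict isomorphism $\phi$ of $A$-typical formal $A$-modules from the underlying $A$-typical formal $A$-module of $F$ to some $A$-typical formal $A$-module $G$, I need to produce a unique strict isomorphism of $B$-typical formal $B$-modules lifting $\phi$. By part~(1) applied to the underlying strict isomorphism of formal $A$-modules, there is already a unique formal $B$-module structure on $G$ making $\phi$ a strict isomorphism of formal $B$-modules, and this is the only candidate for the lift.

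The crux of part (2) is therefore to verify that this unique formal $B$-module structure on $G$ is actually $B$-typical, and this is precisely where the totally ramified hypothesis is used via Theorem~\ref{removeloghypothesis}: for $L/K$ totally ramified, a formal $B$-module is $B$-typical if and only if its underlying formal $A$-module is $A$-typical. Since the underlying formal $A$-module of $G$ is $A$-typical by hypothesis, the constructed formal $B$-module structure on $G$ is $B$-typical, verifying the isomorphism lifting condition. Proposition~\ref{hopf algebroid base change thm} then delivers the isomorphism $V^B\otimes_{V^A}V^AT\stackrel{\cong}{\to}V^BT$, with left-unit compatibility again immediate from the construction. The only real obstacle in the whole argument is the invocation of Theorem~\ref{removeloghypothesis}; without the totally ramified hypothesis the statement would fail, as evidenced by Proposition~\ref{ctretraction}, which shows that even the simpler map $V^A\otimes_{L^A}L^B\to V^B$ fails to be injective when $L/K$ is nontrivial and unramified.
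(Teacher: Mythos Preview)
Your argument is correct and rests on the same moduli-theoretic idea as the paper's proof: a strict isomorphism of formal $B$-modules is uniquely determined by its source formal $B$-module together with the underlying strict isomorphism of formal $A$-modules. The paper, however, does not invoke the packaged base-change criterion of Proposition~\ref{hopf algebroid base change thm}. Instead it writes out explicit pushout diagrams $X_1\to X_2\to X_3$ built from $L^A$, $L^AB$, $L^B$ (respectively $V^A$, $V^AT$, $V^B$) and argues directly via the Yoneda lemma, using Definition~\ref{structureextensionsviaaniso} and Corollary~\ref{strictisoreduction} to identify the functors corepresented by the colimits of $X_2$ and $X_3$; the left-unit compatibility then falls out because $\eta_L$ is part of the diagrams whose colimits are being taken.

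Your route is more economical: you observe that Proposition~\ref{formal module base change thm} already verifies the isomorphism lifting condition for $(L^A,L^AB)\to(L^B,L^BB)$, so Proposition~\ref{hopf algebroid base change thm} delivers part~(1) immediately, and for part~(2) you supplement this with Theorem~\ref{removeloghypothesis} to guarantee that the transported $B$-module structure on the target remains $B$-typical. This is exactly the content of the paper's diagram argument, but with the Yoneda step abstracted away into Proposition~\ref{hopf algebroid base change thm}. The paper's version has the minor advantage of making the left-unit compatibility visible in the diagrams rather than relying on the reader to unpack what ``isomorphism of Hopf algebroids over $S$'' entails; your version has the advantage of reusing machinery already developed and making the role of Theorem~\ref{removeloghypothesis} in part~(2) explicit, whereas the paper's ``repeat the same argument'' leaves that dependence implicit.
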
\begin{proof}\begin{enumerate}\item
We first name some diagrams.
\begin{displaymath}\left(\vcenter{\xymatrix{ L^{A}\ar[r]\ar[d]^{\eta_L} & L^{B}\\
L^{A}T & \\
L^{A}\ar[u]^{\eta_R}\ar[r] & L^{A} }}\right)\stackrel{\Xi_1}{\longrightarrow}
\left(\vcenter{\xymatrix{ L^{A}\otimes_{A} {B}\ar[r]\ar[d]^{\eta_L} & L^{B}\\
L^{A}T\otimes_{A} {B} & \\
L^{A}\otimes_{A} {B}\ar[u]^{\eta_R}\ar[r] & L^{A}\otimes_{A} {B} }}\right)\end{displaymath}
\begin{displaymath}\stackrel{\Xi_2}{\longrightarrow}\left(\vcenter{\xymatrix{ L^{B}\ar[d]^{\eta_L}\ar[r] & L^{B}\\
L^{B}T & \\
L^{B}\ar[u]^{\eta_R}\ar[r] & L^{B}}}\right)\end{displaymath}
We will refer to the above three diagrams as $X_1$, $X_2$, and $X_3$, respectively, and there are maps
$X_1\stackrel{\Xi_1}{\longrightarrow} X_2\stackrel{\Xi_2}{\longrightarrow} X_3$ as indicated, which we have constructed in the previous propositions. 
The map $\Xi_1$ is a morphism of diagrams in the category
of $\mathbb{Z}$-graded commutative $\mathcal{O}_K$-algebras,
while $\Xi_2$ is a morphism of diagrams in the
category of $\mathbb{Z}$-graded commutative ${B}$-algebras.

From Definition~\ref{structureextensionsviaaniso} and Corollary~\ref{strictisoreduction}, we know that to specify a strict isomorphism of formal ${B}$-module laws is 
the same thing as to specify a source formal ${B}$-module law, 
a target formal ${A}$-module law, and a strict isomorphism of formal ${A}$-module laws, i.e., morphisms from $X_2$ to commutative 
${B}$-algebras are in bijection with morphisms from $X_3$ to commutative ${B}$-algebras.
So by the Yoneda Lemma, $\Xi_2$ induces an isomorphism of
$\mathbb{Z}$-graded commutative $\mathcal{O}_K$-algebras
on passing to colimits, and 
%by Prop.~\ref{tensoringuptheorem}, 
$\Xi_1$ does
too. Since these isomorphisms were obtained by taking the colimits of the morphisms of the above diagrams in which the maps $\eta_L$ appear, the 
isomorphisms commute with the left unit maps $\eta_L$.
\item We repeat the same argument as above, with the following diagrams:
\begin{displaymath}\left(\vcenter{\xymatrix{ V^{A}\ar[r]\ar[d]^{\eta_L} & V^{B}\\
V^{A}T & \\
V^{A}\ar[u]^{\eta_R}\ar[r] & V^{A} }}\right)\stackrel{\Xi_1}{\longrightarrow}
\left(\vcenter{\xymatrix{ V^{A}\otimes_{A} {B}\ar[r]\ar[d]^{\eta_L} & V^{B}\\
V^{A}T\otimes_{A} {B} & \\
V^{A}\otimes_{A} {B}\ar[u]^{\eta_R}\ar[r] & V^{A}\otimes_{A} {B} }}\right)\end{displaymath}
\begin{displaymath}
\stackrel{\Xi_2}{\longrightarrow}\left(\vcenter{\xymatrix{ V^{B}\ar[d]^{\eta_L}\ar[r] & V^{B}\\
V^{B}T & \\
V^{B}\ar[u]^{\eta_R}\ar[r] & V^{B}.}}\right)\end{displaymath}\end{enumerate}
\end{proof} 

\begin{corollary} \label{mainsplittingcorollary}
Let $K,L$ be $p$-adic number fields with rings of
integers $A,B$, and let $L/K$ be an extension.
\begin{enumerate}\item 
The map of Hopf algebroids 
$(L^A,L^AB)\longrightarrow (L^B,L^BB)$ classifying the
underlying formal $A$-module structures on $(L^B,L^BB)$ is split, i.e., $(L^B,L^BB)\cong (L^B,L^B\otimes_{L^A}L^AB)$ as Hopf algebroids over $B$, and
\[\Cotor_{L^BB}^{*,*}(L^B,L^B)\cong \Cotor_{L^AB}^{*,*}(L^A,L^B).\] gx THIS ONE ALREADY CAME EARLIER
\item If $L/K$ is totally ramified, then
the map of Hopf algebroids 
$(V^A,V^AT)\longrightarrow (V^B,V^BT)$ classifying the underlying formal
$A$-module structures on $(V^B,V^BT)$ is split, i.e., $(V^B,V^BT)\cong (V^B,V^B\otimes_{V^A}V^AT)$ as Hopf algebroids over $B$, and
\[\Cotor_{V^BT}^{*,*}(V^B,V^B)\cong \Cotor_{V^AT}^{*,*}(V^A,V^B).\]\end{enumerate}\end{corollary}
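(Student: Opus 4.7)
The plan is to read this corollary off from Proposition~\ref{lbtsplitting}, which already furnishes, for any finite extension $L/K$, a $B$-algebra isomorphism $L^B \otimes_{L^A} L^AB \stackrel{\cong}{\longrightarrow} L^BB$ intertwining the left units, and in the totally ramified case the analogous isomorphism $V^B \otimes_{V^A} V^AT \stackrel{\cong}{\longrightarrow} V^BT$. Two things remain: upgrading these algebra isomorphisms to Hopf algebroid isomorphisms, and deducing the Cotor comparisons.

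For the Hopf algebroid upgrade, I would argue as follows. The right $L^AB$-comodule algebra structure on $L^B$ supplied by Proposition~\ref{structureextensionviastrictiso} satisfies condition~\ref{right comodule alg extends unit}, so Proposition~\ref{tensoring on one side} endows $(L^B,\, L^B \otimes_{L^A} L^AB)$ with a canonical graded Hopf algebroid structure over $B$. Both $(L^B, L^BB)$ and $(L^B,\, L^B \otimes_{L^A} L^AB)$ then co-represent the same functor sending a commutative $B$-algebra $R$ to the groupoid whose objects are formal $B$-modules over $R$ and whose morphisms $F \to G$ are strict isomorphisms of formal $B$-modules: for $(L^B, L^BB)$ this is by definition, and for $(L^B,\, L^B \otimes_{L^A} L^AB)$ it follows from Definition~\ref{structureextensionsviaaniso}, Proposition~\ref{structureextensionviastrictiso}, and Corollary~\ref{strictisoreduction}, which together say that a strict isomorphism of the underlying formal $A$-modules transports a formal $B$-module structure in a unique way, reproducing precisely the formal $B$-module target of any given strict isomorphism of formal $B$-modules. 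The Yoneda lemma then promotes the $B$-algebra isomorphism of Proposition~\ref{lbtsplitting} to a Hopf algebroid isomorphism. The totally ramified $V$-case proceeds identically, with Corollary~\ref{tot ram a module to b module} explaining why the hypothesis of total ramification is needed at all: it is what allows the right $V^AT$-comodule algebra structure on $V^B$ (landing in $V^AT$, not merely in $V^AT\otimes_A B$) to exist, so that Proposition~\ref{tensoring on one side} applies.

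For the Cotor comparison, the isomorphism is now a direct instance of the standard Hopf algebroid change-of-rings theorem (A1.3.12 of~\cite{MR860042}, whose Ext version is also the content of the closing paragraph of Proposition~\ref{tensoring on one side}): once $(L^B, L^BB)$ is identified as a Hopf algebroid with the base change $(L^B,\, L^B \otimes_{L^A} L^AB)$ of $(L^A, L^AB)$ along $L^A \to L^B$, taking $L^B$ as the induced comodule we obtain
\[ \Cotor^{*,*}_{L^AB}(L^A, L^B) \;\cong\; \Cotor^{*,*}_{L^B \otimes_{L^A} L^AB}(L^B, L^B) \;\cong\; \Cotor^{*,*}_{L^BB}(L^B, L^B),\]
and the parallel chain of isomorphisms holds in the totally ramified $V$-case.

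The main obstacle is not the Cotor step, which is routine change-of-rings, but the compatibility check at the heart of the Hopf algebroid upgrade: one must verify that the isomorphism of Proposition~\ref{lbtsplitting} respects the coproduct, counit, and conjugation, and not merely the two units. The Yoneda strategy above is what makes this obstacle tractable, since it replaces a direct computational verification of all the structure maps with the single task of identifying the two groupoid-valued presheaves that $(L^B, L^BB)$ and $(L^B, L^B \otimes_{L^A} L^AB)$ co-represent on commutative $B$-algebras, and analogously for $V^BT$ under total ramification.
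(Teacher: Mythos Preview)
Your proposal is correct and follows essentially the same route as the paper, which simply cites Proposition~\ref{lbtsplitting} for the splitting and the change-of-rings result (Proposition~\ref{trivcohomologyiso}, i.e.\ A1.3.12 of~\cite{MR860042}) for the Cotor isomorphism. You are more careful than the paper in observing that Proposition~\ref{lbtsplitting}, as literally stated, only asserts a $B$-algebra isomorphism compatible with left units, and your Yoneda argument on groupoid-valued functors is exactly the right way to promote this to a full Hopf-algebroid isomorphism---indeed it is the same Yoneda reasoning already implicit in the proof of Proposition~\ref{lbtsplitting} itself.
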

\begin{proof} The splittings follow immediately from 
Proposition~\ref{lbtsplitting}.
%and Prop.~\ref{vbtsplitting}. 
The isomorphisms in cohomology follow immediately from Prop.~\ref{trivcohomologyiso}.\end{proof} 

\begin{prop} \label{ptypisoseries} Let $F\stackrel{\alpha}{\longrightarrow} G$ be an isomorphism of formal ${A}$-module laws with logarithms, and let $F$ be ${A}$-typical. 
Then $G$ is ${A}$-typical if and only if \[ \alpha^{-1}(X) = \sum_{i\geq 0}{}^{F}t_iX^{q^i}\] for some collection $\{ t_i\in A\}$ with $t_0=1$.\end{prop}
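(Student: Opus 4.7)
The plan is to reduce the statement to an identity of logarithms. Since $F$ is $A$-typical, I write $\log_F(X) = \sum_{i \geq 0} u_i X^{q^i}$ with $u_0 = 1$; and since the condition $t_0 = 1$ forces $(\alpha^{-1})'(0) = 1$ (as the leading term of $\sum_{i}{}^F t_i X^{q^i}$ is $t_0 X$), $\alpha$ must be a strict isomorphism, so $\log_G(X) = \log_F(\alpha^{-1}(X))$. The key computational input will be that applying $\log_F$ to an $F$-sum $\sum_{i\geq 0}{}^F t_i X^{q^i}$ and using additivity of $\log_F$ together with the explicit form of $\log_F$ produces a power series supported only on monomials $X^{q^k}$, with the coefficient of $X^{q^k}$ being a polynomial expression in the $t_i$'s and $u_j$'s that is triangular in $t_k$.

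Granting this, the ``if'' direction is immediate: if $\alpha^{-1}(X) = \sum{}^F t_i X^{q^i}$, then $\log_G(X) = \log_F(\alpha^{-1}(X))$ has only $q$-power monomials, so $G$ is $A$-typical. For the converse, I would assume $G$ is $A$-typical with $\log_G(X) = \sum_k w_k X^{q^k}$, $w_0 = 1$. Setting $t_0 = 1$ matches the $k = 0$ equation, and then for each $k \geq 1$ the triangularity of the system lets us solve for $t_k$ in terms of $t_0, \dots, t_{k-1}$, $u_0, \dots, u_k$, and $w_k$, from the requirement that the $X^{q^k}$-coefficients of $\log_G(X)$ and of $\log_F(\sum{}^F t_i X^{q^i})$ agree. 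The resulting $F$-sum and $\alpha^{-1}(X)$ have the same $\log_F$ by construction, hence coincide.

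The hard part will be verifying that each recursively defined $t_k$ actually lies in $A$, rather than merely in the coefficient ring tensored with $\mathbb{Q}$ (where the $u_j$'s and $w_j$'s a priori live). I would address this by identifying $t_k$ alternatively as a coefficient of $X^{q^k}$ in a modified version of $\alpha^{-1}(X)$ obtained by subtracting off, in the $F$-sum, the contributions from $t_0, \dots, t_{k-1}$; this shows immediately that $t_k$ lies in the coefficient ring of $\alpha$. The further integrality $t_k \in A$ would then follow from Hazewinkel's explicit parametrization of $A$-typical isomorphisms in 21.5 of~\cite{MR506881}, from which this integrality is essentially built in.
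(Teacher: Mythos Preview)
Your approach is essentially identical to the paper's: both directions are handled by the identity $\log_G(X) = \log_F(\alpha^{-1}(X))$, the ``if'' direction by direct expansion of $\log_F\bigl(\sum^F t_i X^{q^i}\bigr)$ using the $A$-typical form of $\log_F$, and the ``only if'' direction by writing down the triangular recursion that determines $t_k$ from the $q^k$-coefficient of $\log_G$ and the previously constructed $t_0,\dots,t_{k-1}$.

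One remark: the clause ``$t_i \in A$'' in the statement is almost certainly a slip for ``$t_i \in R$'' (the base ring of the formal modules); the paper's own remark immediately following the proof identifies the $t_i$ as the images in $R$ of the universal parameters $t_i^A \in V^AT$. Your first integrality argument---extracting $t_k$ as the $X^{q^k}$-coefficient of $\alpha^{-1}(X)$ after $F$-subtracting the lower terms---is exactly right and already yields $t_k \in R$, which is what is needed. The paper simply writes down the recursion in terms of log coefficients and does not pause over this point, so your discussion here is actually more careful than the original. The appeal to Hazewinkel for a further ``$t_k \in A$'' is unnecessary once the statement is read correctly.
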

\begin{proof} Assume that $\alpha^{-1}$ is of the above form.
\begin{eqnarray*} \log_G(X) & = & \log_F(\alpha^{-1}(X)) \\
& = & \sum_{i\geq 0} \log_F(t_iX^{q^i})\\
& = & \sum_{i\geq 0}\sum_{j\geq 0} m_j(t_iX^{q^i})^{q^j}\end{eqnarray*}
for some $\{ m_j\in A\otimes_{A} K\}$, since $F$ is ${A}$-typical. Reindexing,
\[ = \sum_{i\geq 0}\left(\sum_{j=0}^i m_jt_{i-j}^{q^j}\right)X^{q^i},\]
so $G$ is ${A}$-typical.

Now assume that $F,G$ are both ${A}$-typical with logarithms $\log_F(X) = \sum_{i\geq 0}f_1(\ell_i^{A}) X^{q^i}$ and 
$\log_G(X) = \sum_{i\geq 0}f_2(\ell_i^{A}) X^{q^i}$. Now if we let
\[ t_i = f_2(\ell_i^{A}) - \sum_{j=0}^{i-1}f_1(\ell_{i-j}^{A})t_j^{q^{i-j}},\] then 
\begin{eqnarray*} \log_G(X) & = & \sum_{i\geq 0} \left(\sum_{j=0}^i f_1(\ell_i^{A}) t_{i-j}^{q^j}\right) X^{q^i}\\
& = & \sum_{i\geq 0}\sum_{j\geq 0} f_1(\ell_j^{A})(t_iX^{q^i})^{q^j}\\
& = & \sum_{i\geq 0}\log_F(t_iX^{q^i})\\
& = & \log_F(\alpha^{-1}(X)).\end{eqnarray*}
Hence, \begin{eqnarray*} \alpha^{-1}(X) & = & \log_F^{-1}\left(\sum_{i\geq 0}\log_F(t_iX^{q^i})\right)\\
& = & \sum_{i\geq 0}{}^{F}t_iX^{q^i} .\end{eqnarray*}
\end{proof}

The elements $t_i$ in the previous lemma are precisely the images in $R$ of the elements $t_i^A$ under the map $V^AT\rightarrow R$
classifying the isomorphism $\alpha$.

\begin{prop} \label{structure of vbt part one}
Let $K,L$ be $p$-adic number fields with rings of
integers $A,B$, and let $L/K$ be an extension.
Let $\phi$ denote the ring map 
\[ L^B\stackrel{\phi}{\longrightarrow} V^B\otimes_{V^A}V^AT\]
from Def.~\ref{structureextensionsviaaniso}. 
Then the square
\[ \xymatrix{ L^B\ar[r]\ar[d]^{\phi} & V^B \ar[d] \\
V^B\otimes_{V^A}V^AT\ar[r] & V^BT}\]
is a pushout square in the category of commutative $B$-algebras; in other words,
\[ V^BT\cong (V^B\otimes_{V^A}V^AT)\otimes_{L^B} V^B\]
as commutative $B$-algebras.
\end{prop}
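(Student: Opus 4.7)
The plan is to verify the pushout property via the Yoneda lemma, by showing that both $V^BT$ and $(V^B\otimes_{V^A}V^AT)\otimes_{L^B}V^B$ co-represent the same functor on commutative $B$-algebras. By Theorem~\ref{basic existence thm on classifying hopf algebroids}, a $B$-algebra map $V^BT\to R$ is precisely a strict isomorphism $\alpha\colon F\to H$ of $B$-typical formal $B$-modules over $R$. A preliminary observation needed throughout is that the underlying formal $A$-module of any $B$-typical formal $B$-module is automatically $A$-typical, so that the ring map $V^A\otimes_A B\to V^B$ over which the relative tensor product $V^B\otimes_{V^A}V^AT$ is taken actually exists; this reduces to the totally ramified case via Corollary~\ref{tot ram a module to b module} (since $V^B$ is $\pi_B$-torsion-free, the universal $B$-typical formal $B$-module has a logarithm) and to the unramified case via the surjectivity recorded in Corollary~\ref{structure of vb over va}, with an arbitrary extension $L/K$ factoring as a totally ramified extension of an unramified one.

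Next, I would unpack the functor co-represented by the pushout using its universal property in commutative $B$-algebras. A $B$-algebra map $(V^B\otimes_{V^A}V^AT)\otimes_{L^B}V^B\to R$ amounts to the following data: a $B$-typical formal $B$-module $F$ over $R$ together with a strict isomorphism $f\colon\tilde F\to G$ of $A$-typical formal $A$-modules starting at the underlying formal $A$-module of $F$ (this is the content of a map $V^B\otimes_{V^A}V^AT\to R$); a second $B$-typical formal $B$-module $H$ over $R$ (from the remaining $V^B$); and the compatibility condition, coming from agreement of the two composites $L^B\to R$ through $\phi$ and through the canonical map $L^B\to V^B$, that the formal $B$-module structure induced on $G$ by transport via $f$ in the sense of Definition~\ref{structureextensionsviaaniso} coincides with $H$ as a formal $B$-module law.

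To finish, I would invoke Proposition~\ref{structureextensionviastrictiso}: the compatibility condition is exactly the hypothesis needed to promote $f$, viewed as a power series in $R[[X]]$, to a strict isomorphism of formal $B$-modules from $F$ to $H$, and since both $F$ and $H$ are $B$-typical this is a strict isomorphism of $B$-typical formal $B$-modules. Conversely, any strict isomorphism $\alpha\colon F\to H$ of $B$-typical formal $B$-modules produces such data by taking $G=\tilde H$ and $f=\tilde\alpha$, with the compatibility condition automatic via Lemma~\ref{stupidlemma} and the argument of Corollary~\ref{strictisoreduction}. These two constructions are mutually inverse, so the Yoneda lemma yields the isomorphism. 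The main obstacle will be the bookkeeping: one must carefully align the left and right unit conventions on $V^BT$, $V^AT$, and $V^B\otimes_{V^A}V^AT$ so that $\phi$ is recognized as the right unit classifying the target's formal $B$-module structure, the top arrow $L^B\to V^B$ as the canonical classifying surjection, and the right vertical $V^B\to V^BT$ as the right unit, so that the given square commutes and the pushout carries the expected moduli-theoretic interpretation.
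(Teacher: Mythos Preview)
Your proposal is correct and follows essentially the same approach as the paper: both argue via Yoneda by identifying the functor co-represented by each side as the functor sending $R$ to the set of strict isomorphisms of $B$-typical formal $B$-modules over $R$, with the pushout side unpacked as a map $V^B\otimes_{V^A}V^AT\to R$ whose composite with $\phi$ factors through $L^B\to V^B$. The paper's proof is terser---it does not spell out the preliminary check that the underlying formal $A$-module of a $B$-typical formal $B$-module is $A$-typical, nor does it explicitly cite Proposition~\ref{structureextensionviastrictiso}, Lemma~\ref{stupidlemma}, or Corollary~\ref{strictisoreduction} for the two directions of the bijection---but the logical structure is identical to yours.
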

\begin{proof}
The ring $V^BT$ classifies strict isomorphisms of $B$-typical 
formal $B$-module laws. 
To specify a strict isomorphism $f$ of $B$-typical 
formal $B$-module laws is the same as to specify a $B$-typical formal
$B$-module law $F$, and a strict isomorphism $\alpha$ of $A$-typical
formal $A$-module laws from the underlying formal $A$-module law of $F$
to some other $A$-typical formal $A$-module law $G$, such that, when
we transport the formal $B$-module law structure across $\alpha$ to $G$
(using Def.~\ref{structureextensionsviaaniso}), the resulting formal $B$-module
law is $B$-typical. In other words, a strict isomorphism of $B$-typical
formal $B$-module laws over a commutative $B$-algebra $R$ is specified
by a ring map $V^B\otimes_{V^A}V^{A}T\rightarrow R$ such that the composite 
ring map
\[ L^B\stackrel{\phi}{\longrightarrow} V^B\otimes_{V^A}V^AT\rightarrow R\]
factors through the ring map $L^B\rightarrow V^B$. Such ring maps are
precisely the ring maps
\[ V^BT\cong (V^B\otimes_{V^A}V^AT)\otimes_{L^B} V^B\rightarrow R.\]
\end{proof}

\begin{corollary} \label{structure of vbt part two}
Let $K,L$ be $p$-adic number fields with rings of
integers $A,B$, and let $L/K$ be an unramified extension with
residue degree $f$.
Then the ring map
\[ V^B\otimes_{V^A}V^AT\rightarrow V^BT\]
is surjective, with kernel the ideal generated by the elements
\[\{ t_i^A: f\nmid i\}\subseteq V_B\otimes_{V^A}V^AT.\]
In other words,
\[ V^BT\cong (B\otimes_{A}V^AT)/\left(\{ v_i,t_i: f\nmid i\}\right).\]
\end{corollary}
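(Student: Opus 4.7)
The plan is to combine Proposition~\ref{structure of vbt part one} with Proposition~\ref{ptypisoseries} (the $B$-version) and Corollary~\ref{structure of vb over va}, using the universal property of each ring involved. Proposition~\ref{structure of vbt part one} already exhibits the natural map $V^B\otimes_{V^A}V^AT\rightarrow V^BT$ as the pushout of $\phi$ along the Cartier typification $L^B\rightarrow V^B$; in particular this map is surjective, so the remaining task is to identify its kernel. A ring homomorphism $\theta\colon V^B\otimes_{V^A}V^AT\rightarrow R$ classifies a pair $(F,\alpha)$ in which $F$ is a $B$-typical formal $B$-module over $R$ and $\alpha\colon F|_A\rightarrow G$ is a strict isomorphism of the underlying $A$-typical formal $A$-module of $F$ to some $A$-typical formal $A$-module $G$ over $R$. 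By the pushout description, $\theta$ factors through $V^BT$ if and only if the formal $B$-module structure on $G$ obtained by transporting along $\alpha$ (as in Definition~\ref{structureextensionsviaaniso}) is itself $B$-typical.

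The next step is to translate this ``$G$ is $B$-typical'' condition into vanishing of specific elements of $V^AT$. Since the universal strict isomorphism over $V^AT$ has the form $\alpha^{-1}(X)=\sum_{i\geq 0}{}^{F}t_i^A X^{q^i}$, applying Proposition~\ref{ptypisoseries} with $B$ in place of $A$ (so with exponent base $Q=q^f$) shows that $G$ becomes $B$-typical precisely when $\alpha^{-1}(X)=\sum_{j\geq 0}{}^{F}s_j X^{Q^j}$ for some $s_j\in R$ with $s_0=1$. The $\Leftarrow$ direction, at the level of rings, is immediate: if every $t_i^A$ with $f\nmid i$ maps to zero in $R$, then the $F$-sum collapses to a sum indexed by multiples of $f$ and has the required shape with $s_j$ the image of $t_{fj}^A$. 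For the $\Rightarrow$ direction I would apply $\log_F$ to both expressions for $\alpha^{-1}(X)$, expand using the $B$-typicality of $\log_F$ (only $X^{Q^k}$ terms appear), and read off coefficients of $X^{q^n}$ for $f\nmid n$; this yields a triangular recursion showing inductively on $n$ that the image of $t_n^A$ in $R$ is zero whenever $f\nmid n$. Together these establish that the kernel of $V^B\otimes_{V^A}V^AT\rightarrow V^BT$ is exactly the ideal generated by $\{t_i^A:f\nmid i\}$.

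Finally, to obtain the compact form $(B\otimes_A V^AT)/(\{v_i,t_i:f\nmid i\})$, I would apply Corollary~\ref{structure of vb over va} to rewrite
\[
V^B\otimes_{V^A}V^AT \;\cong\; \bigl(V^A\otimes_A B\bigr)/(v_i^A:f\nmid i)\otimes_{V^A}V^AT \;\cong\; (B\otimes_A V^AT)/(v_i^A:f\nmid i),
\]
and then further quotient by the generators $t_i^A$ with $f\nmid i$. The main obstacle, in my view, is the $\Rightarrow$ direction of the step that uses Proposition~\ref{ptypisoseries}: the assertion that vanishing of the ``non-$B$-typical'' exponents in $\alpha^{-1}(X)$ forces vanishing of the corresponding $t_i^A$'s has to be extracted carefully from the $F$-addition formulas, since ${}^F\!\sum$ mixes low-degree $t_j^A$'s into the coefficient of each $X^{q^n}$ and so requires a clean induction on $n$ working modulo the ideal generated by the $t_j^A$ already shown to vanish. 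Everything else is essentially a Yoneda-lemma bookkeeping argument built on the universal properties already assembled in Proposition~\ref{structure of vbt part one} and Corollary~\ref{structure of vb over va}.
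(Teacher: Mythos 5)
Your proposal is correct and follows essentially the same route as the paper: identify $V^BT$ with $(V^B\otimes_{V^A}V^AT)\otimes_{L^B}V^B$ via Proposition~\ref{structure of vbt part one}, use Proposition~\ref{ptypisoseries} (in its $B$-typical form, i.e.\ with exponent base $q^f$) to see that factoring through this pushout is equivalent to the vanishing of the images of the $t_i^A$ with $f\nmid i$, and then rewrite the quotient using Corollary~\ref{structure of vb over va}. The only difference is that the coefficient-extraction step you flag as the main obstacle is exactly the point the paper passes over with an ``i.e.''; your triangular induction on the ordinary power-series coefficients of the $F$-sum is the standard way to justify it.
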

\begin{proof}
Suppose the residue field of $K$ is $\mathbb{F}_q$.
We identify $V^BT$ with $(V^B\otimes_{V^A}V^AT)\otimes_{L^B} V^B$ using
Prop.~\ref{structure of vbt part one}. By Prop.~\ref{ptypisoseries}, a
ring map $V^B\otimes_{V^A}V^AT\stackrel{g}{\longrightarrow} R$
classifies a strict isomorphism $F\stackrel{\alpha}{\longrightarrow}G$ of
$A$-typical formal $A$-module laws such that 
\[ \alpha^{-1}(X) = \sum_{i\geq 0}{}^{F}g(t_i^A)X^{q^i},\]
and $g$ factors through
$V^B\otimes_{V^A}V^AT\rightarrow (V^B\otimes_{V^A}V^AT)\otimes_{L^B} V^B$
if and only if
\[ \alpha^{-1}(X) = \sum_{i\geq 0}{}^{F}g(t_{fi}^A)X^{q^{fi}},\]
i.e., if and only if $g(t_i^A) = 0$ when $f\nmid i$. Hence
\begin{eqnarray*} (V^B\otimes_{V^A}V^AT)\otimes_{L^B} V^B & \cong &
(V^B\otimes_{V^A}V^AT)/\left(\{ t_i^A : f\nmid i\}\right) \\
 & \cong & (B\otimes_A V^AT)/\left(\{ v_i^A, t_i^A : f\nmid i\}\right). 
\end{eqnarray*}
\end{proof}

\begin{corollary}\label{unramified iso in cohomology}
Let $K,L$ be $p$-adic number fields with rings of
integers $A,B$, and let $L/K$ be an unramified extension of
degree $>1$. Then the Hopf algebroid map
\[ (V^A,V^AT)\rightarrow (V^B,V^BT)\] is not split, i.e.,
$V^BT$ is not isomorphic to $V^B\otimes_{V^A}V^AT$ as a 
$V^AT$-comodule, but we nevertheless have an isomorphism of bigraded
abelian groups
\[ \Ext_{graded\ V^AT-comodules}^*(\Sigma^*V^A,
(V^AT\otimes_{V^A}V^B)\Box_{V^BT} M)\]\[\cong 
\Ext_{graded\ V^BT-comodules}^*(\Sigma^*V^B,M)\]
for any graded $V^BT$-comodule $M$ which is flat over $V^B$.
\end{corollary}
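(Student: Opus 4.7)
The plan is twofold: first prove non-splitness by a degree count, then establish the Ext isomorphism by chaining the Cartier typification equivalences with Corollary~\ref{ext base change corollary}.

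For non-splitness, let $q$ denote the cardinality of the residue field of $A$, so the residue field of $B$ has cardinality $q^f$ with $f>1$. As graded $V^B$-algebras, $V^B\otimes_{V^A}V^AT\cong V^B[t_1^A,t_2^A,\dots]$ with $t_i^A$ in grading degree $2(q^i-1)$, while $V^BT\cong V^B[t_1^B,t_2^B,\dots]$ with $t_i^B$ in grading degree $2(q^{fi}-1)$, and $V^B$ itself is concentrated in degree $0$ and in degrees $\geq 2(q^f-1)$. Hence the degree $2(q-1)$ summand of $V^B\otimes_{V^A}V^AT$ is nonzero (it contains $B\cdot t_1^A$), while the corresponding summand of $V^BT$ is zero, so $V^BT\not\cong V^B\otimes_{V^A}V^AT$ even as graded $V^B$-modules, and in particular not as $V^AT$-comodules.

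For the Ext isomorphism, let $M$ be a graded $V^BT$-comodule flat over $V^B$. By Corollary~\ref{cartier typification local-global principle}, there is an $L^BB$-comodule $M'$, unique up to isomorphism, with $M'\otimes_{L^B}V^B\cong M$ as $V^BT$-comodules; the identity $M'\otimes_{L^B}N=M\otimes_{V^B}N$ for any $L^B$-module $N$ (viewed as $V^B$-module via the Cartier map $V^B\to L^B$) shows that $M'$ is flat over $L^B$. I would then chain the isomorphisms
\begin{align*}
\Ext^{*}_{V^BT}(\Sigma^*V^B,M)
 &\cong \Ext^{*}_{L^BB}(\Sigma^*L^B,M') \\
 &\cong \Ext^{*}_{L^AB}(\Sigma^*L^A,M') \\
 &\cong \Ext^{*}_{V^AT}(\Sigma^*V^A,M'\otimes_{L^A}V^A),
\end{align*}
using Corollary~\ref{cartier iso in cohomology} applied to $B$, then Corollary~\ref{ext base change corollary} applied to the ring map $A\hookrightarrow B$, and then Corollary~\ref{cartier iso in cohomology} applied to $A$.

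The hard part will be identifying the final $V^AT$-comodule $M'\otimes_{L^A}V^A$ with the cotensor $(V^AT\otimes_{V^A}V^B)\Box_{V^BT}M$ appearing in the statement, at least after applying $\Ext^{*}_{V^AT}(\Sigma^*V^A,-)$. My plan is to construct a natural transformation $M'\otimes_{L^A}V^A\to(V^AT\otimes_{V^A}V^B)\Box_{V^BT}M$ of $V^AT$-comodules, using the naturality of the Cartier equivalences and of the splitting $L^BB\cong L^B\otimes_{L^A}L^AB$ of Proposition~\ref{lbtsplitting}(1), and show it induces an isomorphism on $\Ext^{*}_{V^AT}(\Sigma^*V^A,-)$. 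Since both functors are exact on $V^B$-flat $V^BT$-comodules, it will suffice to verify the isomorphism on the single test case $M=V^BT$, where the source becomes $L^BB\otimes_{L^A}V^A$ and the target becomes $V^AT\otimes_{V^A}V^B$, and then extend by naturality. Reconciling these two explicit descriptions and verifying compatibility with all the $V^AT$-comodule structures involved is where I expect the bulk of the remaining technical work to lie.
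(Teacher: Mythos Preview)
Your non-splitness argument by degree count is fine and is essentially the content of Corollary~\ref{structure of vbt part two}, which shows $V^BT$ is the quotient of $V^B\otimes_{V^A}V^AT$ by the ideal $(\{t_i^A:f\nmid i\})$.

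Your route to the Ext isomorphism is genuinely different from the paper's, and the gap you flag at the end is real, not merely technical. The paper does not pass through the $(L^A,L^AB)\to(L^B,L^BB)$ splitting and Cartier typification at all; instead it applies Ravenel's change-of-rings theorem (A1.3.12 of \cite{MR860042}) directly to the map $(V^A,V^AT)\to(V^B,V^BT)$. That theorem says: if $\Gamma\otimes_R S\to\Sigma$ is surjective and the inclusion $(\Gamma\otimes_R S)\Box_\Sigma S\subseteq\Gamma\otimes_R S$ admits an $S$-module retraction, then the change-of-rings spectral sequence collapses to give exactly the cotensor-form isomorphism in the statement. Surjectivity is Corollary~\ref{structure of vbt part two}; the retraction exists because $V^B\otimes_{V^A}V^AT$ is free over $V^B$ by Corollary~\ref{structure of vb over va}. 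The cotensor $(V^AT\otimes_{V^A}V^B)\Box_{V^BT}M$ appears in the answer precisely because that is how Ravenel's theorem is stated, so no identification step is needed.

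By contrast, your chain of isomorphisms lands on $\Ext^*_{V^AT}(\Sigma^*V^A,\,M\otimes_{V^B}L^B\otimes_{L^A}V^A)$, and matching this comodule with $(V^AT\otimes_{V^A}V^B)\Box_{V^BT}M$ is not obviously true at the comodule level; your proposed check on the single object $M=V^BT$ together with exactness is not enough to propagate an isomorphism of functors (you would at minimum need to check on all extended comodules and run a cobar-resolution argument, and even then you would be comparing two different change-of-rings constructions whose comodule outputs need not coincide, only their Ext). So the step you identify as ``remaining technical work'' is in fact the substance of the problem in your approach, and it is exactly what the direct application of Ravenel's theorem sidesteps.
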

\begin{proof}
It is a theorem of Ravenel (A1.3.12 in \cite{MR860042})
that, if $(R,\Gamma)\rightarrow (S,\Sigma)$ is a map of
graded connected Hopf algebroids such that
$\Gamma\otimes_R S\rightarrow\Sigma$ is surjective, and
the inclusion map
\[ (\Gamma\otimes_R S)\Box_{\Sigma} B\subseteq 
\Gamma\otimes_R S\]
admits a retraction in the category of $B$-modules, then
the change-of-rings spectral sequence
\[ \Cotor_{\Gamma}^{*,*}(R,\Cotor_{\Sigma}^{*,*}(\Gamma\otimes_R S,M))
\Rightarrow \Cotor_{\Sigma}^{*,*}(S,M)\]
collapses on to the $0$-line at $E_2$, giving an isomorphism of
bigraded abelian groups
\[ \Cotor_{\Gamma}^{*,*}(R,(\Gamma\otimes_R S)\Box_{\Sigma}M)
\cong \Cotor_{\Sigma}^{*,*}(S,M)\]
for any graded $\Sigma$-comodule $M$ which is flat over $S$.

In the case of the Hopf algebroid map $(V^A,V^AT)\rightarrow
(V^B,V^BT)$, Cor.~\ref{structure of vbt part one} gives us 
that $V^B\otimes_{V^A}V^AT\rightarrow V^BT$ is surjective, and
Cor.~\ref{structure of vb over va} 
gives us that $V^B\otimes_{V^A}V^AT$ is free as a $V^B$-module, so
\[ (V^B\otimes_{V^A}V^AT)\Box_{V^BT}V^B\subseteq
V^B\otimes_{V^A} V^AT\]
admits a retraction in the category of $V^B$-modules.
\end{proof}

\section{The topological realization problem.}

I claim that the main {\em topological} application for the moduli of formal $A$-modules is as a target for the maps induced, in the
various spectral sequences used to compute the cohomology of the moduli stack of formal groups, by the ``forgetful'' map from
the moduli stack of formal $A$-modules to the moduli stack of formal groups. This allows one to use the ``height-shifting'' techniques
mentioned in the introduction to this paper. However, in light of ggxx

%gx INCLUDE NOTE ABOUT PEARLMAN'S RESULT: ALTHOUGH THERE EXISTS A BP-MODULE WHOSE HOMOTOPY GROUPS RECOVER U_A, IT IS NOT BP SMASHED WITH A SPECTRUM.

\subsection{The not-totally-ramified case.}

Here is an easy observation: the spectrum $H\mathbb{F}_p$ has the property that its $BP_*$-homology $BP_*(H\mathbb{F}_p)$,
as a graded $BP_*$-module, splits as a direct sum of suspensions of the residue field $\mathbb{F}_p$ of $BP_*.$
Consequently,  if $A$ is the ring of integers in a degree $d$ unramified field extension of $\mathbb{Q}_p$, then 
$BP_*(H\mathbb{F}_p^{\vee d})$ admits the structure of a $V^A$-module, since the residue field of $V^A$ is $\mathbb{F}_{p^d}$,
which is isomorphic to $\mathbb{F}_p^{\oplus d}$ as a $BP_*$-module, and we can simply let all the generators
$\pi, v_1^A, v_2^A, \dots$ of $V^A$ act trivially on $BP_*(H\mathbb{F}_p^{\vee d})$.

This is sort of a dumb example, though: $H\mathbb{F}_p$ is a {\em dissonant} spectrum (see Remark~\ref{dissonance remark} for a brief review
of dissonant spectra and their pathological properties), so one cannot use
(and does not need) chromatic techniques to learn anything at all about $H\mathbb{F}_p$. One would like to know if there exist
any non-dissonant spectra $X$ such that $BP_*(X)$ is the underlying $BP_*$-module of a $V^A$-module.

In this subsection I show that the answer is {\em no}: if $X$ is a spectrum and $BP_*(X)$ is the underlying $BP_*$-module of a $V^A$-module,
where $A$ is the ring of integers of some field extension $K/\mathbb{Q}_p$, then either $K/\mathbb{Q}_p$ is totally ramified or
$X$ is dissonant (this is Theorem~\ref{unramified dissonance thm}). This tells us that $BP_*(X)$ cannot be a $V^A$-module,
for $K/\mathbb{Q}_p$ not totally ramified, unless $X$ has some rather pathological properties; for example, $BP_*(X)$ cannot be finitely-presented
as a $BP_*$-module (this is Corollary~\ref{unramified case and finite presentation}), and by 
Corollary~\ref{suspension spectra and VA cor}, the $BP$-homology of a {\em space} which is not $p$-locally contractible is
never a $V^A$-module unless $K/\mathbb{Q}_p$ is totally ramified, due to the Hopkins-Ravenel result (gx INSERT REF) that suspension spectra
are harmonic. Finally, Corollary~\ref{unramified VA nonrealizability} gives us that $V^A$ itself is not the $BP_*$-homology of any spectrum.

\begin{definition}
Let $R$ be a  graded ring, and let $M$ be a graded left $R$-module. Let $r\in R$ be a homogeneous element.
\begin{itemize}
\item We say that $M$ is {\em $r$-power-torsion}
if, for each homogeneous $x\in M$, there exists some nonnegative integer $m$ such
that $r^mx = 0$.
\item 
We say that {\em $r$ acts trivially on $M$} if $rx = 0$ for all homogeneous $x\in M$.
\item 
We say that {\em $r$ acts without torsion on $M$} if for all homogeneous $x\in M$, $rx \neq 0$.
\item
We say that {\em $r$ acts invertibly on $M$} if, for each homogeneous $x\in M$, there exists
some $y\in M$ such that $ry = x$.
\item
Suppose $s\in R$ is a homogeneous element.
We say that {\em $r$ acts with eventual $s$-division on $M$} if there exists some nonnegative integer $n$ such that, 
for each homogeneous $x\in M$, there exists
some $y\in M$ such that $sy = r^nx$.
\item
Suppose $s\in R$ is a homogeneous element, and $I\subseteq R$ is a homogeneous ideal.
We say that {\em $r$ acts with eventual $s$-division on $M$ modulo $u$} if there exists some nonnegative integer $n$ such that, 
for each homogeneous $x\in M$, there exists
some $y\in M$ such that $sy \equiv r^nx$ modulo $IM$.
\end{itemize}
\end{definition}

\begin{lemma}\label{acyclicity lemma}
Let $X$ be a spectrum, let $n$ be a nonnegative integer, and suppose that $BP_*(X)$ is $v_n$-power-torsion.
Then the Bousfield localization $L_{E(n)}X$ is contractible.
\end{lemma}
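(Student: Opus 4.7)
The plan is to reduce the claim to showing that $X$ is $E(n)$-acyclic, i.e., that $E(n)_*(X) = 0$; by the very definition of Bousfield localization, this is equivalent to $L_{E(n)}X \simeq *$. The strategy will then be to compute $E(n)_*(X)$ via Landweber exactness and use the $v_n$-power-torsion hypothesis to force the result to vanish.

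First I would translate the hypothesis into a statement about the $v_n$-localization of $BP_*(X)$. Since every homogeneous element of $BP_*(X)$ is annihilated by some positive power of $v_n$, inverting $v_n$ kills the module: $v_n^{-1}BP_*(X) = 0$. Equivalently, the $v_n$-localized Brown--Peterson smash product $v_n^{-1}BP \wedge X$ has trivial homotopy, hence is contractible.

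Next I would invoke Landweber exactness of the Johnson--Wilson spectrum $E(n)$. The coefficient ring $E(n)_* \cong \mathbb{Z}_{(p)}[v_1, \dots, v_{n-1}, v_n^{\pm 1}]$ is a $BP_*$-module on which the sequence $p, v_1, v_2, \dots$ satisfies Landweber's regularity criterion (the first $n$ terms act regularly on $E(n)_*$, $v_n$ acts as a unit, and each subsequent $v_{n+j}$ acts on the zero module), so there is a natural isomorphism $E(n)_*(X) \cong E(n)_* \otimes_{BP_*} BP_*(X)$. Because $v_n$ is invertible in $E(n)_*$, this tensor product factors through the $v_n$-localization: $E(n)_*(X) \cong E(n)_* \otimes_{v_n^{-1}BP_*} v_n^{-1}BP_*(X)$, which vanishes by the previous step.

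There is no serious obstacle here: both ingredients --- the $v_n$-localization computation and the Landweber exactness of $E(n)$ --- are classical and can be extracted from Chapter 4 of~\cite{MR860042}. The only point requiring any care is checking Landweber's criterion for $E(n)_*$, which amounts to the trivial observation that $v_{n+j}$ acts on the successive quotient $E(n)_*/(p, v_1, \ldots, v_{n-1}, v_n, v_{n+1}, \dots, v_{n+j-1}) = 0$.
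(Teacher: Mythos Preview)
Your argument is correct and follows essentially the same route as the paper: both reduce to showing $E(n)_*(X)=0$ by factoring through the $v_n$-localization and using $v_n^{-1}BP_*(X)=0$. The only difference is in the tool used to express $E(n)_*(X)$ in terms of $BP_*(X)$: you invoke Landweber exactness of $E(n)$ to get $E(n)_*(X)\cong E(n)_*\otimes_{BP_*}BP_*(X)$ directly, whereas the paper runs the K\"unneth spectral sequence $\Tor^{BP_*}_{*,*}(E(n)_*,BP_*(X))\Rightarrow E(n)_*(X)$ and base-changes the $\Tor$ groups to $v_n^{-1}BP_*$. Your version is slightly slicker (it absorbs the collapse of the spectral sequence into the cited Landweber exactness), while the paper's version is marginally more self-contained; either way the substance is the same.
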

\begin{proof}
We have the K\"{u}nneth spectral sequence 
\begin{align}\label{kuenneth ss 1} E^2_{*,*} \cong \Tor^{BP_*}_{*,*}(E(n)_*, (BP\smash X)_*) 
  & \Rightarrow & \pi_*((E(n) \smash_{BP} (BP\smash X)  \\
 \nonumber & \cong & E(n)_*(X).\end{align}
(This spectral sequence is classical; see~\cite{MR1324104} or \cite{MR1417719}.)

Now since $v_n$ acts invertibly on $E(n)_*$, we use base-change properties of $\Tor$ (see e.g. Proposition~3.2.9 of~\cite{MR1269324})
to get the isomorphism
\[ \Tor^{BP_*}_{*,*}(E(n)_*, (BP\smash X)_*)  \cong \Tor^{v_n^{-1}BP_*}_{*,*}(E(n)_*, v_n^{-1}(BP\smash X)_*).\]
Since $BP_*(X)$ is $v_n$-power-torsion, $v_n^{-1}(BP\smash X)_*$ is trivial, hence the $E_2$-term of spectral sequence~\ref{kuenneth ss 1}
is trivial, hence $E(n)_*(X)$ is trivial. Hence, $X$ is $E(n)$-acyclic, hence $L_{E(n)}X\cong 0$.
\end{proof}

\begin{remark}\label{dissonance remark}
Recall (from e.g.~\cite{MR737778}) that a spectrum $X$ is said to be {\em dissonant at $p$} (or just {\em dissonant}, if the prime $p$ is understood from the context) if it is acyclic with respect to the wedge 
$\bigvee_{n\in \mathbb{N}} E(n)$ of all the $p$-local Johnson-Wilson theories, equivalently, if it is 
acyclic with respect to the wedge 
$\bigvee_{n\in \mathbb{N}} K(n)$ of all the $p$-local Morava $K$-theories. Dissonant spectra are ``invisible'' to the typical methods of chromatic 
localization. In some sense dissonant spectra are uncommon; for example, it was proven in (gx INSERT REF TO SUSPENSION SPECTRA ARE HARMONIC)
that all $p$-local suspension spectra are {\em harmonic,} that is, local with respect to the wedge of the $p$-local Johnson-Wilson theories,
hence as far from dissonant as possible. On the other hand, there are some familiar examples of dissonant spectra, for example,
the Eilenberg-Mac Lane spectrum $HA$ of any torsion abelian group $A$ (this example appears in~\cite{MR737778}).
\end{remark}
\begin{prop}\label{dissonance lemma}
Let $p$ be a prime number and let $X$ be a spectrum. Then the following conditions are equivalent:
\begin{itemize}
\item $X$ is dissonant at $p$.
\item $BP_*(X)$ is $v_n$-power-torsion for infinitely many $n$. 
\item $BP_*(X)$ is $v_n$-power-torsion for all $n$. 
\end{itemize}
(The elements $v_n$ in question are the $p$-primary $v_n$ elements; the proposition is true regardless of 
whether the Hazewinkel generators $v_n$ or the Araki generators $v_n$
are meant.)
\end{prop}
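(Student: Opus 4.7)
The plan is to prove the three implications (C)~$\Rightarrow$~(B), (C)~$\Rightarrow$~(A), (B)~$\Rightarrow$~(A), and (A)~$\Rightarrow$~(C). The first is immediate, and (C)~$\Rightarrow$~(A) follows by applying Lemma~\ref{acyclicity lemma} for every $n$, yielding $E(n)$-acyclicity of $X$ for every $n$ and hence $\bigvee_n E(n)$-acyclicity, which is dissonance. For (B)~$\Rightarrow$~(A) I would combine the same lemma with Ravenel's Bousfield class identity $\langle E(n)\rangle = \langle K(0)\rangle\vee\cdots\vee\langle K(n)\rangle$ from~\cite{MR737778}: $v_n$-power-torsion on $BP_*X$ gives $E(n)$-acyclicity of $X$ by the lemma, hence $K(i)$-acyclicity for all $0\leq i\leq n$ by the Bousfield class identity, and since this holds for infinitely many $n$, $X$ is $K(i)$-acyclic for every $i$.

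The substantive direction is (A)~$\Rightarrow$~(C), which I would attack by induction on $n$. The base case $n=0$ uses $K(0)\simeq H\mathbb{Q}$: rational acyclicity of $X$ forces $BP_*X\otimes\mathbb{Q} = 0$, hence $BP_*X$ is $p$-power-torsion. For the inductive step at $n$, assume $BP_*X$ is $v_i$-power-torsion for each $0\leq i < n$; a pigeonhole argument on monomials in $p, v_1,\ldots, v_{n-1}$ shows this is equivalent to $BP_*X$ being $I_n$-power-torsion, where $I_n = (p,v_1,\ldots,v_{n-1})$. Meanwhile, $K(n)$-acyclicity of $X$ combined with the fact that $K(n)_*$ is a graded field translates, via the $BP$-based universal coefficient spectral sequence for $K(n)$-homology, to the statement that $BP_*X/I_nBP_*X$ is $v_n$-power-torsion.

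The main obstacle is the final algebraic step: showing that the combination ``$BP_*X$ is $I_n$-power-torsion'' and ``$BP_*X/I_nBP_*X$ is $v_n$-power-torsion'' forces $BP_*X$ itself to be $v_n$-power-torsion. This implication is \emph{false} for arbitrary $BP_*$-modules --- for instance, $\mathbb{Q}[v]/\mathbb{Z}_{(p)}[v]$ is $p$-power-torsion with vanishing mod-$p$ reduction but is not $v$-power-torsion --- so the proof must genuinely use the $BP_*BP$-comodule structure on $BP_*X$. I would proceed element-by-element: given $x\in BP_*X$, pass to the $BP_*BP$-subcomodule $N$ it generates. Because $I_n$ is an invariant ideal, the submodule $\{m\in BP_*X : I_n^k m = 0\}$ is itself a subcomodule for each $k$, so $N$ is \emph{uniformly} $I_n^{k}$-annihilated for some $k = k(x)$. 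On such a uniformly nil subcomodule the finite filtration $N\supseteq I_nN\supseteq\cdots\supseteq I_n^kN = 0$ has successive quotients that are surjective images of $I_n^j\otimes_{BP_*}(N/I_nN)$, hence $v_n$-power-torsion provided $N/I_nN$ itself is; since extensions of $v_n$-power-torsion modules are $v_n$-power-torsion, this propagates up the filtration to $v_n$-power-torsion of all of $N$, and in particular of $x$. The hardest point is establishing that $N/I_nN$ is $v_n$-power-torsion, which does not follow formally from the analogous statement about $BP_*X/I_nBP_*X$ but instead relies on the invariance of $v_n$ modulo $I_n$ in the comodule structure (Landweber), which is precisely what distinguishes subcomodules of $BP_*X$ from arbitrary $BP_*$-submodules.
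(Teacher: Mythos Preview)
Your implications (C)$\Rightarrow$(B) and (C)$\Rightarrow$(A) are fine, and your (B)$\Rightarrow$(A) via the Bousfield class identity $\langle E(n)\rangle=\langle K(0)\rangle\vee\cdots\vee\langle K(n)\rangle$ is a valid alternative to the paper's argument. The paper instead observes that if $BP_*X$ is $v_n$-power-torsion for some $n\geq m$, then $L_{E(n)}X\simeq *$ by Lemma~\ref{acyclicity lemma}, and hence $L_{E(m)}X\simeq L_{E(m)}L_{E(n)}X\simeq *$; since such an $n$ exists for every $m$, dissonance follows. Either route works.

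The (A)$\Rightarrow$(C) argument, however, has a genuine gap and is considerably harder than necessary. Your step~1 claims that $K(n)_*X=0$ forces $BP_*X/I_nBP_*X$ to be $v_n$-power-torsion via the K\"unneth spectral sequence $\Tor^{BP_*}_{s}(K(n)_*,BP_*X)\Rightarrow K(n)_*X$. But in that spectral sequence the $s=0$ line $K(n)_*\otimes_{BP_*}BP_*X$ receives differentials rather than emitting them, so vanishing of the abutment only gives $E^\infty_{0,*}=0$, not $E^2_{0,*}=0$. You therefore cannot conclude that $K(n)_*\otimes_{BP_*}BP_*X=0$, let alone the stronger statement $v_n^{-1}(BP_*X/I_nBP_*X)=0$ (the latter involves $v_n^{-1}BP_*/I_n$, not $K(n)_*$). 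Your step~2 is, as you yourself note, also incomplete: you never actually explain how Landweber invariance of $v_n$ bmod $I_n$ gets you that $N/I_nN$ is $v_n$-power-torsion.

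The paper sidesteps all of this. Under the inductive hypothesis that $BP_*X$ is $v_i$-power-torsion for all $i<n$ (in particular $v_{n-1}^{-1}BP_*X=0$), Ravenel's localization theorem (Theorem~7.5.2 of~\cite{MR1192553}) gives $BP_*(L_{E(n)}X)\cong v_n^{-1}BP_*X$. Dissonance gives $L_{E(n)}X\simeq *$, hence $v_n^{-1}BP_*X=0$, i.e.\ $BP_*X$ is $v_n$-power-torsion. This one-line appeal to the localization theorem replaces both your spectral sequence translation and your comodule-theoretic algebra, and is the key input you are missing.
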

\begin{proof}
\begin{itemize}
\item Suppose that $BP_*(X)$ is $v_n$-power-torsion for infinitely many $n$.
For each nonnegative integer $m$, there exists some integer $n$ such that $n\geq m$ and such that $BP_*(X)$ is $v_n$-power-torsion.
Hence, by Lemma~\ref{acyclicity lemma}, $L_{E(n)}X$ is contractible. Hence, 
\begin{align*} L_{E(m)}X 
 & \simeq & L_{E(m)}L_{E(n)} X \\
 & \simeq & L_{E(m)}\pt \\
 & \simeq & \pt . \end{align*}
Hence, $X$ is $E(m)$-acyclic for all $m$. Hence, 
\begin{align*}
 \left( \bigvee_{n\in \mathbb{N}} E(n) \right) \smash X & 
  \simeq & \bigvee_{n\in \mathbb{N}} \left( E(n)  \smash X \right) \\
  \simeq & \bigvee_{n\in \mathbb{N}} \pt \\
  \simeq &  \pt .\end{align*}
So $X$ is dissonant.
\item 
Now suppose that $X$ is dissonant. Then it is in particular 
$E(0)$-acyclic, so $BP\smash L_{E(0)}X$ must be contractible, so $p^{-1}BP_*(X)$ must be trivial.
So $BP_*(X)$ must be $p$-power-torsion.
This is the initial step in an induction: suppose we have already shown that
$BP_*(X)$ is $v_i$-power-torsion for all $i< n$. Then 
Ravenel's localization conjecture/theorem (see Theorem~7.5.2 of \cite{MR1192553})
gives us that $BP_*(L_{E(n)}X) \cong v_n^{-1}BP_*(X)$, and since $X$ is dissonant, 
$L_{E(n)}X$ must be contractible, so $v_n^{-1}BP_*(X)$ must be trivial. Hence, $BP_*(X)$ must be $v_n$-power-torsion.
Now by induction, $BP_*(X)$ is $v_n$-power-torsion for all $n$.
\item Clearly if $BP_*(X)$ is $v_n$-power-torsion for all $n$, then it is $v_n$-power-torsion for infinitely many $n$.
\end{itemize}
\end{proof}

\begin{lemma}\label{finite presentation and power-torsion}
Let $A$ be a local commutative ring, let $g: \mathbb{N} \rightarrow \mathbb{N}$ be a monotone strictly increasing function, 
and let $R$ be the commutative graded ring
\[ R = A[x_0, x_1, x_2, \dots ] \]
with $x_i$ in grading degree $g(i)$.
Suppose $M$ is a nonzero finitely-presented graded $R$-module. Then there exists some $n$ such that $M$ is not $x_n$-power-torsion.
\end{lemma}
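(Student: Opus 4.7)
The plan is to exploit the finiteness of the presentation to reduce to a polynomial subring in only finitely many of the variables, where the action of the remaining variables will be transparently torsion-free. First I would choose a finite presentation of $M$: finitely many homogeneous generators $m_1,\dots,m_k$ and finitely many homogeneous relations $r_1,\dots,r_\ell\in R^k$. Since each $r_j$ is a tuple of polynomials, each of which mentions only finitely many of the variables $x_0,x_1,x_2,\dots$, there exists a nonnegative integer $N$ so that all of $r_1,\dots,r_\ell$ lie in $R_0^k$, where $R_0 = A[x_0,x_1,\dots,x_{N-1}]$.

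Next I would let $M_0$ denote the $R_0$-module with the same generators and relations. By construction $M \cong M_0\otimes_{R_0}R$. Since $R \cong R_0[x_N,x_{N+1},x_{N+2},\dots]$ is free, and in particular faithfully flat, as an $R_0$-module, the nontriviality of $M$ forces $M_0$ to be nontrivial as well. So I may pick a nonzero homogeneous element $m\in M_0$.

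The last step is the observation that under the decomposition
\[ M \;\cong\; M_0\otimes_{R_0}R \;\cong\; \bigoplus_{\mu} M_0\cdot\mu, \]
indexed by the monomials $\mu$ in $x_N,x_{N+1},\dots$, multiplication by $x_n^j$ for $n\geq N$ and $j\geq 1$ carries the summand $M_0\cdot\mu$ bijectively onto the summand $M_0\cdot (x_n^j\mu)$. In particular, $x_n^j\cdot m = m\cdot x_n^j$ lies in a distinct summand from $m\cdot 1$ and is therefore nonzero. Hence $M$ fails to be $x_n$-power-torsion for every $n\geq N$, which is even stronger than what was claimed.

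I do not anticipate any real obstacle: the entire argument amounts to noticing that only finitely many variables appear in a finite presentation, and that a polynomial ring is free over any of its polynomial subrings. As a side remark, the locality hypothesis on $A$ does not seem to enter the argument; presumably it is stated only because this lemma will be invoked later in a context where $A$ is local.
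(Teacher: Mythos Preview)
Your proof is correct and follows essentially the same approach as the paper's: both arguments locate a finite $N$ so that the presentation descends to $\tilde{R}=A[x_0,\dots,x_{N-1}]$, then use that $R$ is free over $\tilde{R}$ to conclude the remaining variables act without torsion. The only cosmetic difference is that the paper bounds $N$ via the grading degrees of the relation entries together with the monotonicity of $g$, whereas you simply observe that each of the finitely many relation polynomials involves finitely many variables; your route is slightly more direct and, as you noticed, makes clear that neither the locality of $A$ nor the monotonicity of $g$ is actually needed.
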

\begin{proof}
Choose a presentation
\begin{equation}\label{presentation 20} \oplus_{s\in S_1} \Sigma^{d_1(s)} R\{ e_s\} \stackrel{f}{\longrightarrow} \oplus_{s\in S_0} \Sigma^{d_0(s)} R\{ e_s\} \rightarrow M \rightarrow 0\end{equation}
for $M$, where $S_1,S_0$ are finite sets and $d_0: S_0 \rightarrow \mathbb{Z}$ and $d_1: S_1\rightarrow \mathbb{Z}$ just keep track of the
grading degrees of the summands $R\{ e_s\}$.
Now for each $s_1\in S_1$ and each $s_0\in S_0$, the component of $f(e_{s_1})$ in the summand $R\{e_{s_1}\}$ of the codomain of $f$ is
some grading-homogeneous polynomial in $R$ of grading degree $d_1(s_1) - d_0(s_0)$. Since $g$ was assumed monotone strictly increasing,
there are only finitely many $x_i$ such that $g(x_i) \leq d_1(s_1) - d_0(s_0)$, so there exists some maximal integer $i$
such that $x_i$ appears in one of the monomial terms of the component of $f(e_{s_1})$ in the summand $R\{e_{s_1}\}$ of the codomain of $f$.
Let $i_{s_0,s_1}$ denote that integer $i$.

Now there are only finitely many elements $s_0$ of $S_0$ and only finitely many elements $s_1$ of $S_1$, so
the maximum $m = \max\{ i_{s_0, s_1}: s_0 \in S_0, s_1\in S_1\}$ is some integer.
Let $\tilde{R}$ denote the commutative graded ring
\[ \tilde{R} = A[x_0, x_1, x_2, \dots , x_m ],\]
with $x_i$ again in grading degree $g(x_i)$,
and regard $R$ as a graded $\tilde{R}$-algebra by the evident ring map $\tilde{R}\rightarrow R$.
then the map $f$ from presentation~\ref{presentation 20} is isomorphic to
\begin{equation}\label{presentation 21} \oplus_{s\in S_1} \Sigma^{d_1(s)} \tilde{R}\{ e_s\} \stackrel{\tilde{f}}{\longrightarrow} \oplus_{s\in S_0} \Sigma^{d_0(s)} \tilde{R}\{ e_s\} \end{equation}
tensored over $\tilde{R}$ with $R$.
Here $\tilde{f}(e_s)$ is defined to be have the same components as $f(e_s)$.
Since $R$ is flat over $\tilde{R}$, we now have that $M\cong R\otimes_{\tilde{R}} \tilde{M}$, and hence 
$x_i$ acts without torsion on $M$ for all $i>m$.
\end{proof}

\begin{theorem}\label{unramified dissonance thm}
Let $K/\mathbb{Q}_p$ be a finite extension of degree $>1$, and which is not totally ramified.
Let $A$ denote the ring of integers of $K$, and 
let $X$ be a spectrum such that $BP_*(X)$ is the underlying $BP_*$-module of 
a $V^A$-module.
Then $X$ is dissonant, and $BP_*(X)$ is $v_n$-power-torsion for all $n\geq 0$.
\end{theorem}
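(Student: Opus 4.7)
The plan is to observe that the hypothesized $V^A$-module structure on $BP_*(X)$ factors its $BP_*$-module structure through the natural ring map $\rho\colon V^{\hat{\mathbb{Z}}_p} \to V^A$, and that this map annihilates infinitely many Hazewinkel generators $v_i$ of $V^{\hat{\mathbb{Z}}_p} \cong BP_*$. Consequently, $v_i$ will act as zero (hence $v_i$-power-torsion-ly) on $BP_*(X)$ for infinitely many $i$, and Proposition~\ref{dissonance lemma} will finish the job.

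To make this concrete, let $K_{nr}/\mathbb{Q}_p$ denote the maximal unramified subextension of $K/\mathbb{Q}_p$, and let $A_{nr}$ be its ring of integers. Since $K/\mathbb{Q}_p$ is not totally ramified and has degree greater than $1$, the degree $f := [K_{nr}:\mathbb{Q}_p]$ is strictly greater than $1$. I would factor the map $\rho$ as
\[ V^{\hat{\mathbb{Z}}_p} \xrightarrow{\iota} V^{\hat{\mathbb{Z}}_p} \otimes_{\hat{\mathbb{Z}}_p} A_{nr} \xrightarrow{\gamma} V^{A_{nr}} \longrightarrow V^A, \]
where $\iota$ is the canonical inclusion, the last map comes from the totally ramified extension $K/K_{nr}$, and $\gamma$ is exactly the comparison map of Proposition~\ref{computationofunramifiedgamma} applied to the unramified extension $A_{nr}/\hat{\mathbb{Z}}_p$ of degree $f$. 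That proposition (using Hazewinkel generators) gives $\gamma(v_i^{\hat{\mathbb{Z}}_p} \otimes 1) = 0$ for every $i$ with $f \nmid i$, and so $\rho(v_i^{\hat{\mathbb{Z}}_p}) = 0$ for every such $i$.

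Now by hypothesis, the action of $v_i \in BP_* \cong V^{\hat{\mathbb{Z}}_p}$ on $BP_*(X)$ coincides with multiplication by $\rho(v_i) \in V^A$. Hence $v_i$ acts as zero on $BP_*(X)$ for every $i$ with $f \nmid i$, of which there are infinitely many since $f > 1$. Trivially, then, $BP_*(X)$ is $v_i$-power-torsion for infinitely many $i$, and Proposition~\ref{dissonance lemma} then implies both that $X$ is dissonant and that $BP_*(X)$ is $v_n$-power-torsion for every $n \geq 0$. The only substantive input here is the explicit Hazewinkel-generator formula in Proposition~\ref{computationofunramifiedgamma}; there is no real obstacle beyond unpacking the definitions and noting that the Hazewinkel-level computation already shown for $\gamma$ survives composition with the further map to $V^A$.
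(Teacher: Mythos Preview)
Your proof is correct and follows essentially the same approach as the paper's own proof: factor $BP_* \to V^A$ through $V^{A_{nr}}$ for the maximal unramified subextension, invoke Proposition~\ref{computationofunramifiedgamma} to see that the Hazewinkel generators $v_i$ with $f\nmid i$ map to zero, and then apply Proposition~\ref{dissonance lemma} to obtain dissonance and $v_n$-power-torsion for all $n$. The only cosmetic difference is that you write out the intermediate tensor-up step $\iota$ explicitly, which the paper leaves implicit.
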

\begin{proof}
Let $f$ denote the residue degree of the extension $K/\mathbb{Q}_p$.
Then the natural ring homomorphism $BP_*\rightarrow V^A$ 
factors as the composite of the natural maps
\[ BP_* \rightarrow V^{A_{nr}} \rightarrow V^A,\]
where $A_{nr}$ is the ring of integers of the maximal unramified subextension $K_{nr}/\mathbb{Q}$ of $K/\mathbb{Q}$.
In (gx INSERT INTERNAL REF) it was shown that 
the natural map $BP_* \rightarrow V^{A_{nr}}$
sends $v_n$ to zero if $f$ does not divide $n$.
Hence, the natural ring homomorphism $BP_*\rightarrow V^A$ sends $v_n$ to zero if $f$ does not divide $n$.
Hence, if $BP_*(X)$ is the underlying $BP_*$-module of some $V^A$-module, then $BP_*(X)$ is $v_n$-power-torsion for all $n$ not divisible by $f$.
Lemma~\ref{dissonance lemma} then implies that $X$ is dissonant. Now Proposition~\ref{dissonance lemma} implies that
$BP_*(X)$ is $v_n$-power-torsion for all $n\geq 0$.
\end{proof}

\begin{corollary}\label{unramified VA nonrealizability}
Let $K/\mathbb{Q}_p$ be a finite extension of degree $>1$, and which is not totally ramified.
Let $A$ denote the ring of integers of $K$.
Then there does not exist a spectrum $X$ such that $BP_*(X) \cong V^A$.
\end{corollary}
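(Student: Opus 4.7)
The plan is to derive a contradiction directly from Theorem~\ref{unramified dissonance thm} by exhibiting an element of $BP_*$ that acts injectively (not just with nonzero action, but without torsion) on $V^A$. Suppose, for contradiction, that $X$ is a spectrum with $BP_*(X)\cong V^A$ as $BP_*$-modules. Then the right-hand side is tautologically the underlying $BP_*$-module of a $V^A$-module (namely, $V^A$ acting on itself by multiplication), so Theorem~\ref{unramified dissonance thm} applies: every $v_n\in BP_*$ must act as a $v_n$-power-torsion operator on $V^A$. The whole proof thus reduces to locating some $v_n$ whose action on $V^A$ is {\em not} power-torsion.

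First, I would factor the structure map $BP_*\to V^A$ as $BP_*\to V^{A_{nr}}\to V^A$, where $A_{nr}$ is the ring of integers in the maximal unramified subextension $K_{nr}/\mathbb{Q}_p$ of $K/\mathbb{Q}_p$. Writing $f$ for the residue degree of $K/\mathbb{Q}_p$, we have $f>1$ by hypothesis (since $K/\mathbb{Q}_p$ is not totally ramified). By Proposition~\ref{computationofunramifiedgamma}, the first map sends $v_f\in BP_*$ to the Hazewinkel generator $v_1^{A_{nr}}\in V^{A_{nr}}$. Since $K/K_{nr}$ is totally ramified, Proposition~\ref{gamma in low degrees} then sends $v_1^{A_{nr}}$ to $\tfrac{\pi_{A_{nr}}}{\pi_A}\,v_1^A\in V^A$, and writing $\pi_{A_{nr}}=u\,\pi_A^{e}$ for a unit $u\in A^\times$ and $e\geq 1$ the ramification index of $K/K_{nr}$, this image equals $u\,\pi_A^{e-1}v_1^A$, which is a nonzero scalar multiple of the polynomial generator $v_1^A$ in $V^A$.

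The point to verify is that multiplication by this element of $V^A$ acts without any torsion, let alone power-torsion, on $V^A$. This is the only step requiring care, but it is essentially immediate: $V^A\cong A[v_1^A,v_2^A,\ldots]$ is a polynomial algebra over the integral domain $A$, so multiplication by the nonzero homogeneous element $u\pi_A^{e-1}v_1^A$ is injective, and no positive power of it annihilates the unit $1\in V^A$. Thus $V^A$ fails to be $v_f$-power-torsion as a $BP_*$-module, contradicting Theorem~\ref{unramified dissonance thm}. Hence no such spectrum $X$ exists. The main (really only) obstacle is keeping track of the two separate ring maps in the factorization through $V^{A_{nr}}$ and confirming that the composite image of $v_f$ lands on a non-nilpotent, non-torsion element of the polynomial ring $V^A$; this is routine given the formulas already established in Propositions~\ref{computationofunramifiedgamma} and~\ref{gamma in low degrees}.
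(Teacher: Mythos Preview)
Your proof is correct and follows the same strategy as the paper: invoke Theorem~\ref{unramified dissonance thm} to conclude that $BP_*(X)$ would have to be $v_n$-power-torsion for all $n\geq 0$, then exhibit an $n$ for which $V^A$ fails this. The paper simply takes $n=0$, observing that $V^A$ is not $p$-power-torsion (since $V^A$ is a polynomial ring over the integral domain $A$), which spares you the computation of the image of $v_f$ through the factorization $BP_*\to V^{A_{nr}}\to V^A$.
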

\begin{proof}
It is not the case that $V^A$ is $p$-power-torsion.
\end{proof}

\begin{corollary}\label{unramified case and finite presentation}
Let $K/\mathbb{Q}_p$ be a finite extension extension of degree $>1$ which is not totally ramified.
Let $A$ denote the ring of integers of $K$.
Suppose $M$ is a graded $V^A$-module, and suppose that
$X$ is a spectrum such that $BP_*(X) \cong M$. 
Suppose that either
\begin{itemize}
\item $M$ is finitely presented as a $BP_*$-module, or
\item $M$ is finitely presented as a $V^A$-module, and $K/\mathbb{Q}_p$ is unramified.
\end{itemize}
Then $M\cong 0$.
\end{corollary}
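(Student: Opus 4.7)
The plan is to deduce this as a quick consequence of Theorem~\ref{unramified dissonance thm} together with Lemma~\ref{finite presentation and power-torsion}. Under the standing hypothesis that $K/\mathbb{Q}_p$ is a finite extension of degree $>1$ which is not totally ramified, and that $BP_*(X) \cong M$ is the underlying $BP_*$-module of a $V^A$-module, Theorem~\ref{unramified dissonance thm} already gives us that $M$ is $v_n$-power-torsion for every $n \geq 0$, where $v_n \in BP_*$ acts via the ring map $BP_* \to V^A$. All that remains is to show that this is incompatible with finite presentation unless $M = 0$.

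In case (a), where $M$ is finitely presented as a $BP_*$-module, the hypotheses of Lemma~\ref{finite presentation and power-torsion} are met by $BP_* \cong \mathbb{Z}_{(p)}[v_1, v_2, \ldots]$ with $v_i$ in grading degree $2(p^i-1)$ (a monotone strictly increasing function of $i$). If $M$ were nonzero, Lemma~\ref{finite presentation and power-torsion} would produce some $n$ such that $M$ is not $v_n$-power-torsion, contradicting the conclusion of Theorem~\ref{unramified dissonance thm}. Hence $M \cong 0$.

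In case (b), $K/\mathbb{Q}_p$ is unramified of degree $f>1$, so by Theorem~\ref{basic existence thm on classifying hopf algebroids} we have $V^A \cong A[v_1^A, v_2^A, \ldots]$ with $v_i^A$ in grading degree $2(q^i-1)$, where $q=p^f$; this is again monotone strictly increasing in $i$, and $A$ is local, so $V^A$ satisfies the hypotheses of Lemma~\ref{finite presentation and power-torsion}. The one step of translation needed is between ``$v_n$-power-torsion as a $BP_*$-module'' and ``$v_n^A$-power-torsion as a $V^A$-module,'' and this is supplied by Proposition~\ref{computationofunramifiedgamma}: the map $BP_* \otimes_{\mathbb{Z}_{(p)}} A \to V^A$ sends $v_{fi}$ to $v_i^A$ (and sends $v_j$ to $0$ for $f\nmid j$). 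Consequently the statement from Theorem~\ref{unramified dissonance thm} that $M$ is $v_{fi}$-power-torsion for every $i$ is exactly the statement that $M$ is $v_i^A$-power-torsion for every $i$. Lemma~\ref{finite presentation and power-torsion}, applied now to $V^A$ and the finitely presented $V^A$-module $M$, again forces $M \cong 0$.

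There is no real obstacle here; this is a routine corollary. The only point worth highlighting is why the second bullet requires the stronger assumption that $K/\mathbb{Q}_p$ be unramified rather than merely not totally ramified: the clean identification of the image of $v_{fi}$ as $v_i^A$ used in the translation step relies on Proposition~\ref{computationofunramifiedgamma}, and in the genuinely mixed case (ramified but not totally ramified) the formula for the map $BP_* \to V^A$ is complicated enough by Proposition~\ref{gamma in low degrees} that one cannot so directly convert $BP_*$-power-torsion into $V^A$-power-torsion for the relevant polynomial generators.
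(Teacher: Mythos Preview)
Your proof is correct and follows essentially the same route as the paper's: invoke Theorem~\ref{unramified dissonance thm} to get $v_n$-power-torsion for all $n$, then apply Lemma~\ref{finite presentation and power-torsion} to $BP_*$ (case (a)) or to $V^A$ (case (b)), using Proposition~\ref{computationofunramifiedgamma} to identify the action of $v_{fi}$ with that of $v_i^A$ in the unramified case. Your closing remark explaining why the second bullet needs the full unramified hypothesis is a nice addition not present in the paper.
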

\begin{proof}
By Theorem~\ref{unramified dissonance thm}, $M$ must be $v_n$-power-torsion for all $n\geq 0$.
Lemma~\ref{finite presentation and power-torsion} then implies that $M$ cannot be finitely presented as a $BP_*$-module unless $M \cong 0$.

If $K/\mathbb{Q}_p$ is unramified, then by (gx INSERT INTERNAL REF), $V^A \cong A[v_{d}, v_{2d}, v_{3d}, \dots ]$ as
a $BP_*$-module. Hence, if $M$ is a graded $V^A$-module which is $v_{n}$-power-torsion for all integers $n\geq 0$,
then by Lemma~\ref{finite presentation and power-torsion}, 
$M$ cannot be finitely presented  as a $V^A$-module unless $M\cong 0$.
\end{proof}

\begin{corollary}\label{suspension spectra and VA cor}
Let $K/\mathbb{Q}_p$ be a finite extension of degree $>1$, and which is not totally ramified.
Let $A$ denote the ring of integers of $K$.
Then the $BP$-homology of a {\em space} is never the underlying $BP_*$-module of a $V^A$-module, unless that
space is stably contractible.
\end{corollary}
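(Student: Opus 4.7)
The plan is to reduce the corollary to a two-line consequence of Theorem~\ref{unramified dissonance thm} together with the Hopkins--Ravenel theorem that $p$-local suspension spectra are harmonic. Let $Y$ be a space, and suppose $BP_\ast(\Sigma^\infty Y)$ is the underlying $BP_\ast$-module of a $V^A$-module. Since $K/\mathbb{Q}_p$ has degree $>1$ and is not totally ramified, Theorem~\ref{unramified dissonance thm} applies to $X = \Sigma^\infty Y$, so $\Sigma^\infty Y$ is dissonant at $p$, i.e.\ acyclic with respect to $\bigvee_{n\in\mathbb{N}} E(n)$.

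On the other hand, the Hopkins--Ravenel theorem (the ``suspension spectra are harmonic'' result cited earlier in the text) says that the $p$-localization $(\Sigma^\infty Y)_{(p)}$ is harmonic, i.e.\ local with respect to $\bigvee_{n\in\mathbb{N}} E(n)$. But a spectrum which is simultaneously $E$-local and $E$-acyclic (for any spectrum $E$) must be contractible: if $Z$ is $E$-local and $E$-acyclic, then the identity map $\id_Z$ factors through the Bousfield localization $L_E Z$, which is contractible since $Z$ is $E$-acyclic, and hence $\id_Z$ is null. Applying this with $E = \bigvee_{n\in\mathbb{N}} E(n)$ and $Z = (\Sigma^\infty Y)_{(p)}$ gives $(\Sigma^\infty Y)_{(p)} \simeq \pt$, i.e.\ $Y$ is stably contractible at $p$, which is the sense of ``stably contractible'' appropriate here (everything in sight is $p$-local, since $BP_\ast$ and $V^A$ are).

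The only mild subtlety is to be explicit that the dissonance half of the argument already takes place in the $p$-local category (so that ``stably contractible'' in the statement of the corollary is interpreted $p$-locally, which is forced by the hypothesis that $BP_\ast(\Sigma^\infty Y)$ carries a $V^A$-module structure, since $V^A$ is a $BP_\ast$-algebra and $BP$ is $p$-local). With that understood, the proof is a one-paragraph deduction, and I would expect no step to be a real obstacle: Theorem~\ref{unramified dissonance thm} does all the work, and the harmonic/dissonant clash is standard.
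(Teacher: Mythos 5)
Your proposal is correct and is essentially the paper's own argument: Theorem~\ref{unramified dissonance thm} makes the suspension spectrum dissonant, the Hopkins--Ravenel theorem makes ($p$-local) suspension spectra harmonic, and a spectrum that is both dissonant and harmonic is contractible. Your extra sentence justifying why an $E$-local, $E$-acyclic spectrum is contractible, and your explicit handling of the $p$-localization, merely spell out what the paper leaves implicit.
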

\begin{proof}
By (gx INSERT REF TO RAVENEL-HOPKINS), the suspension spectrum of any space is harmonic. By Theorem~\ref{unramified dissonance thm},
if $BP_*(\Sigma^{\infty} X)$ is a $V^A$-module, then $\Sigma^{\infty} X$ is dissonant. The only spectra that are both dissonant and harmonic
are the contractible spectra.
\end{proof}

\subsection{The totally ramified case.}

Here is another observation, parallel to the one at the beginning of the previous section: 
the spectrum $H\mathbb{F}_p$ has the property that its $BP_*$-homology $BP_*(H\mathbb{F}_p)$,
as a graded $BP_*$-module, splits as a direct sum of suspensions of the residue field $\mathbb{F}_p$ of $BP_*.$
Consequently,  if $A$ is the ring of integers in totally ramified field extension of $\mathbb{Q}_p$, then 
$BP_*(H\mathbb{F}_p)$ admits the structure of a $V^A$-module, since the residue field of $V^A$ is $\mathbb{F}_{p}$, 
and we can simply let all the generators
$\pi, v_1^A, v_2^A, \dots$ of $V^A$ act trivially on $BP_*(H\mathbb{F}_p^{\vee d})$.

However, unlike the unramified situation, in the totally ramified setting 
there is another obvious source of spectra $X$ such that $BP_*(X)$ is a $V^A$-module:
if $d$ is the degree of the field extension $K/\mathbb{Q}_p$ of which $A$ is the ring of integers, then
the Eilenberg-Mac Lane spectrum $H\mathbb{Q}_p^{\vee d}$ admits the structure of a $V^A$-module, since
\[ BP_*(H\mathbb{Q}_p^{\vee d}) \cong (\mathbb{Q}_p\otimes_{\mathbb{Z}} BP_*)^{\oplus d} \cong p^{-1}V^A\]
as $BP_*$-modules, by (gx INSERT INTERNAL REF).

So, by taking dissonant spectra and rational spectra, we can produce easy examples of spectra $X$ such that $BP_*(X)$ is the underlying
$BP_*(X)$-module of a $V^A$-module. These examples are not very interesting: again, one does not need chromatic localization methods 
to study rational spectra, and one cannot use chromatic localization methods to study dissonant spectra.
So one would like to know if there exist
any spectra $X$ which are not ``built from'' rational and dissonant spectra, and 
such that $BP_*(X)$ is the underlying $BP_*$-module of a $V^A$-module.

In this subsection I show that the answer is {\em no}: if $X$ is a spectrum and $BP_*(X)$ is the underlying $BP_*$-module of a $V^A$-module,
where $A$ is the ring of integers of some totally ramified field extension $K/\mathbb{Q}_p$, then 
$X$ is an extension of a rational spectrum by a dissonant spectrum (this is Theorem~\ref{main thm on tot ram nonexistence}).
This tells us that $BP_*(X)$ cannot be a $V^A$-module,
for $K/\mathbb{Q}_p$ totally ramified, unless $X$ has some rather pathological properties; 
for example, if $BP_*(X)$ is torsion-free as an abelian group, then it cannot be finitely-presented
as a $BP_*$-module or as a $V^A$-module (this is Corollary~\ref{torsion-free and local coh}), and by 
Corollary~\ref{tot ram corollary for spaces}, the $BP$-homology of a {\em space} is 
not a $V^A$-module that space is stably rational, i.e., its stable
homotopy groups are $\mathbb{Q}$-vector spaces. Finally, Corollary~\ref{tot ram nonrealizability} gives us that $V^A$ itself is not the $BP_*$-homology of any spectrum.

\begin{lemma}\label{quotient of power torsion is power torsion}
Let $R$ be a commutative graded ring, $r\in R$ a homogeneous element,
and let $M$ be a graded $R$-module which is $r$-power-torsion.
Let $M \stackrel{f}{\longrightarrow} N$ be a surjective homomorphism of graded $R$-modules
Then $N$ is $r$-power-torsion.
\end{lemma}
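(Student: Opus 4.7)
The plan is to proceed by a direct element-chase, which is essentially immediate. Let $y \in N$ be a homogeneous element of some grading degree $d$. First I would observe that since $f$ is a graded module homomorphism, it restricts to a map $f_d : M_d \to N_d$ on the grading degree $d$ summands, and that this restriction is still surjective (the surjectivity of $f$ on homogeneous components follows from decomposing any preimage of $y$ into its homogeneous components and noting that only the degree $d$ component can contribute to $y$). Hence I can choose a homogeneous lift $x \in M_d$ with $f(x) = y$.

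Next, since $M$ is $r$-power-torsion and $x$ is homogeneous, there exists a nonnegative integer $m$ such that $r^m x = 0$ in $M$. Applying $f$, which is an $R$-module homomorphism, yields
\[ r^m y = r^m f(x) = f(r^m x) = f(0) = 0 \]
in $N$. Since $y$ was an arbitrary homogeneous element of $N$, this shows that $N$ is $r$-power-torsion.

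There is no real obstacle here; the only point requiring any care at all is the mild bookkeeping that a graded surjection remains surjective in each grading degree, so that a homogeneous lift of the homogeneous element $y$ can actually be found. Everything else follows from $R$-linearity of $f$.
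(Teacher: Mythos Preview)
Your proof is correct and is essentially the same element-chase as the paper's own argument. If anything, you are slightly more careful: the paper simply chooses a preimage $\tilde{n}\in M$ of the homogeneous element $n\in N$ without commenting on homogeneity, whereas you explicitly justify that a homogeneous lift exists.
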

\begin{proof}
Suppose $n\in N$ is a homogeneous element. Choose $\tilde{n}\in M$ such that $f(\tilde{n}) = n$
and a nonnegative integer $i$ such that $r^i\tilde{n} = 0$. Then $0 = f(r^i\tilde{n}) = r^in$.
So $N$ is $r$-power-torsion.
\end{proof}

\begin{lemma}\label{coproduct of power torsion is power torsion}
Let $R$ be a commutative graded ring, and let $r\in R$ be a homogeneous element.
Then any direct sum of $r$-power-torsion graded $R$-modules is also $r$-power-torsion.
\end{lemma}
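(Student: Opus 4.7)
The plan is to reduce the question to the defining property of $r$-power-torsion, applied elementwise, and to exploit the fact that any homogeneous element of a direct sum has only finitely many nonzero components. Write $M = \bigoplus_{i \in I} M_i$ where each $M_i$ is $r$-power-torsion, and let $x \in M$ be homogeneous of some grading degree $d$. The grading on $M$ is the componentwise grading, so $x$ lies in $\bigoplus_{i \in I} (M_i)_d$, and hence can be written uniquely as a finite sum $x = \sum_{i \in J} x_i$ with $J \subseteq I$ finite and each $x_i \in M_i$ a homogeneous element of degree $d$.

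Now I would invoke the $r$-power-torsion hypothesis on each $M_i$: for each $i \in J$, choose a nonnegative integer $m_i$ such that $r^{m_i} x_i = 0$. Since $J$ is finite, the set $\{m_i : i \in J\}$ has a maximum $m$, and then
\[ r^m x = \sum_{i \in J} r^m x_i = \sum_{i \in J} r^{m - m_i}\bigl(r^{m_i} x_i\bigr) = 0. \]
Hence $M$ is $r$-power-torsion, as desired.

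There is essentially no obstacle here: the only thing being used beyond definitions is the universal fact that an element of a (possibly infinite) direct sum has finite support, which guarantees we can take a maximum of the exponents $m_i$ without running into a supremum that is not achieved. If $M$ had instead been a direct product of $r$-power-torsion modules the argument would break down at this step, which is why the lemma is stated for direct sums.
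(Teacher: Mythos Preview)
Your proof is correct and follows essentially the same approach as the paper: both arguments use that a homogeneous element of the direct sum has finite support, choose an annihilating exponent for each component, and take the maximum over that finite set. Your added remark about why the argument fails for direct products is a nice touch not present in the paper.
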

\begin{proof}
Let $S$ be a set and, for each $s\in S$, let $M_s$ be an $r$-power-torsion graded $R$-module.
Any homogeneous element $m$ of the direct sum $\coprod_{s\in S} M_s$ has only finitely many nonzero components;
for each such component $m_s$, choose a nonnegative integer $i_s$ such that $r^{i_s}m_s = 0$.
Then the maximum $i =\max\{ i_s: s\in S\}$ satisfies $r^i m_s = 0$ for all $s\in S$, hence $r^i m = 0$.
Hence, $\coprod_{s\in S} M_s$ is $r$-power-torsion.
\end{proof}

\begin{lemma}\label{localization preserves power torsion property}
Let $R$ be a commutative graded ring, $r,s\in R$ homogeneous elements,
and let $M$ be a graded $R$-module which is $r$-power-torsion.
Then $s^{-1}M$ is $r$-power-torsion.
\end{lemma}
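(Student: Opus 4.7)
The plan is to verify directly, on the level of individual homogeneous elements, that every element of $s^{-1}M$ is annihilated by a power of $r$. Since localization of a graded module at a homogeneous element yields a graded module whose homogeneous elements are represented by fractions $m/s^k$ with $m \in M$ homogeneous and $k$ a nonnegative integer, I will only need to exhibit, for each such representative, a nonnegative integer $i$ such that $r^i(m/s^k) = 0$ in $s^{-1}M$.

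First I would fix a homogeneous element of $s^{-1}M$ and write it as $m/s^k$ with $m \in M$ homogeneous. Next, since $M$ is assumed $r$-power-torsion and $m$ is homogeneous, I can choose a nonnegative integer $i$ such that $r^i m = 0$ in $M$. Then in $s^{-1}M$ we have
\[ r^i(m/s^k) = (r^i m)/s^k = 0/s^k = 0, \]
so $m/s^k$ is annihilated by $r^i$. Since this works for every homogeneous element of $s^{-1}M$, the module $s^{-1}M$ is $r$-power-torsion, as claimed.

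The ``main obstacle'' here is essentially nonexistent: the result is immediate from the definition of $r$-power-torsion together with the fact that the natural map $M \to s^{-1}M$ is an $R$-module homomorphism (so $r^i m = 0$ in $M$ forces $r^i \cdot (m/s^k) = 0$ in $s^{-1}M$) and the fact that every homogeneous element of a localization of a graded module at a homogeneous element admits a representative with homogeneous numerator. Both of these are standard, so no real work is required.
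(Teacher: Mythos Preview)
Your proof is correct. It differs from the paper's argument, which is more indirect: the paper presents $s^{-1}M$ as the cokernel of the map $\id - T$ on the countable coproduct $\coprod_{n\in\mathbb{N}} M\{e_n\}$ (the usual telescope construction for localization), then invokes two earlier lemmas stating that coproducts and quotients of $r$-power-torsion modules are again $r$-power-torsion. Your approach bypasses all of this by working directly with a fraction representative $m/s^k$ and observing that $r^i m = 0$ in $M$ forces $r^i(m/s^k)=0$ in $s^{-1}M$. This is shorter and more elementary; the paper's route has the mild advantage of reusing structural lemmas already in place, but for this particular statement your direct argument is cleaner.
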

\begin{proof}
I will write $\coprod_{n\in\mathbb{N}} M\{ e_n\}$ to mean the countably infinite coproduct $\coprod_{n\in\mathbb{N}} M$ but equipped with a choice of
names $e_0, e_1, e_2, \dots$ for the summands of $\coprod_{n\in\mathbb{N}} M$.

The module $s^{-1}M$ is isomorphic to the cokernel of the map
\[ \coprod_{n\in\mathbb{N}} M\{ e_n\} \stackrel{\id - T}{\longrightarrow} \coprod_{n\in\mathbb{N}} M\{ e_n\}\]
where $T$ is the map of graded $R$-modules
given, for each $m\in \mathbb{N}$, on the component $M\{ e_m\}$ by the composite map
\[ M\{ e_m\} \stackrel{s}{\longrightarrow} M\{ e_m\} \stackrel{\cong}{\longrightarrow} M\{e_{m+1}\} \stackrel{i}{\longrightarrow} \coprod_{n\in\mathbb{N}} M\{ e_n\},\]
where $i$ is the direct summand inclusion map. 

Now $\coprod_{n\in\mathbb{N}} M\{ e_n\}$ is $r$-power-torsion by Lemma~\ref{coproduct of power torsion is power torsion},
so $s^{-1}M$ is a quotient of an $r$-power-torsion module.
So $s^{-1}M$ is $r$-power-torsion by Lemma~\ref{quotient of power torsion is power torsion}.
\end{proof}

\begin{lemma}\label{localization and eventual division}
Suppose that $p$ is a prime and $n$ is a positive integer, and suppose that 
$X$ is a $p$-local spectrum such that $BP_*(X)$ is $v_i$-power-torsion for $i=0, 1, \dots ,n-1$.
Suppose that either:
\begin{itemize}
\item $v_{n+1}$ acts with eventual $v_n$-division on $BP_*(X)$ modulo $I_n$, or
\item $v_{n}$ acts with eventual $v_{n+1}$-division on $BP_*(X)$ modulo $I_n$.
\end{itemize}
Then the Bousfield localization $L_{E(n)}X$ is contractible, and $BP_*(X)$ is $v_n$-power-torsion.
\end{lemma}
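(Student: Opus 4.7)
The plan is to establish that $v_n^{-1}BP_*(X) = 0$. This gives that $BP_*(X)$ is $v_n$-power-torsion (since $v_n^{-1}M = 0$ if and only if every element of $M$ is killed by some power of $v_n$), and Lemma~\ref{acyclicity lemma} then yields that $L_{E(n)}X$ is contractible. Set $M := BP_*(X)$ and $L := v_n^{-1}M$; the goal is $L = 0$.

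First, a preliminary observation: since $M$ is $v_i$-power-torsion for each $i = 0, \dots, n-1$, a pigeonhole argument shows every homogeneous element of $M$ (and hence of $L$) is annihilated by some power of $I_n = (p, v_1, \dots, v_{n-1})$. Indeed, if $v_i^{T_i} x = 0$ for each $i$, then any monomial $p^{a_0} v_1^{a_1} \cdots v_{n-1}^{a_{n-1}}$ with $\sum a_i > \sum_i (T_i - 1)$ must contain some factor $v_i^{a_i}$ with $a_i \geq T_i$, which annihilates $x$. Now I would extract the relevant consequence of each hypothesis. In Case 2, localizing the inclusion $v_n^N M \subseteq v_{n+1} M + I_n M$ at $v_n$ and using that $v_n$ acts invertibly on $L$ gives $L = v_{n+1} L + I_n L$, that is, $L = JL$ where $J := (p, v_1, \dots, v_{n-1}, v_{n+1})$. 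In Case 1, the hypothesis $v_{n+1}^N M \subseteq v_n M + I_n M = I_{n+1} M$ says precisely that $v_{n+1}$ acts uniformly nilpotently on the $BP_*/I_{n+1}$-module $M/I_{n+1}M$.

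The hard part will be concluding $L = 0$ from these relations, because a naive Nakayama-style argument does not apply: $L$ is typically not finitely generated over $v_n^{-1}BP_*$, and in Case 2 the ideal $J$ is not the full graded-maximal ideal of $v_n^{-1}BP_*$ (it omits $v_{n+2}, v_{n+3}, \dots$). The resolution is to use the $BP_*BP$-comodule structure on $M$, which descends to $L$ since $v_n$-localization is exact for comodules. By Landweber's classification of the invariant prime ideals of $v_n^{-1}BP_*$ as the chain $v_n^{-1}I_0 \subsetneq v_n^{-1}I_1 \subsetneq \cdots \subsetneq v_n^{-1}I_n$, together with the local $I_n$-torsionness of $L$, a non-zero $L$ must contain a non-zero sub-comodule $L[I_n]$ on which $I_n$ acts by zero; this $L[I_n]$ is a comodule over (a suitable completion of) the Morava Hopf algebroid at height $n$. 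The relations derived above restrict to $L[I_n]$: in Case 2 they force $v_{n+1}$ to act surjectively on $L[I_n]$, while in Case 1 they force $v_{n+1}$ to act nilpotently. In either case, a Landweber-filtration-style argument for comodules over the Morava Hopf algebroid forces $L[I_n] = 0$, and then an iteration over the $I_n$-torsion filtration of $L$ gives $L = 0$, completing the proof.
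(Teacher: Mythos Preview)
Your overall plan---show $L:=v_n^{-1}BP_*(X)=0$ directly and then invoke Lemma~\ref{acyclicity lemma}---is the reverse of what the paper does, and the reversal is where the difficulty lies. The paper does \emph{not} prove $L=0$ by pure comodule algebra. Instead it shows, via the Miller--Ravenel change-of-rings and the hypothesis, that $(BP_*(X)/I_n)\otimes_{BP_*}E(n)_*=0$, hence $\Ext_{(BP_*,BP_*BP)}^{*,*}(BP_*,v_n^{-1}BP_*(X)/I_n)=0$; then a filtration spectral sequence (filtering by $v_j$-torsion layers for $j<n$) propagates this to $\Ext^{*,*}(BP_*,v_n^{-1}BP_*(X))=0$; another change-of-rings identifies this with the $E_2$-term of the $E(n)$-Adams spectral sequence for $X$, so $L_{E(n)}X\simeq *$; and only then does Ravenel's localization theorem give $v_n^{-1}BP_*(X)\cong BP_*(L_{E(n)}X)=0$. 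The topology (Adams spectral sequence plus the smashing/localization theorems) is used essentially to pass from ``$\Ext$ vanishes'' back to ``the comodule vanishes.''

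Your attempt to short-circuit this has two concrete gaps. First, the claim that the relation $L=JL$ (Case~2) ``restricts to $L[I_n]$'' to give $v_{n+1}$ surjective on $L[I_n]$ is not justified: writing $x\in L[I_n]$ as $\sum i_k y_k + v_{n+1}z$ with $i_k\in I_n$ and $y_k,z\in L$ does not produce a $z'\in L[I_n]$ with $x=v_{n+1}z'$. The relation passes cleanly to the \emph{quotient} $L/I_nL$, not to the submodule $L[I_n]$. (In Case~1 the situation is worse: after inverting $v_n$, the inclusion $v_{n+1}^N M\subseteq v_nM+I_nM$ becomes $v_{n+1}^N L\subseteq L$, which is vacuous, so you get no nilpotence statement on $L[I_n]$ at all.) Second, even granting that $v_{n+1}$ acts surjectively (or nilpotently) on a nonzero $I_n$-annihilated, $v_n$-local $BP_*BP$-comodule, your ``Landweber-filtration-style argument'' for concluding it must vanish is not a proof: Landweber filtration applies to finitely presented comodules, which $L$ need not be, and the statement you actually need---that a nonzero $I_n$-torsion, $v_n$-local $BP_*BP$-comodule has nonzero $K(n)_*$-reduction (equivalently, nonzero primitives)---is a substantial theorem, not a routine filtration argument. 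The paper sidesteps precisely this issue by passing through the spectrum $X$ and invoking convergence of the $E(n)$-Adams spectral sequence together with Ravenel's localization theorem.
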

\begin{proof}
First, since $I_n$ acts trivially on $v_n^{-1}BP_*(X)/I_nBP_*(X)$, 
the Morava-Miller-Ravenel change-of-rings isomorphism (see section 6.1 of~\cite{MR860042}, or (gx INSERT REF) for the original reference),
gives us an isomorphism of bigraded $\mathbb{Z}_{(p)}$-modules
\begin{align*}
 \Ext_{(BP_*,BP_*BP)}^{*,*}(BP_*,v_n^{-1}BP_*(X)/I_nBP_*(X)) 
  & \cong & \Ext_{(E(n)_*,E(n)_*E(n))}^{*,*}(E(n)_*,(BP_*(X)/I_nBP_*(X))\otimes_{BP_*} E(n)_*). \end{align*}
Now 
\[ E(n)_* \cong v_n^{-1}BP_*/(v_{n+1}, v_{n+2}, \dots ),\]
hence 
\[ (BP_*(X)/I_nBP_*(X))\otimes_{BP_*}E(n)_* \cong v_n^{-1}BP_*(X)/(p, v_1, \dots ,v_{n-1}, v_{n+1}, v_{n+2}, \dots )BP_*(X).\]

So, if $v_n$ is assumed to act with eventual $v_{n+1}$-division on $BP_*$ modulo $I_n$, then
some power of $v_n$ acts trivially on any given element of $BP_*(X)/(p, v_1, \dots ,v_{n-1}, v_{n+1}, v_{n+2}, \dots )BP_*(X)$,
and hence $v_n^{-1}BP_*(X)/(p, v_1, \dots ,v_{n-1}, v_{n+1}, v_{n+2}, \dots )BP_*(X) \cong 0$.

On the other hand, if we instead assume that $v_{n+1}$ acts with eventual $v_n$-division on $BP_*$ modulo $I_n$, then some power of $v_{n+1}$ acts by
some power of $v_n$ on each element of $BP_*(X)/(p, v_1, \dots ,v_{n-1})BP_*(X)$.
Hence, some power of $v_{n+1}$ acts by multiplication by a unit on each element of $v_n^{-1}BP_*(X)/(p, v_1, \dots ,v_{n-1})BP_*(X)$.
Hence, $v_n^{-1}BP_*(X)/(p, v_1, \dots ,v_{n-1}, v_{n+1})BP_*(X) \cong 0$.

With either assumption, we have now proven that 
$(v_n^{-1}BP_*(X)/I_nBP_*(X))\otimes_{BP_*}E(n)_* \cong 0$,
hence 
\begin{equation}\label{iso 100000}\Ext_{(BP_*,BP_*BP)}^{*,*}(BP_*,v_n^{-1}BP_*(X)/I_nBP_*(X))\cong 0.\end{equation}

Now for each integer $j$ such that $0 < j \leq n$,
we filter $v_n^{-1}BP_*(X)/I_j$ by kernels of multiplication by powers of $v_j$:
let $F^iBP_*(X)$ be the kernel of the map
\begin{equation}\label{comodule map 1} p^i : v_n^{-1}BP_*(X)/I_j\rightarrow v_n^{-1}BP_*(X)/I_j,\end{equation}
so that we have the increasing filtration
\begin{equation}\label{comodule filtration 1} 0 \cong F^0v_n^{-1}BP_*(X)/I_j \subseteq F^1v_n^{-1}BP_*(X)/I_j \subseteq F^2v_n^{-1}BP_*(X)/I_j \subseteq \dots .\end{equation}
Since $BP_*(X)$ was assumed to be $p$-power-torsion, Lemma~\ref{localization preserves power torsion property} gives us that $v_n^{-1}BP_*(X)$ is also 
$p$-power-torsion. Hence, filtration~\ref{comodule filtration 1} is exhaustive.
Furthermore, the map~\ref{comodule map 1} is a $BP_*BP$-comodule homomorphism, so filtration~\ref{comodule filtration 1} is a filtration of
$v_n^{-1}BP_*(X)/I_j$ by sub-$BP_*BP$-comodules, hence we get a spectral sequence
\begin{align}\label{filtration ss 1} E_1^{s,t,u} \cong \Ext^{s,u}_{(BP_*,BP_*BP)}(BP_*, (F^tv_n^{-1}BP_*(X)/I_j)/(F^{t-1}v_n^{-1}BP_*(X)/I_j))
 \Rightarrow \Ext^{s,u}(BP_*,v_n^{-1}BP_*(X)/I_j).\end{align}
We have an isomorphism of graded $BP_*BP$-comodules
\begin{equation}\label{iso 100001} (F^tv_n^{-1}BP_*(X)/I_j)/(F^{t-1}v_n^{-1}BP_*(X)/I_j) \cong v_n^{-1}BP_*(X)/I_{j+1}\end{equation}
for all $t$,
and consequently the $E_1$-term of the spectral sequence~\ref{filtration ss 1} is isomorphic (up to appropriate regrading) to a direct sum of 
countably infinitely many copies of 
$\Ext^{*,*}_{(BP_*,BP_*BP)}(BP_*, v_n^{-1}BP_*(X)/I_{j+1})$.
Now isomorphism~\ref{iso 100000} tells us that this $E_1$-term vanishes when $j=n$, hence the input for the spectral sequence~\ref{filtration ss 1}
is trivial when $j=n$, hence its output is trivial; but the input for spectral sequence~\ref{filtration ss 1} when $j=n-1$ is again
(due to isomorphism~\ref{iso 100001}) isomorphic (up to appropriate regrading) to a direct sum of countably infinitely many copies of the
output of the $j=n$ spectral sequence. Now by the obvious induction we have
\begin{align} 
 0 
\nonumber  & \cong & \Ext^{*,*}_{(BP_*,BP_*BP)}(BP_*, v_n^{-1}BP_*(X)/I_{n}) \\
\nonumber  & \cong & \Ext^{*,*}_{(BP_*,BP_*BP)}(BP_*, v_n^{-1}BP_*(X)/I_{n-1}) \\
\nonumber  & \cong & \dots
\nonumber  & \cong & \Ext^{*,*}_{(BP_*,BP_*BP)}(BP_*, v_n^{-1}BP_*(X)/I_{1}) \\
\label{iso 100002}  & \cong & \Ext^{*,*}_{(BP_*,BP_*BP)}(BP_*, v_n^{-1}BP_*(X)). \end{align}

Now since $BP_*(X)$ is assumed to be $v_{n-1}$-power-torsion, we have that $v_{n-1}^{-1}BP_*(X)\cong 0$
and hence Ravenel's localization conjecture/theorem (see Theorem~7.5.2 of \cite{MR1192553})
gives us that $BP_*(L_{E(n)}X) \cong v_n^{-1}BP_*(X)$. 
Using triviality of the $\Ext$-group~\ref{iso 100002} together with another Morava-Miller-Ravenel change-of-rings
(since $BP_*(X)$ is $I_n$-nil, i.e., $v_i$-power-torsion for all $i < n$), we have
\begin{align} 0 
\nonumber & \cong & \Ext^{*,*}_{(BP_*,BP_*BP)}(BP_*, v_n^{-1}BP_*(X)) \\
\nonumber & \cong & \Ext^{*,*}_{(BP_*,BP_*BP)}(BP_*, BP_*(L_{E(n)}X)) \\
\nonumber & \cong & \Ext^{*,*}_{(E(n)_*,E(n)_*E(n))}(E(n)_*, BP_*(L_{E(n)}X)\otimes_{BP_*} E(n)_*) \\
\label{iso 100003} & \cong & \Ext^{*,*}_{(E(n)_*,E(n)_*E(n))}(E(n)_*, E(n)_*(L_{E(n)}X)) \\
\label{iso 100004} & \cong & \Ext^{*,*}_{(E(n)_*,E(n)_*E(n))}(E(n)_*, E(n)_*(X)) ,\end{align}
with isomorphism~\ref{iso 100003} due to $E(n)_*$ being a Landweber-exact $BP_*$-module.
The bigraded $\mathbb{Z}_p$-module~\ref{iso 100004} is the $E_2$-term of the $E(n)$-Adams spectral sequence
converging to $X_{E(n)}^{\widehat{}}$, the $E(n)$-nilpotent completion of $X$.
Hence, $X_{E(n)}^{\widehat{}}$ is contractible. 
Recall that Ravenel's smashing conjecture was proven by showing
that the sphere spectrum is $E(n)$-prenilpotent (see Proposition~8.2.4 and~8.2.5 and Lemma~8.2.7 of \cite{MR1192553}), and by Proposition 3.9 of~\cite{MR551009}, if the sphere spectrum is $E$-prenilpotent, then $E$-nilpotent completion
coincides with $E$-localization. Consequently
$\pt \simeq X_{E(n)}^{\widehat{}} \simeq L_{E(n)}X$, as claimed.

Since the $E(n)$-localization of $X$ is contractible, $BP\smash L_{E(n)}X$ is contractible, and since $BP_*(X)$ is $v_{n-1}$-power-torsion,
again we use Ravenel's localization conjecture/theorem
to get the isomorphism
\[ v_n^{-1}BP_*(X) \cong BP_*(L_{E(n)}X) \cong 0.\]
Since $v_n^{-1}BP_*(X)$ is trivial, all elements of $BP_*(X)$ are 
$v_n$-power-torsion, as claimed.
\end{proof}

\begin{lemma}\label{eventual divisibility of VA-modules}
Let $K/\mathbb{Q}_p$ be a finite, totally ramified extension of degree greater than one.
Then, for each positive integer $n$, either the element $v_{n+1}\in BP_*$ acts with eventual division by $v_n$ on $V^A$ modulo $I_n$,
or the element $v_n\in BP_*$ acts with eventual division by $v_{n+1}$ on $V^A$ modulo $I_n$.
\end{lemma}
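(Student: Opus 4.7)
The plan is to exploit total ramification of $K/\mathbb{Q}_p$ of degree $d > 1$ to extract divisibility structure on the image of $BP_*$ in $V^A$. Writing $\pi$ for a uniformizer of $A$, we have $p = u\pi^d$ for some $u \in A^\times$, so $\pi^d \in I_n V^A$ and the image of $\pi$ in $V^A/I_n V^A$ is nilpotent of order at most $d$. This nilpotence is the engine driving the computation.

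First I would record, via equation~\ref{totramgammamodideal}, that
\[ \gamma(v_j^{\mathbb{Z}_p}) = u_j \pi^{d-1} v_j^A + P_j \qquad \text{for each } j \geq 1, \]
with $u_j \in A^\times$ a unit and $P_j \in (v_1^A, \dots, v_{j-1}^A) V^A$. Since $\gamma(v_j^{\mathbb{Z}_p}) \equiv 0$ in $V^A/I_n V^A$ for each $j < n$, these inductive relations pin down how $\pi$ and the $v_j^A$ interact in the quotient. Next, because $p \equiv 0$ modulo $I_n V^A$, the Frobenius identity $(a+b)^p = a^p + b^p$ applies, giving
\[ (v_n^{\mathbb{Z}_p})^p \equiv u_n^p \pi^{(d-1)p} (v_n^A)^p + P_n^p \pmod{I_n V^A}. \]
Since $(d-1)p \geq d$ whenever $d \geq 2$ and $p \geq 2$, the first summand vanishes, leaving $(v_n^{\mathbb{Z}_p})^p \equiv P_n^p$ modulo $I_n V^A$. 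Iterating Frobenius drives powers of $v_n^{\mathbb{Z}_p}$ into polynomials in strictly lower Hazewinkel generators, constrained by the previously recorded inductive relations on $\pi^{d-1} v_j^A$.

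The main obstacle, and the heart of the proof, is the final dichotomy. After enough Frobenius iterations, either the expression collapses to zero in $V^A/I_n V^A$---in which case $v_n$ acts with eventual $v_{n+1}$-division trivially (take $y = 0$)---or a nonzero residue survives, which must be identified with a multiple of $\gamma(v_{n+1}^{\mathbb{Z}_p})$. The tool for the latter is Hazewinkel's recursion $\pi \ell_{n+1}^A = v_{n+1}^A + \sum_{i=1}^n \ell_i^A (v_{n+1-i}^A)^{p^i}$, which, when multiplied by $p/\pi = u\pi^{d-1}$ and translated through $\gamma$, yields an explicit expansion of $\gamma(v_{n+1}^{\mathbb{Z}_p})$ in the lower generators modulo $I_n V^A$. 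Matching the surviving residue against this expansion, keeping careful track of grading degrees to ensure that the required multiplier lies in nonnegative grading, is the delicate step, and the careful bookkeeping of the nonlinear ``lower order'' terms $P_j$ throughout is where the bulk of the detailed work lies.
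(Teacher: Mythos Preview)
Your approach focuses on the wrong generator and, as a result, manufactures a difficult endgame that the paper avoids entirely. The paper computes $\gamma(v_{n+1})$ (not $\gamma(v_n)$) modulo $I_nV^A$, using the same Hazewinkel recursion you cite, and observes that for $n>1$ one gets simply $\gamma(v_{n+1}) \equiv \frac{p}{\pi}\,v_{n+1}^A$; since $\frac{p}{\pi}=u\pi^{e-1}$ and $(\pi^{e-1})^e$ is divisible by $p$, it follows that $\gamma(v_{n+1})^e\equiv 0$. The $n=1$ case splits further: for $e>2$ the same nilpotence argument works on $\gamma(v_2)$, while for $e=2$ one has $\gamma(v_1)^2=(\frac{p}{\pi})^2(v_1^A)^2\equiv 0$. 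In every case the conclusion is \emph{nilpotence} of one of the two elements on $V^A/I_nV^A$, which is the trivial instance of eventual division (take $y=0$). No ``matching'' is ever required.

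Your proposed dichotomy---either the iterated Frobenius collapses to zero, or the surviving residue is a multiple of $\gamma(v_{n+1})$---has a real gap on the second branch. After you write $(\gamma(v_n))^p\equiv P_n^p$ with $P_n\in(v_1^A,\dots,v_{n-1}^A)V^A$, the generators $v_j^A$ for $j<n$ are \emph{not} in $I_nV^A$ (only the $\gamma(v_j)$ are), so the residue is a genuine nonzero polynomial in the $v_j^A$. You give no mechanism by which such a polynomial would equal $\gamma(v_{n+1})\cdot y$ for some $y\in V^A$; the grading-degree remark does not supply one, and Hazewinkel's recursion for $\gamma(v_{n+1})$ does not obviously produce the specific monomials $P_n^{p^k}$ yields. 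Concretely, for $p=e=n=2$ your iteration gives $(\gamma(v_2))^2\equiv u^2(v_1^A)^6$ modulo $I_2V^A$, and it is not at all clear why $(v_1^A)^6$ should be $\gamma(v_3)$-divisible there. The paper sidesteps this by looking at $\gamma(v_3)$ instead and showing $\gamma(v_3)^2\equiv 0$, which settles the lemma immediately.
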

\begin{proof}
Let $e$ denote the ramification degree of $K/\mathbb{Q}_p$.
Let $\gamma: BP_*\rightarrow V^A$ denote the usual ring map (i.e., the one classifying the underlying formal group law of the universal formal $A$-module) by which we regard $V^A$ as a $BP_*$-module. Let $\ell_1, \ell_2, \dots$ denote the logarithm coefficients for the universal formal group law, 
and let $\ell_1^A, \ell_2^A, \dots$ denote the logarithm coefficients for the universal formal $A$-module.
Then $\gamma(\ell_i) = \ell_i^A$ for all $i$, since gx INSERT REF shows that, for $K/\mathbb{Q}_p$ finite and totally ramified, the underlying formal 
group law of the universal formal $A$-module is the universal formal group law.
Then, by the definition of the Hazewinkel generators of $BP_*$ and of $V^A$, we have the equations
\begin{align*} 
 p\ell_{n+1}
  & = & \sum_{j=0}^{n} (v_{n+1-j}^{p^j}) \ell_j \mbox{\ \ and} \\
 \pi \ell_{n+1}^A 
  & = & \sum_{j=0}^{n} (v_{n+1-j}^{p^j}) \ell_j^A .
\end{align*}
Applying $\gamma$, dividing by $p$ and by $\pi$, setting $\gamma(\ell_{n+1})$ equal to $\ell_{n+1}^A$, and solving for $\gamma(v_{n+1})$ yields the equation
\begin{align} 
\label{equality 400} \gamma(v_{n+1}) 
  & = & \frac{p}{\pi} v_{n+1}^A + \sum_{j=1}^{n}\left( \left(\frac{p}{\pi} (v_{n+1-j}^A)^{p^j} - \gamma(v_{n+1-j})^{p^j}\right) \ell_j^A\right) \end{align}

Simplifying the expression~\ref{equality 400} modulo $I_nV^A$ requires breaking into special cases:
\begin{itemize}
\item If $n>1$, then, for any $i<n$, both $\gamma(v_i)$ and any integral expression in $V^A$ divisible by $\ell_i^A$ in $\mathbb{Q}\otimes_{\mathbb{Z}} V^A$
are zero modulo $I_nV^A$.
Consequently~\ref{equality 400} reduces to
\[ \gamma(v_{n+1})  \equiv \frac{p}{\pi} v_{n+1}^A \mod I_nV^A,\]
hence $v_{n+1}^e 1 = (\frac{p}{\pi} v_{n+1}^A)^e \equiv 0 \mod (p)$,
hence $v_{n+1}$ acts with eventual division by $v_n$ on $V^A$ modulo $I_n$ (since being zero is a trivial case of being divisible by $v_n$).
\item If $n=1$, then, since $\ell_1^A = \frac{1}{\pi}v_1^A$ and $\gamma(v_1) = \frac{p}{\pi} v_1^A$, we have the congruence modulo $I_1V^A$:
\begin{align} 
 \gamma(v_{2}) 
\nonumber  & \equiv & \frac{p}{\pi} v_{2}^A + \frac{p}{\pi} (v_1^A)^p\ell_1^A  - \gamma(v_1)^p\ell_1^A  \\
\label{equality 401}  & \equiv & \frac{p}{\pi} v_{2}^A + \frac{p}{\pi^2} (v_1^A)^{p+1}  - \frac{p^p}{\pi^{p+1}}\gamma(v_1)^{p+1}  \mod I_1V^A.
 \end{align}

Since $K/\mathbb{Q}_p$ was assumed to be totally ramified and of degree $>1$, the ramification degree $e$ of $K/\mathbb{Q}_p$ must be at least $2$.
If $e > 2$, then $\frac{p}{\pi}$ and $\frac{p}{\pi^2}$ and $\frac{p^p}{\pi^{p+1}}$ are all divisible by $\pi$,
hence $\gamma(v_2^e)$ is divisible by $p$, hence zero modulo $I_1V^A$, and $v_2$ acts with eventual division by $v_1$ on $V^A$ modulo $I_1$
(since, again, being zero is a trivial case of being divisible by $v_n$).

Suppose that, on the other hand, the ramification degree $e$ is equal to $2$.
\begin{comment}
We will need a quick observation on powers of congruences modulo $\pi$: suppose that $x \equiv y$ modulo $\pi$ in some commutative $A$-algebra.
Then $x = y+\pi z$ in $A$ for some $z$, hence 
\begin{align} 
 x^p 
\nonumber  & = & (y+\pi z)^p \\
\nonumber  & \equiv & y^p + \pi^p z^p \mod p \\
\label{congruence 500}  & \equiv & y^p  \mod p,\end{align}
since the ramification degree $e$ is now assumed to be $2$ and of course $2\leq p$.

Now $\frac{p}{\pi}$ and $\frac{p^p}{\pi^{p+1}}$ are both divisible by $\pi$,
hence, applying congruence~\ref{congruence 500} to \ref{equality 401}, $v_2^p$ acts on $V^A/p$ as multiplication by 
\begin{align*} 
 \gamma(v_2^p) 
  & \equiv & \sum_{i=0}^p \binom{p}{i} \left( \frac{p}{\pi^2} (v_1^A)^{p+1}\right)^{p-i} \left( \pi \left( \frac{p}{\pi^2} v_2^A - \frac{p^p}{\pi^{p+2}} \gamma(v_1)^{p+1}\right) \right)^{i} \mod p \\
  & \equiv & \left( \frac{p}{\pi^2} (v_1^A)^{p+1}\right)^{p} + \left( \pi \left( \frac{p}{\pi^2} v_2^A - \frac{p^p}{\pi^{p+2}} \gamma(v_1)^{p+1}\right) \right)^{p} \mod p \\
  & \equiv &  \left(\frac{p}{\pi^2}\right)^p (v_1^A)^{p^2+p} \mod p .\end{align*}
On the other hand,\end{comment}
Then $\gamma(v_1) = \frac{p}{\pi} v_1^A$,
so $v_1$ acts as multiplication by $\frac{p}{\pi} v_1^A$ on $V^A$, hence $v_1^2$ acts trivially on $V^A$. Hence
$v_1$ acts with eventual division by $v_2$ on $V^A$ modulo $I_1$
(since, yet again, being zero is a trivial case of being divisible by $v_2$).\end{itemize}
\end{proof}

\begin{lemma}\label{power torsion preserved under extension}
Let $R$ be a graded ring, and let $r\in R$ be a homogeneous element.
Let \[ 0 \rightarrow M^{\prime}\stackrel{f}{\longrightarrow} M \stackrel{g}{\longrightarrow} M^{\prime\prime}  \rightarrow 0\]
be a short exact sequence of graded $R$-modules, and suppose that
$M^{\prime}$ and $M^{\prime\prime}$ are both $r$-power-torsion. Then $M$ is also $r$-power-torsion.
\end{lemma}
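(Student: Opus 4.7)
The plan is to prove this by a straightforward diagram chase on a homogeneous element, using the definition of $r$-power-torsion and the exactness of the sequence.

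First I would take an arbitrary homogeneous element $x \in M$. Pushing forward to $M''$ via $g$, I obtain the homogeneous element $g(x) \in M''$, which by hypothesis is killed by some power of $r$: there exists a nonnegative integer $n$ with $r^n g(x) = 0$. Since $g$ is $R$-linear, this gives $g(r^n x) = 0$, so by exactness $r^n x$ lies in the image of $f$. Choose a homogeneous preimage $y \in M'$ with $f(y) = r^n x$ (such a homogeneous preimage exists because $f$ is a graded $R$-module homomorphism and $r^n x$ is homogeneous, so we may take $y$ to be the component of any preimage in the appropriate grading degree, using that $f$ is injective).

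Next I would apply the hypothesis that $M'$ is $r$-power-torsion to $y$: there exists a nonnegative integer $m$ with $r^m y = 0$. Then
\[
r^{n+m} x \;=\; r^m (r^n x) \;=\; r^m f(y) \;=\; f(r^m y) \;=\; f(0) \;=\; 0.
\]
Since $x$ was an arbitrary homogeneous element of $M$, this shows $M$ is $r$-power-torsion, completing the proof.

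There is no serious obstacle here; the only point that requires a small amount of care is the choice of a \emph{homogeneous} preimage $y$, which is automatic because $f$ is a homomorphism of graded modules and hence respects the grading decomposition. No calculation beyond this diagram chase is needed.
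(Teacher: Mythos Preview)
Your proof is correct and is essentially identical to the paper's own argument: both take a homogeneous element of $M$, use $r$-power-torsion of $M''$ to push $r^n x$ into $\ker g = \im f$, lift to $M'$, and then use $r$-power-torsion of $M'$ to finish. The only difference is that you spell out the existence of a homogeneous preimage, which the paper takes as obvious.
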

\begin{proof}
Let $m\in M$. Then $g(r^n m) = r^ng(m) = 0$ for some nonnegative integer $n$ since $M^{\prime\prime}$ is $r$-power-torsion, so there exists some $\tilde{m}\in M^{\prime}$ such that $f(\tilde{m}) = r^nm$. Then $r^{\ell}\tilde{m} = 0$ for some nonnegative integer $\ell$, since $M^{\prime}$
is $r$-power-torsion. So $0 = f(r^{\ell}\tilde{m}) = r^{\ell}r^nm = r^{\ell+n}m$. So $M$ is $r$-power-torsion.
\end{proof}

\begin{theorem}\label{main thm on tot ram nonexistence}
Let $K/\mathbb{Q}_p$ be a finite, totally ramified extension of degree greater than one.
Let $X$ be a spectrum such that $BP_*(X)$ is the underlying graded $BP_*$-module of a graded $V^A$-module. 
wThen the following statements are all true:
\begin{itemize}
\item For all nonnegative integers $n$, $E(n)_*(X)$ is a $\mathbb{Q}$-vector space.
\item For all nonnegative integers $n$, $L_{E(n)}X$ is a rational spectrum, i.e., a wedge of suspensions of $H\mathbb{Q}$.
\item For all positive integers $n$, $K(n)_*(X) \cong 0$.
\item $X$ is an extension of a rational spectrum by a dissonant spectrum. That is, $X$ sits in a homotopy fiber sequence
\begin{equation}\label{fiber seq 60} cX \rightarrow X \rightarrow LX ,\end{equation}
where $LX$ is a rational spectrum, that is, a wedge product of suspensions of copies of the Eilenberg-Mac Lane spectrum $H\mathbb{Q}$,
and $cX$ is a dissonant spectrum, that is, $E(n)\smash cX$ for all $n>0$.
(Here $LX$ is the usual rationalization of $X$, i.e., $LX \simeq X \smash H\mathbb{Q}$.)
\end{itemize}
\end{theorem}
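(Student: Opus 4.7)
The plan is to reduce all four items to the single assertion that the homotopy fiber $cX$ of the rationalization map $X \to LX := X \wedge H\mathbb{Q}$ is dissonant. (We may assume $X$ is $p$-local; the four conclusions and the hypothesis all survive $p$-localization.) Granting dissonance of $cX$: item 4 is the fiber sequence $cX \to X \to LX$ itself; items 1 and 2 follow because $E(n)_*(cX) \cong 0$ for all $n \geq 0$ (for $n \geq 1$ by dissonance, for $n=0$ because $cX \wedge H\mathbb{Q} \simeq *$, using $H\mathbb{Q} \wedge H\mathbb{Q} \simeq H\mathbb{Q}$) together with the fact that $LX$ is a wedge of suspensions of $H\mathbb{Q}$ and $H\mathbb{Q}$ is itself $E(n)$-local (by the chromatic fracture square and $L_{K(n)}(H\mathbb{Q}) \simeq *$ for $n \geq 1$); and item 3 holds because $K(n)_*(cX) \cong 0$ (dissonance) and $K(n)_*(LX) \cong 0$ (since $K(n) \wedge H\mathbb{Q} \simeq *$).

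Everything then reduces to showing that $cX$ is dissonant, and the first step is to identify $BP_*(cX)$ algebraically. The K\"unneth identification $BP_*(LX) \cong BP_*(X) \otimes_{\mathbb{Z}} \mathbb{Q}$, fed into the long exact sequence for $cX \to X \to LX$, exhibits $BP_*(cX)$ as a two-step extension of $BP_*$-modules
\[ 0 \to C_* \to BP_*(cX) \to T_* \to 0, \]
where $T_*$ is the $p$-power-torsion submodule of $BP_*(X)$ and $C_*$ is (a degree shift of) the cokernel of the rationalization map $BP_*(X) \to BP_*(X) \otimes \mathbb{Q}$. Both $T_*$ and $C_*$ inherit $V^A$-module structures from $BP_*(X)$, and both are $p$-power-torsion; hence so is $BP_*(cX)$ by Lemma~\ref{power torsion preserved under extension}. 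This establishes the $v_0 = p$ base case of the induction to come.

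Next I would induct on $n \geq 1$ to show that $BP_*(cX)$ is $v_n$-power-torsion, invoking Lemma~\ref{localization and eventual division} at each stage. For the inductive step, assume the result for all $i < n$. Lemma~\ref{eventual divisibility of VA-modules} provides a single uniform choice, depending only on $n$, of one of the two eventual $(v_n, v_{n+1})$-divisibility relations modulo $I_n$ in $V^A$, witnessed by a scalar equality in $V^A/I_n V^A$; this equality transports along $\gamma: BP_* \to V^A$ to any $V^A$-module, so the chosen relation holds on both $C_*$ and $T_*$ with a common uniform exponent $N$. A short diagram chase shows the relation propagates across the extension: given $x \in BP_*(cX)$, lifting the witness from $T_*$ leaves a defect in $C_* + I_n BP_*(cX)$, and a second application of the relation in $C_*$ absorbs this defect, yielding that $v_{n+1}^{2N}$ acts with $v_n$-division (respectively, $v_n^{2N}$ with $v_{n+1}$-division) on $BP_*(cX)$ modulo $I_n$. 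Lemma~\ref{localization and eventual division} applied to $cX$ then yields both $L_{E(n)}(cX) \simeq *$ and the $v_n$-power-torsion of $BP_*(cX)$, completing the induction. Proposition~\ref{dissonance lemma} then delivers dissonance of $cX$.

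The main obstacle I expect is the transfer of the eventual divisibility data from the outer terms $T_*$ and $C_*$ of the short exact sequence to the middle term $BP_*(cX)$, which a priori carries only a $BP_*$-module structure and not a $V^A$-module structure. The feasibility of this transfer hinges on the fact that Lemma~\ref{eventual divisibility of VA-modules} supplies a \emph{uniform} choice of divisibility relation on $V^A$ (rather than a choice depending on the element), so that the same relation with a uniform exponent applies to both outer terms simultaneously, and the defects accumulated in passing across the extension can be absorbed by a single further application of that same relation, rather than having to reconcile incompatible choices.
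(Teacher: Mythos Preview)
Your proposal follows essentially the same architecture as the paper's proof: form the fiber sequence $cX\to X\to LX$, identify $BP_*(cX)$ as a $BP_*$-module extension of the $p$-power-torsion submodule of $BP_*(X)$ by (a shift of) the cokernel of $BP_*(X)\to p^{-1}BP_*(X)$, conclude $BP_*(cX)$ is $p$-power-torsion, and then run the induction using Lemma~\ref{eventual divisibility of VA-modules} together with Lemma~\ref{localization and eventual division} to get $v_n$-power-torsion for all $n$, hence dissonance of $cX$ via Proposition~\ref{dissonance lemma}. Your deductions of items 1--3 from the dissonance of $cX$ are correct and differ only cosmetically from the paper's (you use chromatic fracture to see $H\mathbb{Q}$ is $E(n)$-local; the paper invokes the smashing theorem and monochromatic layers).

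On one point you are in fact more careful than the paper. The paper asserts that the short exact sequence $0\to H^1_{(p)}(\Sigma BP_*(X))\to BP_*(cX)\to H^0_{(p)}(BP_*(X))\to 0$ is a short exact sequence of graded $V^A$-modules, and then applies the eventual-divisibility relation directly to $BP_*(cX)$. As you rightly flag, the outer terms inherit $V^A$-structures from $BP_*(X)$, but the middle term is determined topologically and is a priori only a $BP_*$-module; the boundary map in the long exact sequence has no evident reason to be $V^A$-linear. Your workaround---observing that Lemma~\ref{eventual divisibility of VA-modules} produces a \emph{uniform} relation in $V^A$ (indeed, in the cases that arise, an identity of the shape $\gamma(v_{n+1})^e\in I_nV^A$ or $\gamma(v_1)^2\in pV^A$), so that a single exponent works on both outer terms, and then propagating it across the extension with a doubled exponent---is correct and closes this gap cleanly. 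The diagram chase you sketch (lift the $T_*$-witness, absorb the defect into $C_*+I_nBP_*(cX)$, apply the relation once more in $C_*$) goes through exactly as you describe.
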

\begin{proof}
Let $cX$ be as in the statement of the theorem, i.e., $cX$ is the homotopy fiber of the rationalization map $X \rightarrow LX$.
Since $BP_*(LX) \cong p^{-1}BP_*(X)$ and the
map induced in $BP$-homology by the rationalization agrees
up to isomorphism with the 
natural localization map
$i: BP_*(X) \rightarrow p^{-1}BP_*(X)$, we have a short exact sequence 
of graded $BP_*$-modules
\begin{equation}\label{ses 60} 0 \rightarrow \coker \Sigma i \rightarrow BP_*(cX) \rightarrow
 \ker i \rightarrow 0,\end{equation}
i.e., using Propositions~\ref{basic properties of local cohomology 1} and~\ref{basic properties of local cohomology 2}, 
a short exact sequence of graded $BP_*$-modules
\begin{equation}\label{ses 61} 0 \rightarrow H^1_{(p)}(\Sigma BP_*(X)) \rightarrow BP_*(cX) \rightarrow
 H^0_{(p)}(BP_*(X)) \rightarrow 0.\end{equation}
Even better, the short exact sequences~\ref{ses 60} and~\ref{ses 61} are short exact sequences of graded $V^A$-modules, since $p$ is not just an 
homogeneous element of $BP_*$ but also a homogeneous element of $V^A$.

In general, for any commutative graded ring $R$, any homogeneous element $r\in R$, and any graded $R$-module $M$,
the $R$-module $H^1_{(r)}(M) \cong (r^{-1}M)/M$ is $r$-power-torsion, since any homogeneous element $m\in r^{-1}M$ can be multiplied by
some power of $r$ to get an element of $M$, which represents zero in $H^1_{(r)}(M)$. Consequently, $H^1_{(p)}(BP_*(X))$ is a
$p$-power-torsion graded $V^A$-module.
Proposition~\ref{basic properties of local cohomology 2} also
gives us that $H^0_{(p)}(BP_*(X))$ is a $p$-power-torsion graded $V^A$-module.
Now Lemma~\ref{power torsion preserved under extension} gives us that $BP_*(cX)$ is also a $p$-power-torsion graded $V^A$-module.

Consequently,  Lemma~\ref{eventual divisibility of VA-modules} applies to $BP_*(cX)$: 
for each positive $n$, either 
the element $v_{n+1}\in BP_*$ acts with eventual division by $v_n$ on $BP_*(cX)$ modulo $I_n$,
or the element $v_n\in BP_*$ acts with eventual division by $v_{n+1}$ on $BP_*(cX)$ modulo $I_n$..
Consequently,  Lemma~\ref{localization and eventual division} applies to $BP_*(cX)$: 
$BP_*(cX)$ is a $p$-power-torsion graded $V^A$-module and 
either the element $v_{2}\in BP_*$ acts with eventual division by $v_1$ on $BP_*(cX)$ modulo $I_1$,
or the element $v_1\in BP_*$ acts with eventual division by $v_{2}$ on $BP_*(cX)$ modulo $I_1$.
So $L_{E(1)}cX$ is contractible, and $BP_*(cX)$ is $v_1$-power-torsion. This is the initial step in an induction:
if we have already shown that $L_{E(n)}cX$ is contractible and $BP_*(cX)$ is $v_i$-power-torsion for all $i\leq n$,
then Lemma~\ref{localization and eventual division} implies that
$L_{E(n+1)}cX$ is contractible and $BP_*(cX)$ is $v_i$-power-torsion for all $i\leq n+1$.
So $cX$ is acyclic with respect to all Johnson-Wilson theories, hence $cX$ is dissonant,
hence $X$ is an extension of a rational spectrum by a dissonant spectrum, as claimed.

Consequently,  $L_{E(n)}cX$ is contractible for all $n\geq 0$, and $BP_*(cX)$ is $v_n$-power-torsion for all $n\geq 0$.
Consequently, $cX$ is $E(n)$-acyclic, and smashing fiber sequence~\ref{fiber seq 60} with $E(n)$ yields the weak equivalence
$E(n)\smash X \stackrel{\simeq}{\longrightarrow} E(n)\smash LX$. Since $LX$ is a rational spectrum, it splits as a wedge of suspensions of copies of
$H\mathbb{Q}$, and hence $E(n)\smash LX$ splits as a wedge of suspensions of copies of $E(n)\smash HQ$.
Hence, $E(n)_*(X)$ is a $\mathbb{Q}$-vector space, as claimed.

Smashing the fiber sequence~\ref{fiber seq 60} with the $E(n)$-local sphere $L_{E(n)}S$, and using Ravenel's smashing conjecture/theorem (see~\cite{MR1192553}) and the fact that $L_{E(n)}L_{E(0)}X \simeq L_{E(0)}X$, yields the homotopy fiber sequence
\[ L_{E(n)}cX \rightarrow L_{E(n)}X \rightarrow L_{E(0)}X .\]
We have shown that $L_{E(n)}cX$ is contractible, so the localization map $L_{E(n)}X \rightarrow L_{E(0)}X$ is an equivalence,
so $L_{E(n)}X$ is a rational spectrum as claimed. 

Furthermore, recall that, for all positive integers $n$, the notation $\mu_nX$ is used to denote the ``$n$th monochromatic layer of $X$,'' i.e., $\mu_nX$ sits in the
homotopy fiber sequence
\[ \mu_n X \rightarrow L_{E(n)}X \rightarrow L_{E(n-1)}X.\]
So $\mu_nX$ is contractible for all $n>0$. Recall also (from e.g.~\cite{MR1601906}) that there exists natural isomorphisms on the stable homotopy category 
$L_{K(n)}\circ \mu_n \simeq L_{K(n)}$ and $\mu_n\circ L_{K(n)} \simeq \mu_n$; consequently the contractibility of $\mu_nX$ implies the contractibility of 
$L_{K(n)}X$. So $X$ is $K(n)$-acyclic for all $n>0$, so $K(n)_*(X) \cong 0$ for all $n>0$, as claimed.
\end{proof}

\begin{corollary}\label{tot ram corollary for spaces}
Let $K/\mathbb{Q}_p$ be a finite, totally ramified extension of degree greater than one.
Let $X$ be a space such that $BP_*(X)$ is the underlying graded $BP_*$-module of a graded $V^A$-module.
Then $X$ is stably rational, that is,
its stable homotopy groups are all $\mathbb{Q}$-vector spaces.
\end{corollary}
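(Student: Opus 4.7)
The plan is to upgrade Theorem~\ref{main thm on tot ram nonexistence} from a splitting statement up to dissonance into an actual rationality statement, by exploiting the fact that suspension spectra of spaces are much more rigid than arbitrary spectra. Concretely, I would apply Theorem~\ref{main thm on tot ram nonexistence} to the suspension spectrum $\Sigma^{\infty}X$, whose $BP$-homology is $BP_*(X)$ by definition and therefore carries a $V^A$-module structure by hypothesis. That theorem produces a homotopy fiber sequence
\[ cX \longrightarrow \Sigma^{\infty}X \longrightarrow L\Sigma^{\infty}X \]
where $cX$ is dissonant (acyclic with respect to every $E(n)$, equivalently every $K(n)$) and $L\Sigma^{\infty}X \simeq \Sigma^{\infty}X \smash H\mathbb{Q}$ is a wedge of suspensions of $H\mathbb{Q}$.

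Next I would bring in the Hopkins--Ravenel theorem that every $p$-local suspension spectrum is harmonic, i.e. local with respect to the wedge $\bigvee_{n\in\mathbb{N}} E(n)$. This was already invoked in the proof of Corollary~\ref{suspension spectra and VA cor} (in the unramified setting), and applies here without change to $\Sigma^{\infty}X$. The decisive observation is then the general fact that, by definition of Bousfield localization, the space of maps from a dissonant spectrum to a harmonic spectrum is contractible: any such map factors through the $\bigvee_n E(n)$-localization of its source, which is zero. Hence the map $cX \to \Sigma^{\infty}X$ is null.

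Having shown that the map in the fiber sequence is null, the fiber sequence splits, giving an equivalence $\Sigma^{\infty}X \simeq L\Sigma^{\infty}X \vee \Sigma cX$. Then $\Sigma cX$ is a retract of the harmonic spectrum $\Sigma^{\infty}X$, so it is itself harmonic; but it is also dissonant, as the suspension of a dissonant spectrum. Any spectrum that is simultaneously harmonic and dissonant is contractible (its identity map factors as dissonant $\to$ harmonic and is therefore null). Consequently $\Sigma^{\infty}X \simeq L\Sigma^{\infty}X$ is rational, so $\pi_*^s(X)$ is a direct sum of suspensions of $\mathbb{Q}$, as required.

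I do not expect any serious obstacle here: once Theorem~\ref{main thm on tot ram nonexistence} is in hand, the whole argument reduces to the soft interplay between the harmonic localization of a suspension spectrum and the dissonant fiber produced by that theorem. If anything is delicate, it is only the bookkeeping to make sure the $V^A$-module structure assumed on $BP_*(X)$ is the same structure used to invoke Theorem~\ref{main thm on tot ram nonexistence} on $\Sigma^{\infty}X$, which is immediate from $BP_*(\Sigma^{\infty}X) = BP_*(X)$.
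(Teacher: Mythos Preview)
Your approach is essentially the paper's: apply Theorem~\ref{main thm on tot ram nonexistence} to $\Sigma^{\infty}X$ to get the fiber sequence $cX \to \Sigma^{\infty}X \to L\Sigma^{\infty}X$ with $cX$ dissonant, then use Hopkins--Ravenel harmonicity of suspension spectra to kill the dissonant piece. One small slip: when $cX \to \Sigma^{\infty}X$ is null, the cofiber splitting goes the other way, $L\Sigma^{\infty}X \simeq \Sigma^{\infty}X \vee \Sigma cX$, so it is $\Sigma^{\infty}X$ that is a retract of the rational spectrum $L\Sigma^{\infty}X$ (hence rational immediately); your conclusion is unaffected.
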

\begin{proof}
By (gx INSERT RAVENEL-HOPKINS REF), suspension spectra are harmonic,
hence $\Sigma^{\infty} X$ does not admit nontrivial maps from dissonant spectra.
By Theorem~\ref{main thm on tot ram nonexistence}
$\Sigma^{\infty} X$ is rational.
\end{proof}

\begin{corollary}\label{torsion-free and local coh}
Let $K/\mathbb{Q}_p$ be a finite, totally ramified extension of degree greater than one, and with ring of integers $A$.
Let $X$ be a spectrum such that $BP_*(X)$ is the underlying graded $BP_*$-module of a graded $V^A$-module,
and suppose that $BP_*(X)$ is torsion-free as an abelian group, i.e., $BP_*(X)$ is $p$-torsion-free.
Then the local cohomology module $H^1_{(p)}(BP_*(X))$ and the mod $p$ reduction $BP_*(X)/pBP_*(X)$ are both $v_n$-power-torsion $BP_*$-modules for all $n\geq 0$.
\end{corollary}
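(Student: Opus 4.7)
The plan is to exploit the fact that the entire proof of Theorem~\ref{main thm on tot ram nonexistence} already builds the required machinery: it produces a dissonant spectrum $cX$ whose $BP$-homology is assembled from the local cohomology of $BP_*(X)$ at $(p)$, and then shows that dissonant $BP$-homologies are $v_n$-power-torsion for all $n$. The torsion-free hypothesis simply lets us identify $BP_*(cX)$ directly with $H^1_{(p)}(BP_*(X))$ (up to a grading shift), rather than with an extension of $H^1$ by $H^0$.

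First I would consider the homotopy fiber sequence
\[ cX \to X \to LX \]
constructed in the proof of Theorem~\ref{main thm on tot ram nonexistence}, where $LX \simeq X \wedge H\mathbb{Q}$. The induced map on $BP$-homology is the localization $BP_*(X) \to p^{-1}BP_*(X)$, so the long exact sequence in $BP$-homology produces the short exact sequence of graded $BP_*$-modules
\[ 0 \to H^1_{(p)}(\Sigma BP_*(X)) \to BP_*(cX) \to H^0_{(p)}(BP_*(X)) \to 0 \]
already displayed in the proof of Theorem~\ref{main thm on tot ram nonexistence}. The hypothesis that $BP_*(X)$ is $p$-torsion-free forces $H^0_{(p)}(BP_*(X))$ to be zero, since the $p$-power-torsion submodule of a $p$-torsion-free module is trivial. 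Hence $BP_*(cX) \cong H^1_{(p)}(\Sigma BP_*(X))$, which is just a degree-shifted copy of $H^1_{(p)}(BP_*(X))$. Theorem~\ref{main thm on tot ram nonexistence} tells us that $cX$ is dissonant, and Proposition~\ref{dissonance lemma} then says that $BP_*(cX)$ is $v_n$-power-torsion for all $n \geq 0$. Since the $v_n$-power-torsion property is unaffected by internal grading shift, we conclude that $H^1_{(p)}(BP_*(X))$ is $v_n$-power-torsion for all $n \geq 0$.

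For the claim about $BP_*(X)/pBP_*(X)$, I would derive an embedding of this quotient into $H^1_{(p)}(BP_*(X))$ and then cite Lemma~\ref{quotient of power torsion is power torsion} together with the obvious fact that a submodule of a $v_n$-power-torsion module is $v_n$-power-torsion. The embedding comes from writing $M := BP_*(X)$ and applying the snake lemma to the commutative diagram with exact rows
\[ \xymatrix{ 0 \ar[r] & M \ar[r]\ar[d]^{p} & p^{-1}M \ar[r]\ar[d]^{p}_{\cong} & H^1_{(p)}(M) \ar[r]\ar[d]^{p} & 0 \\ 0 \ar[r] & M \ar[r] & p^{-1}M \ar[r] & H^1_{(p)}(M) \ar[r] & 0, } \]
in which the middle vertical map is an isomorphism. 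The snake lemma, together with the vanishing $M[p] = 0$, yields a natural isomorphism
\[ H^1_{(p)}(M)[p] \cong M/pM, \]
exhibiting $BP_*(X)/pBP_*(X)$ as a sub-$BP_*$-module of $H^1_{(p)}(BP_*(X))$.

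There is no real obstacle here: once Theorem~\ref{main thm on tot ram nonexistence} is in hand, the only substantive point is the snake-lemma identification of $M/pM$ as the $p$-torsion of $H^1_{(p)}(M)$, and this is standard. The whole argument is a short corollary squeezing the last drop out of the dissonance statement that was already proven.
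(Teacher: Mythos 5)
Your argument for $H^1_{(p)}(BP_*(X))$ is essentially the paper's own: both use the $p$-torsion-freeness to kill $H^0_{(p)}$, identify $BP_*(cX)$ with (a shift of) $H^1_{(p)}(BP_*(X))$, and invoke the dissonance of $cX$ from Theorem~\ref{main thm on tot ram nonexistence} together with Proposition~\ref{dissonance lemma}. Where you genuinely diverge is the second claim. The paper handles $BP_*(X)/pBP_*(X)$ topologically: it smashes $X$ with the mod $p$ Moore spectrum, notes that $BP_*(X\wedge S/p)\cong BP_*(X)/pBP_*(X)$ is still a $V^A$-module, applies Theorem~\ref{main thm on tot ram nonexistence} a second time, and argues that $X\wedge S/p$ is rationally trivial and hence dissonant, so its $BP$-homology is $v_n$-power-torsion. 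You instead deduce the second claim algebraically from the first: the snake lemma applied to multiplication by $p$ on $0\to M\to p^{-1}M\to H^1_{(p)}(M)\to 0$, with $M[p]=0$ and $p$ acting invertibly on $p^{-1}M$, identifies $M/pM$ with the $p$-torsion submodule $H^1_{(p)}(M)[p]$, and $v_n$-power-torsion clearly passes to submodules. This is correct (and your passing citation of Lemma~\ref{quotient of power torsion is power torsion} is superfluous --- only the submodule closure is needed). Your route is more elementary, avoiding a second pass through the main theorem and the rational-triviality argument; the paper's route yields the additional topological fact that $X\wedge S/p$ is dissonant, which your argument does not recover.
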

\begin{proof}
Since $BP_*(X)$ is assumed to be $p$-torsion-free, its maximal $p$-power-torsion submodule $\Gamma_{(p)}(BP_*(X))$ is trivial, so
from Propositions~\ref{basic properties of local cohomology 1} and~\ref{basic properties of local cohomology 2}
we get the short exact sequence of graded $V^A$-modules
\[ 0 \rightarrow BP_*(X) \rightarrow p^{-1}BP_*(X) \rightarrow H^1_{(p)}(BP_*(X)) \rightarrow 0,\]
and the localization map $BP_*(X) \rightarrow p^{-1}BP_*(X)$ coincides
with the localization map in homotopy $BP_*(X) \rightarrow BP_*(L_{E(0)}X)$.
Consequently, $H^1_{(p)}(BP_*(X)) \cong BP_*(cX)$, the $BP$-homology
of the $E(0)$-acyclization $cX$ of $X$.
Now Theorem~\ref{main thm on tot ram nonexistence}
tells us that $BP_*(cX)$ is $v_n$-power-torsion for all $n\geq 0$.

Similarly, since $BP_*(X)$ is $p$-torsion-free, the $BP$-homology of $X$ smashed with the mod $p$ Moore spectrum is $BP_*(X)/pBP_*(X)$:
\[ BP_*(X \smash S/p) \cong BP_*(X)/pBP_*(X).\]
Of course the mod $p$ reduction of a graded $V^A$ is still a graded $V^A$-module, so
$X\smash S/p$ is a spectrum with the property that $BP_*(X \smash S/p)$ is a graded $V^A$-module, hence by Theorem~\ref{main thm on tot ram nonexistence} $X\smash S/p$ is an extension of a rational spectrum by a dissonant spectrum. However, the homotopy groups of $X\smash S/p$ are all $p$-torsion,
hence rationally trivial, hence $X\smash S/p$ is $E(0)$-acyclic and does not map nontrivially to a rational spectrum; so $X \smash S/p$ is dissonant.
Now Proposition~\ref{dissonance lemma} gives us that $BP_*(X \smash S/p)$ is $v_n$-power-torsion for all $n\geq 0$.
\end{proof}

\begin{corollary}\label{torsion-free and local coh}
Let $K/\mathbb{Q}_p$ be a finite, totally ramified extension of degree greater than one, and with ring of integers $A$.
Let $X$ be a spectrum such that $BP_*(X)$ is the underlying graded $BP_*$-module of a graded $V^A$-module.
Suppose that either $BP_*(X)$ is $p$-power-torsion, or that 
$BP_*(X)$ is torsion-free as an abelian group, i.e., $BP_*(X)$ is $p$-torsion-free.
Then $BP_*(X)$ is not finitely presented as a $BP_*$-module, and $BP_*(X)$ is not finitely presented as a $V^A$-module.
\end{corollary}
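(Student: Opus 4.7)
My plan is to argue the contrapositive: assuming $BP_*(X)$ is nonzero and finitely presented (over $BP_*$ or over $V^A$), I will derive $BP_*(X)\cong 0$, a contradiction. The proof combines the structural consequences of Theorem~\ref{main thm on tot ram nonexistence} with the finite-presentation obstruction of Lemma~\ref{finite presentation and power-torsion}, applied after a suitable reduction modulo $p$ (resp.\ modulo the uniformizer $\pi$ of $A$).

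In Case 1, where $BP_*(X)$ is $p$-power-torsion, I would note that $BP_*(LX)\cong p^{-1}BP_*(X)\cong 0$; since $LX$ is rational, its $BP$-homology is a wedge of suspensions of $BP_*\otimes\mathbb{Q}$, so $BP_*(LX)=0$ forces $LX$ to be contractible. The fibre sequence $cX\to X\to LX$ of Theorem~\ref{main thm on tot ram nonexistence} then collapses to $X\simeq cX$, which is dissonant, and Proposition~\ref{dissonance lemma} yields that $BP_*(X)$ is $v_n$-power-torsion for every $n\geq 0$. Lemma~\ref{finite presentation and power-torsion}, applied to $BP_*\cong\mathbb{Z}_{(p)}[v_1,v_2,\dots]$ with $g(i)=2(p^i-1)$, then forces $BP_*(X)\cong 0$.

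In Case 2, where $BP_*(X)$ is $p$-torsion-free, I would invoke the previous corollary in the excerpt (the one asserting that $BP_*(X)/pBP_*(X)$ is $v_n$-power-torsion for every $n\geq 0$). Finite presentation of $BP_*(X)$ over $BP_*$ descends to the mod-$p$ reduction, so $BP_*(X)/pBP_*(X)$ is a finitely presented module over $BP_*/p\cong\mathbb{F}_p[v_1,v_2,\dots]$, and Lemma~\ref{finite presentation and power-torsion} forces it to vanish. Finite presentation together with $p$-torsion-freeness ensures that each graded piece of $BP_*(X)$ is a finitely generated free $\mathbb{Z}_{(p)}$-module (since each graded piece of $BP_*$ is itself finitely generated free over $\mathbb{Z}_{(p)}$), so vanishing of the mod-$p$ reduction forces $BP_*(X)\cong 0$.

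For the $V^A$-module finite presentation claim, the plan is the analogous one after reducing modulo $\pi$ instead of modulo $p$: finite presentation over $V^A$ yields finite presentation of $BP_*(X)/\pi BP_*(X)$ over $V^A/\pi\cong\mathbb{F}_p[v_1^A,v_2^A,\dots]$, and Lemma~\ref{finite presentation and power-torsion} would force this quotient to vanish, whence $BP_*(X)\cong 0$ by the $\pi$-torsion-free/finitely-generated-free argument of Case~2 (or the $\pi$-power-torsion variant of Case~1). The main obstacle, and the heart of the argument, is establishing that $BP_*(X)/\pi BP_*(X)$ is $v_n^A$-power-torsion for every $n\geq 1$: the $BP_*$-analysis above produces $v_n$-power-torsion only for the elements $v_n\in BP_*$, whose images in $V^A$ satisfy $\gamma(v_h)=(p/\pi)v_h^A+(\text{polynomial in lower-index }v_i^A\text{ and }\gamma(v_i))$ by the Hazewinkel recursion (cf.\ the proof of Lemma~\ref{eventual divisibility of VA-modules}), and when the ramification index $e\geq 3$ the leading coefficient $(p/\pi)$ already lies in $\pi V^A$, so the naive mod-$\pi$ reduction of $\gamma(v_h)$ does not see $v_h^A$ at all. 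I would circumvent this by solving inductively for $v_h^A$ over $V^A[1/\pi]$ in terms of $\gamma(v_1),\dots,\gamma(v_h)$, then exploiting the eventual $\pi$-annihilation of elements of $BP_*(X)$ to clear denominators and recover $v_n^A$-power-torsion; a cleaner alternative would be to replace Lemma~\ref{finite presentation and power-torsion} over $V^A$ with an $A$-typical Morava $K$-theoretic argument, using $K(n)^A_*$ (the Morava $K$-theory of a height-$n$ formal $A$-module) in place of $K(n)$ and transferring the vanishing $K(n)_*(X)=0$ from Theorem~\ref{main thm on tot ram nonexistence} into $K(n)^A_*(X)=0$ directly.
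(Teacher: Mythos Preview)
Your treatment of the $BP_*$--finite-presentation claim in both cases is essentially the paper's argument: in the $p$-power-torsion case you collapse the fibre sequence to $X\simeq cX$, invoke dissonance, and apply Lemma~\ref{finite presentation and power-torsion}; in the $p$-torsion-free case you pass to $BP_*(X)/pBP_*(X)$ via the preceding corollary and apply the same lemma. (Your extra step of deducing $BP_*(X)=0$ from vanishing of its mod-$p$ reduction is unnecessary --- Lemma~\ref{finite presentation and power-torsion} already delivers the contradiction directly --- but it does no harm.)

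Where you diverge from the paper is the $V^A$-finite-presentation claim, and there you have correctly located the obstacle but not the paper's way around it. You try to reduce modulo $\pi$ and then extract $v_n^A$-power-torsion from $v_n$-power-torsion, and you rightly observe that $\gamma(v_n)\equiv 0\pmod{\pi}$ when $e\ge 2$ kills the naive transfer; your proposed fixes (an unspecified inductive denominator-clearing, or invoking an $A$-typical Morava $K$-theory $K(n)^A$) are vague and the second would require machinery not available in the paper. The paper's shortcut is a single structural fact that you are missing: writing $d=[K:\mathbb{Q}_p]$, one has
\[
\gamma(v_{dn})\ \equiv\ (v_n^{A})^{(p^{dn}-1)/(p^n-1)}\quad \text{up to a unit, modulo }(\pi,\,v_1^{A},\dots,v_{n-1}^{A}),
\]
which is exactly the statement that a formal $A$-module of $A$-height $n$ has underlying formal group of $p$-height $dn$. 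Given this, an easy induction on $n$ converts ``$v_m$-power-torsion for all $m$'' into ``$v_n^{A}$-power-torsion for all $n$'' on any $V^A$-module that is already $\pi$-power-torsion, and then Lemma~\ref{finite presentation and power-torsion} (applied over $V^A=A[v_1^A,v_2^A,\dots]$) finishes. Note also that in the $p$-torsion-free case the paper reduces modulo $p$, not modulo $\pi$: since $p$ is a unit times $\pi^e$, the quotient $BP_*(X)/pBP_*(X)$ is automatically $\pi$-power-torsion, which is exactly what the induction above needs to get started. Reducing modulo $\pi$ as you suggest gives you less to work with, not more.
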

\begin{proof}
\begin{itemize}
Suppose first that $BP_*(X)$ is $p$-power-torsion. 
Then by Lemma~\ref{acyclicity lemma}, $L_{E(0)}X$ is contractible,
so $X$ is $E(0)$-acyclic and does not map nontrivially to a rational spectrum. So, by Theorem~\ref{main thm on tot ram nonexistence},
$X$ is dissonant, and $BP_*(X)$ is $v_n$-power-torsion for all $n\geq 0$. Now Lemma~\ref{finite presentation and power-torsion}
gives us that $BP_*(X)$ is not finitely presented as a $BP_*$-module. 

Similarly, by (gx INSERT INTERNAL REF), if $[K :\mathbb{Q}_p] = d$, then $v_{dn}$ acts by a power of $v_n^A$ on $V^A/(\pi, v_1^A, \dots , v_{n-1}^A)$.
Consequently, a finitely presented $V^A$-module cannot be $v_{dn}$-power-torsion for all $n\geq 0$.
So $BP_*(X)$ cannot be finitely presented as a $V^A$-module.
\item 
Now suppose that $BP_*(X)$ is $p$-torsion-free and finitely presented
as a $BP_*$-module.
The $BP_*(X)/pBP_*(X)$ is also finitely presented as a $BP_*$-module, 
and by Corollary~\ref{torsion-free and local coh},
it is also $v_n$-power-torsion for all $n\geq 0$.
This contradicts Lemma~\ref{finite presentation and power-torsion}.

Suppose instead that $BP_*(X)$ is $p$-torsion-free and finitely presented
as a $V^A$-module.
The $BP_*(X)/pBP_*(X)$ is also finitely presented as a $V^A$-module, 
and by Corollary~\ref{torsion-free and local coh},
it is also $v_n$-power-torsion for all $n\geq 0$.
Let $d = [K :\mathbb{Q}_p]$, and let
$\gamma: BP_* \rightarrow V^A$ be the map classifying the underlying formal group law of the universal
formal $A$-module.
By (gx INSERT INTERNAL REF), 
$\gamma(v_{dn})$ is congruent to a power of $v_n^A$ modulo $(\pi, v_1^A, \dots ,v_{n-1}^A)$. Hence, $BP_*(X)/pBP_*(X)$ being $v_n$-power-torsion for all $n\geq 0$ implies that $BP_*(X)/pBP_*(X)$ is $v_n^A$-power-torsion for all $n\geq 0$. Again Lemma~\ref{finite presentation and power-torsion} then gives us a contradiction: $BP_*(X)$ cannot be finitely presented.
\end{itemize}
\end{proof}

\begin{corollary}\label{tot ram nonrealizability}
Let $K/\mathbb{Q}_p$ be a finite, totally ramified extension of degree greater than one, and with ring of integers $A$.
Then there does not exist a spectrum $X$ such that $BP_*(X) \cong V^A$
as graded $BP_*$-modules.
\end{corollary}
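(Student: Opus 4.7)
The plan is to derive a contradiction by showing that $V^A$, viewed as a module over itself, satisfies exactly the hypotheses of Corollary~\ref{torsion-free and local coh} that rule out its being the $BP_*$-homology of any spectrum.

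First I would observe that $V^A \cong A[v_1^A, v_2^A, \dots]$ is a polynomial algebra over the ring of integers $A$; since $A$ is a discrete valuation ring of characteristic zero, its underlying abelian group is free, and hence the underlying abelian group of $V^A$ is free as well. In particular $V^A$ is $p$-torsion-free. Next, as a module over itself, $V^A$ is free of rank one, and so it is trivially finitely presented as a $V^A$-module (indeed, as a $V^A$-algebra too).

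Now suppose, for the sake of contradiction, that a spectrum $X$ exists with $BP_*(X) \cong V^A$ as graded $BP_*$-modules. Then $BP_*(X)$ acquires, via this isomorphism, the structure of a graded $V^A$-module whose underlying $BP_*$-module structure is the given one; it is $p$-torsion-free; and it is finitely presented as a $V^A$-module. These are precisely the hypotheses of Corollary~\ref{torsion-free and local coh} in the $p$-torsion-free case, whose conclusion is that no such spectrum $X$ exists. This contradiction completes the proof.

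There is no real obstacle here: all the heavy lifting has been done in Theorem~\ref{main thm on tot ram nonexistence} and Corollary~\ref{torsion-free and local coh}, which extract from the hypothetical $V^A$-module structure on $BP_*(X)$ strong chromatic constraints (that $X$ is an extension of a rational spectrum by a dissonant one, and hence that $BP_*(X)/pBP_*(X)$ is $v_n$-power-torsion for all $n$, incompatible with finite presentation over $V^A$ via the fact from earlier in the paper that $\gamma(v_{dn})$ is congruent to a power of $v_n^A$ modulo $(\pi, v_1^A, \dots, v_{n-1}^A)$). The present corollary is simply the special case obtained by feeding the module $M = V^A$ itself into that machinery.
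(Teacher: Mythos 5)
Your argument is correct and is essentially the paper's own proof: the corollary is intended as a direct consequence of Corollary~\ref{torsion-free and local coh}, since $V^A$ is $p$-torsion-free and is free of rank one (hence finitely presented) as a module over itself, so a spectrum $X$ with $BP_*(X)\cong V^A$ would contradict that corollary's conclusion. Two small wording points only: finite presentation over $V^A$ is not a \emph{hypothesis} of that corollary but the negation of its conclusion (which is exactly what produces your contradiction), and the underlying abelian group of $A$ (a finite extension of $\mathbb{Z}_p$) is torsion-free but not free as a $\mathbb{Z}$-module --- though all that is needed is $p$-torsion-freeness, which certainly holds.
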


\begin{corollary}\label{nonrealizability of VA}
Let $K/\mathbb{Q}_p$ be a finite field extension of degree greater than one, and with ring of integers $A$.
Then there does not exist a spectrum $X$ such that $BP_*(X) \cong V^A$
as graded $BP_*$-modules.
\end{corollary}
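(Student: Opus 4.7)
The plan is to observe that this corollary is simply the union of the two preceding corollaries, whose hypotheses are complementary and jointly exhaustive. Every finite extension $K/\mathbb{Q}_p$ of degree greater than one is either totally ramified (i.e., the ramification degree $e(K/\mathbb{Q}_p)$ equals $[K:\mathbb{Q}_p]$) or not totally ramified (i.e., $e(K/\mathbb{Q}_p) < [K:\mathbb{Q}_p]$, equivalently the maximal unramified subextension $K_{nr}/\mathbb{Q}_p$ is itself nontrivial).

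In the totally ramified case I would apply Corollary~\ref{tot ram nonrealizability} directly: its hypotheses are exactly that $K/\mathbb{Q}_p$ is finite, totally ramified, and of degree greater than one, and its conclusion is precisely that no spectrum $X$ has $BP_*(X) \cong V^A$ as graded $BP_*$-modules. In the complementary case I would apply Corollary~\ref{unramified VA nonrealizability}, whose hypotheses are precisely that $K/\mathbb{Q}_p$ is finite, of degree greater than one, and not totally ramified; its conclusion, again, is the nonexistence of such a spectrum. Recall that this second corollary rests on Theorem~\ref{unramified dissonance thm}: a realization $BP_*(X) \cong V^A$ would force $V^A$ to be $p$-power-torsion, contradicting the fact that $V^A$ is a polynomial ring over $A$ and hence $p$-torsion-free.

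Since the dichotomy ``totally ramified'' versus ``not totally ramified'' is exhaustive for finite extensions of $\mathbb{Q}_p$, combining the two cases gives the statement. There is no real obstacle; all of the substantive work has already been carried out in the two preceding corollaries, and what remains is essentially a one-line case split on whether $e(K/\mathbb{Q}_p) = [K:\mathbb{Q}_p]$.
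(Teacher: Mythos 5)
Your proposal is correct and is exactly the paper's argument: the paper deduces the statement immediately by combining Corollary~\ref{tot ram nonrealizability} (totally ramified case) with Corollary~\ref{unramified VA nonrealizability} (not totally ramified case), using the same exhaustive dichotomy you describe. Nothing further is needed.
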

\begin{proof}
Immediate from Corollary~\ref{unramified VA nonrealizability} together with Corollary~\ref{tot ram nonrealizability}.
\end{proof}

\subsection{Conclusions on the topological realization problem.}

\begin{theorem}\label{nonrealization omnibus thm}
Let $K/\mathbb{Q}_p$ be a finite field extension with ring of integers $A$.
Suppose that $M$ is a graded $V^A$-module.
Then $M$ is not the underlying $BP_*$-module of the $BP$-homology of a spectrum if any one of the following conditions are satisfied:
\begin{enumerate}
\item $M \cong V^A$.
\item $K/\mathbb{Q}_p$ is not totally ramified, and $M$ is a finitely presented $BP_*$-module.
\item $K/\mathbb{Q}_p$ is unramified, and $M$ is a finitely presented $V^A$-module.
\item $M$ is $p$-power-torsion and $M$ is a finitely presented $V^A$-module.
\item $M$ is $p$-torsion-free and $M$ is a finitely presented $V^A$-module.
\item $K/\mathbb{Q}_p$ is not totally ramified, and there exists some $n$ such that $v_n$ acts without torsion on $M$.
\item $M$ is $p$-torsion-free, and there exists some $n$ such that $v_n$ acts without torsion on $M$.
\item gggxx

 which is either $p$-power-torsion or $p$-torsion free. 
Then

\end{enumerate}
\end{theorem}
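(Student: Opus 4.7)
The plan is to reduce all the listed criteria to the two master theorems proven earlier in this section: Theorem~\ref{unramified dissonance thm}, which forces any spectrum $X$ with $BP_*(X)$ a $V^A$-module in the not-totally-ramified, degree $>1$ case to be dissonant with $BP_*(X)$ being $v_n$-power-torsion for all $n\ge 0$; and Theorem~\ref{main thm on tot ram nonexistence}, which in the totally-ramified, degree $>1$ case forces $X$ to be an extension of a rational spectrum by a dissonant spectrum. The remaining ingredients are Lemma~\ref{finite presentation and power-torsion} (no finitely presented graded module over a polynomial ring on infinitely many positive-degree generators can be $x_n$-power-torsion for all $n$), Lemma~\ref{acyclicity lemma} ($v_n$-power-torsion $BP_*(X)$ forces $L_{E(n)}X\simeq 0$), and the explicit description of the map $\gamma\colon BP_*\to V^A$ established earlier: in the unramified case of residue degree $f$, $\gamma(v_{fn})$ equals $v_n^A$ up to a unit and decomposables while $\gamma(v_j)=0$ for $f\nmid j$; in the totally ramified case, $\gamma(v_n)$ is congruent to a power of $v_n^A$ modulo $(\pi,v_1^A,\ldots,v_{n-1}^A)$.

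Item~(1) is exactly Corollary~\ref{nonrealizability of VA}. For items~(2), (3), and (6), Theorem~\ref{unramified dissonance thm} yields $BP_*(X)$ $v_n$-power-torsion for every $n$: this immediately contradicts (6), and contradicts (2) by a direct application of Lemma~\ref{finite presentation and power-torsion} to $BP_*$; for (3), the polynomial-ring description of $V^A$ from Corollary~\ref{structure of vb over va} combined with the formulas for $\gamma(v_{fn})$ lets one transport $v_{fn}$-power-torsion of $M$ to $v_n^A$-power-torsion, and Lemma~\ref{finite presentation and power-torsion} applied over $V^A$ gives the contradiction.

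For item~(4), the hypothesis that $M$ is $p$-power-torsion together with Lemma~\ref{acyclicity lemma} forces $L_{E(0)}X$ to be contractible. In either the totally-ramified case (via Theorem~\ref{main thm on tot ram nonexistence}: the rational summand of $X$ is $L_{E(0)}X$, which vanishes, so $X$ is dissonant) or the not-totally-ramified case (direct from Theorem~\ref{unramified dissonance thm}), $M$ is then $v_n$-power-torsion for all $n$, and Lemma~\ref{finite presentation and power-torsion} applied via $\gamma$ gives the contradiction. Item~(5) is the $p$-torsion-free analogue: smashing with the mod-$p$ Moore spectrum yields a spectrum $X\wedge S/p$ with $BP$-homology $M/pM$, still finitely presented over $V^A/pV^A$; Corollary~\ref{torsion-free and local coh} (totally ramified) and Theorem~\ref{unramified dissonance thm} applied to $X\wedge S/p$ (not totally ramified) both show $M/pM$ is $v_n$-power-torsion for every $n$, and Lemma~\ref{finite presentation and power-torsion} finishes the argument. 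Item~(7) similarly uses the $p$-torsion-freeness of $M$ to transport a $v_n$-non-torsion element of $M$ to a $v_n$-non-torsion element of $M/pM$ (if $v_n y\in pM$ for $y\in M$, write $v_n y=pz$ and use the non-torsion of $v_n$ to descend), contradicting the $v_n$-power-torsion of $BP_*(X)/pBP_*(X)$ established above.

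The main obstacle will be the mixed-case bookkeeping of items (4), (5), and (7), where one must handle both totally ramified and not-totally-ramified extensions uniformly. The unifying observation is that each master theorem outputs the same qualitative statement, namely $v_n$-power-torsion of $BP_*(X)$ (or of its mod-$p$ reduction) for every $n$, and that the explicit description of $\gamma$ in either case allows this torsion to be pushed onto the polynomial generators of $V^A$ so that Lemma~\ref{finite presentation and power-torsion} applies.
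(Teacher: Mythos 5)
There is a wrinkle you could not have seen: the paper contains no proof of this theorem at all — the statement is an unfinished stub (its final item is a placeholder), and the intended content is evidently just the aggregation of Theorem~\ref{unramified dissonance thm}, Theorem~\ref{main thm on tot ram nonexistence}, Lemma~\ref{finite presentation and power-torsion}, and the surrounding corollaries (\ref{unramified case and finite presentation}, \ref{torsion-free and local coh}, \ref{nonrealizability of VA}). Your skeleton is exactly that aggregation, and for items (1), (2), (3) and (6) your reductions are essentially the ones those corollaries make. But note that every result you invoke carries the hypothesis $[K:\mathbb{Q}_p]>1$, which the statement you are proving omits: for $K=\mathbb{Q}_p$ items (1), (3), (4), (5), (7) are simply false ($V^{\mathbb{Z}_p}=BP_*$ is realized by the sphere, $BP_*/p$ by the mod~$p$ Moore spectrum), and one also needs $M\neq 0$ throughout (the zero module satisfies (2)--(5) and is realized by a point). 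A correct write-up must either add these hypotheses or explain why they are not needed; your proof silently assumes them.

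The more serious problems are in items (4), (5) and especially (7). Item (7) is false as stated in the totally ramified case, and the paper itself supplies the counterexample at the opening of its totally ramified subsection: $M=p^{-1}V^A\cong(\mathbb{Q}_p\otimes_{\mathbb{Z}}BP_*)^{\oplus d}\cong BP_*(H\mathbb{Q}_p^{\vee d})$ is a graded $V^A$-module, $p$-torsion-free, with every $v_n$ acting injectively, and it is realizable. Your argument for (7) breaks exactly at the parenthetical ``write $v_ny=pz$ and use the non-torsion of $v_n$ to descend'': from $v_ny=pz$ you cannot conclude $y\in pM$ without dividing $z$ by $v_n$, which is not available; and when $M=pM$ (as in the rational example) the whole mod-$p$ strategy says nothing, since $M/pM=0$. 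Theorem~\ref{main thm on tot ram nonexistence} only forces $X$ to be rational-by-dissonant, and rational pieces have all $v_n$ acting without torsion, so no argument can salvage (7) without adding a hypothesis such as ``not totally ramified'' (which is item (6)) or $p$-power-torsion. Finally, in (4) and (5) your closing step ``Lemma~\ref{finite presentation and power-torsion} applied via $\gamma$'' is not a direct application in the ramified case: $\gamma(v_{dn})$ agrees with a power of $v_n^A$ only modulo $(\pi,v_1^A,\dots,v_{n-1}^A)$, not in $V^A$ or $V^A/p$, so $v_{dn}$-power-torsion of $M$ (or of $M/pM$) does not literally give $v_n^A$-power-torsion. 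One genuinely needs an intermediate reduction — e.g. use finite presentation to exhibit $M/\pi M$ as base-changed from a finite polynomial subring $\mathbb{F}_q[v_1^A,\dots,v_m^A]$, note that the further quotient by $(v_1^A,\dots,v_m^A)$ is a nonzero free module over $\mathbb{F}_q[v_{m+1}^A,\dots]$, and only then invoke the congruence for $\gamma(v_{d(m+1)})$ to contradict power-torsion. The paper's own draft corollary elides the same point, but it is a real step and your proposal does not supply it.
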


\begin{theorem}{\bf (Nonexistence of Ravenel's algebraic spheres, local case.)}
Let $K$ be a finite extension of $\mathbb{Q}_p$ of degree $>1$, with ring of integers $A$.
Then there does not exist a spectrum $X$ such that $MU_*(X) \cong L^A$.
\end{theorem}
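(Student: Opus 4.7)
The plan is to reduce the claim to the local non-realizability of $V^A$ as the $BP$-homology of a spectrum (Corollary~\ref{nonrealizability of VA}). Suppose for contradiction that $X$ is a spectrum with $MU_*(X)\cong L^A$ as graded $MU_*$-modules. Since $A$ is already $p$-local, so is $L^A$, and thus $MU_*(X)_{(p)}\cong L^A$. The Quillen splitting $MU_{(p)}\simeq\bigvee_\lambda\Sigma^{2|\lambda|}BP$, with $\lambda$ ranging over partitions whose parts lie in $\{i\geq 1:i\neq p^k-1\}$, yields an isomorphism of $BP_*$-modules
\[
 MU_*(X)_{(p)}\cong BP_*(X)\otimes_{\mathbb{Z}_{(p)}}\mathbb{Z}_{(p)}[b_i:i\neq p^k-1],
\]
with $|b_i|=2i$.

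On the algebraic side, the Cartier $A$-typification (Lemma~\ref{p-typification is a homotopy equivalence}), together with the polynomial structure of $L^A$ over $A$, produces a compatible splitting $L^A\cong V^A\otimes_A A[T_m:m\geq 1, m\neq q^k-1]$ with $|T_m|=2m$, where $q$ is the cardinality of the residue field of $A$. The natural compatibility of the Quillen idempotent with the Cartier typification --- both arise from strict isomorphisms to typified formal groups, and the classifying map $MU_*\to L^A$ factors as $MU_*\to BP_*\to V^A\hookrightarrow L^A$ after typification --- lets one cancel the matched polynomial factors. In the totally ramified case $f=1$, where $q=p$ and the two index sets coincide, the cancellation yields $BP_*(X)\cong V^A$, directly contradicting Corollary~\ref{nonrealizability of VA}.

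In the non-totally-ramified case $f>1$, the cancellation instead yields $BP_*(X)\cong V^A\otimes_{\mathbb{Z}_{(p)}}\mathbb{Z}_{(p)}[T_{p^k-1}:f\nmid k]$, so $V^A$ is a direct summand of $BP_*(X)$ as a $BP_*$-module. By Proposition~\ref{computationofunramifiedgamma}, $v_n\in BP_*$ acts by zero on $V^A$ whenever $f\nmid n$, hence $BP_*(X)$ is $v_n$-power-torsion for infinitely many $n$. Proposition~\ref{dissonance lemma} then forces $X$ to be dissonant and $BP_*(X)$ to be $v_n$-power-torsion for every $n\geq 0$. But $v_f\in BP_*$ maps to a unit multiple of $v_1^A$ in $V^A$, and $v_1^A$ acts without torsion on $V^A\cong A[v_1^A,v_2^A,\dots]$, a contradiction.

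The main obstacle will be justifying the cancellation step rigorously: under the given isomorphism $MU_*(X)\cong L^A$, one must identify the Quillen idempotent acting on $MU_*(X)$ --- intrinsically defined via the $MU_*MU$-coaction --- with the algebraic Cartier idempotent on $L^A$. This compatibility should follow from the canonical $MU_*MU$-comodule structure on $L^A$ coming from the map of Hopf algebroids $(MU_*,MU_*MU)\to(L^A,L^AB)$ (an instance of Proposition~\ref{formal module base change thm}), by moduli-theoretic naturality: Cartier $A$-typification of a formal $A$-module restricts on underlying formal group laws to Cartier $p$-typification.
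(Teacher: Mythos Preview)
Your cancellation step is a genuine gap, and the fix you propose in the final paragraph does not close it. The hypothesis is only that $MU_*(X)\cong L^A$ as graded $MU_*$-modules; nothing says this isomorphism respects $MU_*MU$-comodule structures. The Quillen idempotent on $MU_*(X)$ is defined via the comodule structure coming from the \emph{spectrum} $X$, while the Cartier idempotent on $L^A$ is defined via the canonical comodule structure coming from the Hopf algebroid map $(MU_*,MU_*MU)\to(L^A,L^AB)$. Your naturality argument would identify these only if the given module isomorphism were already a comodule isomorphism, which is exactly what is not assumed. Without that, the decomposition $MU_*(X)_{(p)}\cong BP_*(X)\otimes\mathbb{Z}_{(p)}[b_i]$ and the decomposition $L^A\cong V^A\otimes A[T_m]$ need not match up under the given isomorphism, so you cannot read off $BP_*(X)$.

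The paper's argument sidesteps this entirely. Rather than attempting to identify $BP_*(X)$, it builds a new spectrum $Y=\coprod_{n\neq p^k-1}\Sigma^{2n}X$. The spectrum-level Quillen splitting gives $BP_*(Y)\cong\bigoplus_{n\neq p^k-1}\Sigma^{2n}BP_*(X)\cong MU_*(X)$ as $BP_*$-modules, purely topologically, with no comodule compatibility needed. Then the hypothesis gives $BP_*(Y)\cong L^A$, and Cartier typification exhibits $L^A$ as a free $V^A$-module $\bigoplus\Sigma^{2n}V^A$. So $BP_*(Y)$ is the underlying $BP_*$-module of a $V^A$-module which is $p$-torsion-free and on which some $v_n$ acts without torsion; the omnibus nonrealization theorem then rules out $Y$. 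The trick is that you never need to know what $BP_*(X)$ is: you only need that $BP_*$ of \emph{some} spectrum is a $V^A$-module of the forbidden type.
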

\begin{proof}
This is a proof by contrapositive: suppose that a spectrum $X$ exists so that $MU_*(X)\cong L^A$. Let $q$ denote the cardinality of the residue
field of $A$.
We have an isomorphism of graded $V^{A}$-modules
\[ L^{A} \cong \oplus_{n\neq q^k-1}  \Sigma^{2n} V^{A}\]
given by Cartier typification (see 21.7.17 of~\cite{MR506881} for this).
Now, since $MU_*(X)$ is already $p$-local, 
we have the isomorphism of graded $MU_*$-modules
\begin{align*}
 MU_*(X) & \cong & \oplus_{n\neq p^k-1} \Sigma^{2n} BP_*(X) .\end{align*}
Let $Y$ be the coproduct $Y = \coprod_{n\neq p^k-1} \Sigma^{2n} X$,
so that we have isomorphisms of graded $BP_*$-modules
\begin{align*}
 BP_*(Y) 
  & \cong & \oplus_{n\neq p^k-1} \Sigma^{2n} BP_*(X) \\
  & \cong & MU_*(X) \\
  & \cong & L^A \\
  & \cong & \oplus_{n\neq q^k-1} \Sigma^{2n} V^A .\end{align*}
So the graded $V^A$-module $\oplus_{n\neq q^k-1} \Sigma^{2n} V^A$ is the $BP$-homology of the spectrum $Y$. But
$\oplus_{n\neq q^k-1} \Sigma^{2n} V^A$ is $p$-torsion-free and not $v_n$-torsion for any $n>0$.
Hence, by Theorem~\ref{nonrealization omnibus thm}, it cannot be the $BP$-homology of a spectrum.
This is a contradiction, so the spectrum $X$ such that $MU_*(X)\cong L^A$ cannot exist.
\end{proof}

\begin{lemma}\label{chebotarev density lemma}
Let $K/\mathbb{Q}$ be a finite Galois extension of degree $>1$, with ring of integers $A$. Then there exist infinitely many prime numbers 
$p$ such that $p$ neither ramifies nor completely splits in $A$.
\end{lemma}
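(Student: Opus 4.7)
The plan is to deduce this directly from the Chebotarev density theorem applied to the Galois group $G = \Gal(K/\mathbb{Q})$. Recall that for a finite Galois extension of number fields, a rational prime $p$ splits completely in $A$ if and only if $p$ is unramified in $A$ and the Frobenius conjugacy class $\Frob_p \subseteq G$ is the identity class $\{e\}$. So to exhibit infinitely many primes which are neither ramified nor completely split, it suffices to exhibit infinitely many unramified primes whose Frobenius class is not $\{e\}$.

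First I would note the standard fact that only finitely many rational primes ramify in $A$, namely those dividing the discriminant of $K/\mathbb{Q}$, so only finitely many primes need to be excluded on the ramification side. Next, since $[K:\mathbb{Q}] = |G| > 1$ by hypothesis, we may choose a non-identity element $g \in G$ and let $C \subseteq G$ denote its conjugacy class. By the Chebotarev density theorem (applied to the Galois extension $K/\mathbb{Q}$), the set
\[ \{ p : p \text{ unramified in } A,\ \Frob_p = C\} \]
has Dirichlet density $|C|/|G| > 0$, and in particular is infinite.

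Each such prime $p$ is, by construction, unramified in $A$ and has nontrivial Frobenius class, hence does not split completely in $A$. This gives the desired infinite family, proving the lemma.

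There is essentially no obstacle here; the result is a completely standard consequence of Chebotarev, and the only minor bookkeeping is remembering that complete splitting in a Galois extension is characterized by trivial Frobenius (together with being unramified), which is a textbook fact (see e.g.\ any standard reference on algebraic number theory).
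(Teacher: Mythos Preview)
Your proof is correct and takes essentially the same approach as the paper: both invoke the Chebotarev density theorem, with the paper noting that completely split primes have density $1/d < 1$ (so the complement has positive density), while you instead pick a specific nontrivial conjugacy class with density $|C|/|G| > 0$. These are equivalent routes through Chebotarev, and the bookkeeping about ramified primes being finite is identical.
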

\begin{proof}
Let $d = [K : \mathbb{Q}]$.
By the \v{C}ebotarev density theorem, the density of the primes of $\mathbb{Z}$ that split completely in $A$ is $1/d$. Hence, the set of primes 
of $\mathbb{Z}$ that do not split completely in $A$ is positive, hence there are infinitely many such primes, and only finitely many of them 
(namely, the divisors of the discriminant)
can ramify in $A$.
\end{proof}

\begin{lemma}\label{completion lemma}
Let $A$ be a Noetherian commutative ring, 
and let $\underline{m}_1, \dots ,\underline{m}_n$ be a finite set of maximal ideals of $A$.
Let $\underline{m}$ denote the product of these ideals,
\[ \underline{m} = \underline{m}_1\cdot \underline{m}_2\cdot \dots \cdot \underline{m}_n.\]
Suppose that the ring $A/\underline{m}$ is Artinian.
Then there is an isomorphism of $A$-algebras 
\[ \hat{A}_{\underline{m}} \cong \times_{i=1}^n \underline{A}_{\underline{m}_i}\]
between the $\underline{m}$-adic completion of $A$ and the product of the $\underline{m}_i$-adic completions of $A$ for all $i$
between $1$ and $n$.

Furthermore, for any finitely generated $A$-module $M$, the $\underline{m}$-adic completion of $M$ is isomorphic to the
product of the $\underline{m}_i$-adic completion of $M$ for all $i$ between $1$ and $n$, i.e.,
\[ \hat{M}_{\underline{m}} \cong \times_{i=1}^n \hat{M}_{\underline{m}_i}.\]
This isomorphism is natural in the choice of finitely generated $A$-module $M$.
\end{lemma}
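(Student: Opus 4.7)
The plan is to prove the ring statement first by applying the Chinese Remainder Theorem to the powers of $\underline{m}$, then to deduce the module statement by the standard identification $\hat{M}_I \cong M \otimes_A \hat{A}_I$ for finitely generated modules over Noetherian rings.

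First, since $\underline{m}_1, \ldots, \underline{m}_n$ are distinct maximal ideals of $A$, they are pairwise coprime, i.e., $\underline{m}_i + \underline{m}_j = A$ for $i \neq j$. It is a standard fact (an easy induction from the $n=2$ case) that pairwise coprimality of a finite collection of ideals is preserved by taking powers: for any positive integer $k$, the ideals $\underline{m}_1^k, \ldots, \underline{m}_n^k$ remain pairwise coprime. Consequently, for each $k$, the product equals the intersection,
\[ \underline{m}^k = \underline{m}_1^k \cdot \underline{m}_2^k \cdots \underline{m}_n^k = \bigcap_{i=1}^n \underline{m}_i^k, \]
and the Chinese Remainder Theorem yields an isomorphism of $A$-algebras $A/\underline{m}^k \stackrel{\cong}{\longrightarrow} \prod_{i=1}^n A/\underline{m}_i^k$, natural in $k$ with respect to the evident quotient maps as $k$ varies.

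Next, I would pass to the inverse limit as $k \to \infty$. Since inverse limits commute with finite products, this gives an isomorphism of $A$-algebras
\[ \hat{A}_{\underline{m}} = \lim_{k\to \infty} A/\underline{m}^k \stackrel{\cong}{\longrightarrow} \lim_{k\to \infty} \prod_{i=1}^n A/\underline{m}_i^k \cong \prod_{i=1}^n \lim_{k\to\infty} A/\underline{m}_i^k = \prod_{i=1}^n \hat{A}_{\underline{m}_i}, \]
establishing the first claim of the lemma.

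For the module statement, I would invoke the standard result (chapter~10 of~\cite{MR0242802}) that for a Noetherian commutative ring $A$, a finitely generated ideal $I \subseteq A$, and a finitely generated $A$-module $M$, the natural map $M \otimes_A \hat{A}_I \rightarrow \hat{M}_I$ is an isomorphism. Applying this with $I = \underline{m}$, and using that tensor product with $M$ commutes with \emph{finite} products in its right-hand argument, we obtain the chain of natural isomorphisms of $A$-modules
\[ \hat{M}_{\underline{m}} \cong M \otimes_A \hat{A}_{\underline{m}} \cong M \otimes_A \prod_{i=1}^n \hat{A}_{\underline{m}_i} \cong \prod_{i=1}^n \left( M \otimes_A \hat{A}_{\underline{m}_i} \right) \cong \prod_{i=1}^n \hat{M}_{\underline{m}_i}, \]
with naturality in $M$ being automatic at each step. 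The Artinian hypothesis on $A/\underline{m}$ is not strictly needed for the argument as presented (it is automatic since $A/\underline{m} \cong \prod_i A/\underline{m}_i$ is a finite product of fields), but if preferred it can be used to invoke standard finiteness properties of the completions $\hat{A}_{\underline{m}_i}$. No step presents a real obstacle; this lemma is essentially a packaging of the Chinese Remainder Theorem with the behavior of completion for finitely generated modules over Noetherian rings.
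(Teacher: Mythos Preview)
Your proof is correct and reaches the same conclusion by essentially the same two-step structure as the paper (ring statement first, then module statement via $\hat{M}_I \cong M\otimes_A \hat{A}_I$), but with a genuinely different justification for the key isomorphism $A/\underline{m}^k \cong \prod_i A/\underline{m}_i^k$. You invoke the Chinese Remainder Theorem directly, using pairwise coprimality of powers of distinct maximal ideals; the paper instead argues that each $A/\underline{m}^j$ is Artinian (as a nilpotent extension of the assumed Artinian $A/\underline{m}$) and then applies the structure theorem that an Artinian ring splits as the product of its localizations at its maximal ideals (Theorem~8.7 in~\cite{MR0242802}). Your route is more elementary and, as you correctly observe, shows that the Artinian hypothesis on $A/\underline{m}$ is redundant; the paper's route explains why that hypothesis was stated, since it is the input to the Artinian structure theorem. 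The module step and the passage through finite products versus finite direct sums are handled identically in both arguments.
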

\begin{proof}
Since $M$ is finitely generated and $A$ is Noetherian, we have an isomorphism
\begin{equation}\label{smashing iso 2} \hat{M}_{I} \cong \hat{A}_I \otimes_A M\end{equation}
for any choice of ideal $I$ in $A$,
and isomorphism~\ref{smashing iso 2} is natural in the choice of finitely generated $A$-module $M$. 
(This is classical; see e.g. Proposition~10.13 of~\cite{MR0242802}, or consult any number of other algebra textbooks.)

Now, for any positive integer $i$, the kernel of the $A$-algebra homomorphism
\[ A/\underline{m}^j \rightarrow A/\underline{m} \]
is $\underline{m}/\underline{m}^j$, a nilpotent ideal in $A/\underline{m}_j$;
hence $A/\underline{m}^j$ is a nilpotent extension of an Artinian ring for all $j\in\mathbb{N}$.
Hence, $A/\underline{m}^j$ is Artinian for all $j\in\mathbb{N}$.

Now every Artinian ring splits as the Cartesian product of its localizations at its maximal ideals (this is Theorem~8.7 in \cite{MR0242802}, for example),
so we have an isomorphism of rings $A/\underline{m}^j \cong \times_{i=1}^n A/\underline{m}_i^j$,
and on taking limits, isomorphisms of rings
\begin{align*} 
 \hat{A}_{\underline{m}} 
  & \cong & \lim_{j\rightarrow\infty} A/\underline{m}^j \\
  & \cong & \lim_{j\rightarrow\infty} \left( \times_{i=1}^n A/\underline{m}^j_i\right) \\
  & \cong & \times_{i=1}^n\lim_{j\rightarrow\infty} A/\underline{m}^j_i \\
  & \cong & \times_{i=1}^n \hat{A}_{\underline{m}_i},\end{align*}
hence isomorphisms of $A$-modules
\begin{align*}
 \hat{M}_{\underline{m}} 
  & \cong & \hat{A}_{\underline{m}}\otimes_A M \\
  & \cong & \left( \times_{i=1}^n\hat{A}_{\underline{m}_i}\right)\otimes_A M \\
  & \cong & \left( \oplus_{i=1}^n\hat{A}_{\underline{m}_i}\right)\otimes_A M \\
  & \cong & \oplus_{i=1}^n\left( \hat{A}_{\underline{m}_i}\otimes_A M\right) \\
  & \cong & \oplus_{i=1}^n\left( \hat{M}_{\underline{m}_i}\right) \\
  & \cong & \times_{i=1}^n\left( \hat{M}_{\underline{m}_i}\right), \end{align*}
all natural in $M$.
(The switch from $\times$ to $\oplus$ is because coproducts commute with the tensor product of modules;
and the product in question happens to be a finite product of modules, hence it is isomorphic to a finite coproduct of modules.)
\end{proof}

\begin{theorem}{\bf (Nonexistence of Ravenel's algebraic spheres, global case.)}\label{nonexistence of alg spheres global case}
Let $K$ be a finite Galois extension of $\mathbb{Q}$ of degree $>1$, with ring of integers $A$.
Then there does not exist a spectrum $X$ such that $MU_*(X) \cong L^A$.
\end{theorem}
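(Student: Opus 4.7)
The strategy is to reduce the global statement to the local case of the same theorem by $p$-adic completion at a carefully chosen rational prime. Suppose, for contradiction, that a spectrum $X$ exists with $MU_*(X)\cong L^A$. By Lemma~\ref{chebotarev density lemma}, we may select a prime $p\in\mathbb{Z}$ that neither ramifies nor splits completely in $A$; let $\mathfrak{p}_1,\ldots,\mathfrak{p}_n$ be the primes of $A$ above $p$. Because $K/\mathbb{Q}$ is Galois, the local degrees $[\hat{A}_{\mathfrak{p}_i}:\hat{\mathbb{Z}}_p] = ef$ agree and equal $[K:\mathbb{Q}]/n$; the choice of $p$ (so that $n<[K:\mathbb{Q}]$) therefore guarantees $[\hat{A}_{\mathfrak{p}_i}:\hat{\mathbb{Z}}_p]>1$ for every $i$.

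Next I would pass to $p$-completions and identify the resulting algebraic structure. Since $A$ is finitely generated as an abelian group, Theorem~\ref{number ring iso 0} shows that each grading degree of $L^A$ is finitely generated over $\mathbb{Z}$, and standard convergence arguments then yield $MU_*(X^{\hat{}}_p)\cong (L^A)^{\hat{}}_p$. A $p$-completion analog of Theorem~\ref{lazard ring localization iso}, proved by exactly the same degree-by-degree analysis of formal $A$-module $n$-buds via the Drinfeld presentation (Proposition~\ref{drinfeld presentation}) now applied in the full subcategory of $p$-adically complete $A$-algebras, identifies $(L^A)^{\hat{}}_p$ with $L^{\hat{A}_p}$. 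By Lemma~\ref{completion lemma}, $\hat{A}_p\cong \prod_{i=1}^n \hat{A}_{\mathfrak{p}_i}$, and a short universal-property argument gives $L^{\hat{A}_p}\cong \prod_{i=1}^n L^{\hat{A}_{\mathfrak{p}_i}}$: the orthogonal idempotents $e_i$ of $\hat{A}_p$ act under $\rho$ as idempotent endomorphisms of the universal formal group, and by a simple induction on power-series degree any idempotent endomorphism of a one-dimensional formal group that is congruent to $0$ modulo $X^2$ must itself be $0$, so each formal $\hat{A}_p$-module over an $\hat{A}_p$-algebra splits across the idempotents as a product of formal $\hat{A}_{\mathfrak{p}_i}$-modules.

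The final step is to promote this algebraic splitting to the spectrum level: the idempotents $e_i\in\hat{A}_p\subseteq MU_*(X^{\hat{}}_p)$ act by orthogonal idempotent $MU_*$-module endomorphisms on $MU_*(X^{\hat{}}_p)$, and lifting them to idempotent self-maps of $X^{\hat{}}_p$ produces a wedge decomposition $X^{\hat{}}_p\simeq \bigvee_{i=1}^n Y_i$ with $MU_*(Y_i)\cong L^{\hat{A}_{\mathfrak{p}_i}}$. Any one of the resulting summands $Y_i$ then contradicts the local case of this theorem, since $[\hat{A}_{\mathfrak{p}_i}:\hat{\mathbb{Z}}_p]>1$. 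I expect the main obstacle to be precisely this idempotent-lifting step: one needs the Hurewicz-type map $[X^{\hat{}}_p,X^{\hat{}}_p]\to\mathrm{End}_{MU_*}(MU_*(X^{\hat{}}_p))$ to hit the relevant orthogonal idempotents. I would approach this via a $MU$-based Künneth/Universal Coefficient argument, exploiting the explicit and very clean presentation of $L^{\hat{A}_p}$ as a finite product of tensor products of Rees algebras on projective ideals (Theorem~\ref{number ring iso 0}), and using that in the category of $p$-complete spectra, once a pair of orthogonal idempotents is realized on the mapping spectrum they automatically split the object as a wedge.
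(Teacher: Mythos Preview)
Your overall architecture matches the paper's: argue by contradiction, use Lemma~\ref{chebotarev density lemma} to pick a prime $p$ that neither ramifies nor splits completely, $p$-complete, and decompose algebraically via Lemma~\ref{completion lemma}. The divergence is at your final step, and it is exactly the step you flag as the obstacle.

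The paper does \emph{not} lift idempotents or split the spectrum at all. The key observation you are missing is that, because $K/\mathbb{Q}$ is Galois and $p$ is unramified, all the completions $\hat{A}_{\mathfrak{p}_i}$ are isomorphic to a single unramified local ring $\hat{A}_{\mathfrak{p}}$ with $[\hat{A}_{\mathfrak{p}}:\hat{\mathbb{Z}}_p]=f_p>1$. Hence the product $\prod_i L^{\hat{A}_{\mathfrak{p}_i}}$, and therefore $(L^A)^{\hat{}}_p$, is already a module over $V^{\hat{A}_{\mathfrak{p}}}$ for this one local ring (via Cartier typification applied factorwise). The paper then builds a single spectrum $Y$ out of $X$ (a wedge of suspensions smashed with the $p$-complete sphere) whose $BP$-homology is this $V^{\hat{A}_{\mathfrak{p}}}$-module, and applies Theorem~\ref{unramified dissonance thm} directly to $Y$: that theorem only needs $BP_*(Y)$ to carry a $V^{\hat{A}_{\mathfrak{p}}}$-module structure extending its $BP_*$-module structure, which forces $BP_*(Y)$ to be $v_n$-power-torsion for all $n$. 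This contradicts the evident non-$p$-power-torsion of a free $V^{\hat{A}_{\mathfrak{p}}}$-module. No splitting of $Y$, no realization of idempotents, and no appeal to the local theorem about $MU_*(-)\cong L^{\hat{A}_{\mathfrak{p}}}$ is required.

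So your idempotent-lifting step is a genuine gap in the sense that you have not justified it, and more importantly it is unnecessary: once you note that the Galois hypothesis makes all local factors isomorphic, the whole completed module is a $V^{\hat{A}_{\mathfrak{p}}}$-module, and the dissonance criterion applies globally without any spectrum-level decomposition.
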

\begin{proof}
This is a proof by contrapositive: suppose that a spectrum $X$ exists so that $MU_*(X)\cong L^A$, and use 
Lemma~\ref{chebotarev density lemma} to choose a prime number $p$ such that $p$ neither ramifies nor completely splits in $A$.
Write $\underline{p}_1, \underline{p}_2, \dots ,\underline{p}_n$ for the set of primes of $A$ over $p$,
and for each $i$ between $1$ and $n$,
write $K(\hat{A}_{\underline{p}_i})$ for the fraction field of the $\underline{p}_i$-adic completion
$\hat{A}_{\underline{p}_i}$.  Let $q_i$ denote the cardinality of the residue
field of $\hat{A}_{\underline{p}_i}$.
We have an isomorphism of graded $V^{\hat{A}_{\underline{p}_i}}$-modules
\[ L^{\hat{A}_{\underline{p}}} \cong \oplus_{n\neq q^k-1}  \Sigma^{2n} V^{\hat{A}_{\underline{p}}}\]
given by Cartier typification (see 21.7.17 of~\cite{MR506881} for this).
By Lemma~\ref{completion lemma}, $\hat{A}_{p} \cong \times_{i=1}^n \hat{A}_{\underline{p}_i}$,
hence
\begin{align}
\nonumber (L^{A})^{\widehat{}}_{p}
  & \cong &  (L^{A})^{\widehat{}}_{\underline{p}_1\cdot \underline{p}_2\cdot \dots \cdot \underline{p}_n} \\
\label{iso 7100}  & \cong &  \times_{i=1}^n (L^{A})^{\widehat{}}_{\underline{p}_i} \mbox{,\ and}\\
\nonumber (V^{A})^{\widehat{}}_{p}
  & \cong &  (V^{A})^{\widehat{}}_{\underline{p}_1\cdot \underline{p}_2\cdot \dots \cdot \underline{p}_n} \\
\label{iso 7101}  & \cong &  \times_{i=1}^n (V^{A})^{\widehat{}}_{\underline{p}_i} ,
\end{align}
in both cases using the fact that $L^A$ and $V^A$ are each finitely-generated $A$-modules {\em in each grading degree},
even though neither $L^A$ nor $V^A$ are themselves finitely-generated $A$-modules.

Now, because $K/\mathbb{Q}$ was assumed Galois, each of the primes $\underline{p}_i$ in $A$ has the same residue degree;
and since $p$ was assumed to not ramify, this means that the extension of local fields
$K(\hat{A}_{\underline{p}_i})/\mathbb{Q}_p$ is unramified of some residue degree $f_p$, and this residue degree $f_p$ does not depend on $i$.
But up to isomorphism, there is only one unramified extension of $\mathbb{Q}_p$ of a given residue degree $f_p$.
Consequently, neither $K(\hat{A}_{\underline{p}_i})$ nor its ring of integers $\hat{A}_{\underline{p}_i}$ depends, up to isomorphism, on $i$.
So isomorphisms~\ref{iso 7100} and~\ref{iso 7101} yield the isomorphisms
\begin{align}
\nonumber (L^{A})^{\widehat{}}_{p}
\label{}  & \cong &  \left((L^{A})^{\widehat{}}_{\underline{p}}\right)^{\times f_p} \mbox{,\ and}\\
\nonumber (V^{A})^{\widehat{}}_{p}
  & \cong &  \left( (V^{A})^{\widehat{}}_{\underline{p}}\right)^{\times f_p} ,
\end{align}
where $\underline{p}$ is any (it does not matter which one) of the ideals $\underline{p}_1, \dots ,\underline{p}_n$.

Finally, here is a dirty trick: if we write $\hat{S}_p$ for the $p$-complete sphere, i.e., $\hat{S}_p = \holim_{i\rightarrow \infty} S/p^i$,
then $BP_*(X \smash \hat{S}_p) \cong (BP_*(X))^{\widehat{}}_p$ as long as $BP_*(X)$ is a free $\mathbb{Z}_{(p)}$-module in each grading degree
(the relevant K\"{u}nneth spectral sequence immediately collapses),
and consequently 
\begin{align*} 
 (MU_*(X))^{\widehat{}}_p 
  & \cong & \oplus_{n\neq p^k-1} \Sigma^{2n} (BP_*(X))^{\widehat{}}_p \\
  & \cong & BP_*\left(\left(\coprod_{n\neq p^k-1} \Sigma^{2n} X\right)\smash \hat{S}_p\right) .\end{align*}
Of course $MU_*(X) \cong L^A$ by assumption, so $BP_*(X)$ is a summand in $(MU_*(X))_{(p)}$, which is a free $\mathbb{Z}_{(p)}$-module
in each grading degree since $(L^A)_{(p)}$ is a free $A_{(p)}$-module in each grading degree, and $A_{(p)} \cong \mathbb{Z}_{(p)}^{[K : \mathbb{Q}]}$
as abelian groups.
So $p$-complete $MU_*(X)$ to get the isomorphisms of graded $MU_*$-modules
\begin{align*}
 BP_*\left(\left(\coprod_{n\neq p^k-1} \Sigma^{2n} X\right)\smash \hat{S}_p\right) 
  & \cong & (MU_*(X))^{\widehat{}}_p \\
  & \cong & (L^A)_p^{\widehat{}} \\
  & \cong &  \left((L^{A})^{\widehat{}}_{\underline{p}}\right)^{\times f_p} \\
  & \cong &  \left(\oplus_{n\neq q^k - 1}\Sigma^{2n} (V^{A})^{\widehat{}}_{\underline{p}}\right)^{\times f_p} ,\end{align*}
where $q$ is the cardinality of the residue field of $\hat{A}_{\underline{p}}$.

Since we assumed that $p$ does not completely split in $A$, the residue degree $f_p$ is greater than one, and hence
the extension of local fields $K(\hat{A}_{\underline{p}})/\mathbb{Q}_p$ has degree $>1$. So Theorem~\ref{unramified dissonance thm} applies:
$\left(\oplus_{n\neq q^k - 1}\Sigma^{2n} (V^{A})^{\widehat{}}_{\underline{p}}\right)^{\times f_p}$ must be a $v_n$-power-torsion $BP_*$-module for all 
$n\geq 0$. But 
$\left(\oplus_{n\neq q^k - 1}\Sigma^{2n} (V^{A})^{\widehat{}}_{\underline{p}}\right)^{\times f_p}$ is a free $V^{\hat{A}_{\underline{p}}}$-module,
hence is not $p$-power-torsion, a contradiction. ggxx INSERT INTERNAL REF TO PROP, STILL REMAINING TO BE PROVEN, THAT COMPLETION COMMUTES WITH $V_{-}$

Hence, there cannot exist $X$ such that $MU_*(X) \cong L^A$.
\end{proof}

ggxx

\section{Appendix on basics of local cohomology.}

In this short appendix I present some of the most basic definitions and
ideas of local cohomology.
The reader who wants something more substantial about
this subject, or to read proofs of the results cited in this appendix, can consult the textbook of Brodmann and Sharp,
\cite{MR3014449}.

First, recall that an ideal $I$ in a commutative graded ring $R$ 
is called {\em homogeneous} if there exists a generating set for $I$
consisting of homogeneous elements.
\begin{definition}
Let $R$ be a commutative graded ring, and let $I$ be a homogeneous ideal of $R$. Let $\grMod(R)$ denote the category of graded $R$-modules
and grading-preserving $R$-module homomorphisms.
We define two functors
\begin{align*} D_I, \Gamma_I: \grMod(R) & \rightarrow & \grMod(R)\end{align*}
as follows:
\begin{align*}
 \Gamma_I(M) & = & \oplus_{n\in\mathbb{Z}}\colim_{n\rightarrow \infty}\hom_{\grMod(R)}(\Sigma^n R/I^n, M) \\
 D_I(M) & = & \oplus_{n\in\mathbb{Z}}\colim_{n\rightarrow \infty}\hom_{\grMod(R)}(\Sigma^n I^n, M), \end{align*}
with natural transformations
\begin{equation}\label{seq of nat transformations} \Gamma_I \rightarrow \id \rightarrow D_I \end{equation}
induced by applying $\oplus_{n\in\mathbb{Z}}\colim_{n\rightarrow \infty}\hom_{\grMod(R)}(\Sigma^n -, M)$ to the short exact sequence of graded $R$-modules
\begin{equation*}\label{} 0 \rightarrow I^n \rightarrow R \rightarrow R/I^n \rightarrow 0.\end{equation*}

The right-derived functors $R^*\Gamma_I$ of the functor $\Gamma_I$ are called {\em local cohomology at $I$}, and written
\[ H^n_I(M) = (R^n(\Gamma_I))(M).\]
\end{definition}

\begin{prop}
Let $R$ be a commutative graded ring, and let $I$ be a homogeneous ideal of $R$. 
The natural transformations~\ref{seq of nat transformations}
induce an exact sequence of $R$-modules
\[ 0 \rightarrow \Gamma_I(M) \rightarrow M \rightarrow D_I(M)
 \rightarrow H^1_I(M) \rightarrow 0,\]
natural in $M$, 
and an isomorphism 
\[ H^n_I(M) \cong (R^{n-1}(D_I))(M) \]
for all $n\geq 2$, also natural in $M$.
\end{prop}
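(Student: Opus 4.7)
The plan is to extract everything from the long exact sequence obtained by applying $\hom_{\grMod(R)}(\Sigma^\bullet -, M)$ to the defining short exact sequences
\[ 0 \to I^n \to R \to R/I^n \to 0, \]
and then passing to the filtered colimit over $n$ (and taking the direct sum over internal grading shifts). First I would observe that $R$ itself is a projective object in $\grMod(R)$, so $\Ext^{i}_{\grMod(R)}(\Sigma^\bullet R, M)=0$ for $i\geq 1$, and $\bigoplus_{t\in\mathbb{Z}}\hom_{\grMod(R)}(\Sigma^{t} R, M)\cong M$ canonically. Feeding these into the long exact Ext-sequence of the above short exact sequence yields, for each $n$, a four-term exact sequence
\[ 0 \to \tsum_t\hom(\Sigma^t R/I^n, M) \to M \to \tsum_t\hom(\Sigma^t I^n, M) \to \tsum_t\Ext^1(\Sigma^t R/I^n, M) \to 0 \]
together with connecting isomorphisms
\[ \tsum_t\Ext^{i-1}(\Sigma^t I^n, M) \stackrel{\cong}{\longrightarrow} \tsum_t\Ext^{i}(\Sigma^t R/I^n, M) \]
for every $i\geq 2$, each natural in $M$ and compatible with the transition maps as $n$ varies.

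Next I would pass to the colimit as $n\to\infty$. Filtered colimits are exact in $\grMod(R)$ and commute with direct sums, so the four-term exact sequence passes to an exact sequence whose outer terms are precisely $\Gamma_I(M)$ and $H^1_I(M)$, by the identifications I would establish in the following step; the isomorphisms for $i\geq 2$ likewise survive the colimit. The two maps in the middle, $\Gamma_I(M)\to M$ and $M\to D_I(M)$, are by inspection exactly the natural transformations displayed in~\ref{seq of nat transformations}, which handles the first statement of the proposition.

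The step that requires a little care is the identification of the derived functors. To see that $H^i_I(M) = (R^i\Gamma_I)(M)$ is naturally isomorphic to $\colim_n\bigoplus_t\Ext^{i}_{\grMod(R)}(\Sigma^t R/I^n, M)$, and similarly $(R^i D_I)(M)\cong \colim_n\bigoplus_t\Ext^{i}_{\grMod(R)}(\Sigma^t I^n, M)$, I would pick an injective resolution $M\to J^\bullet$ in $\grMod(R)$ and note that each $\hom_{\grMod(R)}(\Sigma^t R/I^n, J^\bullet)$ computes $\Ext^\bullet(\Sigma^t R/I^n, M)$, so that the filtered colimit (over $n$) of these cochain complexes, which is still a complex of abelian groups computing the cohomology in question because filtered colimits commute with cohomology, is by definition the complex whose cohomology is $R^\bullet \Gamma_I(M)$; the same argument handles $D_I$. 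Combining this identification with the connecting isomorphisms established in the first step immediately gives $H^i_I(M)\cong (R^{i-1}D_I)(M)$ for $i\geq 2$.

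The only real obstacle is bookkeeping: one must check that the comparison maps obtained from the long exact sequence are compatible under the transition maps $I^{n+1}\hookrightarrow I^n$ and $R/I^{n+1}\twoheadrightarrow R/I^n$, so that the colimit really produces the natural transformations $\Gamma_I\to \id\to D_I$ defined in~\ref{seq of nat transformations} rather than some other comparison. This compatibility is automatic from the naturality of the Ext long exact sequence in the short exact sequence $0\to I^n\to R\to R/I^n\to 0$, and naturality in $M$ is then inherited directly from naturality of the Ext long exact sequence.
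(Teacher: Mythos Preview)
Your argument is the standard one and is correct. The paper does not actually give its own proof of this proposition: the appendix on local cohomology explicitly states that the results there are cited as basic facts, with proofs deferred to the textbook of Brodmann and Sharp, and no proof appears after the statement. So there is nothing in the paper to compare against beyond noting that your approach is exactly the classical one found in that reference: apply the long exact $\Ext$-sequence to $0\to I^n\to R\to R/I^n\to 0$, use projectivity of $R$ to kill the higher $\Ext$ groups involving $R$, and then pass to the filtered colimit using exactness of filtered colimits in $\grMod(R)$.

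One small remark on presentation rather than substance: your identification of $R^i\Gamma_I$ and $R^iD_I$ with the colimits of $\Ext$ groups is the heart of the matter, and you handle it correctly by computing both sides via the same injective resolution and invoking commutation of filtered colimits with cohomology. It may be worth stating explicitly that injective objects in $\grMod(R)$ exist (the category has enough injectives), since you silently use this when choosing $J^\bullet$; this is standard but is the one unstated hypothesis your argument rests on.
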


\begin{prop}\label{basic properties of local cohomology 1}
Let $R$ be a commutative graded ring, and let $I$ be a homogeneous ideal of $R$. 
The natural transformations~\ref{seq of nat transformations}
induce an exact sequence of $R$-modules
\begin{equation}\label{natural seq of transformations 1} 0 \rightarrow \Gamma_I(M) \rightarrow M \rightarrow D_I(M)
 \rightarrow H^1_I(M) \rightarrow 0,\end{equation}
natural in $M$, 
and an isomorphism 
\[ H^n_I(M) \cong (R^{n-1}(D_I))(M) \]
for all $n\geq 2$, also natural in $M$.
\end{prop}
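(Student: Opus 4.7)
My plan is to derive the exact sequence and isomorphisms from the long exact sequences in $\Ext$ associated to the short exact sequences
\[ 0 \rightarrow I^n \rightarrow R \rightarrow R/I^n \rightarrow 0,\]
passing to the filtered colimit over $n$.

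First I would fix $M$ and, for each positive integer $n$, apply the functor $\oplus_{k\in\mathbb{Z}}\hom_{\grMod(R)}(\Sigma^k -, M)$ to the short exact sequence above. Since $R$ is a projective (indeed free) $R$-module, $\Ext^i_{\grMod(R)}(\Sigma^k R, M) \cong 0$ for all $i>0$ and $\oplus_{k} \hom_{\grMod(R)}(\Sigma^k R, M) \cong M$. The resulting long exact sequence in $\Ext$ therefore collapses to a four-term exact sequence
\[ 0 \rightarrow \oplus_{k}\hom(\Sigma^k R/I^n, M) \rightarrow M \rightarrow \oplus_{k}\hom(\Sigma^k I^n, M) \rightarrow \oplus_{k}\Ext^1(\Sigma^k R/I^n, M) \rightarrow 0\]
together with natural isomorphisms $\oplus_{k}\Ext^{j}(\Sigma^k I^n, M) \cong \oplus_{k}\Ext^{j+1}(\Sigma^k R/I^n, M)$ for all $j\geq 1$.

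Next I would take the filtered colimit over $n$ (using the evident maps $I^{n+1}\hookrightarrow I^n$ and $R/I^{n+1}\twoheadrightarrow R/I^n$). Filtered colimits in $\grMod(R)$ are exact, so the four-term exact sequence above yields, in the colimit, exactly the desired four-term exact sequence~\ref{natural seq of transformations 1}, provided one identifies the colimit $\colim_n \oplus_{k}\Ext^1(\Sigma^k R/I^n, M)$ with $H^1_I(M)$, and more generally identifies $\colim_n \oplus_{k}\Ext^{j+1}(\Sigma^k R/I^n, M)$ with $H^{j+1}_I(M)$, and $\colim_n \oplus_{k}\Ext^{j}(\Sigma^k I^n, M)$ with $(R^{j}(D_I))(M)$, for $j\geq 1$. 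These identifications come from commuting the filtered colimit past the right-derived functors; because filtered colimits are exact in $\grMod(R)$, they commute with right-derived functors of left-exact functors, and so $R^j\Gamma_I = \colim_n \oplus_k \Ext^j(\Sigma^k R/I^n,-)$ and $R^jD_I = \colim_n \oplus_k \Ext^j(\Sigma^k I^n, -)$ on the nose.

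The main obstacle, such as it is, is the verification that the natural transformations~\ref{seq of nat transformations} of Proposition~\ref{basic properties of local cohomology 1} do induce the correct connecting maps in the colimit long exact sequence; that is, that the map $\Gamma_I(M)\rightarrow M$ appearing in my derivation coincides with the natural transformation obtained from $\Gamma_I \rightarrow \id$, and similarly for $M \rightarrow D_I(M)$. This is a diagram chase: the maps in the four-term sequence for fixed $n$ are by construction induced by the maps $I^n \rightarrow R \rightarrow R/I^n$, which are exactly the maps used (after applying $\oplus_k \hom(\Sigma^k -, M)$ and taking the colimit) to define the natural transformations~\ref{seq of nat transformations}. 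Once the identification of derived functors with local cohomology is verified, the four-term sequence and the isomorphisms $H^n_I(M)\cong (R^{n-1}(D_I))(M)$ for $n\geq 2$ follow immediately from the collapsed long exact sequence.
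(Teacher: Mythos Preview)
The paper does not actually prove this proposition: the appendix on local cohomology states it as a standard fact and directs the reader to the textbook of Brodmann and Sharp for proofs. So there is nothing to compare your argument against in the paper itself.

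Your argument is the standard one and is correct in outline. One point deserves a slightly more careful phrasing: the sentence ``because filtered colimits are exact in $\grMod(R)$, they commute with right-derived functors of left-exact functors'' is not quite a general principle one can invoke directly. What you actually use is that if $I^\bullet$ is an injective resolution of $M$, then
\[
(R^j\Gamma_I)(M) = H^j(\Gamma_I(I^\bullet)) = H^j\bigl(\colim_n \textstyle\bigoplus_k \hom(\Sigma^k R/I^n, I^\bullet)\bigr) \cong \colim_n H^j\bigl(\textstyle\bigoplus_k \hom(\Sigma^k R/I^n, I^\bullet)\bigr),
\]
the last isomorphism holding because filtered colimits are exact; and the right-hand side is $\colim_n \bigoplus_k \Ext^j(\Sigma^k R/I^n, M)$. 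The same reasoning applies to $D_I$. With that made explicit, the rest of your derivation (the collapse of the long exact sequence using $\Ext^i(R,M)=0$ for $i>0$, and the identification of the maps with the natural transformations~\ref{seq of nat transformations}) goes through as you describe.
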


\begin{prop}\label{basic properties of local cohomology 2}
Let $R$ be a commutative graded ring, and 
let $r\in R$ be a homogeneous element. Let $(r)$ be the principal
ideal generated by $r$.
\begin{itemize}
\item Then $D_{(r)}(M)$ is naturally isomorphic to $r^{-1}M$, and the natural 
transformation $\id \rightarrow D_{(r)}$ from~\ref{seq of nat transformations}
is naturally isomorphic to the standard localization map $M \rightarrow r^{-1}M$.
\item
Furthermore, $\Gamma_{(r)}(M)$ is naturally isomorphic to the
sub-$R$-module of $M$ consisting of the $r$-power-torsion elements,
and the natural transformation $\Gamma_{(r)}\rightarrow \id$
from~\ref{seq of nat transformations} is naturally isomorphic to
the inclusion of that submodule.
\item
Finally, $H^n_{(r)}(M) \cong 0$ for all $n\geq 2$.
\end{itemize}
\end{prop}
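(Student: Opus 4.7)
The proof splits into the three bullet points, and I would treat them in order, since the first two are used to establish the third.

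For the identification $\Gamma_{(r)}(M) \cong \{m \in M : r^k m = 0 \text{ for some } k\}$: a grading-shifted $R$-module map $R/(r^n) \to M$ is uniquely determined by the image of $1$, which must be annihilated by $r^n$, so $\hom(R/(r^n), M) \cong \ker(r^n \colon M \to M)$. The transition $\hom(R/(r^n), M) \to \hom(R/(r^{n+1}), M)$ induced by $R/(r^{n+1}) \twoheadrightarrow R/(r^n)$ corresponds under this identification to the inclusion $\ker(r^n) \hookrightarrow \ker(r^{n+1})$, so the colimit is $\bigcup_n \ker(r^n) = $ the $r$-power torsion; and the natural transformation $\Gamma_{(r)} \to \id$ becomes the evident submodule inclusion.

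For the identification $D_{(r)}(M) \cong r^{-1}M$: the principal ideal $(r^n)$ is generated by $r^n$ with relations $\mathrm{Ann}_R(r^n) \cdot r^n = 0$, so a grading-shifted map $\phi \colon (r^n) \to M$ is specified by $\phi(r^n) = m$ subject to $\mathrm{Ann}_R(r^n) \cdot m = 0$, and the transition $\hom((r^n), M) \to \hom((r^{n+1}), M)$ is $m \mapsto rm$. I would define $\alpha \colon D_{(r)}(M) \to r^{-1}M$ by $\alpha(\phi) = \phi(r^n)/r^n$; compatibility with transitions is immediate because $r\phi(r^n)/r^{n+1} = \phi(r^n)/r^n$. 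Injectivity is easy: if $\phi(r^n)/r^n = 0$ in $r^{-1}M$ then $r^k \phi(r^n) = 0$ for some $k$, i.e.\ $\phi$ restricts to $0$ on $(r^{n+k})$. For surjectivity, given $m/r^n$, one shows that for sufficiently large $\ell$ the element $r^{\ell-n} m$ defines a legitimate map $(r^\ell) \to M$ whose image under $\alpha$ is $m/r^n$; the needed fact is that $\mathrm{Ann}_R(r^\ell) \cdot r^{\ell-n} m$ stabilizes to $0$. Finally, one checks the described map $\id \to D_{(r)}$ sends $m \in M$ to the element of $\hom((r), M)$ given by $r \mapsto rm$, whose image under $\alpha$ is $rm/r = m/1$, i.e.\ the standard localization map.

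For the vanishing $H^n_{(r)}(M) = 0$ when $n \geq 2$: combining the previous two identifications with Proposition~\ref{basic properties of local cohomology 1} yields the exact sequence $0 \to \Gamma_{(r)}(M) \to M \to r^{-1}M \to H^1_{(r)}(M) \to 0$ and the formula $H^n_{(r)}(M) \cong R^{n-1}(D_{(r)})(M)$. Since $D_{(r)}$ is naturally isomorphic to the localization functor $r^{-1}(-)$, which is exact, $D_{(r)}$ carries short exact sequences to short exact sequences; embedding $M$ in an injective $E$ with cokernel $N$, the resulting long exact sequence in $R^\ast D_{(r)}$ shows $R^i D_{(r)}(M) \cong R^{i-1}D_{(r)}(N)$ for $i \geq 1$ together with $R^0 D_{(r)}(E) \twoheadrightarrow R^0 D_{(r)}(N)$, and by induction on $i$ all $R^{i}D_{(r)}$ vanish for $i \geq 1$, so $H^n_{(r)} = 0$ for $n \geq 2$.

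The main obstacle will be the surjectivity argument for $\alpha$: the naive choice $\phi(r^\ell) = r^{\ell-n}m$ requires $\mathrm{Ann}_R(r^\ell)$ to annihilate $r^{\ell-n}m$, and in full generality this need not hold for any single $\ell$. I expect this is handled either by an Artin--Rees-type stabilization argument (if one is willing to assume $R$ Noetherian, which is implicit in many local-cohomology references such as Brodmann--Sharp) or by recasting the whole computation through the Čech complex $0 \to M \to r^{-1}M \to 0$, which manifestly computes the derived functors of $\Gamma_{(r)}$ on any injective resolution.
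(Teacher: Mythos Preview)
The paper does not give a proof of this proposition. It appears in a short appendix on local cohomology whose stated purpose is only to record basic definitions and results, with the explicit disclaimer that ``the reader who wants \dots\ to read proofs of the results cited in this appendix, can consult the textbook of Brodmann and Sharp.'' So there is no argument in the paper against which to compare your proposal.

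Your write-up is a reasonable sketch of the standard proof. Your own diagnosis of the soft spot is accurate: the direct surjectivity argument for $\alpha\colon D_{(r)}(M)\to r^{-1}M$ via $\phi(r^\ell)=r^{\ell-n}m$ needs $\mathrm{Ann}_R(r^\ell)$ to kill $r^{\ell-n}m$, which is not automatic for a general graded ring. The cleanest fix is exactly the one you name at the end: invoke the \v{C}ech description of $H^*_{(r)}$ (the two-term complex $M\to r^{-1}M$), which holds without Noetherian hypotheses, to get $H^n_{(r)}(M)=0$ for $n\ge 2$ and $H^1_{(r)}(M)\cong r^{-1}M/\operatorname{im}(M)$ directly, and then read off the identification of $D_{(r)}(M)$ from the four-term exact sequence of Proposition~\ref{basic properties of local cohomology 1}. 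That route avoids the annihilator issue entirely and is what Brodmann--Sharp do.
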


\begin{comment} (FINE, DECIDED NOT TO INCLUDE IT)
\section{Appendix on finite presentation of generalized homology of finite spectra.}

This appendix collects some very elementary facts about finite presentation of graded modules and generalized homology of finite spectra which 
are certainly not new, but which I do not know where to find in the literature.

\begin{definition} 
If $R$ is a graded ring, then I will say that an $R$-module is finitely generated if $R$ admits a finite generating set consisting 
of {\em homogeneous} elements. 
I will say that an $R$-module is finitely presented if $R$ admits a presentation in which the set of generators and the set 
of relations are both finite and homogeneous. (Note that these definitions differ from the classical, ungraded definitions by the
presence of the word ``homogeneous.'')
\end{definition}

\begin{prop}
Let $R$ be a graded ring, and let
\[ 0 \rightarrow L \stackrel{f}{\longrightarrow} M \stackrel{g}{\longrightarrow} N \rightarrow 0\]
be a short exact sequence of graded $R$-modules and grading-preserving $R$-module homomorphisms.
If $L$ and $N$ are both finitely generated, then $M$ is also finitely generated.
If $L$ and $N$ are both finitely presented, then $M$ is also finitely presented.
\end{prop}
\begin{proof}
Elementary.
\end{proof}

\begin{lemma}
Let 

\end{lemma}
\end{comment}

\begin{comment} NOT CURRENTLY USED
\begin{definition-proposition}
Let $A$ be a commutative ring. An element $a\in A$ is said to be
{\em additive torsion} if $a$ is a torsion element in the underlying
abelian group of $A$, i.e., if there exists some positive integer $n$ such that $nx = 0$.

Let $T(A)$ denote the set of additive torsion elements of $A$.
We say that $A$ is {\em additively torsion-free} if $T(A) = \{ 0\}$.

The subset $T(A)\subset A$ 
forms an ideal in $A$, and we write $Q(A)$ 
for the quotient ring $Q(A) = A/T(A)$. 
Then $Q$ is a functor from the category of commutative rings
to the category of additively torsion-free commutative rings.
Let $G$ denote the forgetful functor from additively torsion-free
commutative rings to commutative rings. 
Then: 
\begin{itemize}
\item $Q$ is left adjoint to $G$, 
\item $G$ preserves finite coproducts (but not arbitrary colimits, in particular, not coequalizers),
\item hence the composite $Q\circ G$, from commutative rings to 
commutative rings, preserves finite coproducts.
\end{itemize}
\end{definition-proposition}
\begin{proof}
Proving that $T(A)$ is an ideal is elementary.
Clearly if $R$ is an additively torsion-free commutative ring
and $f: A\rightarrow R$ is a ring homomorphism, then $T(A)$
is sent to zero by $f$, so $f$ factors, uniquely, through
the quotient map $A\rightarrow Q(A)$,
hence $Q$ is left adjoint to the forgetful functor.

That $G$ preserves finite coproducts follows from the
coproduct in commutative rings being the tensor product
(over $\mathbb{Z}$), and the tensor product of two torsion-free
$\mathbb{Z}$-modules is a torsion-free $\mathbb{Z}$-module,
since torsion-free modules coincide with flat modules
over $\mathbb{Z}$. Consequently, the tensor product 
(over $\mathbb{Z}$) is the coproduct in both the category of
commutative rings and the category of additively torsion-free
commutative rings.

That $G$ does not preserve coequalizers is easy:
let $n$ be an integer, let $f$ be the ring map $f: \mathbb{Z}[x]\rightarrow\mathbb{Z}$
defined by $f(x) = 0$, and let
$g$ be the ring map $g: \mathbb{Z}[x]\rightarrow\mathbb{Z}$
defined by $g(x) = n$. Then $\mathbb{Z}[x]$ and $\mathbb{Z}$ are
both additively torsion-free, but the coequalizer of
$f$ and $g$ is $\mathbb{Z}/n\mathbb{Z}$, which is not additively torsion-free if $n\geq 2$.
\end{proof}

}
\def\cprime{$'$} \def\cprime{$'$} \def\cprime{$'$} \def\cprime{$'$}

%\bibliography{/home/asalch/texmf/tex/salch}{}
%\bibliographystyle{plain}
\end{document}